\documentclass[10pt,letterpaper]{amsart}
\usepackage[ansinew]{inputenc}

\usepackage{url}

\newcommand{\Dual}{\mathsf{C}}
\newcommand{\Jacobian}{\nabla}

\newcommand{\sign}{\operatorname{sign}}
\newcommand{\adj}{\operatorname{adj}}

\newcommand{\sym}{\vee}
\newcommand{\Perm}{\operatorname{Perm}}
\newcommand{\Scherkset}{\calA}
\newcommand{\Scherkcof}{A}
\newcommand{\SphereArea}{A}
\newcommand{\TV}{\mathrm{TV}}

\newcommand{\contract}{:} 

\newcommand{\Orlicz}{\calA}
\newcommand{\DoubleOrlicz}{\calB}

\newcommand{\Cont}{\calC}
\newcommand{\Lebesgue}{\calL}
\newcommand{\Sobolev}{\calW}
\newcommand{\BV}{\operatorname{BV}}

\newcommand{\IS}{I}
\newcommand{\IT}{{\hat{I}}}
\newcommand{\OmegaS}{\Omega}
\newcommand{\OmegaT}{{\hat\Omega}}
\newcommand{\VEC}{V}
\newcommand{\vectest}{\Psi}
\newcommand{\test}{\psi}
\newcommand{\LIN}{\frakL}

\usepackage{amsfonts}
\usepackage{amsmath}
\usepackage{amssymb}
\usepackage{amsthm}
\usepackage{stmaryrd}
\usepackage{amscd}
\usepackage{mathrsfs}
\usepackage{textcomp}
\usepackage{mathtext}
\usepackage{yfonts}

\usepackage{enumerate}

\usepackage{tikz}

\hyphenation{Lip-schitz}
\hyphenation{Ra-de-ma-cher}

\newtheorem{theorem}{Theorem}[section]
\newtheorem{lemma}[theorem]{Lemma}
\newtheorem{corollary}[theorem]{Corollary}
\theoremstyle{definition}

\newtheorem{example}[theorem]{Example}

\newtheorem{proposition}[theorem]{Proposition}
\theoremstyle{remark}
\newtheorem{remark}[theorem]{Remark}

\numberwithin{equation}{section}

\usepackage{nicefrac}

\newcommand{\indexset}[2]{{\{#1\mathbin{:}#2\}}}

\makeatletter
\newcommand\suchthat{\@ifstar
  {\mathrel{}\middle\vert\mathrel{}}
  {\mid}}
\makeatother

\DeclareSymbolFont{bbold}{U}{bbold}{m}{n}
\DeclareSymbolFontAlphabet{\mathbbold}{bbold}

\DeclareMathOperator*{\esssup}{esssup}

\newcommand{\eps}{{\epsilon}}

\newcommand{\dif}{{\mathrm d}}

\newcommand{\divergence}{\operatorname{div}}

\newcommand{\equivalent}{ \Longleftrightarrow }

\newcommand{\dist}{\operatorname{dist}}

\newcommand{\supp}{\operatorname{supp}}

\newcommand{\bbN}{{\mathbb N}}

\newcommand{\bbR}{{\mathbb R}}

\newcommand{\bfD}{{\mathbf D}}

\newcommand{\bfF}{{\mathbf F}}

\newcommand{\calA}{{\mathcal A}}
\newcommand{\calB}{{\mathcal B}}
\newcommand{\calC}{{\mathcal C}}
\newcommand{\calD}{{\mathcal D}}

\newcommand{\calL}{{\mathcal L}}

\newcommand{\calP}{{\mathcal P}}

\newcommand{\calW}{{\mathcal W}}

\newcommand{\frakL}{{\mathfrak L}}

\newcommand{\frakP}{{\mathfrak P}}

\begin{document}

\title[Chain rule for tensor fields]
{Higher-order chain rules for tensor fields, generalized Bell polynomials, and estimates in Orlicz-Sobolev-Slobodeckij and total variation spaces.}

\author[Martin W.\ Licht]{Martin W.\ Licht}
\address{\'Ecole Polytechnique F\'ed\'erale de Lausanne (EPFL), 1015 Lausanne, Switzerland}
\email{martin.licht@epfl.ch}

\thanks{This material is based upon work supported by the National Science Foundation 
under Grant No.\ DMS-1439786 while the third author was in residence at the Institute for Computational and 
Experimental Research in Mathematics in Providence, RI, during the ``Advances in Computational Relativity'' program.}

\subjclass[2010]{Primary 26B12; Secondary 05A17, 26B30, 46E35, 49Q15} 
\keywords{Bell polynomial, chain rule, bounded variation, Fa\`{a}~di~Bruno formula, Musielak-Orlicz space, Sobolev-Slobodeckij space, tensor field}

\date{}

\begin{abstract}
    We describe higher-order chain rules for multivariate functions and tensor fields. 
    We estimate Sobolev-Slobodeckij norms, Musielak-Orlicz norms, and the total variation seminorms of the higher derivatives of tensor fields after a change of variables and determine sufficient regularity conditions for the coordinate change. 
    We also introduce a novel higher-order chain rule for composition chains of multivariate functions
    that is described via nested set partitions and generalized Bell polynomials;
    it is a natural extension of the Fa\`{a}~di~Bruno formula.
    Our discussion uses the coordinate-free language of tensor calculus and includes Fr\'echet-differentiable mappings between Banach spaces. 
\end{abstract}

\maketitle

\section{Introduction}\label{sec:intro}

The higher-order chain rule for the composition of functions in one real variable is generally known as \emph{Fa\`{a}~di~Bruno formula}.
Less well-known are its generalizations to multivariate functions or to multiple compositions. 
In this contribution, we combine these two possible generalizations: 
we formulate the higher-order chain rule in several variables and for compositions of several such functions.
This seems to not have been discussed in the literature before. 
The key observation is that,
just like the basic Fa\`{a}~di~Bruno formula can be interpreted in terms of set partitions, 
our generalization can be interpreted in terms of hierarchies of set partitions. 
We express the derivatives in terms of a new recursive generalization of Bell polynomials.
Formulating our results in the coordinate-free language of tensor analysis is crucial
and also elucidates the basic structure of the theorem, even in the univariate case. 

How to estimate function space norms of tensor fields and their higher derivatives after a coordinate transformation might be known in very broad strokes, 
but no higher-order chain rules and estimates are found in the literature, and some interesting subtleties are easily overlooked. 
We estimate the Sobolev-Slobodeckij norms of tensor fields with rough coefficients after coordinate transformations.
While the ubiquity of such estimates in analysis is out of question, we provide explicit formulas for the higher derivatives of pullbacks and study what regularity is necessary.
For example, vector fields and differential forms with Sobolev-Slobodeckij regularity $\Sobolev^{1+\theta,p}$ are preserved under pullbacks along transformations with H\"older regularity $\Cont^{2,\sigma}$ if $\sigma > \theta$. 

Subsequently, we generalize these estimates to tensor fields in Musielak-Orlicz spaces. 
Such spaces emerge in the study of nonlinear partial differential equations; 
the most famous special cases are the Lebesgue and Sobolev-Slobodeckij spaces with variable exponents,
double phase spaces, and logarithmic Sobolev spaces. 
More generally, we study the transformation of tensor fields in \emph{fractional Musielak-Orlicz} spaces
provided that the Musielak-Orlicz integrands are sufficiently well-behaved. 
We give a brief introduction into these spaces. 
The notion of \emph{pullback} of Musielak-Orlicz integrand emerges naturally in this setting.

Lastly, we study a generalized class of bounded variation spaces and estimate their total variation seminorm under a specific class of coordinate transformations. 
Specifically, the coordinate transformations require higher regularity than for estimates in Orlicz-Sobolev spaces:
the highest derivative must have an essentially bounded divergence. 
This emerges in the study of diffeomorphisms that minimize energy functionals. 
In that discussion it essential that we characterize bounded variation spaces via integration by parts against \emph{flat vector fields},
meaning essentially bounded vector fields with essentially bounded divergence. 
\\

We review the history of the higher-order chain rule and contextualize this discussion.
Given two sufficiently differentiable mappings $f$ and $\phi$ in one variable,
the $m$-th derivative of their composition is:
\begin{align}\label{math:faadibruno}
    (f \circ \phi)^{(m)} (x)
    =
    \sum_{ \substack{ (b_1, b_2, \ldots, b_m) \in \bbN_{0}^{m} \\ b_1 + 2 b_2 + \cdots + m b_m = m } }
    \frac{m!}{ b_1! b_2! \cdots b_m! }
f^{(\sum_{j=1}^{m} b_j)}
    ( \phi(x) )
    \prod_{j=1}^{m}
    \left( \frac{ \phi^{(j)}(x) }{ j!} \right)^{b_j}
    .
\end{align}
This chain rule of higher order is a direct extension of a classical result of real analysis,
just analogous to how the Leibniz rule generalizes the elementary product rule. 
Indeed, a possible motivation for representing the $m$-th derivative in the form~\eqref{math:faadibruno}, the \emph{Fa\`{a}~di~Bruno formula},
stems from comparison with the generalized product rule: 
if one develops the left-hand side of the formula for some low values of $m$ and combines like summands, 
one quickly starts wondering about the pattern in the occurring products of derivatives and their numerical coefficients.
The Fa\`{a}~di~Bruno formula, as stated above, directly answers this question.

We show that this formula specializes a more general representation of the $m$-th derivative to the univariate case.
Whereas formula~\eqref{math:faadibruno} does not generalize immediately to compositions of multivariate vector-valued functions,
we present a precursor formula, written in terms of set partitions, that does. 
The Fa\`{a}~di~Bruno formula naturally emerges from our discussion, as do expressions for the higher derivatives of composition chains of mappings between vector spaces.

The centuries-old publication history of the Fa\`{a}~di~Bruno formula~\eqref{math:faadibruno} has an interesting history. 
We give a very brief overview of the oft-repeated discovery of this chain rule,
based on the extensive discussion and sources collected in the accounts by Johnson~\cite{johnson2002curious} and Craik~\cite{craik2005prehistory}.
The first known written reference to the higher-order chain rule is found in Arbogast's 1800 contribution~\cite{arbogast1800calcul},
who discusses the basic idea and writes down examples, without stating Equation~\eqref{math:faadibruno} itself. 
The second edition of Lacroix's Treatise (1810,~\cite{lacroix1797traite}) states the formula together with the coefficients.
The higher-order chain rule~\eqref{math:faadibruno} was also stated explicitly by Scherk (1823,~\cite{scherk1823evolvenda}) 
and Tiburce Abadie (1850,~\cite{abadie1850differentiation}).
It appears in Fa\`{a}~di~Bruno's 1855 article~\cite{faa1855sullo} together with an original determinant formula,
and it gained wider circulation with his book~\cite{di1876theorie}. 
Since then, the Fa\`{a}~di~Bruno formula has received particular attention in combinatorics (see, e.g.,~\cite{roman1980formula}) due to its connection with Bell polynomials. 
However, the chain rule of higher order is sometimes described as a rather inaccessible result of classical analysis (see, e.g., Flanders~\cite{flanders2001ford}).

The chain rule of higher order in several variables seems to have been addressed much later. The work of Most (1871,~\cite{most1871ueber}) addresses compositions $f(x(t))$ where $x(t)$ is a univariate vector field. 
Cartan's book~\cite{cartan1967calcul} only addresses second and third derivatives of composed multivariate functions.
Without claiming to provide a complete literature review,
we also mention the contributions of Gzyl~\cite{gzyl1986multidimensional}, of Constantine and Savits~\cite{constantine1996multivariate}, and of Encinas and Masqu\'e~\cite{encinas2003short}. 
The aforementioned works state the multivariate higher-order chain rule in coordinate form. 
We instead use the coordinate-free language of tensor calculus and discuss successive directional derivatives. 
Thus, we present several variations of the chain rule while explicitly preserving underlying geometric structures
that would be less clear in coordinate notation.

The language of tensor calculus allows for a succinct formulation of the higher-order chain rule for compositions of \emph{multiple} multivariate vector-valued functions. 
Whereas the classical Fa\`{a}~di~Bruno formula is stated in terms of set partitions,
the generalization to composition chains involves \emph{nested} set partitions. 
To the author's best knowledge, the present manuscript's interpretation via nested partitions is novel,
and the general form for multivariate functions has not yet been discussed before in the literature either. 
We refer to Natalini and Ricci~\cite{natalini2004extension}, who apparently were the first to discuss higher-order chain formulae for composition chains of univariate functions,
and to Bernardini, Natalini, and Ricci~\cite{bernardini2005multidimensional},
who develop a higher-order chain rule for the case of a single multivariate function in several composition chains of univariate functions.
We address more general types of compositions here. Moreover, we address Fr\'echet-differentiable mappings between Banach spaces. 
\\

The main application of our generalized chain rule is deriving explicit formulae for estimating the pullback of covariant tensor fields along coordinate changes, measured in function space norms.
\emph{Sobolev spaces}~\cite{sobolev2008some} are among the most important achievements of functional analysis 
and appear in the solution theory of partial differential equations and their numerical analysis. 
\emph{Sobolev-Slobodeckij spaces}~\cite{slobodeckij1958generalized} have seen increased interest over the recent decades because they formalize the notion of Sobolev spaces with ``intermediate'' non-integer smoothness. 
Vector and tensor fields with coefficients in Sobolev-Slobodeckij spaces
are ubiquitous in the mathematical theories of electromagnetism, elasticity and general relativity.
The multivariate chain rule allows estimating the Sobolev-Slobodeckij norms of scalar and tensor fields after a coordinate change.
A qualitative difference between tensor fields and scalar fields is that the geometrically conforming coordinate transformation of tensor fields is the pullback,
which involves the Jacobian of the coordinate change.
Thus, while scalar fields with $m$ weak derivatives are preserved under a coordinate transformation with $m$ derivatives,
general tensor fields with $m$ weak derivatives are preserved only under a coordinate transformation with $m+1$ derivatives.
In addition, we show in detail how Sobolev-Slobodeckij spaces are only preserved under coordinate transformations of strictly higher smoothness. 
To the author's impression, these facts are often overlooked. 

We extend the discussion to \emph{Musielak-Orlicz spaces}~\cite{chlebicka2021partial,Harjulehto2019}. These Banach spaces naturally emerge in the discussion of nonlinear partial differential equations. They include Lebesgue spaces with variable exponents which appear, e.g., when modeling of electrorheological fluids~\cite{diening2011lebesgue}, or double phase spaces~\cite{colombo2015regularity}, and we also address their fractional counterparts~\cite{alberico2021fractional}.
For example, such spaces emerge in the discussion of the fractional Laplacian with variable exponent~\cite{kaufmann2017fractional}.

Finally, we address the pullback of tensor fields in higher-order bounded variation spaces~\cite{ambrosio2000functions}.
The $m$-th distributional derivative of a function may not exist as a locally integrable function,
but it may exist as a finite Radon measure. 
We discuss the preservation of that \emph{bounded variation} property under bi-Lipschitz mappings 
and prove higher-order chain rules for functions whose $m$-th weak derivative has bounded variation. 
In comparison to the estimate of Orlicz-Sobolev norms, estimating the total variation assumes that the higher derivatives of the transformation have essentially bounded tensor divergence. 
This is an extra smoothness qualitatively different from extra ``fractional'' smoothness. 
Transformations with such regularity conditions arise, e.g., from diffeomorphisms that minimize energy functionals.
\\

The remainder of this article is structured as follows. 
We summarize definitions and notation in Section~\ref{sec:notation}. 
We discuss the higher-order chain rules in Section~\ref{sec:chainrule}, which includes a proof of the original Fa\`{a}~di~Bruno formula. 
We discuss the higher-order chain rules for multicomposite functions in Section~\ref{sec:multicomposite}. 
We derive pointwise estimates for the pullback of tensor fields in Section~\ref{sec:pullbacks}. 
We develop estimates for the pullback of tensor fields in Sobolev-Slobodeckij spaces in Section~\ref{sec:sobolev}, 
introduce and discuss Musielak-Orlicz spaces in Section~\ref{sec:orlicz},
and finally consider generalized bounded variation spaces in Section~\ref{sec:bv}.

\section{Definitions and Notation}\label{sec:notation}

In this section, we review background material and introduce notation: 
this considers multilinear algebra, derivatives, and set partitions. 

The main applications of higher-order chain rules are mappings between finite-dimensional spaces. 
Nevertheless, we also address the more general infinite-dimensional setting, 
where we consider Frech\'et-differentiable mappings over Banach spaces. 
We take care to present the material in a manner 
that allows handling the infinite-dimensional setting as accessibly as the finite-dimensional setting.

\subsection{Multilinear Algebra}

We review notions of multilinear algebra and tensor analysis. 
For any normed vector space $X$, we let $X^{\ast}$ denote its dual space,
that is, the space of bounded linear functionals over $X$. 
We write $\langle x, x^\ast \rangle \equiv \langle x^\ast, x \rangle$ for the bilinear pairing of $x \in X$ and $x^\ast \in X^\ast$. 
We let $\LIN(X,Y)$ denote the space of bounded linear mappings from some normed vector space $X$ into another normed vector space $Y$.
More generally, we write $\LIN^{m}(X,Y)$ for the space of bounded $m$-linear mappings 
that take $m$ variables in $X$ and map into the space $Y$. 
The \emph{spectral norm} on the vector space $\LIN^{m}(X,Y)$ is 
\begin{align*}
    \| u \|
    :=
    \sup_{ x_1, x_2, \ldots, x_m \in X \setminus \{0\} } 
    \dfrac{
        \| u(x_1,x_2,\ldots,x_m) \|_{Y}
    }{ 
        \|x_1\|_{X} \|x_2\|_{X} \cdots \|x_m\|_{X}
    },
    \quad 
    u \in \LIN^{m}(X,Y)
    .
\end{align*}
In the case of a $1$-linear mapping, this is just the usual \emph{operator norm}. 

Multilinear mappings naturally lead to the discussion of tensor products. 
When $X_1$ and $X_2$ are normed vector spaces, then $X_1 \otimes X_2$ denotes their tensor product space.
As usual, $x_1 \otimes x_2 \in X_1 \otimes X_2$ denotes the simple tensor product of any $x_1 \in X_1$ and $x_2 \in X_2$.
We abbreviate $\otimes^m X$ for the $m$-fold tensor product of the space $X$ with itself;
by convention, $\otimes^0 X = \bbR$ is just the scalars.
We equip the tensor product with the \emph{projective norm}
\begin{align*}
    \| \mu \|
    :=
    \inf
    \left\{
        \sum_{i} \|x_{1,i}\|_{X_1} \|x_{2,i}\|_{X_2} \suchthat* \mu = \sum_{i} x_{1,i} \otimes x_{2,i}
    \right\},
    \quad 
    \mu \in X_{1} \otimes X_{2}.
\end{align*}
The projective norm satisfies the \emph{cross norm property}:
$\| x_1 \otimes x_2 \| = \|x_1\|_{X_1} \|x_2\|_{X_2}$ for any $x_1 \in X_1$ and $x_2 \in X_2$. 
The definition of the projective norm naturally extends to the tensor products of more than two normed spaces. 

We use the projective norm whenever we address the norm of a tensor product, unless otherwise noted. 
However, apart from the finite-dimensional case, the tensor product with the projective norm is not a complete vector space.
In order to keep the notation simple, \emph{we tacitly denote the closure of the algebraic tensor product under the projective norm by $X_{1} \otimes X_{2}$ as well}.

We review the relation between tensor products and multilinear mappings. We write 
\begin{align*}
    \LIN^{m}_{n}(X,Y)
    := 
    \LIN\left( \otimes^m X, \otimes^n Y \right)
    .
\end{align*}
The members of $\LIN^{m}_{n}(X,Y)$ are naturally interpreted as multilinear mappings. 
For example, $\LIN^{m}_{1}(X,Y) = \LIN^{m}(X,Y)$ is a multilinear form that maps $m$ members of $X$ into the space $Y$,
and $\LIN^{m}_{0}(X,Y)$ is a scalar-valued multilinear form in $m$ members of $X$.
Notice also that $\LIN^{1}_{0}(X,Y) = X^{\ast}$ and $\LIN^{0}_{1}(X,Y) = Y$, and that $\LIN^{0}_{0}(X,Y)$ is the scalars. 
We also note that the operator norm on $\LIN^{m}_{n}(X,Y)$,
when interpreting its members as linear mappings from $\otimes^m X$ into $\otimes^n Y$,
is the same as the spectral norm when interpreting its members as $m$-linear mappings into $\otimes^n Y$.

We adopt some terminology that is common in tensor analysis and differential geometry. 
We say that every member of $\LIN^{m}_{n}(X,Y)$ has $m$ covariant indices and $n$ contravariant indices. 
Indeed, this terminology of covariant and contravariant indices is well-established for spaces of the form 
\begin{align*}
    \underbrace{X^{\ast} \otimes \cdots \otimes X^{\ast}}_{m\text{-times}}
    \otimes 
    \underbrace{Y \otimes \cdots \otimes Y}_{n\text{-times}}
    .
\end{align*}
This space naturally embeds into $\LIN^{m}_{n}(X,Y)$ (and coincides with it in the finite-dimensional setting), and therefore we extend the terminology to multilinear forms. 
\\

We may consider the tensor product of two linear mappings. 
Whenever $\mu \in \LIN^{m}_{n}(X,Y)$ and $\nu \in \LIN^{r}_{s}(X,Y)$,
we may identify their tensor product $\mu \otimes \nu$ 
naturally as a member of $\LIN^{m+r}_{n+s}(X,Y)$. 
This is a minor abuse of notation but very practical.
The projective norm on the tensor $\mu \otimes \nu$ coincides with the operator norm 
when interpreting that tensor as a member of $\LIN^{m+r}_{n+s}(X,Y)$. 

The contraction of tensors will help us formalize higher-order chain rules,
especially when composing multiple functions. 
When $\mu \in \LIN^{n}_{r}(Y,Z)$ and $\nu \in \LIN^{m}_{n}(X,Y)$,
then $\mu \contract \nu \in \LIN^{m}_{r}(X,Z)$ denotes the composition of these two linear mappings.
This is again inspired by notions of tensor calculus:
there, $\mu \contract \nu$ denotes the contraction between the $r$ covariant indices of $\mu$ and the $r$ contravariant indices of $\nu$. 
The contraction operation is commutative, which means that for any $\omega \in \LIN^{r}_{s}(Z,W)$ 
we also have $\omega \contract (\mu \contract \nu) = (\omega \contract \mu) \contract \nu$. 

Because we will formalize higher-order derivatives as symmetric multilinear forms, 
we discuss \emph{symmetric products}. 
Whenever $x_1, x_2, \ldots, x_m \in X$ are members of a vector space $X$,
we write 
\begin{align}\label{math:symmetricproduct}
    x_1 \sym x_2 \sym \cdots \sym x_m
    :=
    \frac{1}{m!}
    \sum_{ \pi \in \Perm(m) } 
    x_{\pi(1)} \otimes x_{\pi(2)} \otimes \cdots \otimes x_{\pi(m)}
\end{align}
for their symmetric product, 
where $\Perm(m)$ denotes the set of permutations of $\{1,2,\ldots,m\}$.
In particular, the symmetric tensor $x_{1} \sym x_{2} \sym \cdots \sym x_{m}$
is invariant under reordering of the vectors. 
Note that tensors of the form~\eqref{math:symmetricproduct} constitute a linear subspace of $\otimes^{m} X$,
which is denoted by $\sym^{m} X$ and called the \emph{$m$-th symmetric power of $X$}.

\subsection{Norms on tensors in finite dimensions}

The vector space $\bbR^{N}$ can be equipped with the usual $\ell^{p}$-norm $\|\cdot\|_{p}$. 
Whenever $p,q \in [1,\infty]$ with $p \leq q$, we have 
\begin{gather}\label{math:lpcomparison}
    \|x\|_{q} \leq \|x\|_{p} \leq N^{\frac 1 p - \frac 1 q} \|x\|_{q},
    \quad 
    x \in \bbR^N
    .
\end{gather}
When $p,q \in [1,\infty]$ with $1 = 1/p + 1/q$, then 
\begin{gather}\label{math:hoelderinequality}
    |\langle x, y \rangle| \leq \|x\|_{p} \|y\|_{q},
    \quad 
    x, y \in \bbR^N.
\end{gather}
Correspondingly, the dual space of $\bbR^{N}$ with the $\ell^{p}$-norm can be identified with the space $\bbR^{N}$ with the $\ell^{q}$-norm. 

More generally, the vector space ${\otimes}^d \bbR^{N}$ can be identified with the space $\LIN^{d}(\bbR^{N},\bbR)$ of $d$-linear functionals. We take a look at the spectral norm of those forms.
First, we consider the spectral norm of the simple tensor $x_1 \otimes x_2 \otimes \dots \otimes x_d$ composed of the vectors $x_1, x_2, \dots, x_d \in \bbR^N$.
Interpreting this as a multilinear form over the space $\bbR^N$ equipped with the $\ell^{p}$-norm,
its spectral norm is 
\begin{gather*}
    \left\|
        x_1 \otimes x_2 \otimes \dots \otimes x_d
    \right\|
    = 
    \|x_1\|_{q}
    \|x_2\|_{q}
    \cdots 
    \|x_d\|_{q}
    .
\end{gather*}
More generally, when $\omega \in {\otimes}^d \bbR^{N}$,
then it is evident that 
\begin{gather*}
    \left\| \omega \right\|
    \leq 
    \inf
    \left\{\;
        \textstyle{\sum_{i}}
        \|x_{1,i}\|_{q} \|x_{2,i}\|_{q} \cdots \|x_{d,i}\|_{q}
        \suchthat* 
        \omega = \textstyle{\sum_{i}} 
        x_{1,i} \otimes x_{2,i} \otimes \cdots \otimes x_{d,i}
    \;\right\}
     .
\end{gather*}
If we interpret $\omega$ as a vector in $\bbR^{N^d}$, using the canonical standard basis, then $\|\omega\|_{\infty} \leq \|\omega\| \leq \|\omega\|_{1}$.

We notice that the spectral norm satisfies the contraction inequality 
\begin{align}\label{math:contractioninequality}
    \| \mu \contract \nu \| \leq \|\mu\| \cdot \|\nu\|
    ,
    \quad \mu \in \LIN^{n}_{r}(Y,Z), \quad \nu \in \LIN^{m}_{n}(X,Y)
    .
\end{align}

\subsection{Derivatives}

Let $X$ and $Y$ be Banach spaces. 
Suppose that $\OmegaS \subseteq X$ is an open set 
and that we have a function $f : \OmegaS \rightarrow Y$.
We say that $f$ is differentiable at $x \in \OmegaS$ 
if there exists a bounded linear mapping $\nabla f_{|x} \in \LIN(X,Y)$
satisfying 
\begin{align*}
    \lim_{ v \rightarrow 0 }
    \dfrac{
        \| f ( x + v ) - f(x) - \nabla f_{|x}(v) \|_{Y}
    }{
        \| v \|_{X} 
    }
    = 0.
\end{align*}
In that case, we also have
\begin{align*}
    \nabla f_{|x}(v)
    =
    \lim_{ \tau \rightarrow 0 }
    \dfrac{
        f ( x + \tau v ) - f(x)
    }{
        \tau  
    },
    \quad 
    v \in X
    .
\end{align*}
The reader may notice that our notion of differentiability is exactly the notion of Frech\'et-differentiability. 
We call $\nabla f_{|x}$ the (first) \emph{derivative} of $f$ at $x$.
We write $\nabla_{v} f_{|x} = \nabla f_{|x}(v)$ 
and call this the directional derivative of $f$ at $x$ in direction $v$. 

The derivatives of higher order are defined recursively in the usual manner,
provided that $f$ is sufficiently regular. 
We write $\nabla^{m} f_{|x}$ for the $m$-th derivative of $f$ at $x$. 
Owing to the identification of $\LIN(X,\LIN^{m-1}(X,Y))$ with $\LIN^m(X,Y)$, 
the $m$-th derivative of $f$ at $x \in \OmegaS$ 
is a bounded multilinear form in $\LIN^{m}_{1}(X,Y)$,
having $m$ covariant indices and one contravariant index. 
The relevance of multilinear forms to the notion of derivatives is thus clear. 
Generalizing our notation for directional derivatives, 
we write 
\begin{align*}
    \nabla_{ v_1, v_2, \ldots, v_m } f_{|x}
    = 
    \nabla^{m} f_{|x}( v_1, v_2, \ldots, v_m ),
    \quad 
    v_1, v_2, \ldots, v_m \in X.
\end{align*}
Moreover, the  $m$-linear form $\nabla^{m} f_{|x}$ is symmetric,
that is, $\nabla^{m} f_{|x}( v_1, v_2, \ldots, v_m ) \in Y$ 
is invariant under reordering the vectors $v_1, v_2, \ldots, v_m \in X$.
That just means that $\nabla^{m} f_{|x}( v_1 \otimes v_2 \otimes \cdots \otimes v_m )$ 
only depends on the symmetric part of its argument. 
With some abuse of notation, we write 
\begin{align*}
    \nabla^{m} f_{|x}( v_1, v_2, \ldots, v_m )
    =
    \nabla^{m} f_{|x}( v_1 \otimes v_2 \otimes \cdots \otimes v_m )
    =
    \nabla^{m} f_{|x}( v_1 \sym v_2 \sym \cdots \sym v_m )
    ,
\end{align*}
where we interpret $\nabla^{m} f$ either 
as a multilinear $m$-form, as a linear form on the tensor product $\otimes^{m} X$, or
as a linear form on the symmetric power $\sym^{m} X$.  We will liberally switch between those different notations. 
\\

We explicitly state two auxiliary results on the Frech\'et derivative of mappings between Banach spaces.
We omit the proofs.

\begin{lemma}\label{lemma:frechetderivativeproductrule}
    Suppose that $X$, $Y_1$, and $Y_2$ are Banach spaces.
    Suppose that $\OmegaS \subseteq X$ is an open set.
If two mappings $f_1 : \OmegaS \rightarrow Y_1$ and  $f_2 : \OmegaS \rightarrow Y_2$
        are differentiable at $x \in \OmegaS$, then their pointwise tensor product
        \begin{align*}
            f : \OmegaS \rightarrow Y_1 \otimes Y_2,
            \quad 
            x \mapsto f_1(x) \otimes f_2(x)
        \end{align*}
        is differentiable at $x$, and we have 
\begin{align*}
            \nabla f_{|x}
            =
            \nabla f_{1|x} \otimes f_{2|x}
            +
            f_{1|x} \otimes \nabla f_{2|x}
            .
        \end{align*}
If two mappings $g_1 : \OmegaS \rightarrow Y_1$ and  $g_2 : \OmegaS \rightarrow Y_1^{\ast}$
        are differentiable at $x \in \OmegaS$,
        then their pointwise dual pairing
        \begin{align*}
            g : \OmegaS \rightarrow \bbR, \quad 
            x \mapsto \langle g_1(x), g_2(x) \rangle
        \end{align*}
        is differentiable at $x$, and we have 
\begin{align*}
            \nabla g_{|x}
            =
            \langle \nabla g_{1|x}, g_{2|x} \rangle
            +
            \langle g_{1|x}, \nabla g_{2|x} \rangle
            .
        \end{align*}
\end{lemma}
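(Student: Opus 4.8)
The plan is to verify both identities directly from the definition of the Fr\'echet derivative, using the cross norm property of the projective norm (and, for the second part, the H\"older-type bound $|\langle a,b\rangle| \leq \|a\|\,\|b\|$) to dispose of the lower-order error terms.

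For the tensor-product rule I would start from the increment $f(x+v)-f(x) = f_1(x+v)\otimes f_2(x+v) - f_1(x)\otimes f_2(x)$, insert the mixed term $f_1(x+v)\otimes f_2(x)$, and split it as
\begin{align*}
  f(x+v) - f(x)
  = f_1(x+v)\otimes\bigl(f_2(x+v)-f_2(x)\bigr) + \bigl(f_1(x+v)-f_1(x)\bigr)\otimes f_2(x).
\end{align*}
Writing $f_i(x+v) - f_i(x) = \nabla f_{i|x}(v) + r_i(v)$ with $\|r_i(v)\|_{Y_i} = o(\|v\|_X)$, and using $\|f_1(x+v)-f_1(x)\|_{Y_1} = O(\|v\|_X)$, I would fully expand the two summands into simple tensors. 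By the cross norm property the norm of each such simple tensor equals the product of the norms of its two factors, and for every term except $\nabla f_{1|x}(v)\otimes f_2(x)$ and $f_1(x)\otimes\nabla f_{2|x}(v)$ this product is $o(\|v\|_X)$ (one factor being $o(\|v\|_X)$, or both being $O(\|v\|_X)$). Hence
\begin{align*}
  f(x+v) - f(x) = \nabla f_{1|x}(v)\otimes f_2(x) + f_1(x)\otimes\nabla f_{2|x}(v) + o(\|v\|_X).
\end{align*}
The map $v \mapsto \nabla f_{1|x}(v)\otimes f_2(x) + f_1(x)\otimes\nabla f_{2|x}(v)$ is linear in $v$ and bounded (again by the cross norm property and boundedness of $\nabla f_{1|x}$ and $\nabla f_{2|x}$), so it is $\nabla f_{|x}$; under the identification of $\nabla f_{1|x}\otimes f_{2|x}$ and $f_{1|x}\otimes\nabla f_{2|x}$ as elements of $\LIN(X, Y_1\otimes Y_2)$ it is exactly $\nabla f_{1|x}\otimes f_{2|x} + f_{1|x}\otimes\nabla f_{2|x}$.

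For the dual-pairing rule the quickest route is to note that the pairing $Y_1\times Y_1^\ast\to\bbR$, being bounded bilinear, induces a bounded linear contraction $c : Y_1\otimes Y_1^\ast\to\bbR$ with $\|c\|\leq 1$, and that $g = c\circ(g_1\otimes g_2)$ pointwise. Since composition with a fixed bounded linear map commutes with Fr\'echet differentiation, the first part applied to $g_1$ and $g_2$ gives
\begin{align*}
  \nabla g_{|x} = c\circ\bigl(\nabla g_{1|x}\otimes g_{2|x} + g_{1|x}\otimes\nabla g_{2|x}\bigr) = \langle\nabla g_{1|x}, g_{2|x}\rangle + \langle g_{1|x}, \nabla g_{2|x}\rangle.
\end{align*}
Alternatively, one repeats the increment computation verbatim with $\otimes$ replaced by $\langle\cdot,\cdot\rangle$ and the cross norm property replaced by the H\"older bound. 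There is no essential obstacle; the only care needed is in the error estimate — one must invoke the cross norm property (not merely submultiplicativity of the projective norm) to see that terms like $r_1(v)\otimes f_2(x)$ and $\nabla f_{1|x}(v)\otimes r_2(v)$ are genuinely $o(\|v\|_X)$ — together with keeping track of the harmless reordering of contravariant indices implicit in regarding $f_{1|x}\otimes\nabla f_{2|x}$ as an element of $\LIN(X, Y_1\otimes Y_2)$.
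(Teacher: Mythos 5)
Your proof is correct. The paper itself omits the proof of this lemma, so there is no argument of record to compare against; what you give is exactly the standard computation one would expect: split the increment of $f_1\otimes f_2$ via the mixed term, absorb all products with at least one $o(\|v\|_X)$ or two $O(\|v\|_X)$ factors into the remainder, and obtain the dual-pairing rule either by composing with the bounded contraction $c:Y_1\otimes Y_1^{\ast}\to\bbR$ induced by the pairing (which has norm at most one for the projective norm) or by repeating the estimate with the bound $|\langle a,b\rangle|\leq\|a\|\,\|b\|$. One small remark: your parenthetical claim that the full cross norm \emph{equality} is needed (``not merely submultiplicativity'') is backwards --- for the error estimates and for the boundedness of the candidate derivative only the inequality $\|a\otimes b\|\leq\|a\|_{Y_1}\|b\|_{Y_2}$ is used, which is immediate from the definition of the projective norm; the equality is never required. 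This is over-caution rather than a gap, and the argument stands as written.
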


\subsection{Nested Partitions}

We work with sets of indices and now state some further combinatorial definitions. 
We write $\frakP(A)$ for the power set of any set $A$,
that is, the set of all subsets of $A$. 

The set of all ordered partitions of $A$ into $d+1$ disjoint, possibly empty sets is written 
\begin{align}\label{math:partition:maybeempty}
    {\calP}_{0}(A,d)
    &
    := 
    \left\{
        \calP = ( {P}_0, {P}_1, \ldots, {P}_d ) \in \frakP(A)^{d+1}
        \suchthat* 
        \begin{array}{c}
            A = \cup_{j=0}^{d} {P}_{j},
            \\
\forall i < j : {P}_{i} \cap {P}_{j} = \emptyset
        \end{array}
    \right\}
    .
\end{align}
The set of all (unordered) partitions of $A$ into $k$ pairwise disjoint non-empty sets is written 
\begin{align}\label{math:partition:nonempty}
    {\calP}(A,k)
    &
    := 
    \left\{
        \calP = \{ {P}_1, {P}_2, \ldots, {P}_k \} \subseteq \frakP(A)
        \suchthat* 
        \begin{array}{c}
            A = \cup_{j=1}^{k} {P}_{j},
\;
            \emptyset \notin \calP,
            \\
\forall i < j : {P}_{i} \cap {P}_{j} = \emptyset
        \end{array}
    \right\}
    .
\end{align}
We also write 
\begin{align}\label{math:partition:nonempty_all}
    {\calP}(A)
    &
    := 
    {\calP}(A,0)
    \cup
    {\calP}(A,1)
    \cup
    {\calP}(A,2)
    \cup
    \cdots 
    .
\end{align}
Note that we use different indexing conventions in the two definitions~\eqref{math:partition:maybeempty} and~\eqref{math:partition:nonempty}, 
and that the members of ${\calP}_{0}(A,k)$ are ordered whereas the members of ${\calP}(A,k)$ generally are not. 
That being said, in what follows, we occasionally fix an arbitrary order on the members of ${\calP}(A)$ to simplify the exposition; this will always be made explicit. 
\\

We generalize the classes of partitions introduced above. 
Specifically, we introduce hierarchies of partitions, each hierarchy having one level for each composition. 
For any set $A$, we recursively define 
\begin{align}
    \calP^{0}(A) := A,
    \quad 
    \calP^{1}(A) := \calP(A),
    \quad 
\calP^{l+1}(A) := \left\{ \calP(\calC) \suchthat* \calC \in \calP^{l}(A) \right\}.
\end{align}
We thus have the recursive relation 
\begin{align}
    \calP^{l+1}(A)
    = 
    \left\{
        \calD = \{ D_1, \ldots, D_k \} 
        \suchthat* 
        \begin{array}{c}
            \{ {P}_1, \ldots, {P}_k \} \in \calP(A),
            \\
            D_i = \calP^{l}({P}_i), \quad 1 \leq i \leq k
            .
        \end{array}
    \right\}
    .
\end{align}
Obviously, the set $\calP^{1}(A)$ is again the set of all partitions of $A$ into non-empty pairwise disjoint sets. 
We obtain $\calP^{2}(A)$ as the set of all possible partitions of partitions of $A$, 
and so forth for all $l > 2$. 
Again, we occasionally fix an arbitrary order on the members of ${\calP^{l}}(A)$ to simplify the exposition, and will be explicit whenever we use that.

We write $\indexset{1}{m} := \{ 1, 2, \ldots, m \}$ for any positive integer $m$,
and we abbreviate 
\begin{gather}\label{math:partition:integersets}
    \begin{gathered}
        {\calP}_{0}(m,d) := {\calP}_{0}(\indexset{1}{m},d),
        \quad 
        \calP(m,k)       := \calP(\indexset{1}{m},k),
        \\ 
        \calP(m)         := \calP(\indexset{1}{m}),
        \quad 
        \calP^{l}(m)     := \calP^{l}(\indexset{1}{m})
        .
    \end{gathered}
\end{gather}

\section{Chain rules of higher order}\label{sec:chainrule}

We commence our main work. 
We first prove a higher-order chain rule for the composition of two functions. 
We emphasize that this is not yet the famous Fa\`{a}~di~Bruno formula, 
which is only developed later in this section. 

\begin{proposition}\label{prop:higherorderchainrule}
    Assume that $X$, $Y$, and $\VEC$ are Banach spaces,
    and that $\OmegaS \subseteq X$ and $\OmegaT \subseteq Y$ are open sets. 
    Let $\phi : \OmegaS \rightarrow \OmegaT$ and ${f} : \OmegaT \rightarrow \VEC$ be mappings.
    If $x \in \OmegaS$ such that $\phi$ is $m$-times differentiable at $x$ and $f$ is $m$-times differentiable at $\phi(x)$,
    then ${f} \circ \phi$ is $m$-times differentiable at $x$ and we have 
\begin{align}\label{math:higherorderchainrule}
        \begin{split}
        &
        \nabla^{m}\left( {f} \circ \phi \right)_{|x}
        \left( v_1, v_2, \ldots, v_m \right)
        \\&\quad 
        =
        \sum_{ \substack{ 1 \leq k \leq m \\ \calP \in \calP(m,k) } }
        \nabla^{k} {f}_{|\phi(x)}
        \left( 
            \nabla^{|{P}_1|}\phi_{|x}( \sym_{ i \in {P}_{1} } v_{i} ), 
            \nabla^{|{P}_2|}\phi_{|x}( \sym_{ i \in {P}_{2} } v_{i} ), 
            \ldots, 
            \nabla^{|{P}_k|}\phi_{|x}( \sym_{ i \in {P}_{k} } v_{i} )
        \right)
\end{split}
    \end{align}    
    for all $v_1, v_2, \ldots, v_m \in X$.
\end{proposition}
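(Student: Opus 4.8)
The plan is to prove the formula by induction on $m$, which is the natural strategy given that higher derivatives are defined recursively. For the base case $m=1$, the only partition of $\indexset{1}{1}$ is $\{\{1\}\}$ with $k=1$, so the right-hand side reads $\nabla f_{|\phi(x)}(\nabla\phi_{|x}(v_1))$, which is exactly the ordinary (first-order) chain rule for Fr\'echet-differentiable maps between Banach spaces; I would either cite this as standard or prove it quickly from the definition of the Fr\'echet derivative.

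For the inductive step, assume the formula holds for $m-1$ and that $\phi$ is $m$-times differentiable at $x$ and $f$ is $m$-times differentiable at $\phi(x)$. I would freeze the last direction $v_m$, apply the inductive hypothesis to write $\nabla^{m-1}(f\circ\phi)_{|y}(v_1,\ldots,v_{m-1})$ as a sum over $\calP \in \calP(m-1, k)$ of terms $\nabla^k f_{|\phi(y)}(\ldots)$, and then differentiate this expression in $y$ in the direction $v_m$ at $y = x$. The differentiation of each summand is governed by two tools from the excerpt: the first-order chain rule (to handle $\nabla^k f_{|\phi(y)}$, giving a factor $\nabla^{k+1}f_{|\phi(x)}$ contracted with $\nabla\phi_{|x}(v_m)$), and the Leibniz-type product rule of Lemma~\ref{lemma:frechetderivativeproductrule} applied to the tensor product of the $k$ slots $\nabla^{|P_j|}\phi_{|y}(\sym_{i\in P_j}v_i)$, each slot contributing, when hit by $\nabla_{v_m}$, a factor $\nabla^{|P_j|+1}\phi_{|x}(v_m \sym \sym_{i\in P_j}v_i)$. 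So differentiating one summand for a partition $\calP = \{P_1,\ldots,P_k\}$ of $\indexset{1}{m-1}$ produces $k+1$ new terms: one where $v_m$ is ``adjoined to $f$'' (creating a new singleton block $\{m\}$, raising $k$ to $k+1$), and $k$ terms where $v_m$ is ``adjoined to block $P_j$'' (replacing $P_j$ by $P_j \cup \{m\}$, keeping $k$ blocks).

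The combinatorial heart of the argument — which I expect to be the main obstacle, or at least the step requiring the most care — is checking that the resulting collection of terms, summed over all $\calP \in \calP(m-1,k)$ and all $k$, reproduces exactly the sum over $\calP' \in \calP(m,k')$ with each partition counted once. This is precisely the recursion underlying the Stirling/Bell numbers: every partition $\calP'$ of $\indexset{1}{m}$ is obtained from a unique partition $\calP$ of $\indexset{1}{m-1}$ by either adding $\{m\}$ as a new block (then $\calP = \calP' \setminus \{\{m\}\}$) or inserting $m$ into exactly one existing block $Q \in \calP'$ (then $\calP = (\calP'\setminus\{Q\}) \cup \{Q\setminus\{m\}\}$), and these cases are mutually exclusive and exhaustive. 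Thus the map from ``(partition of $\indexset{1}{m-1}$, choice of where to put $m$)'' to ``partition of $\indexset{1}{m}$'' is a bijection, giving each target term with coefficient exactly $1$. I would also note that the symmetrization is consistent: adjoining $v_m$ to block $P_j$ yields $\nabla^{|P_j|+1}\phi_{|x}(\sym_{i \in P_j \cup \{m\}} v_i)$ because $\nabla^{|P_j|+1}\phi_{|x}$ is a symmetric form and $v_m \sym \sym_{i\in P_j} v_i = \sym_{i \in P_j\cup\{m\}} v_i$. A minor point to address along the way: the inductive step as written differentiates in the single direction $v_m$, so strictly I am computing $\nabla_{v_m}$ of the $(m-1)$-st derivative evaluated at generic $(v_1,\ldots,v_{m-1})$; since the $m$-th derivative is the symmetric multilinear form recovered from these directional derivatives and both sides of~\eqref{math:higherorderchainrule} are symmetric in $v_1,\ldots,v_m$, this suffices, but I would remark on it explicitly for rigor.
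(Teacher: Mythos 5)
Your proposal is correct and follows essentially the same route as the paper's own proof: induction on $m$, with the base case being the classical Fr\'echet chain rule and the inductive step combining the Leibniz rule of Lemma~\ref{lemma:frechetderivativeproductrule} with the first-order chain rule, then identifying the resulting terms with partitions of the enlarged index set via the ``add a singleton or insert into an existing block'' bijection. Your additional remarks on the symmetrization and on recovering the full symmetric multilinear form from directional derivatives only make explicit points the paper leaves implicit.
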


\begin{proof}
    We use an induction argument.
    In the base case $m=1$, the statement is a reformulation of the classical chain rule
    for the composition of multivariate functions~\cite[Chapter~1.1]{hormander2015analysis}: 
    if $\phi$ is Frech\'et-differentiable at $x$ and $f$ is Frech\'et-differentiable at $\phi(x)$,
    then $f \circ \phi$ is Frech\'et-differentiable at $x$, and we have 
    \begin{align*}
        \nabla\left( {f} \circ \phi \right)_{|x}
        =
        \nabla {f}_{|\phi(x)} \contract \nabla\phi_{|x}
        .
    \end{align*}
    For the induction step, we assume that the statement is valid for $m$ derivatives,
    and we prove its validity for $m+1$ derivatives. 
    
    To that end, 
    suppose that $\calP \in \calP(m,k)$ is any partition of $\{1,\ldots,m\}$ 
    into $k$ non-empty pairwise disjoint sets, $\calP = \{ {P}_{1}, {P}_{2}, \ldots, {P}_{k} \}$. 
    Given any direction vector $v_{m+1}$,
    we apply the Leibniz rule (Lemma~\ref{lemma:frechetderivativeproductrule}) and the classical chain rule once more to find 
    \begin{align*}
        &
        \nabla_{v_{m+1}} 
        \left( 
        \nabla^{k} {f}_{|\phi(x)}
            \left(
                \nabla^{|{P}_1|}\phi_{|x}( \sym_{ i \in {P}_{1} } v_{i} ), 
                \nabla^{|{P}_2|}\phi_{|x}( \sym_{ i \in {P}_{2} } v_{i} ), 
                \ldots, 
                \nabla^{|{P}_k|}\phi_{|x}( \sym_{ i \in {P}_{k} } v_{i} )
            \right)
        \right)
        \\&\quad
        =
        \nabla^{k+1} {f}_{|\phi(x)}
        \left( 
            \nabla_{v_{m+1}}\phi_{|x}, 
            \nabla^{|{P}_1|}\phi_{|x}( \sym_{ i \in {P}_{1} } v_{i} ), 
            \nabla^{|{P}_2|}\phi_{|x}( \sym_{ i \in {P}_{2} } v_{i} ), 
            \ldots, 
            \nabla^{|{P}_k|}\phi_{|x}( \sym_{ i \in {P}_{k} } v_{i} )
        \right)
        \\&\quad\quad
        \quad+
        \nabla^{k} {f}_{|\phi(x)}
        \left( 
            \nabla_{v_{m+1}}\nabla^{|{P}_1|}\phi_{|x}( \sym_{ i \in {P}_{1} } v_{i} ), 
            \nabla^{|{P}_2|}\phi_{|x}( \sym_{ i \in {P}_{2} } v_{i} ), 
            \ldots, 
            \nabla^{|{P}_k|}\phi_{|x}( \sym_{ i \in {P}_{k} } v_{i} )
        \right)
        \\&\quad\quad
        \quad+
        \nabla^{k} {f}_{|\phi(x)}
        \left( 
            \nabla^{|{P}_1|}\phi_{|x}( \sym_{ i \in {P}_{1} } v_{i} ), 
            \nabla_{v_{m+1}}\nabla^{|{P}_2|}\phi_{|x}( \sym_{ i \in {P}_{2} } v_{i} ), 
            \ldots, 
            \nabla^{|{P}_k|}\phi_{|x}( \sym_{ i \in {P}_{k} } v_{i} )
        \right)
        \\&\quad\quad
        \quad\quad\quad\quad\vdots
        \\&\quad\quad
        \quad+
        \nabla^{k} {f}_{|\phi(x)}
        \left( 
            \nabla^{|{P}_1|}\phi_{|x}( \sym_{ i \in {P}_{1} } v_{i} ), 
            \nabla^{|{P}_2|}\phi_{|x}( \sym_{ i \in {P}_{2} } v_{i} ), 
            \ldots, 
            \nabla_{v_{m+1}}\nabla^{|{P}_k|}\phi_{|x}( \sym_{ i \in {P}_{k} } v_{i} )
        \right)
        .
    \end{align*}
Each summand on the right corresponds to a partition $\calP(m+1,k)$. 
    The first term on the right-hand side of the equation corresponds to a partition $\calP^{(0)} \in \calP(m+1,k)$
    that we obtain by adding a new singleton set to the existing partition $\calP$,
    that is, $\calP^{(0)} = \{\{m+1\}\} \cup \calP$. 
    The remaining $k$ summands correspond to partitions $\calP^{(1)}, \calP^{(2)}, \ldots, \calP^{(k)} \in \calP(m+1,k)$ obtained by adding the index $m+1$ to any of the existing $k$ members of $\calP$, that is,
    \begin{align*}
        \calP^{(l)} := ( \calP \setminus {P}_{l} ) \cup ( {P}_{l} \cup \{m+1\} ), \quad 1 \leq l \leq k.
    \end{align*}
    Any partition newly created in that manner is created from exactly one ``parent partition''. 
    The induction step is now evident, and the proof is complete. 
\end{proof}

\begin{remark}
    Under the assumptions of the proposition, the statement can be written equivalently in numerous ways. 
    For example, 
    \begin{align}
        \nabla^{m}\left( f \circ \phi \right)_{|x}
        \left( v_1 \sym v_2 \sym \cdots \sym v_m \right)
        &=
        \sum_{ \substack{ 1 \leq k \leq m \\ \calP \in \calP(m,k) } }
        \nabla^{k} {f}_{|\phi(x)}
        \left( 
            \sym_{{P} \in \calP} \nabla^{|{P}|}\phi_{|x}\left( \sym_{ i \in P } v_{i} \right)
        \right)
        ,
    \end{align}
    where the symmetric products are taken only in the contravariant indices of the tensors.
\end{remark}

\begin{remark} 
    The $k$-th derivative is a symmetric tensor.
    However, the reader may notice that the individual summands appearing in the sum are not symmetric tensors.
    In each summand, we can generally only swap vectors within a block, 
    and hence that tensor enjoys symmetries with respect to a subgroup of the symmetric group.
    It is only the summing over all partitions that restores the full symmetry.
\end{remark}

\begin{example}\label{example:inversefunction:multivariate}
    We illustrate Proposition~\ref{prop:higherorderchainrule}
    with an application to the inverse function theorem. 
    Suppose that $\phi : \OmegaS \rightarrow \OmegaT$ is invertible. 
    If $\phi$ is $m$-times differentiable at $x \in \OmegaS$,
    then its inverse $\phi^{-1}$ is $m$-times differentiable at $y := \phi(x) \in \OmegaT$ (see~\cite[Ch1.1]{hormander2015analysis}). 
    Using~\eqref{math:higherorderchainrule} with the composition $\phi \circ \phi^{-1}$ and any $m \geq 2$,
    we find 
    \begin{align*}
        0
        &
        =
        \sum_{ \substack{ 1 \leq k \leq m \\ \calP \in \calP(m,k) } }
        \nabla^{k} \phi_{|\phi^{-1}(y)}
        \left( 
            \nabla^{|{P}_1|}\phi^{-1}_{|y}( \sym_{ i \in {P}_{1} } v_{i} ), 
            \nabla^{|{P}_2|}\phi^{-1}_{|y}( \sym_{ i \in {P}_{2} } v_{i} ), 
            \ldots, 
            \nabla^{|{P}_k|}\phi^{-1}_{|y}( \sym_{ i \in {P}_{k} } v_{i} )
        \right)
        .
    \end{align*}
    Note that $\calP(m,1) = \{\{\indexset{1}{m}\}\}$ has exactly one member. 
    This observation leads to 
    \begin{align*}
        &
        -
        \nabla \phi_{|\phi^{-1}(y)}
        \nabla^{m}\phi^{-1}_{|y}(v_1,\ldots,v_m)
        \\&\quad 
        =
        \sum_{ \substack{ 2 \leq k \leq m \\ \calP \in \calP(m,k) } }
        \nabla^{k} \phi_{|\phi^{-1}(y)}
        \left( 
            \nabla^{|{P}_1|}\phi^{-1}_{|y}( \sym_{ i \in {P}_{1} } v_{i} ), 
            \nabla^{|{P}_2|}\phi^{-1}_{|y}( \sym_{ i \in {P}_{2} } v_{i} ), 
            \ldots, 
            \nabla^{|{P}_k|}\phi^{-1}_{|y}( \sym_{ i \in {P}_{k} } v_{i} )
        \right)
        .
    \end{align*}
    The right-hand side of that identity only includes the derivatives of $\phi^{-1}$ up to order $m-1$. 
    According to the inverse function theorem, 
    the inverse of $\nabla \phi_{|\phi^{-1}(y)}$ equals the differential $\nabla \phi^{-1}_{y}$.
    We obtain 
    \begin{align}\label{math:explicitformulaforinverse}
        \begin{aligned}
        &
        \nabla^{m}\phi^{-1}_{|y}(v_1,\ldots,v_m)
        \\&\quad 
        =
        -
        \nabla \phi^{-1}_{|y}
        \sum_{ \substack{ 2 \leq k \leq m \\ \calP \in \calP(m,k) } }
        \nabla^{k} \phi_{|\phi^{-1}(y)}
        \left( 
            \nabla^{|{P}_1|}\phi^{-1}_{|y}( \sym_{ i \in {P}_{1} } v_{i} ), 
\ldots, 
            \nabla^{|{P}_k|}\phi^{-1}_{|y}( \sym_{ i \in {P}_{k} } v_{i} )
        \right)
        .
        \end{aligned}
    \end{align}
Equation~\eqref{math:explicitformulaforinverse} provides a recursive formula 
    that produces successively higher derivatives of the inverse mapping. 
\end{example}

Proposition~\ref{prop:higherorderchainrule} provides a general formula for the higher derivatives of compositions,
including multivariate vector-valued functions.
Next, 
we address the special case of univariate functions.
On that occasion, we also introduce Bell polynomials.

We consider the special case of a composition $f \circ \phi$
of univariate functions $f$ and $\phi$.
In that case, the linearity of the differential means that all directional derivatives of $\phi$ agree up to scaling. 
We can write the higher-order chain rule as 
\begin{align}\label{math:higherorderchainrule:univariate}
    \nabla^{m} \left( f \circ \phi \right)_{|x}
    = 
    \sum_{ \substack{ 1 \leq k \leq m \\ \calP \in \calP(m,k) } }
    \nabla^{|\calP|}f_{|\phi(x)}
    \left( 
    \nabla^{|{P}_1|} \phi_{|x},
    \nabla^{|{P}_2|} \phi_{|x},
    \ldots 
    \nabla^{|{P}_k|} \phi_{|x} 
    \right)
    .
\end{align}
Some of the summands in that expression are redundant, since directional derivatives are all colinear in the univariate case.
We therefore simplify the expression. 

For every ${P} \in \calP(m,k)$, we consider the cardinalities of its members ${j}_{1}, {j}_{2}, \ldots, {j}_{k}$ 
in increasing order; none of them are zero. Members of $\calP(m,k)$ that give the same sequence of cardinalities lead to the same summand in~\eqref{math:higherorderchainrule:univariate}. 
For every such cardinality signature ${j}_{1}, {j}_{2}, \ldots, {j}_{k}$ of some member of $\calP(m,k)$, 
let $b_{i}$ indicate how many of these cardinalities equal $i$. 
Thus we obtain another sequence $b_{1},\ldots,b_m \in \bbN_{0}$, 
which by construction must satisfy
\begin{gather*}
    b_1 + b_2 + \cdots + b_m = k,
    \quad 
    1 b_1 + 2 b_2 + \cdots + m b_m = m,
\end{gather*}
In fact, this procedure produces exactly all the non-negative tuples $(b_1,\ldots,b_m)$ whose entries satisfy these summation identities, 
and every increasing cardinality sequence ${j}_{1}, {j}_{2}, \ldots, {j}_{k}$ with $1 \leq k \leq m$
can be reconstructed from the sequence $b_{1},\ldots,b_m \in \bbN_{0}$.

We now ask how many summands in~\eqref{math:higherorderchainrule:univariate} lead to the same cardinality signature ${j}_{1}, {j}_{2}, \ldots, {j}_{k}$, because those summands are like terms. 
Intuitively, given $k$ buckets that are only distinguishable by their sizes ${j}_1 \leq \ldots \leq {j}_{k}$, 
the number of ways of distributing the $m$ derivatives into each bucket is $m!$ 
divided by ${j}_1! {j}_2! \cdots {j}_m!$ and by $b_1! b_2! \cdots b_m!$. 
The first divisor expresses that derivatives commute 
and the second divisor expresses that the blocks of equal size are unordered. 
This insight leads to an expression using multinomial coefficients: 
\begin{align*}
    \nabla^{m}\left( f \circ \phi \right)_{|x}
    =
    \sum_{k=1}^{m}
    \sum_{ \substack{ 
        {j}_1, \ldots, {j}_{k} \in \bbN 
        \\ 
        {j}_1 \leq \cdots \leq {j}_{k} 
        \\ 
        {j}_1 + \cdots + {j}_{k} = m
    } }
{ m \choose {j}_1, \ldots, {j}_k }
\frac{1}{ b_1! b_2! \cdots b_m! }
\nabla^{k} f_{|\phi(x)}
    ( \nabla^{{j}_1} \phi_{|x}, \nabla^{{j}_2} \phi_{|x}, \ldots, \nabla^{{j}_k} \phi_{|x} )
    ,
\end{align*}
where the numbers $b_1,\ldots,b_m$ are defined for each $j_1,\ldots,j_k$ as outlined above. 
Note that $j_k \leq m-k+1$ in each summand above.
However, it is more common to write the sum in terms of the numbers $b_1,\ldots,b_m$ discussed above. 
This leads to the \emph{Fa\`a~di~Bruno formula}:
\begin{align}\begin{split}\label{math:faadibrunoformula}
    \nabla^{m} (f \circ \phi)_{|x}
    &
    =
    \sum_{k=1}^{m}
    \sum_{ \substack{
        b = (b_1, b_2, \ldots, b_m) \in \bbN_{0}^{m}
        \\
        b_1 + 2 b_2 + \cdots + m b_m = m
        \\
        b_1 +   b_2 + \cdots +   b_m = k
        } 
    }
    \frac{m!}{ b_1! b_2! \cdots b_m! }
     \nabla^{(k)} f_{|\phi(x)}
\prod_{j=1}^{m-k+1}
    \left( \frac{ \nabla^{j} \phi_{|x} }{ j!} \right)^{b_j}
    \\&
    =
    \sum_{ \substack{
        b = (b_1, b_2, \ldots, b_m) \in \bbN_{0}^{m}
        \\
        b_1 + 2 b_2 + \cdots + m b_m = m } 
    }
    \frac{m!}{ b_1! b_2! \cdots b_m! }
    \nabla^{(b_1+b_2+\cdots+b_m)} f_{|\phi(x)}
\prod_{j=1}^{m}
    \left( \frac{ \nabla^{j} \phi_{|x} }{ j!} \right)^{b_j}
    .
\end{split}\end{align}

\begin{remark}
    The Fa\`{a}~di~Bruno formula~\eqref{math:faadibrunoformula}
    can be interpreted as a common specialization of the higher-order chain rule when composing two univariate functions. 
    That specialization reduces the number of summands. 
    But that specialization is not constitutive for the chain rule of higher order. 
    Its underlying structure is elucidated by formulating the generalized chain rule via tensor analysis in Proposition~\ref{prop:higherorderchainrule}.
    We also remark that the case of a multivariate outer function ${f}$ and an univariate inner function $\phi$ was treated first by Most~\cite{most1871ueber}.
\end{remark}

We reformulate the Fa\`{a}~di~Bruno formula concisely in terms of Bell polynomials. 
That formalism is particularly common in combinatorics. 
To make the notation more concise, we first introduce what we call the \emph{Scherk indices},
\begin{align}
    \Scherkset(m,k)
    = 
    \left\{
        b = (b_1, b_2, \ldots, b_m) \in \bbN_{0}^{m}
        \suchthat* 
\sum_{i=1}^{m} b_i = k,
\;
\sum_{i=1}^{m} i b_i = m
\right\},
\end{align}
and the corresponding \emph{Scherk coefficients},
\begin{align}
    \Scherkcof^b
    = 
    \frac{m!}{ b_1! b_2! \cdots b_m! (1!)^{b_1} (2!)^{b_2} \cdots (m!)^{b_m}}
    ,
    \quad 
    b \in \Scherkset(m,k)
    .
\end{align}
The Scherk coefficient\footnote{It seems that Heinrich Ferdinand Scherk's 1823 dissertation is the first known occurrence of these coefficients in writing, including the fact that the coefficients multiplied to $\nabla^k f$ only involve derivatives up to the index $m-k+1$.} $\Scherkcof^b$ counts in how many ways we can distribute $m$ elements into $b_1$ unordered buckets of size $1$, $b_2$ buckets of size $2$, etc.,
where the buckets do not order their content, and buckets can be reordered arbitrarily. 

The Bell polynomials arise in the context of the higher-order chain formula.
We define the partial Bell polynomials $B_{m,k}$ by 
\begin{gather}\label{math:bellpolynomial:partial}
    B_{m,k}(x_1,\ldots,x_k)
    =
    \sum_{ b \in \Scherkset(m,k) }
    \Scherkcof^b
    \prod_{j=1}^{m-k+1}
    (x_j)^{b_j} ,
\end{gather}
and the full Bell polynomials $B_{m}$ as
\begin{gather}\label{math:bellpolynomial:full}
B_{m}( x^{[1]}_{1}, x^{[1]}_{2}, \ldots, x^{[1]}_{m}, x^{[0]}_{1}, x^{[0]}_{2}, \ldots, x^{[0]}_{m} ) 
=
    \sum_{ 1 \leq k \leq m }
    x^{[1]}_{k}
    B_{m,k}\left(
        x^{[0]}_{1}, x^{[0]}_{2}, \ldots, x^{[0]}_{m-k+1} 
    \right)
.
\end{gather}
Note that these polynomials have non-negative coefficients 
and that $B_{m,k}$ is homogeneous of degree $k$. 
These polynomials frequently appear in subsequent discussions. 
For example, we can rewrite the $m$-th derivative of the composition $f \circ \phi$
of two univariate functions $f$ and $\phi$ as 
\begin{align}\label{math:faadibruno:bellpolynomial}
    \begin{split}
    \nabla^{m} (f \circ \phi)_{|x}
    &= 
    \sum_{ 1 \leq k \leq m }
    \nabla^{k} f_{|\phi(x)}
    B_{m,k}( \nabla \phi_{|x}, \nabla^{2} \phi_{|x}, \ldots, \nabla^{m-k+1} \phi_{|x} )
    \\&=     
    B_{m}\left(
        \nabla^{1} f_{|\phi(x)}, \nabla^{2} f_{|\phi(x)}, \ldots, \nabla^{m} f_{|\phi(x)},
        \nabla \phi_{|x}, \nabla^{2} \phi_{|x}, \ldots, \nabla^{m} \phi_{|x} 
    \right)
    .
    \end{split}
\end{align}

\begin{remark}
The values $B_{m,k}(1,\ldots,1)$, 
which are the sums of Scherk coefficients corresponding to the indices in $\Scherkset(m,k)$,
are also known as the \emph{Stirling numbers of second kind}.
These count the possible ways to distribute $m$ distinguishable items into $k > 0$ non-empty sets.
Their sum over $k$ is the $m$-th Bell number,
which counts the number of partitions of $m$ distinguishable items into non-empty sets. 
\end{remark}

\begin{example} We resume the discussion of inverse functions that we started in Example~\ref{example:inversefunction:multivariate}. 
    We impose the additional assumption that $\OmegaS, \OmegaT \subseteq \bbR$ are subsets of the real line,
    so $\phi$ and its inverse are univariate functions. 
We see that 
    \begin{align*}
        0
        =
        \nabla^{m} (\phi \circ \phi^{-1})_{|y}
        = 
        \sum_{ 1 \leq k \leq m }
        \nabla^{k} \phi_{|\phi^{-1}(y)}
        B_{m,k}( \nabla \phi^{-1}_{|y}, \nabla^{2} \phi^{-1}_{|y}, \ldots, \nabla^{m-k+1} \phi^{-1}_{|y} )
        .
    \end{align*}
    We extract the term corresponding to $k=1$, and multiply with the inverse of $\nabla\phi_{\phi^{-1}(y)}$. An application of the inverse function theorem leads to 
    \begin{align*}
        \nabla^{m} \phi^{-1}_{|y}
        &
        = 
        -
        \nabla \phi^{-1}_{|y}
        \sum_{ 2 \leq k \leq m }
        \nabla^{k} \phi_{|\phi^{-1}(y)}
        B_{m,k}( \nabla \phi^{-1}_{|y}, \nabla^{2} \phi^{-1}_{|y}, \ldots, \nabla^{m-k+1} \phi^{-1}_{|y} )
        \\&
        = 
        -
        \nabla \phi^{-1}_{|y}
        \sum_{ \substack{
            2 \leq k \leq m
            \\
            (b_1, b_2, \ldots, b_m) \in \bbN_{0}^{m}
            \\
            b_1 +   b_2 + \cdots +   b_m = k
            \\
            b_1 + 2 b_2 + \cdots + m b_m = m
        } }
        \nabla^{k} \phi_{|\phi^{-1}(y)}
        \frac{ 
            m! 
        }{
            b_1! b_2! \cdots b_m!
        }
        \prod_{j=1}^{m-k+1}
        \left( \frac{ \nabla^{j} \phi^{-1}(y) }{ j!} \right)^{b_j}
    \end{align*}
    as a recursively formula for $m$-th of the inverse of any univariate function. 
\end{example}

\section{Higher-order chain rule for multicomposite functions}\label{sec:multicomposite}

Having discussed the higher derivatives of a composition of two functions, 
we now consider the higher derivatives of a composition of multiple functions. 
While we have formalized the generalized chain rule for composed functions using partitions of index sets, 
we formalize the generalized chain rule for multicomposite functions using nested partitions of index sets. 
\\

Throughout this section, we assume that 
$X_{0}, X_{1}, \ldots, X_{l}, X_{l+1}$ are Banach spaces 
and that we have open subsets 
\begin{align*}
    \OmegaS_{0} \subseteq X_{0}, 
    \quad 
    \OmegaS_{1} \subseteq X_{1},
    \quad
    \ldots,
    \quad
    \OmegaS_{l} \subseteq X_{l},
    \quad 
    \OmegaS_{l+1} \subseteq X_{l+1}
    .
\end{align*}
The multivariate higher-order chain rule concerns compositions $f_{l} \circ f_{l-1} \circ \cdots \circ f_1 \circ f_0$ of functions 
\begin{align*}
    f_i : \OmegaS_{i} \subseteq X_{i} \rightarrow \OmegaS_{i+1} \subseteq X_{i+1}.
\end{align*}
The full composition $f_{l} \circ \cdots \circ f_0$ thus maps from $\OmegaS_{0}$ into $\OmegaS_{l+1}$. 
We take successive derivatives in $m$ vectors $v_1,v_2,\ldots,v_m \in X_{0}$,
and so our goal is finding a formula for 
\begin{align*}
    \nabla^{m}\left( f_{l} \circ \cdots \circ f_1 \circ f_0 \right)_{|x}
    \left( v_1, v_2, \ldots, v_m \right)
    .
\end{align*}
The $m$-th derivative of the full composition has $m$ covariant indices. 
\\

For the discussion of the chain rule, we introduce more notation. 
When $x \in \OmegaS_{0}$, we set $x_{0} := x$
and write $x_{i} := ( f_{i} \circ \cdots \circ f_{0} )(x)$ for any $0 \leq i \leq l$.
In other words, we recursively define 
\begin{align*}
    x_{l+1} := f_{l  }(x_{l  })
    ,\quad 
    x_{l  } := f_{l-1}(x_{l-1})
    ,\quad 
    \ldots 
    \quad 
    x_{2  } := f_{1  }(x_{1  })
    ,\quad 
    x_{1  } := f_{0  }(x_{0  })
    .
\end{align*}
When $A \subseteq \indexset{1}{m}$, we set 
\begin{align}\label{math:recursivenotation:base}
    \nabla_{A} f_{0|x_{0}}
    :=
    \nabla^{|A|}f_{0|x_{0}}\left( \vee_{i \in A} v_{i} \right).
\end{align}
More generally, we make the following recursive definition. 
When $l \in \bbN_{0}$ and $\calP \in \calP^{l}(A)$, we define 
\begin{align}\label{math:recursivenotation:step}
    \nabla_{\calP} f_{l+1|x_{l+1}}
    :=
    \nabla^{|\calP|}f_{l+1|x_{l+1}}\left( \vee_{{P} \in \calP} \nabla_{P} f_{l|x_{l}} \right).
\end{align}
Here, the symmetrization over the members of $\calP$ is only taken over the contravariant indices. 
\\

The notation is now sufficient to formulate the higher-order chain rule. 

\begin{proposition}\label{prop:multicompositechainrule}
    Under assumptions of this section, 
    \begin{align*}
        \nabla^{m}\left( f_{l} \circ \cdots \circ f_1 \circ f_0 \right)_{|x}
        \left( v_1, v_2, \ldots, v_m \right)
        = 
        \sum_{ \calP \in \calP^{l}(m) }
        \nabla_{\calP} f_{l|x_{l}}
.
    \end{align*}
\end{proposition}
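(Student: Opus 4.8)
The plan is an induction on the number $l$ of compositions, using Proposition~\ref{prop:higherorderchainrule} to peel off the outermost map and then a multilinearity expansion to re-expose the nested‑partition structure. It is cleanest to prove at once the version in which $\indexset{1}{m}$ is replaced by an arbitrary finite set $A$ and the directions $v_{1},\dots,v_{m}$ by a family $(v_{i})_{i \in A}$ with $m = |A|$: this is no loss of generality by relabeling, and it is precisely what one needs in order to apply the inductive hypothesis to the sub-chains coming from the individual blocks of a partition. Thus, assuming each $f_{i}$ to be $m$-times differentiable at $x_{i}$, I claim that for every $l \geq 0$ the composition $f_{l} \circ \cdots \circ f_{0}$ is $m$-times differentiable at $x$ and
\begin{align*}
    \nabla^{m}\big( f_{l} \circ \cdots \circ f_{0} \big)_{|x}\Big( \vee_{i \in A} v_{i} \Big)
    =
    \sum_{ \calP \in \calP^{l}(A) } \nabla_{\calP} f_{l|x_{l}}.
\end{align*}
The case $l = 0$ is just~\eqref{math:recursivenotation:base} together with the fact that $\calP^{0}(A)$ consists of the single one-leaf nested partition, whose associated quantity is $\nabla_{A} f_{0|x_{0}} = \nabla^{m} f_{0|x}( \vee_{i \in A} v_{i} )$; taking $A = \indexset{1}{m}$ then recovers the proposition as the endpoint of the induction.

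For the induction step, suppose the claim holds for chains of $l$ maps (over every finite index set) and consider a chain of $l+1$ maps. Put $g := f_{l-1} \circ \cdots \circ f_{0}$, so that $f_{l} \circ \cdots \circ f_{0} = f_{l} \circ g$ and $g(x) = x_{l}$; by the inductive hypothesis $g$ is $m$-times differentiable at $x$, and $f_{l}$ is $m$-times differentiable at $x_{l}$, so Proposition~\ref{prop:higherorderchainrule} applies to the two-fold composition $f_{l} \circ g$. Grouping the partitions appearing there by their number $|\calP|$ of blocks and using that $\nabla^{|\calP|} f_{l|x_{l}}$ is symmetric, it gives
\begin{align*}
    \nabla^{m}( f_{l} \circ g )_{|x}\Big( \vee_{i \in A} v_{i} \Big)
    =
    \sum_{ \calP \in \calP(A) }
    \nabla^{|\calP|} f_{l|x_{l}}\Big( \vee_{P \in \calP} \nabla^{|P|} g_{|x}\big( \vee_{i \in P} v_{i} \big) \Big).
\end{align*}
For each block $P \in \calP$ the inductive hypothesis, applied to the $l$-chain $g$ with the subfamily $(v_{i})_{i \in P}$, yields $\nabla^{|P|} g_{|x}( \vee_{i \in P} v_{i} ) = \sum_{\calQ \in \calP^{l-1}(P)} \nabla_{\calQ} f_{l-1|x_{l-1}}$. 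Substituting these identities and expanding the symmetric product (which is multilinear and is taken in the contravariant indices only) turns the right-hand side into
\begin{align*}
    \sum_{ \calP \in \calP(A) } \; \sum_{ (\calQ_{P})_{P \in \calP} }
    \nabla^{|\calP|} f_{l|x_{l}}\Big( \vee_{P \in \calP} \nabla_{\calQ_{P}} f_{l-1|x_{l-1}} \Big),
\end{align*}
where the inner sum runs over all assignments $P \mapsto \calQ_{P} \in \calP^{l-1}(P)$, one for each $P \in \calP$.

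It remains to recognize the index set of this double sum as $\calP^{l}(A)$. By the very recursion defining nested partitions, to give an element $\calD \in \calP^{l}(A)$ is the same as to give a partition $\calP = \{P_{1},\dots,P_{k}\} \in \calP(A)$ together with an element $\calQ_{P_{j}} \in \calP^{l-1}(P_{j})$ for each block; the corresponding element is $\calD = \{\calQ_{P_{1}},\dots,\calQ_{P_{k}}\}$, which has $|\calD| = |\calP|$ (the ground sets $P_{j}$ being pairwise disjoint and nonempty), and~\eqref{math:recursivenotation:step}, read at level $l-1$, says exactly that $\nabla_{\calD} f_{l|x_{l}} = \nabla^{|\calP|} f_{l|x_{l}}\big( \vee_{P \in \calP} \nabla_{\calQ_{P}} f_{l-1|x_{l-1}} \big)$. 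Hence the double sum equals $\sum_{\calD \in \calP^{l}(A)} \nabla_{\calD} f_{l|x_{l}}$, which is the claim, and the induction closes. The only point that genuinely requires care is this last bijective bookkeeping between $\calP(A)$-indexed families $(\calQ_{P})$ and the members of $\calP^{l}(A)$; everything else reduces to the two-map chain rule of Proposition~\ref{prop:higherorderchainrule} and the multilinearity of the symmetric product $\vee$ and of the outer derivative $\nabla^{|\calP|} f_{l|x_{l}}$.
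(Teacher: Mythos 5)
Your proof is correct and follows essentially the same route as the paper's: induction on $l$, peeling off the outermost map with Proposition~\ref{prop:higherorderchainrule}, inserting the inductive hypothesis on each block, expanding by multilinearity, and re-indexing the resulting double sum over $\calP(A)$ and block-wise nested partitions as a single sum over $\calP^{l}(A)$. Your explicit formulation over an arbitrary finite index set $A$ (needed to apply the hypothesis to the subfamilies $(v_i)_{i\in P}$) and the spelled-out bijection onto $\calP^{l}(A)$ merely make precise steps the paper leaves implicit.
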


\begin{proof}
 This is proven by an induction argument over $l$.
 In the base $l=1$, this is Proposition~\ref{prop:higherorderchainrule} again. 
 So let us assume that the statement is true for some positive integer $l \in \bbN$.
 Write $\phi = f_{l} \circ \cdots \circ f_1 \circ f_0$.
 Proposition~\ref{prop:higherorderchainrule} implies 
 \begin{align*}
    &
    \nabla^{m}\left( f_{l+1} \circ \phi \right)_{|x}
    \left( v_1, v_2, \ldots, v_m \right)
    \\&\quad
    =
    \sum_{ \calP \in \calP(m) }
    \nabla^{|\calP|} f_{|\phi(x)}
    \left( 
        \nabla^{|{P}_1|}\phi_{|x}( \vee_{ i \in {P}_{1} } v_{i} ), 
        \nabla^{|{P}_2|}\phi_{|x}( \vee_{ i \in {P}_{2} } v_{i} ), 
        \ldots, 
        \nabla^{|{P}_{|\calP|}|}\phi_{|x}( \vee_{ i \in {P}_{|B|} } v_{i} )
    \right)
    .
 \end{align*}    
 By the induction assumption and with the notation~\eqref{math:recursivenotation:base} and~\eqref{math:recursivenotation:step},
 we have 
 \begin{align*}
    \nabla^{|{P}_j|}\phi_{|x}( \vee_{ i \in {P}_{j} } v_{i} )
    =
    \sum_{ \calD \in \calP^{l}({P}_j) }
    \nabla_{\calD} f_{l|x_{l}}
\end{align*}
 in the summands above. Substituting these sums, we arrive at 
 \begin{align*}
    &
    \nabla^{m}\left( f_{l+1} \circ \phi \right)_{|x}
    \left( v_1, v_2, \ldots, v_m \right)
    \\&\quad
    =
    \sum_{ k=1 }^{\infty}
    \sum_{ \substack{ \calP \in \calP(m) \\ |\calP|=k }}
    \sum_{ \calD_{1} \in \calP^{l}({P}_1) }
    \cdots 
    \sum_{ \calD_{k} \in \calP^{l}({P}_k) }
    \nabla^{|\calP|} f_{|\phi(x)}
    \left( 
        \nabla_{\calD_{1}} f_{l|x_{l}}, 
        \nabla_{\calD_{2}} f_{l|x_{l}}, 
        \ldots, 
        \nabla_{\calD_{k}} f_{l|x_{l}} 
    \right)
    \\&\quad
    =
    \sum_{ \substack{ \calP \in \calP^{l+1}(m) }}
    \nabla^{|\calP|} f_{|\phi(x)}
    \left( 
        \vee_{ \calD \in \calP }
        \nabla_{\calD} f_{l|x_{l}} 
    \right)
    .
 \end{align*}    
 This has the form as in the statement of this proposition. 
 The proof is complete. 
\end{proof}

The next step is rewriting this higher-order chain formula for compositions 
in terms of tensor contractions. 
Towards that end, we once more introduce additional definitions. 
Suppose that $\calP \in \calP^{l}(m)$.
We define 
\begin{align}\label{math:specialnotation:tensors}
    \begin{split}
    D^{l,\calP} f_{l|x_{l}}
    &:=
    \nabla^{|\calP|} f_{l|x_{l}}
    ,
    \\
    D^{l-1,\calP} f_{l-1|x_{l-1}}
    &:=
    \bigotimes_{ {P}_{l-1} \in \calP   } \nabla^{|{P}_{l-1}|} f_{l-1|x_{l-1}}
    ,
    \\
    D^{l-2,\calP} f_{l-2|x_{l-2}}
    &:=
    \bigotimes_{ {P}_{l-1} \in \calP   }
    \bigotimes_{ {P}_{l-2} \in {P}_{l-1} }
    \nabla^{|{P}_{l-2}|} f_{l-2|x_{l-2}}
    ,
    \\
    &\quad\vdots 
    \\
    D^{1,\calP} f_{1|x_1}
    &:=
    \bigotimes_{ {P}_{l-1} \in \calP   }
    \bigotimes_{ {P}_{l-2} \in {P}_{l-1} }
    \cdots 
    \bigotimes_{ {P}_{1  } \in {P}_{2  } }
    \nabla^{|{P}_{1  }|} f_{1|x_1}
    ,
    \\
    D^{0,\calP} f_{0|x_0}
    &:=
    \bigotimes_{ {P}_{l-1} \in \calP   }
    \bigotimes_{ {P}_{l-2} \in {P}_{l-1} }
    \cdots 
    \bigotimes_{ {P}_{1  } \in {P}_{2  } }
    \bigotimes_{ B       \in {P}_{1} }
    \nabla^{|B|} f_{0|x_0}
    .
    \end{split}
\end{align}
In an analogous manner, 
given $v_1,\ldots,v_m \in X_{0}$, we define 
\begin{align}\label{math:specialnotation:vectors}
    \mu^{\calP}
    :=
    \bigotimes_{ {P}_{l-1} \in \calP   }
    \bigotimes_{ {P}_{l-2} \in {P}_{l-1} }
    \cdots 
    \bigotimes_{ {P}_{  0} \in {P}_{  1} }
    \bigotimes_{ i       \in {P}_{  0} }
    v_{i}
    .
\end{align}
Notice that these definitions rely on the arbitrary but fixed order of the sets over whose members we sum. 

\begin{proposition}\label{prop:multicompositechainrule:alternative}
    Under the assumptions of Proposition~\ref{prop:multicompositechainrule}
    and using the notation~\eqref{math:specialnotation:tensors}
    and~\eqref{math:specialnotation:vectors},
    we have 
    \begin{align*}
        &
        \nabla^{m}
        \left( f_{l} \circ \cdots \circ f_1 \circ f_0 \right)_{|x}
        \left( v_1, v_2, \ldots, v_m \right)
        \\&\quad
        = 
        \sum_{ \calP \in \calP^{l} }
        D^{l  ,\calP} f_{l  |x_{l  }}
        \contract
        D^{l-1,\calP} f_{l-1|x_{l-1}}
        \contract
        \cdots
        \contract
        D^{  1,\calP} f_{  1|x_{  1}}
        \contract
        D^{  0,\calP} f_{  0|x_{  0}}
        \contract
        \mu_{\calP}
        \\&\quad
        = 
        \sum_{ {P}_{l} \in \calP^{l} }
        \nabla^{| {P}_{l} |} f_{l  |x_{l  }}
        \contract
        \left( 
            \bigotimes_{ {P}_{l-1} \in {P}_{l} }
            \nabla^{ |{P}_{l-1}| } f_{l-1|x_{l-1}}
            \contract
            \left( 
                \bigotimes_{ {P}_{l-2} \in {P}_{l-1} }
                \nabla^{ |{P}_{l-2}| } f_{l-2|x_{l-2}}
                \contract
                \cdots 
                    \left( 
                        \bigotimes_{ {P}_{0} \in {P}_{1} }
                        \nabla^{ |{P}_{0}| } f_{0|x_{0}}
                        \contract
                        \mu_{{P}_{0}}
                    \right)
                \cdots 
            \right)
        \right) 
        .
    \end{align*}
\end{proposition}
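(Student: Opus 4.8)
The plan is to factor the statement through a single identity attached to each hierarchy $\calP$ and then quote Proposition~\ref{prop:multicompositechainrule} for the combinatorics. Concretely, I would prove by induction on $l$ that, for every $A\subseteq\indexset{1}{m}$ and every $\calP\in\calP^{l}(A)$,
\begin{align*}
  \nabla_{\calP} f_{l|x_{l}}
  =
  D^{l,\calP} f_{l|x_{l}}
  \contract D^{l-1,\calP} f_{l-1|x_{l-1}}
  \contract \cdots
  \contract D^{0,\calP} f_{0|x_{0}}
  \contract \mu^{\calP}
  ,
\end{align*}
where the tensors $D^{i,\calP} f_{i|x_{i}}$ and $\mu^{\calP}$ are as in~\eqref{math:specialnotation:tensors}--\eqref{math:specialnotation:vectors}, but formed over the index set $A$ and an arbitrary fixed order on the blocks of $\calP$ at every level. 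Granting this, the first display of the proposition is the case $A=\indexset{1}{m}$ summed over $\calP\in\calP^{l}(m)$, combined with Proposition~\ref{prop:multicompositechainrule}; the sum passes through the contractions because $\contract$ is bilinear.

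For the induction, the base case $l=0$ is just~\eqref{math:recursivenotation:base}: since $\nabla^{|A|}f_{0|x_{0}}$ only depends on the symmetric part of its argument, feeding it $\vee_{i\in A}v_{i}$ is the same as contracting it against $\bigotimes_{i\in A}v_{i}=\mu^{A}$. For the step, take $\calP=\{\calD_{1},\ldots,\calD_{k}\}\in\calP^{l}(A)$, where $\calD_{j}\in\calP^{l-1}(P_{j})$ for some partition $\{P_{1},\ldots,P_{k}\}$ of $A$. Starting from~\eqref{math:recursivenotation:step}, I would: (i) replace $\vee_{j}\nabla_{\calD_{j}}f_{l-1|x_{l-1}}$ by $\bigotimes_{j}\nabla_{\calD_{j}}f_{l-1|x_{l-1}}$, which is legitimate because $\nabla^{k}f_{l|x_{l}}$ is symmetric in its covariant slots; (ii) insert the induction hypothesis for each $\nabla_{\calD_{j}}f_{l-1|x_{l-1}}$; and (iii) use the compatibility of the tensor product of linear maps with composition, $(\alpha_{1}\otimes\alpha_{2})\contract(\beta_{1}\otimes\beta_{2})=(\alpha_{1}\contract\beta_{1})\otimes(\alpha_{2}\contract\beta_{2})$ (checked on simple tensors and extended by continuity), together with the associativity of $\contract$, to gather all the factors $\nabla^{\bullet}f_{i|x_{i}}$ of a fixed level $i$ into the single tensor $D^{i,\calP}f_{i|x_{i}}$ and all the $v_{i}$'s into $\mu^{\calP}$. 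Reading off the resulting chain of contractions gives the claimed identity, and hence the first display.

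The nested-bracket form in the second display is the same computation stopped earlier: instead of distributing every $\bigotimes$ past every $\contract$, one leaves the parentheses in place, which is exactly the shape obtained by unrolling the recursion~\eqref{math:recursivenotation:step} one level at a time (equivalently, by re-bracketing the flat chain from the innermost factor $\bigotimes_{P_{0}\in P_{1}}\nabla^{|P_{0}|}f_{0|x_{0}}\contract\mu_{P_{0}}$ outward, using associativity of $\contract$ and the compatibility law in reverse). I expect the one genuine difficulty to be bookkeeping rather than ideas: one must fix an order on the blocks at each level of each hierarchy once and for all and use it consistently in~\eqref{math:specialnotation:tensors}, \eqref{math:specialnotation:vectors}, and in step~(iii), so that each covariant index of a factor $\bigotimes\nabla^{\bullet}f_{i}$ is contracted against the contravariant index of the correct factor one level below; the coordinate-free notation suppresses this matching, and making it precise is where the substance of the step sits. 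That the final answer does not depend on the chosen orders follows from the symmetry of each $\nabla^{\bullet}f_{i|x_{i}}$: permuting the blocks at a fixed level permutes the covariant slots being fed and the contravariant legs feeding in by the same permutation, and the two cancel.
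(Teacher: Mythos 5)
Your proposal is correct and follows essentially the same route as the paper: an induction on $l$ that unrolls the recursive definitions \eqref{math:recursivenotation:base}--\eqref{math:recursivenotation:step}, replaces symmetric products by tensor products via the symmetry of each $\nabla^{k}f_{i|x_{i}}$, and regroups the factors level by level with the interchange law $(\alpha_1\otimes\alpha_2)\contract(\beta_1\otimes\beta_2)=(\alpha_1\contract\beta_1)\otimes(\alpha_2\contract\beta_2)$. Your only organizational difference is to prove the per-hierarchy identity $\nabla_{\calP}f_{l|x_{l}}=D^{l,\calP}f_{l|x_{l}}\contract\cdots\contract D^{0,\calP}f_{0|x_{0}}\contract\mu^{\calP}$ first and then sum via Proposition~\ref{prop:multicompositechainrule}, which merely makes explicit the regrouping and order-bookkeeping that the paper's terser induction leaves implicit.
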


\begin{proof}
    We use induction. 
    The statement is true for $l=1$.
    For the induction step, we assume the statement holds when for some number $l$. 
    As in the preceding proof,
    \begin{align*}
        \nabla^{m}\left( f_{l+1} \circ \phi \right)_{|x}
        \left( v_1, v_2, \ldots, v_m \right)
        &
        =
        \sum_{ \substack{ \calP \in \calP^{l+1}(m) }}
        \nabla^{|\calP|} f_{|\phi(x)}
        \left( 
            \vee_{ \calD \in \calP }
            \nabla_{\calD} f_{l|x_{l}} 
        \right)
        \\&
        =
        \sum_{ \substack{ \calP \in \calP^{l+1}(m) }}
        \nabla^{|\calP|} f_{|\phi(x)}
        \contract 
        \bigotimes_{ \calD \in \calP }
        \nabla_{\calD} f_{l|x_{l}} 
        .
    \end{align*}
    This completes the proof.
\end{proof}

We once more consider the special case of univariate functions
and specialize the general higher-order chain rule for this special case. 
Suppose that $f_{l} \circ f_{l-1} \circ \cdots \circ f_{1} \circ f_{0}$
is the composition of univariate real-valued functions $f_{i} : \OmegaS_{i} \rightarrow \OmegaS_{i+1}$. 
We define $B^{[1]}_{m,k} = B_{m,k}$ and $B^{[1]}_{m} = B_{m}$, so that 
\begin{align*}
    \nabla^{m} (f_{1} \circ f_{0})
    _{|x_0}
    &= 
    \sum_{ 1 \leq k \leq m }
    \nabla^{k} f_{1|x_1}
    B^{[1]}_{m,k}( \nabla f_{0|x}, \ldots, \nabla^{m-k+1} f_{0|x} )
    \\&=  
    B^{[1]}_{m}\left(
        \nabla^{1} f_{1|x_1}, \ldots, \nabla^{m} f_{1|x_1},
        \nabla f_{0|x}, \ldots, \nabla^{m} f_{0|x} 
    \right)
    .
\end{align*}
We obviously have the recursive relation 
\begin{align*}
    &
    \nabla^{m} (f_{l+1} \circ f_{l} \circ \cdots \circ f_{0})
    _{|x_0}
    \\&\quad
    = 
    \sum_{ 1 \leq k \leq m }
    \nabla^{k} f_{l+1|x_{l+1}}
    B_{m,k}\left(
        \nabla (f_{l} \circ \cdots \circ f_{0})_{|x_0}, \ldots, \nabla^{m-k+1} (f_{l} \circ \cdots \circ f_{0})_{|x_0}
    \right)
    \\&\quad
    = 
    B_{m}\left(
        \nabla^{1} f_{l+1|x_{l+1}}, \ldots, \nabla^{m} f_{l+1|x_{l+1}},
        \nabla (f_{l} \circ \cdots \circ f_{0})_{|x_0}, \ldots, \nabla^{m} (f_{l} \circ \cdots \circ f_{0})_{|x_0} 
    \right)
    .
\end{align*}
This leads to a discussion of higher-level Bell polynomials. 
We recursively define the higher-level partial Bell polynomials $B^{[l+1]}_{m,k}$ and Bell polynomials $B^{[l+1]}_{m}$ by setting 
\begin{align*}
B^{[l+1]}_{m,k}( x^{[l]}_{1}, \ldots, x^{[l]}_{m-k+1}, \ldots, x^{[0]}_{1}, \ldots, x^{[0]}_{m-k+1} )
:= 
    B_{m,k}\big(
        &
        B^{[l]}_{1}    ( x^{[l]}_{1}, \ldots, x^{[0]}_{1} ),
        \\&
        B^{[l]}_{2}    ( x^{[l]}_{1}, x^{[l]}_{2}, \ldots, x^{[0]}_{1}, x^{[0]}_{2} ),
        \\&
        \ldots,
        \\&
        B^{[l]}_{m-k+1}( x^{[l]}_{1}, \ldots, x^{[l]}_{m-k+1}, \ldots, x^{[0]}_{1}, \ldots, x^{[0]}_{m-k+1} ) 
    \big)
\end{align*}
and 
\begin{align*}
    &
    B^{[l+1]}_{m}( x^{[l+1]}_{1}, \ldots, x^{[l+1]}_{m}, x^{[l]}_{1}, \ldots, x^{[l]}_{m}, \ldots, x^{[0]}_{1}, \ldots, x^{[0]}_{m} )
    \\&\quad 
    = 
    \sum_{ 1 \leq k \leq m }
    x^{[l+1]}_{k}
    B^{[l+1]}_{m,k}( x^{[l]}_{1}, \ldots, x^{[l]}_{m-k+1}, \ldots, x^{[0]}_{1}, \ldots, x^{[0]}_{m-k+1} )
\end{align*}
Evidently, the $m$-th derivative of a composition of $l$ univariate functions is 
\begin{align*}
    &
    \nabla^{m} (f_{l+1} \circ f_{l} \circ \cdots \circ f_{0})
    _{|x_0}
    \\&\quad 
    =
    B^{[l+1]}_{m}\big(
\nabla^{ } f_{l+1|x_{l+1}}, \ldots, \nabla^{m} f_{l+1|x_{l+1}}, 
\nabla^{ } f_{l  |x_{l  }}, \ldots,
\nabla^{m} f_{1  |x_{1  }}, 
        \nabla^{ } f_{0  |x_{0  }}, \ldots, \nabla^{m} f_{0  |x_{0  }}  
\big)
    .
\end{align*}

\section{Pullbacks of Tensor Fields}\label{sec:pullbacks}

The geometrically conforming coordinate transformation of tensor fields leads to the notion of pullback. 
The pullback of (covariant) tensor fields along a coordinate transformation thus generalizes the composition of scalar-valued functions with a coordinate change. 
In this section, we review pullbacks of tensors and derive pointwise estimates. 
\\

Let $X$ and $Y$ be Banach spaces and let $\OmegaS \subseteq X$ and $\OmegaT \subseteq Y$ be open sets.
We suppose that $\phi : \OmegaS \rightarrow \OmegaT$ is a mapping differentiable at $x \in \OmegaS$.
Recall that a \emph{tensor field} over $\OmegaT$ is a mapping ${u} : \OmegaT \rightarrow \LIN^{d}(Y,\VEC)$, where $\VEC$ is another vector space.
The \emph{pullback} of such a tensor field ${u}$ at $x$ along the coordinate transformation $\phi$ is a member of $\LIN^{d}(X,\VEC)$,
defined by the identity 
\begin{align*}
    \phi^{\ast} {u}_{|x}\left( {v}_1, {v}_2, \ldots, {v}_d \right)
    = 
    {u}_{\phi(x)}\left(
        \nabla\phi_{|x} \cdot {v}_1, \nabla\phi_{|x} \cdot {v}_2, \ldots, \nabla\phi_{|x} \cdot {v}_d 
    \right)
    ,
    \quad 
    {v}_{1}, {v}_{2}, \ldots, {v}_{d} \in X
    .
\end{align*}
If the transformation $\phi$ is differentiable (almost everywhere) over $\OmegaS$,
then the pullback defines another $d$-tensor field $\phi^{\ast} {u} : \OmegaS \rightarrow \LIN^{d}(X,\VEC)$ (almost everywhere) over $\OmegaS$.

\begin{example}
    Real-valued functions can be interpreted as $0$-tensors. 
    When ${u} : \OmegaT \rightarrow \bbR$ is a function, 
    then the pullback along $\phi : \OmegaS \rightarrow \OmegaT$ is just the composition ${u} \circ \phi$. 
    In particular, the pullback of a scalar-valued function is well-defined even along nowhere differentiable functions. 
    But the situation is different for covariant $d$-tensors of higher valence $d$:
    the pullback of general $d$-tensors involves the Jacobian and thus requires a coordinate transformation that is differentiable at the given point. 
    
    Suppose that $X = Y = \bbR^{N}$ are finite-dimensional. 
    We can identify any vector field ${u} : \OmegaT \rightarrow \bbR^{N}$
    with a covariant $1$-tensor. The pullback along $\phi$ of that $1$-tensor corresponds to the vector field $\nabla\phi^{t} \cdot u_{|\phi}$ over the domain $\OmegaS$. 
    Similarly, we can identify any matrix field ${u} : \OmegaT \rightarrow \bbR^{N \times N}$
    with a covariant $2$-tensor. The pullback along $\phi$ of that $1$-tensor corresponds to the matrix field $\nabla\phi^{t} \cdot u_{|\phi} \cdot \nabla\phi$ over $\OmegaS$. 
\end{example}

We are interested in pointwise estimates for the derivative of pullbacks. 
Given directions $v_1,v_2,\ldots,v_m \in X$ 
we apply a higher-order Leibniz formula:
\begin{align*}
    &
    \nabla_{v_1,v_2,\ldots,v_m} 
\left( \phi^{\ast} {u} \right)_{|x}
    =
    \sum_{ {P} \in {\calP}_{0}(m,d) }
\nabla_{ {P}_{0} } ({u}\circ\phi)_{|x} 
\left( 
\nabla_{ {P}_{1} } \phi_{|x}
       ,
\nabla_{ {P}_{2} } \phi_{|x} 
       ,
\ldots 
       ,
\nabla_{ {P}_{d} } \phi_{|x} 
    \right)
    .
\end{align*}
For any ${P_{0}} \subseteq \indexset{1}{m}$, 
the higher-order chain rule implies 
\begin{align*}
    \nabla_{ {P_{0}} } \left( {u}\circ\phi \right)_{|x}
    =
    \sum_{ \substack{ 0 \leq k \leq |P_{0}| \\ \calC \in \calP(P_{0},k) } }
    \nabla^{k} {u}_{|\phi(x)}
\left( 
        \nabla_{ C_1 }\phi_{|x}
        ,
        \nabla_{ C_2 }\phi_{|x} 
        ,
        \ldots 
        ,
        \nabla_{ C_k }\phi_{|x}
    \right)
    .
\end{align*}
Note that here we sum up contractions in $k$ indices that yield $d$-linear mappings. 
Putting this together,
the $m$-th derivative of the pullback is 
\begin{align}\label{math:derivativeofpullback}
    \begin{aligned}
    &
    \nabla_{v_1,v_2,\ldots,v_m} 
    \left( \phi^{\ast} {u} \right)_{|x}
\\&\quad 
    =
    \sum_{ \substack{ {P} \in \calP_{0}(m,d) \\ 0 \leq k \leq |{P}_{0}| \\ \calC \in \calP({P}_{0},k) } }
    \nabla^{k} {u}_{|\phi(x)}
\left( 
\nabla_{ {C}_{1} } \phi_{|x}
       ,
       \ldots 
       ,
       \nabla_{ {C}_{k} } \phi_{|x}
       , 
       \nabla_{ {P}_{1} } \nabla\phi_{|x}
       ,
       \nabla_{ {P}_{2} } \nabla\phi_{|x} 
       ,
       \ldots 
       ,
       \nabla_{ {P}_{d} } \nabla\phi_{|x} 
    \right)
    .
    \end{aligned}
\end{align}
In the sum above, every partition set $\calC$ has been equipped with an arbitrary ordering
for the sake of readability, but the summands do not depend on that ordering.

\begin{remark}
    The situation of first derivatives of functions is special:
    The pullback of the first derivative of a function is the first derivative of the function's pullback.
    However, taking the pullback does not commute with taking derivatives in general. 
    For the example, the $m$-th derivative of the pullback of the $d$-th derivative of a function is generally not the $(m+d)$-th derivative of the pullback of the function. 
\end{remark}

Having derived a formula for the higher derivatives of a pullback, 
we want to estimate its spectral norm. 
For that purpose we introduce a number of definitions. 
We introduce the \emph{generalized Scherk coefficients}
\begin{align}\label{math:linglingindices}
    \Scherkcof^{b}_{h}
    :=
    \Scherkcof^{b_1,b_2,\ldots,b_m}_{h_0,h_1,h_2,\ldots,h_m}
    :=
    \frac{
        (\sum b_{i})!
        (\sum h_{j})!
    }{
        \prod_{i=1}^{m} b_{i}! (i!)^{b_i}
        \prod_{j=0}^{m} h_{j}! (j!)^{h_j}
    }
\end{align}
whenever $b=(b_1,b_2,\ldots,b_m)$ and $h=(h_0,h_1,h_2,\ldots,h_m)$ are sequences of non-negative integers. 
Note that in the case where the integers $h_0,h_1,h_2,\ldots,h_m$ are all zero, 
we recover the Scherk coefficient $\Scherkcof^b = \Scherkcof^{b_1,b_2,\ldots,b_m}_{}$ as a special case. 
We use the coefficients~\eqref{math:linglingindices} when summing 
over several constrained indices: we define the \emph{generalized Scherk indices} as 
\begin{align}\label{math:magesets} 
\Scherkset(k,m,d)
    :=
    \left\{
        (b,h)
    \suchthat*
        \begin{array}{l}
            b = (b_{1}, b_{2}, \ldots, b_{m}) \in \bbN_{0}^{m}
            ,
            \\
            h = (h_{0}, h_{1}, \ldots, h_{m}) \in \bbN_{0}^{m+1}
            , 
        \end{array}
        \;
        \sum_{i=1}^{m} b_{i} = k
        \;
        ,
        \sum_{j=0}^{m} h_{j} = d
        \;
        ,
        \sum_{i=1}^{m} i b_{i} + \sum_{j=0}^{m} j h_{j} = m
    \right\}
    .
\end{align}
These definitions allow for the following concise statement.

\begin{proposition}\label{prop:pullbackoftensors}
    Let $X$, $Y$ and $\VEC$ be Banach spaces, and let $\OmegaS \subseteq X$ and $\OmegaT \subseteq Y$ be open sets.
    Let $\phi : \OmegaS \rightarrow \OmegaT$ be a mapping $(m+1)$-times differentiable at $x \in \OmegaS$,
    and let ${u} : \OmegaT \rightarrow \LIN^{d}_{1}(Y,\VEC)$ be $m$-times differentiable at $\phi(x) \in \OmegaT$. 
    Then 
    \begin{align*}
        \left| 
            \nabla^{m} \left( \phi^{\ast} {u} \right)_{|x}
        \right|
        \leq 
\sum_{ \substack{ 
            0 \leq k \leq m 
            \\
            (b,h) \in \Scherkset(k,m,d)
        } }
        \left| \nabla^{k} {u}_{|\phi(x)} \right|
        \cdot 
        \Scherkcof^{b_1,b_2,\ldots,b_m}_{h_0,h_1,h_2,\ldots,h_m}
        \prod_{i=1}^{m} \left| \nabla^{i  } \phi_{|x} \right|^{b_i}
        \prod_{j=0}^{m} \left| \nabla^{j+1} \phi_{|x} \right|^{h_j}
        .
    \end{align*}
    If $d=0$, then the statement is also true when 
    $\phi : \OmegaS \rightarrow \OmegaT$ is $(m+1)$-times differentiable at $x \in \OmegaS$. 
\end{proposition}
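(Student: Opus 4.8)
The plan is to take the explicit expansion~\eqref{math:derivativeofpullback} of $\nabla^{m}(\phi^{\ast}u)_{|x}$ as the starting point, pass to spectral norms term by term, and then collect like terms into the claimed sum. Fix $v_{1},\ldots,v_{m}\in X$ with $\|v_{i}\|_{X}\le 1$; it suffices to bound $\|\nabla_{v_{1},\ldots,v_{m}}(\phi^{\ast}u)_{|x}\|$. By~\eqref{math:derivativeofpullback} this quantity is a finite sum indexed by triples $(\calP,k,\calC)$ with $\calP=({P}_{0},{P}_{1},\ldots,{P}_{d})\in\calP_{0}(m,d)$, $0\le k\le|{P}_{0}|$ and $\calC=\{{C}_{1},\ldots,{C}_{k}\}\in\calP({P}_{0},k)$, the corresponding summand being $\nabla^{k}u_{|\phi(x)}$ contracted against the $k$ vectors $\nabla_{{C}_{\ell}}\phi_{|x}\in Y$ in its ``new'' covariant slots and against the $d$ linear maps $\nabla_{{P}_{j}}\nabla\phi_{|x}\in\LIN(X,Y)$ in its remaining covariant slots. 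The triangle inequality reduces everything to estimating the norm of one such summand.

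For a single summand, the definition of the spectral norm together with the contraction inequality~\eqref{math:contractioninequality} gives the bound $\|\nabla^{k}u_{|\phi(x)}\|\cdot\prod_{\ell=1}^{k}\|\nabla_{{C}_{\ell}}\phi_{|x}\|_{Y}\cdot\prod_{j=1}^{d}\|\nabla_{{P}_{j}}\nabla\phi_{|x}\|$. Now $\nabla_{{C}_{\ell}}\phi_{|x}=\nabla^{|{C}_{\ell}|}\phi_{|x}(\sym_{i\in{C}_{\ell}}v_{i})$, and since the symmetric product of vectors of norm at most one itself has projective norm at most one (triangle inequality and the cross-norm property), $\|\nabla_{{C}_{\ell}}\phi_{|x}\|_{Y}\le\|\nabla^{|{C}_{\ell}|}\phi_{|x}\|$; similarly $\|\nabla_{{P}_{j}}\nabla\phi_{|x}\|\le\|\nabla^{|{P}_{j}|}(\nabla\phi)_{|x}\|=\|\nabla^{|{P}_{j}|+1}\phi_{|x}\|$, the case $|{P}_{j}|=0$ being allowed and producing the factor $\|\nabla\phi_{|x}\|$. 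So the summand for $(\calP,k,\calC)$ is bounded by $\|\nabla^{k}u_{|\phi(x)}\|\prod_{\ell=1}^{k}\|\nabla^{|{C}_{\ell}|}\phi_{|x}\|\prod_{j=1}^{d}\|\nabla^{|{P}_{j}|+1}\phi_{|x}\|$; here the orders of $\nabla u$ occurring never exceed $|{P}_{0}|\le m$, and the orders of $\nabla\phi$ never exceed $\max_{j}|{P}_{j}|+1\le m+1$, which is exactly the regularity hypothesized.

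It remains to collect like terms. Two triples produce the same product above exactly when they agree in the multiset $\{|{C}_{1}|,\ldots,|{C}_{k}|\}$ of block sizes of $\calC$ and in the multiset $\{|{P}_{1}|,\ldots,|{P}_{d}|\}$ of sizes of the ordered parts ${P}_{1},\ldots,{P}_{d}$ (including the empty ones). Encode the former by $b=(b_{1},\ldots,b_{m})$ with $b_{i}$ the number of blocks of size $i$, and the latter by $h=(h_{0},\ldots,h_{m})$ with $h_{j}$ the number of indices $\ell\in\{1,\ldots,d\}$ with $|{P}_{\ell}|=j$. Reading off the constraints $\sum_{i}b_{i}=k$, $\sum_{j}h_{j}=d$ and $\sum_{i}ib_{i}+\sum_{j}jh_{j}=m$ shows that the admissible pairs $(b,h)$ are exactly the members of $\Scherkset(k,m,d)$ from~\eqref{math:magesets}; and the number of triples $(\calP,k,\calC)$ of a given type $(b,h)$ is computed by choosing the subset of $\indexset{1}{m}$ that carries $\calC$, partitioning it into unordered blocks with size multiset $b$, and distributing the complement into the $d$ ordered parts with size multiset $h$ --- a routine multinomial computation that collapses to the generalized Scherk coefficient $\Scherkcof^{b}_{h}$ of~\eqref{math:linglingindices}. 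Summing over all types $(b,h)$ and all $k$ yields the stated inequality.

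Finally, when $d=0$ we have $\calP_{0}(m,0)=\{(\indexset{1}{m})\}$, so ${P}_{0}=\indexset{1}{m}$ in every term and~\eqref{math:derivativeofpullback} collapses to the Fa\`{a}~di~Bruno expansion of $\nabla^{m}(u\circ\phi)_{|x}$ supplied by Proposition~\ref{prop:higherorderchainrule}; this needs only $m$-fold differentiability of $\phi$ at $x$ and of $u$ at $\phi(x)$, and indeed the pullback of a $0$-tensor is literally the composition $u\circ\phi$, so the Jacobian does not enter and no extra derivative of $\phi$ is required. I expect the genuine work to lie entirely in the multiplicity count of the third step: keeping the ordered parts ${P}_{j}$ distinct from the unordered blocks ${C}_{\ell}$, correctly booking the empty parts ${P}_{j}$ (which feed $h_{0}$ and hence the power $\|\nabla\phi_{|x}\|^{h_{0}}$), and checking that the resulting multinomial factors assemble into precisely $\Scherkcof^{b}_{h}$; a marginally cleaner variant is to split the proof into the Leibniz rule for the contraction of $u\circ\phi$ with $d$ copies of $\nabla\phi$ followed by the Bell-polynomial form~\eqref{math:faadibruno:bellpolynomial} of the chain rule applied to the factor $u\circ\phi$, using that the Bell polynomials have non-negative coefficients.
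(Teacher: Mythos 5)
Your argument is correct and follows essentially the same route as the paper: both split the $m$ derivatives between the composed factor $u\circ\phi$ and the $d$ copies of $\nabla\phi$ (you by taking norms term by term in the exact identity~\eqref{math:derivativeofpullback}, the paper by a Leibniz plus Fa\`a~di~Bruno norm estimate), and both collect like terms through the same multiplicity count, which indeed yields $m!\,d!/\bigl(\prod_i b_i!\,(i!)^{b_i}\prod_j h_j!\,(j!)^{h_j}\bigr)$ — exactly the coefficient the paper's proof produces and the intended value of $\Scherkcof^{b}_{h}$ (the numerator displayed in~\eqref{math:linglingindices} appears to be a typo). Your reading of the $d=0$ clause as requiring only $m$-fold differentiability of $\phi$ likewise matches the intended statement and the paper's subsequent corollary.
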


\begin{proof}
    The proof involves a number of combinatorial observations. 
    We apply the Leibniz rule to $\nabla^{m}\left( \phi^{\ast} {u} \right)$:
    there are ${m \choose n}$ ways of distributing $0 \leq n \leq m$ derivatives onto the first factor ${u}_{|\phi}$ 
    and $m-n$ derivatives onto the second factor $\otimes^{d} \nabla\phi$. 
    For the $n$-th derivative of the first factor, $0 \leq k \leq n$, we calculate 
    \begin{align*}
        \left|
        \nabla^{n} {u}_{|\phi(x)}
        \right|
        \leq 
        \sum_{ \substack{ 
            0 \leq k \leq n \\ b_{1} + b_{2} + \cdots + b_{n} = k \\ b_{1} + 2 b_{2} + \cdots + n b_{n} = n 
        } }
        \left| \nabla^{k} {u}_{|\phi(x)} \right|
        \cdot 
        \frac{
            n!
        }{
            b_{1}! b_{2}! \cdots b_{n}! (1!)^{b_1} (2!)^{b_2} \cdots (n!)^{b_n} 
        }
        \prod_{i=1}^{n} \left| \nabla^{i} \phi_{|x} \right|^{b_i}
        .
    \end{align*}
    This is checked for $n=0$ and shown for $n > 0$ 
    by an argument analogous to the proof of the Fa\`{a}~di~Bruno formula. 
    We would like to handle the second factor in a similar manner. 
    When taking the derivative of order $m-n$ of the second factor, 
    we distribute $m-n$ derivatives onto $d$ different copies of $\nabla\phi_{|x}$. 
    For each such distribution, let $h_{0}, h_{1}, h_{2}, \ldots$ be the number of factors receiving $0,1,2,\ldots$ derivatives, respectively.
    The number of distributions giving the same such sequence of numbers is
    \begin{align*}
        \frac{d!}{h_{0}! h_{1}! \cdots h_{m-n}! }
        \times 
        \frac{(m-n)!}{ (0!)^{0} (1!)^{1} \cdots (m-n)!^{m-n} }
        .
    \end{align*}
    Putting these observations together,
    the norm of 
    $\nabla^{m} \left( \phi^{\ast} {u} \right)_{|x}$
    has the upper bound 
    \begin{align*}
        \sum_{ \substack{ 
            0 \leq k \leq n \leq m 
            \\
            b_{1}, b_{2}, \ldots, b_{n} \in \bbN_{0}
            \\
            b_{1} + b_{2} + \cdots + b_{n} = k
            \\
            b_{1} + 2 b_{2} + \cdots + n b_{n} = n 
            \\
            h_{0}, h_{1}, h_{2}, \ldots, h_{m-n} \in \bbN_{0}
            \\
            h_{0} + h_{1} + h_{2} + \cdots + h_{m-n} = d
            \\
            1 h_{1} + 2 h_{2} + \cdots + (m-n) h_{m-n} = m-n } }
        \left| \nabla^{k} {u}_{|\phi(x)} \right|
        \cdot 
        { m \choose n }
        \frac{
            n!
        }{
            \prod_{i=1}^{n} b_{i}! (i!)^{b_i}
        }
        \frac{
            d! (m-n)!
        }{
            \prod_{j=0}^{m-n} h_{j}! (j!)^{h_j}
        }
\prod_{i=1}^{n  } \left| \nabla^{i} \phi_{|x} \right|^{b_i}
        \prod_{j=0}^{m-n} \left| \nabla^{j} \nabla \phi_{|x} \right|^{h_j}
        .
    \end{align*}
    We reduce the fractions further,
    and slightly rewriting the index set over which we sum, 
    we find 
    \begin{align*}
        \sum_{ \substack{ 
            0 \leq k \leq n \leq m 
            \\
            b_{1}, b_{2}, \ldots, b_{m} \in \bbN_{0}
            \\
            h_{0}, h_{1}, \ldots, h_{m} \in \bbN_{0}
            \\
            b_{1} + b_{2} + \cdots + b_{m} = k
            \\
            h_{0} + h_{1} + \cdots + h_{m} = d
            \\
            b_{1} + 2 b_{2} + \cdots + m b_{m} = n 
            \\
            h_{1} + 2 h_{2} + \cdots + m h_{m} = m-n
} }
        \left| \nabla^{k} {u}_{|\phi(x)} \right|
        \cdot 
        \frac{
            m!
            d!
        }{
            \prod_{i=1}^{m} b_{i}! (i!)^{b_i}
            \prod_{j=0}^{m} h_{j}! (j!)^{h_j}
        }
        \prod_{i=1}^{m} \left| \nabla^{i} \phi_{|x} \right|^{b_i}
        \prod_{j=0}^{m} \left| \nabla^{j} \nabla \phi_{|x} \right|^{h_j}
        .
    \end{align*}
    The desired statement follows, using Definition~\eqref{math:linglingindices}. 
\end{proof}

\begin{remark}
    This result showcases a difference between scalar and tensor fields 
    that we will repeatedly highlight in forthcoming sections in different forms:
    the pullback of a function with $m$ derivatives has $m$ derivatives again 
    if the transformation has $m$ derivatives. 
    Yet the analogous statement for tensor fields is only true when the transformation has $m+1$ derivatives. 
    This is natural because the pullback of tensor fields involves the Jacobian of the transformation. 
\end{remark}

It is of interest to condense the norm inequality in Proposition~\ref{prop:pullbackoftensors} even further:
the derivatives of the transformation may appear in both products on the right-hand side of the inequality. 
For that purpose we introduce the index sets  
\begin{align}\label{math:magesets:summarized}
    \hat \Scherkset(k,m,d)
    :=
    \left\{\;
p = (p_{0}, p_{1}, \ldots, p_{m}) \in \bbN_{0}^{m+1}
\suchthat*
        p = (0,b) + h, 
        \;
        (b,h) \in \Scherkset(k,m,d)
    \;\right\}
\end{align}
and corresponding coefficients 
\begin{align}\label{math:linglingindices:summarized}
    \hat \Scherkcof_{p_0,p_1,\ldots,p_m}
    :=
    \sum_{ \substack{ (b,h) \in \Scherkset(k,m,d) \\ p = b + d } }
    \frac{
        (\sum b_{i})!
        (\sum h_{j})!
    }{
        \prod_{i=1}^{m} b_{i}! (i!)^{b_i}
        \prod_{j=0}^{m} h_{j}! (j!)^{h_j}
    }
    .
\end{align}
We note that $\hat \Scherkset(k,m,0)$ is just obtained by padding a zero in front of every $\Scherkset(k,m)$.
The following simple corollary of Proposition~\ref{prop:pullbackoftensors} is clear.

\begin{corollary}\label{corollary:pullbackoftensors:condensed}
    Under the assumptions of Proposition~\ref{prop:pullbackoftensors},
\begin{align*}
        \left| 
            \nabla^{m} \left( \phi^{\ast} {u} \right)_{|x}
        \right|
        \leq 
        \sum_{ \substack{ 
            0 \leq k \leq m 
            \\
            p \in \hat \Scherkset(k,m,d)
        } }
        \left| \nabla^{k} {u}_{|\phi(x)} \right|
        \cdot 
        \Scherkcof_{p_0,p_1,\ldots,p_m}
        \prod_{j=0}^{m+1} \left| \nabla^{j} \phi_{|x} \right|^{p_j}
        .
    \end{align*}
\end{corollary}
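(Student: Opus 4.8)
The plan is to obtain the estimate purely by regrouping the right-hand side of Proposition~\ref{prop:pullbackoftensors}; no new analysis is needed, since all of the inequality has already been spent there. That right-hand side is a sum over $0 \le k \le m$ of $\left| \nabla^{k} {u}_{|\phi(x)} \right|$ times the nonnegative coefficient $\Scherkcof^{b_1,\ldots,b_m}_{h_0,\ldots,h_m}$ times the monomial $\prod_{i=1}^{m} \left| \nabla^{i}\phi_{|x} \right|^{b_i} \prod_{j=0}^{m} \left| \nabla^{j+1}\phi_{|x} \right|^{h_j}$, indexed by $(b,h) \in \Scherkset(k,m,d)$. The first step is arithmetic: for a fixed pair $(b,h)$ I would merge the two products over the derivatives of $\phi$ into a single product. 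The factor $\left| \nabla^{r}\phi_{|x} \right|$ collects the exponent $b_r$ from the first product (for $1 \le r \le m$) and the exponent $h_{r-1}$ from the second product (for $1 \le r \le m+1$); the highest derivative that occurs is $\nabla^{m+1}\phi_{|x}$, carrying exponent $h_m$, while $b$ contributes nothing beyond index $m$. Writing $p$ for the resulting combined exponent vector — the vector that~\eqref{math:magesets:summarized} denotes $(0,b)+h$ — the summand attached to $(b,h)$ becomes $\left| \nabla^{k} {u}_{|\phi(x)} \right| \cdot \Scherkcof^{b}_{h} \cdot \prod_{j} \left| \nabla^{j}\phi_{|x} \right|^{p_j}$.

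Next, for each fixed $k$, I would partition the index set $\Scherkset(k,m,d)$ according to the value of $p$: two pairs $(b,h),(b',h') \in \Scherkset(k,m,d)$ produce the same monomial in the derivatives of $\phi$ exactly when $(0,b)+h = (0,b')+h'$. Collecting like terms then replaces the inner sum over $(b,h)$ by a sum over the occurring values of $p$, the coefficient of a given $p$ being $\sum \Scherkcof^{b}_{h}$ over all $(b,h) \in \Scherkset(k,m,d)$ with $(0,b)+h = p$, which is precisely $\hat\Scherkcof_{p_0,p_1,\ldots,p_m}$ by~\eqref{math:linglingindices:summarized}, while the values of $p$ that occur are exactly the members of $\hat\Scherkset(k,m,d)$ by~\eqref{math:magesets:summarized}. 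Substituting these two identifications gives the displayed inequality. The case $d=0$ is then read off directly: there $h=0$, so $p=(0,b)$, the set $\hat\Scherkset(k,m,0)$ is $\Scherkset(k,m)$ with a leading zero appended, and $\hat\Scherkcof$ reduces to the ordinary Scherk coefficient $\Scherkcof^{b}$, recovering the Fa\`{a}~di~Bruno-type bound for the $0$-tensor case $\phi^{\ast}{u} = {u}\circ\phi$.

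The step I expect to demand the most care is the bookkeeping of the index shift in the first paragraph: correctly amalgamating ``$\nabla^{i}\phi$ with exponent $b_i$'' and ``$\nabla^{j+1}\phi$ with exponent $h_j$'' into one exponent vector without an off-by-one error, and confirming that the amalgamated vector is indeed what~\eqref{math:magesets:summarized} abbreviates as $(0,b)+h$. This introduces no loss, as it is an identity between the two expressions and not an estimate; every inequality was already used in Proposition~\ref{prop:pullbackoftensors}. Beyond this the corollary is immediate, the definitions~\eqref{math:magesets:summarized} and~\eqref{math:linglingindices:summarized} having been arranged precisely so that the regrouping is automatic.
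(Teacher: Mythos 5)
Your regrouping argument is exactly what the paper intends: it states the corollary as ``clear'' from Proposition~\ref{prop:pullbackoftensors} together with the definitions~\eqref{math:magesets:summarized} and~\eqref{math:linglingindices:summarized}, i.e.\ merging the two products into the exponent vector $p=(0,b)+h$ and collecting like terms so that the coefficients sum to $\hat\Scherkcof_{p}$. Your proposal is correct and follows essentially the same (and only) route, with the index bookkeeping handled properly.
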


We rewrite the pointwise inequality for the higher-order derivatives. 
Earlier, we have used Bell polynomials for a concise exact representation of the derivatives of composed univariate functions. 
Now concisely inequalities using generalized Bell polynomials. 
We set 
\begin{align}\label{math:bellpolynomial:dgeneralized}
    \begin{aligned}
        &
        B_{k,m,d}\left(
            x^{}_{1  },
            x^{}_{2  },
            \ldots,
            x^{}_{m+1},
        \right)
        \\&\quad 
:=
        \sum_{ \substack{ 
            (b,d) \in \Scherkset(k,m,d)
        } }
        \Scherkcof^{b}_{d}
        \prod_{i=1}^{m} \left( x^{}_{i  } \right)^{b_i}
        \prod_{j=0}^{m} \left( x^{}_{j+1} \right)^{d_j}
=
        \sum_{ \substack{ 
            p \in \hat \Scherkset(k,m,d)
        } }
        \hat \Scherkcof_{p}
        \prod_{j=0}^{m+1} \left( x^{}_{j} \right)^{p_j}
        .
    \end{aligned}
\end{align}
With that, we once more rewrite the pointwise estimate for the higher derivatives of the pullback.

\begin{corollary}
    Under the assumptions of Proposition~\ref{prop:pullbackoftensors},
    \begin{align}\label{math:pointwisepullbackinequality}
        \left| 
            \nabla^{m} \left( \phi^{\ast} {u} \right)_{|x}
        \right|
        \leq 
\sum_{k=0}^{m}
        | \nabla^{k} {u}_{|\phi(x)} |
        \cdot 
        B_{k,m,d}\left(
            | \nabla^{} \phi_{|x} |, | \nabla^{2} \phi_{|x} |, \ldots, | \nabla^{m+1} \phi_{|x} |
        \right)
        .
    \end{align}
\end{corollary}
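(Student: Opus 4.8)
The plan is to observe that this corollary is nothing more than a repackaging of Corollary~\ref{corollary:pullbackoftensors:condensed}, with the right-hand side rewritten in terms of the generalized Bell polynomials defined in~\eqref{math:bellpolynomial:dgeneralized}. First I would recall the bound from Corollary~\ref{corollary:pullbackoftensors:condensed}:
\begin{align*}
    \left| \nabla^{m}\left(\phi^{\ast}{u}\right)_{|x} \right|
    \leq
    \sum_{ \substack{ 0 \leq k \leq m \\ p \in \hat{\Scherkset}(k,m,d) } }
    \left| \nabla^{k} {u}_{|\phi(x)} \right|
    \cdot \hat{\Scherkcof}_{p_0,p_1,\ldots,p_m}
    \prod_{j=0}^{m+1} \left| \nabla^{j} \phi_{|x} \right|^{p_j}
    .
\end{align*}
The only nontrivial point in the hypotheses is that the factor $|\nabla^0\phi_{|x}|^{p_0}$ appears — but whenever $p_0 > 0$ this contributes a positive power of $|\phi(x)|$, which would be undesirable; I would check (from~\eqref{math:magesets:summarized} together with~\eqref{math:magesets}) that in fact $p_0 = h_0$ and that $h_0$ is a summand in $\sum_j j h_j$ only with coefficient $j=0$, so $p_0$ is entirely unconstrained by the degree equation, whence the term $|\nabla^0\phi_{|x}|^{p_0}$ must really be read as the count $h_0$ of copies of $\nabla\phi_{|x}$ that receive \emph{zero} further derivatives, i.e. $\nabla^{0}\nabla\phi = \nabla\phi$, contributing $|\nabla\phi_{|x}|^{p_0}$. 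In other words the index $j$ in the product $\prod_{j=0}^{m+1}$ corresponds to $\nabla^{j}\phi$ only after the shift built into~\eqref{math:bellpolynomial:dgeneralized}, where the $h$-part is evaluated at $x_{j+1}$; I would make this bookkeeping explicit so that no spurious $|\phi(x)|$ factor appears.

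Next I would fix $k$ with $0 \le k \le m$ and compare the inner sum over $p \in \hat{\Scherkset}(k,m,d)$ with the polynomial $B_{k,m,d}$. By the second equality in the definition~\eqref{math:bellpolynomial:dgeneralized},
\begin{align*}
    B_{k,m,d}\!\left( |\nabla\phi_{|x}|, |\nabla^{2}\phi_{|x}|, \ldots, |\nabla^{m+1}\phi_{|x}| \right)
    =
    \sum_{ p \in \hat{\Scherkset}(k,m,d) }
    \hat{\Scherkcof}_{p}
    \prod_{j=0}^{m+1} \left( |\nabla^{j}\phi_{|x}| \right)^{p_j}
    ,
\end{align*}
which is exactly the $k$-th inner sum on the right-hand side of the bound from Corollary~\ref{corollary:pullbackoftensors:condensed}, modulo the indexing shift noted above. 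Summing over $0 \le k \le m$ and pulling $|\nabla^k{u}_{|\phi(x)}|$ outside each inner sum then yields~\eqref{math:pointwisepullbackinequality} verbatim.

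The only genuine obstacle — and it is a cosmetic rather than a mathematical one — is making sure the two index conventions line up: Proposition~\ref{prop:pullbackoftensors} and its first corollary phrase the $h$-part in terms of $\nabla^{j}\nabla\phi = \nabla^{j+1}\phi$ for $0 \le j \le m$, while the Bell polynomial~\eqref{math:bellpolynomial:dgeneralized} absorbs this $+1$ shift by plugging the $(j+1)$-st argument into the $h$-factors and the $i$-th argument into the $b$-factors. I would therefore state at the outset that~\eqref{math:pointwisepullbackinequality} is obtained from Corollary~\ref{corollary:pullbackoftensors:condensed} by "substituting $x_j := |\nabla^{j}\phi_{|x}|$" and invoking~\eqref{math:bellpolynomial:dgeneralized}; since $B_{k,m,d}$ was \emph{defined} precisely so that this substitution reproduces the condensed sum, the verification reduces to unwinding definitions, and no further estimate is needed. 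Regularity hypotheses are inherited unchanged from Proposition~\ref{prop:pullbackoftensors}, including the special case $d=0$.
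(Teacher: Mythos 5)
Your proposal is correct and coincides with the paper's intended argument: the corollary is stated there as an immediate rewriting of Proposition~\ref{prop:pullbackoftensors} (equivalently Corollary~\ref{corollary:pullbackoftensors:condensed}) using the definition~\eqref{math:bellpolynomial:dgeneralized} of $B_{k,m,d}$, so the verification is pure unwinding of definitions. Your explicit handling of the index shift (the $h$-part being evaluated at the $(j+1)$-st argument, so that the exponent $p_0$ counts undifferentiated copies of $\nabla\phi$ and no factor $\left|\nabla^{0}\phi_{|x}\right|$ ever appears) is exactly the right way to reconcile the paper's slightly loose indexing with the stated inequality.
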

    
Considerable simplifications apply when $u$ is a zero tensor,
for example, in the special case where $u$ is a real-valued function. 
Either specializing Proposition~\ref{prop:pullbackoftensors} or using a straightforward consequence of~\eqref{math:faadibruno:bellpolynomial},
we get an inequality in terms of Bell polynomials. 
Note that $B_{m,k,0}$ only depends on the first $m$ derivatives of $\phi$,
and we can thus identify $B_{m,k,0} = B_{m,k}$ with some minor abuse of notation.

\begin{corollary}\label{corollary:pullbackoftensors:zerotensors}
    Let $X$, $Y$ and $\VEC$ be Banach spaces, and let $\OmegaS \subseteq X$ and $\OmegaT \subseteq Y$ be open sets.
    Let $\phi : \OmegaS \rightarrow \OmegaT$ be a mapping $m$-times differentiable at $x \in \OmegaS$,
    and let ${u} : \OmegaT \rightarrow \VEC$,
    be $m$-times differentiable at $\phi(x) \in \OmegaT$. 
    Then 
    \begin{align*}
        \left| 
            \nabla^{m} \left( {u} \circ \phi \right)_{|x}
        \right|
        &
        \leq 
        \sum_{ \substack{ 
            1 \leq k \leq m 
            \\
            b \in \Scherkset(k,m)
        } }
        \left| \nabla^{k} {u}_{|\phi(x)} \right|
        \cdot 
        \Scherkcof^{b_1,b_2,\ldots,b_m}
        \prod_{i=1}^{m} \left| \nabla^{i  } \phi_{|x} \right|^{b_i}
\\&
        =
        \sum_{k=1}^{m}
        | \nabla^{k} u_{|\phi(x)} |
        \cdot 
        B_{m,k}\left( | \nabla^{1} \phi_{|x} |, | \nabla^{2} \phi_{|x} |, \ldots, | \nabla^{m-k+1} \phi_{|x} | \right) 
        \\&
        =
        B_{m}\left(
            | \nabla^{1} u_{|\phi(x)} |, | \nabla^{2} u_{|\phi(x)} |, \ldots, | \nabla^{m} u_{|\phi(x)} |,
            | \nabla^{1} \phi_{|x}    |, | \nabla^{2} \phi_{|x}    |, \ldots, | \nabla^{m} \phi_{|x}    | 
        \right) 
        .
    \end{align*}
\end{corollary}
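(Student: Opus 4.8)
The statement is the degenerate case $d=0$ of Proposition~\ref{prop:pullbackoftensors}, in which a $0$-tensor field is simply a $\VEC$-valued function and its pullback $\phi^{\ast}{u}$ is the composition ${u}\circ\phi$. The plan is to deduce it directly from the higher-order chain rule~\eqref{math:higherorderchainrule} of Proposition~\ref{prop:higherorderchainrule}, taken with $f={u}$; this route has the advantage of only invoking $m$-fold differentiability of $\phi$, exactly as hypothesized. First I would take the $\VEC$-norm of the identity~\eqref{math:higherorderchainrule}, apply the triangle inequality over the partitions $\calP\in\calP(m,k)$, and then estimate each summand by the definition of the spectral norm and the cross-norm property: one has $\|\nabla^{k}{u}_{|\phi(x)}(w_1,\dots,w_k)\|_{\VEC}\le|\nabla^{k}{u}_{|\phi(x)}|\,\|w_1\|_{Y}\cdots\|w_k\|_{Y}$ and $\|\nabla^{|{P}_j|}\phi_{|x}(\sym_{i\in {P}_j}v_i)\|_{Y}\le|\nabla^{|{P}_j|}\phi_{|x}|\,\prod_{i\in {P}_j}\|v_i\|_{X}$, the last step also using $\|v_1\sym\cdots\sym v_n\|\le\|v_1\|_{X}\cdots\|v_n\|_{X}$, which is immediate from the definition~\eqref{math:symmetricproduct} of the symmetric product together with the cross-norm property. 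Passing to the supremum over $v_1,\dots,v_m$ in the unit sphere of $X$ then yields
\[
  \bigl|\nabla^{m}({u}\circ\phi)_{|x}\bigr|
  \le
  \sum_{\substack{1\le k\le m\\ \calP\in\calP(m,k)}}
  \bigl|\nabla^{k}{u}_{|\phi(x)}\bigr|\,\prod_{j=1}^{k}\bigl|\nabla^{|{P}_j|}\phi_{|x}\bigr|.
\]

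Next I would regroup the partitions in this sum by the multiset of sizes of their blocks, as in the passage leading to the Fa\`{a} di Bruno formula~\eqref{math:faadibrunoformula}. For a tuple $b=(b_1,\dots,b_m)\in\Scherkset(k,m)$, that is, one with $\sum_i b_i=k$ and $\sum_i i\,b_i=m$, the number of partitions $\calP\in\calP(m,k)$ with exactly $b_i$ blocks of cardinality $i$ equals $m!/\bigl(b_1!\cdots b_m!\,(1!)^{b_1}\cdots(m!)^{b_m}\bigr)=\Scherkcof^{b}$, and every such partition contributes the same product $\prod_i|\nabla^{i}\phi_{|x}|^{b_i}$; summing over $b$ gives the first inequality of the corollary. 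Since $b\in\Scherkset(k,m)$ forces $b_j=0$ for $j>m-k+1$, that product may be truncated at $j=m-k+1$, and by the definition~\eqref{math:bellpolynomial:partial} the inner sum over $b\in\Scherkset(k,m)$ equals $B_{m,k}\bigl(|\nabla\phi_{|x}|,\dots,|\nabla^{m-k+1}\phi_{|x}|\bigr)$, which is the second line; summing this over $1\le k\le m$ and applying the definition~\eqref{math:bellpolynomial:full} of the full Bell polynomial gives the third line.

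A shorter alternative is to specialize Proposition~\ref{prop:pullbackoftensors} itself to $d=0$: there the only admissible tuple $h$ is $h=(0,\dots,0)$, so $\Scherkset(k,m,0)$ is a zero-padded copy of $\Scherkset(k,m)$, the generalized Scherk coefficient $\Scherkcof^{b}_{0,\dots,0}$ collapses to $\Scherkcof^{b}$, the products indexed by $h$ are empty, and for $m\ge1$ the value $k=0$ contributes nothing because $\sum_i i\,b_i=m>0$ is incompatible with $\sum_i b_i=0$, so the range reduces to $1\le k\le m$. In either route the only place where real attention is needed is the combinatorial identity counting set partitions of a prescribed type and the bookkeeping of the degenerate cases --- the truncation $b_j=0$ for $j>m-k+1$ and the vanishing of the $k=0$ term; the analytic ingredients, submultiplicativity of the spectral norm under contraction and under tensor and symmetric products, are entirely routine.
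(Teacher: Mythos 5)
Your proof is correct and follows essentially the same path as the paper, which justifies the corollary by exactly the two routes you describe --- either specializing Proposition~\ref{prop:pullbackoftensors} to $d=0$ or taking norms in the chain rule of Proposition~\ref{prop:higherorderchainrule} and regrouping partitions by block-size multisets as in the derivation of the Fa\`a~di~Bruno formula~\eqref{math:faadibruno:bellpolynomial}. Your detailed first route also correctly explains why only $m$-fold differentiability of $\phi$ is needed, matching the corollary's hypotheses.
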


\section{Sobolev-Slobodeckij Spaces and Estimates}\label{sec:sobolev}

In this section, we discuss Sobolev spaces and Sobolev-Slobodeckij spaces. 
One of our main objectives in this exposition is estimating the Sobolev-Slobodeckij norm of tensor fields under coordinate transformations. 
That involves higher-order chain rules for the pullback of tensor fields.
\\

We specialize to a finite-dimensional geometric ambient. 
Let $X = Y = \bbR^{N}$. 
Let $\OmegaS \subseteq \bbR^{n}$ be open and let $\VEC$ be a Banach space. 
We emphasize that we do not necessarily assume that $\OmegaS$ is bounded or has a regular boundary. 

We begin with reviewing a few classical function spaces. 
By $\Cont^{\infty}_{c}(\OmegaS,\VEC)$ we denote the vector space of smooth functions over $\OmegaS$ with compact support that map into the Banach space $\VEC$. 
We let $\Cont^{m}(\OmegaS,\VEC)$ be the vector space of continuous functions from $\OmegaS$ into $\OmegaT$
that have continuous derivatives up to degree $m$.
Note that this vector space as such is not normed. 
However, the members of $\Cont^{m}(\OmegaS,\VEC)$ for which the \emph{uniform norm}
\begin{align}\label{math:uniformnorm}
    \left\| u \right\|_{\Cont^{m}(\OmegaS,\VEC)}
    :=
    \max_{0 \leq k \leq m}
    \sup_{ x \in \OmegaS }
    \left| \nabla^{k} u_{|x} \right|_{\VEC}
\end{align}
is finite constitute a Banach space. 
At this point we remark that we occasionally drop the codomain from the notation.

Suppose that $\dist(\cdot,\cdot)$ is a metric on $\Omega$. 
When $\theta \in (0,1]$, 
the \emph{H\"older space} $\Cont^{0,\theta}(\OmegaS,\VEC)$ consists of those $u \in \Cont^{0}(\OmegaS,\VEC)$ for which 
there exists a constant $C \geq 0$ such that
\begin{align}\label{math:hoelderspacedefinition}
    | u(x) - u(x') |_{\VEC}
    \leq
    C
    \dist(x,x')^{\theta},
    \quad 
    x,x' \in \OmegaS,
\end{align}
and the minimum of such constants is written $|u|_{\Cont^{0,\theta}(\OmegaS,\VEC)}$.
This quantity defines a seminorm on $\Cont^{0,\theta}(\OmegaS,\VEC)$.
The functions in $\Cont^{0,1}(\OmegaS,\VEC)$ are also called \emph{Lipschitz}.
Every member of $\Cont^{0,1}(\OmegaS,\VEC)$ is differentiable almost everywhere in $\OmegaS$
according to Rademacher's theorem.
More generally, we write $\Cont^{m,\theta}(\OmegaS,\VEC)$ for the subspace of $\Cont^{m}(\OmegaS,\VEC)$ 
whose members have continuous derivatives up to degree $m$ in the H\"older space $\Cont^{0,\theta}(\OmegaS,\LIN^m(X,\VEC))$.
Similarly as above, the members of $\Cont^{m,\theta}(\OmegaS,\VEC)$ for which the norm 
\begin{align}\label{math:higherhoeldernorm}
    \left\| u \right\|_{\Cont^{m,\theta}(\OmegaS,\VEC)}
    :=
    \max_{0 \leq k \leq m}
    \sup_{ x \in \OmegaS }
    \left| \nabla^{k} u_{|x} \right|_{\VEC}
+
    \left| \nabla^{k} u \right|_{\Cont^{0,\theta}(\OmegaS,\LIN^{d}(X,\VEC))}
\end{align}
is finite constitute a Banach space. 
\\

Before we continue, we prove a result on classical function spaces that may be interpreted
as an inverse mapping theorem for functions satisfying a H\"older or Lipschitz condition. 
Using the recursive construction used in some earlier examples,
we derive estimates for the H\"older seminorms of an invertible differentiable mapping.

\begin{proposition}\label{proposition:inversefunctiontheorem:hoelder}
    Let $m \geq 2$.
    Suppose that $\OmegaS, \OmegaT \subseteq \bbR^{N}$ are open sets
    and  that $\phi \in \Cont^{m}(\OmegaS,\OmegaT)$ has an inverse $\phi^{-1} \in \Cont^{m}(\OmegaT,\OmegaS)$.
    We then have 
    \begin{align*}
        &
        \left| \nabla^{m}\phi^{-1} \right|_{\Cont^{0,\theta}(\OmegaT)}
        \\&\quad
        \leq 
        \left\| \phi^{-1} \right\|_{\Cont^{0,\theta}(\OmegaT)}
        \sum_{ \substack{ 2 \leq k \leq m \\ \calP \in \calP(m,k) } }
        (k+1)
        \left\| \nabla^{k} \phi \right\|_{\Cont^{0,\theta}(\OmegaS)}
        B_{m,k}
        \left(
            \left\| \nabla^{1}\phi^{-1} \right\|_{\Cont^{0,\theta}(\OmegaT)}
            ,
            \ldots
            ,
            \left\| \nabla^{m-k+1}\phi^{-1} \right\|_{\Cont^{0,\theta}(\OmegaT)}
        \right)
        \\
        &\quad \quad 
        + 
        \left\| \nabla \phi^{-1} \right\|_{\Cont^{0}(\OmegaT)}
        \left| \phi^{-1} \right|_{\Cont^{0,1}(\OmegaT)}^{\theta} 
        \sum_{ \substack{ 2 \leq k \leq m \\ \calP \in \calP(m,k) } }
        \left| \nabla^{k} \phi \right|_{\Cont^{0,\theta}(\OmegaS)}
        B_{m,k}
        \left(
            \left| \nabla^{1}\phi^{-1} \right|_{\Cont^{0}(\OmegaT)}
            ,
            \ldots
            ,
            \left| \nabla^{m-k+1}\phi^{-1} \right|_{\Cont^{0}(\OmegaT)}
        \right)
        ,
    \end{align*}
    provided that the right-hand side is finite. 
    In particular, $\phi^{-1} \in \Cont^{m,\theta}(\OmegaT,\OmegaS)$ 
    if
    \begin{gather*}
        \left\| \phi \right\|_{\Cont^{m,\theta}(\OmegaS,\OmegaT)}
        +
        \left\| \nabla\phi^{-1} \right\|_{\Cont^{0,\theta}(\OmegaT)} +
        \left| \phi^{-1} \right|_{\Cont^{0,1}(\OmegaT,\OmegaS)} <
        \infty.
    \end{gather*}
\end{proposition}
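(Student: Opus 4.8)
The plan is to start from the closed recursive formula for the derivatives of an inverse established in Example~\ref{example:inversefunction:multivariate}. Writing $\psi := \phi^{-1}$ and adopting the abuse of notation introduced after Proposition~\ref{prop:higherorderchainrule}, formula~\eqref{math:explicitformulaforinverse} takes the form
\begin{align*}
    \nabla^{m}\psi_{|y}
    =
    -\,\nabla\psi_{|y}
    \contract
    \sum_{ \substack{ 2 \leq k \leq m \\ \calP \in \calP(m,k) } }
    \left( \nabla^{k}\phi \circ \psi \right)_{|y}
    \contract
    \bigvee_{ P \in \calP } \nabla^{|P|}\psi_{|y}
    .
\end{align*}
Thus $\nabla^{m}\psi$, as a tensor field on $\OmegaT$, is a finite sum over partitions $\calP\in\calP(m,k)$, $2\le k\le m$, of contractions of the $k+2$ factors $\nabla\psi$, $\nabla^{k}\phi\circ\psi$, and $\{\nabla^{|P|}\psi : P\in\calP\}$; all of these are continuous since $\phi,\psi\in\Cont^{m}$. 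I would estimate the $\Cont^{0,\theta}$-seminorm of this expression factor by factor, so that the proof reduces to two elementary calculus rules.

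First, a Leibniz inequality for H\"older seminorms: from $(f\contract g)(y) - (f\contract g)(y') = (f(y)-f(y'))\contract g(y) + f(y')\contract (g(y)-g(y'))$ together with the contraction inequality~\eqref{math:contractioninequality}, one gets $|f\contract g|_{\Cont^{0,\theta}} \le |f|_{\Cont^{0,\theta}}\|g\|_{\Cont^{0}} + \|f\|_{\Cont^{0}}|g|_{\Cont^{0,\theta}}$, and iterating over several factors, $|f_{1}\contract\cdots\contract f_{r}|_{\Cont^{0,\theta}} \le \sum_{i=1}^{r}\|f_{1}\|_{\Cont^{0}}\cdots|f_{i}|_{\Cont^{0,\theta}}\cdots\|f_{r}\|_{\Cont^{0}}$. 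Second, a composition rule: if $\psi\in\Cont^{0,1}(\OmegaT)$, then $|g\circ\psi|_{\Cont^{0,\theta}(\OmegaT)} \le |g|_{\Cont^{0,\theta}(\OmegaS)}\,|\psi|_{\Cont^{0,1}(\OmegaT)}^{\theta}$, which is immediate from $|\psi(y)-\psi(y')| \le |\psi|_{\Cont^{0,1}}|y-y'|$.

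Applying the Leibniz inequality to the summand attached to a partition $\calP\in\calP(m,k)$ splits it into $k+2$ pieces, according to which of the $k+2$ factors carries the H\"older difference. In the single piece where the difference falls on the composed factor $\nabla^{k}\phi\circ\psi$, the composition rule bounds its seminorm by $|\nabla^{k}\phi|_{\Cont^{0,\theta}(\OmegaS)}\,|\psi|_{\Cont^{0,1}(\OmegaT)}^{\theta}$, while $\nabla\psi$ and the $\nabla^{|P|}\psi$ are taken in the $\Cont^{0}$-norm; in each of the other $k+1$ pieces the difference falls on one of the factors built from $\psi$, which is estimated by its $\Cont^{0,\theta}$-seminorm, the composed factor being bounded by $\|\nabla^{k}\phi\|_{\Cont^{0}}\le\|\nabla^{k}\phi\|_{\Cont^{0,\theta}}$ and the rest by $\Cont^{0}$-norms. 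Summing over $\calP\in\calP(m,k)$ and using the combinatorial identity $\sum_{\calP\in\calP(m,k)}\prod_{P\in\calP}x_{|P|} = B_{m,k}(x_{1},\ldots,x_{m-k+1})$ — the same enumeration by cardinality signatures that yields the Bell-polynomial form~\eqref{math:faadibruno:bellpolynomial} from the partition form of the univariate chain rule, each signature $b\in\Scherkset(m,k)$ being realized by exactly $\Scherkcof^{b}$ partitions — collapses the partition sums into partial Bell polynomials: the $k+1$ pieces of the second kind assemble, after bounding every $\psi$-factor uniformly by its $\Cont^{0,\theta}$-norm and charging $k+1$ for the choice of slot, into the first term of the asserted bound, and the pieces of the first kind into the term carrying $|\psi|_{\Cont^{0,1}(\OmegaT)}^{\theta}$; summing over $k=2,\ldots,m$ completes the inequality. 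For the final assertion I would argue by induction on $m$: the right-hand side involves only H\"older norms of $\phi$ (controlled by $\|\phi\|_{\Cont^{m,\theta}}$), the quantity $|\psi|_{\Cont^{0,1}}$, and $\Cont^{0,\theta}$-norms of derivatives of $\psi$ of order at most $m-1$, which are finite by the inductive hypothesis with base case $\|\nabla\psi\|_{\Cont^{0,\theta}}<\infty$; hence $|\nabla^{m}\psi|_{\Cont^{0,\theta}}<\infty$, so that $\psi\in\Cont^{m,\theta}(\OmegaT,\OmegaS)$.

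The main obstacle is the bookkeeping in the last step: one has to check that, once the H\"older difference has been distributed, the residual products of $\Cont^{0}$- (respectively $\Cont^{0,\theta}$-) norms of the inner factors $\nabla^{|P|}\psi$, summed with their multiplicities over all partitions $\calP\in\calP(m,k)$, reassemble precisely into $B_{m,k}$ evaluated at the stated arguments. This is the counting already used in the proof of the Fa\`{a}~di~Bruno formula, but now one must also track the extra outer factor $\nabla\psi$ and the fact that composing with $\psi$ costs only a Lipschitz — not a H\"older — power; this is exactly why $|\psi|_{\Cont^{0,1}}^{\theta}$ (rather than $|\psi|_{\Cont^{0,\theta}}$) occurs and why the estimate requires $\psi$ to be Lipschitz.
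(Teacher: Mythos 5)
Your proposal is correct and follows essentially the same route as the paper: it starts from the recursive formula of Example~\ref{example:inversefunction:multivariate}, distributes the H\"older difference over the factors via a telescope/Leibniz argument, absorbs the composed factor $\nabla^{k}\phi\circ\phi^{-1}$ through the Lipschitz bound giving the power $|\phi^{-1}|_{\Cont^{0,1}}^{\theta}$, and collapses the partition sums into the partial Bell polynomials exactly as in Section~\ref{sec:chainrule}. Your explicit induction for the final claim only spells out what the paper leaves implicit.
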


\begin{proof}
    Let $v_1,\ldots,v_m \in Y$. 
    Recalling Example~\ref{example:inversefunction:multivariate},
    the assumptions imply that $\phi^{-1} \in \Cont^{m}(\OmegaT,\OmegaS)$ and that the $m$-th derivative of the inverse function satisfies 
    \begin{align*}
        \nabla^{m}\phi^{-1}_{|y}(v_1,\ldots,v_m)
        =
        -
        \nabla \phi^{-1}_{|y}
        \sum_{ \substack{ 2 \leq k \leq m \\ \calP \in \calP(m,k) } }
        \nabla^{k} \phi_{|\phi^{-1}(y)}
        \left( 
            \nabla^{ {P}_1 }\phi^{-1}_{|y}, 
            \nabla^{ {P}_2 }\phi^{-1}_{|y}, 
            \ldots, 
            \nabla^{ {P}_k }\phi^{-1}_{|y}
        \right)
        .
    \end{align*} 
    Let $y, y' \in \OmegaT$. 
    We apply a telescope sum and find 
    \begin{align*}
        &
        -
        \nabla^{m}\phi^{-1}_{|y }(v_1,\ldots,v_m)
        +
        \nabla^{m}\phi^{-1}_{|y'}(v_1,\ldots,v_m)
        \\&\quad
        =
        \left( 
            \nabla \phi^{-1}_{|y} - \nabla \phi^{-1}_{|y'}
        \right)
        \sum_{ \substack{ 2 \leq k \leq m \\ \calP \in \calP(m,k) } }
        \nabla^{k} \phi_{|\phi^{-1}(y)}
        \left( 
            \nabla^{ {P}_1 }\phi^{-1}_{|y}, 
            \nabla^{ {P}_2 }\phi^{-1}_{|y}, 
            \ldots, 
            \nabla^{ {P}_k }\phi^{-1}_{|y}
        \right)
        \\&\quad\quad\quad 
        +
        \nabla \phi^{-1}_{|y'}
        \sum_{ \substack{ 2 \leq k \leq m \\ \calP \in \calP(m,k) } }
        \left( 
            \nabla^{k} \phi_{|\phi^{-1}(y)}
            -
            \nabla^{k} \phi_{|\phi^{-1}(y')}
        \right)
        \left( 
            \nabla^{ {P}_1 }\phi^{-1}_{|y}, 
            \nabla^{ {P}_2 }\phi^{-1}_{|y}, 
            \ldots, 
            \nabla^{ {P}_k }\phi^{-1}_{|y}
        \right)
        \\&\quad\quad\quad 
        +
        \nabla \phi^{-1}_{|y'}
        \sum_{ \substack{ 2 \leq k \leq m \\ \calP \in \calP(m,k) } }
        \nabla^{k} \phi_{|\phi^{-1}(y')}
        \left( 
            \nabla^{ {P}_1 }\phi^{-1}_{|y} - \nabla^{|{P}_1|}\phi^{-1}_{|y'}, 
            \nabla^{ {P}_2 }\phi^{-1}_{|y}, 
            \ldots, 
            \nabla^{ {P}_k }\phi^{-1}_{|y}
        \right)
        \\&\quad\quad\quad\quad\quad 
        \vdots 
        \\&\quad\quad\quad 
        +
        \nabla \phi^{-1}_{|y'}
        \sum_{ \substack{ 2 \leq k \leq m \\ \calP \in \calP(m,k) } }
        \nabla^{k} \phi_{|\phi^{-1}(y')}
        \left( 
            \nabla^{ {P}_1 }\phi^{-1}_{|y'}, 
            \nabla^{ {P}_2 }\phi^{-1}_{|y'}, 
            \ldots, 
            \nabla^{ {P}_k }\phi^{-1}_{|y} - \nabla^{|{P}_k|}\phi^{-1}_{|y'}
        \right)
        .
    \end{align*} 
    Note furthermore that 
    \begin{align*}
        \frac{ 
            \nabla^{k} \phi_{|\phi^{-1}(y)}
            - 
            \nabla^{k} \phi_{|\phi^{-1}(y')}
        }{
            \dist(y,y')^{\theta}
        }
=
\dfrac{
            \nabla^{k} \phi_{|\phi^{-1}(y)}
            - 
            \nabla^{k} \phi_{|\phi^{-1}(y')}
        }{
            \dist(\phi^{-1}(y),\phi^{-1}(y'))^{\theta}
        }
        \cdot 
        \frac{
            \dist(\phi^{-1}(y),\phi^{-1}(y'))^{\theta}
        }{
            \dist(y,y')^{\theta}
        }
        .
    \end{align*}
    We use this last observation, 
    the triangle inequality applied to the telescope sum above,
    and arguments like the ones in Section~\ref{sec:chainrule}.
    This completes the proof. 
\end{proof}

\begin{remark}
    An easy consequence of Proposition~\ref{proposition:inversefunctiontheorem:hoelder} resembles the inverse mapping theorem.
    Suppose that $\phi \in \Cont^{m,\theta}(\OmegaS,\OmegaT)$ is invertible at $x \in \OmegaS$ and has an invertible Jacobian there. 
    If $m \geq 2$, then then this Proposition ensures that there exists a neighborhood $U$ of $x$ such that the restriction $\phi_{|U}$ is invertible 
    with inverse $\phi^{-1} \in \Cont^{m,\theta}(\phi(U),\OmegaS)$. 

    It is instructive to consider the interesting special case when $\OmegaS$ and $\OmegaT$ are bounded with, say, smooth boundary. 
    Since the domain is bounded and has regular boundary,  
    $\left\| \phi \right\|_{\Cont^{m}(\OmegaS)}$ and $\left\| \nabla^m \phi \right\|_{\Cont^{0,\theta}(\OmegaS)}$ being finite
    implies that $\left\| \phi \right\|_{\Cont^{m,\theta}(\OmegaS)}$ is finite. 
    
    Now, if $\phi$ is invertible and the Jacobian $\nabla\phi$ has bounded inverse,
    then the calculations in Example~\ref{example:inversefunction:multivariate} show that 
    $\left\| \phi^{-1} \right\|_{\Cont^{m}(\OmegaT)}$ is finite. 
    The regularity of the boundary and the domain's bounded diameter 
    then imply that $\left| \phi^{-1} \right|_{\Cont^{0,1}(\OmegaT)}$
    and $\left| \nabla\phi^{-1} \right|_{\Cont^{0,\theta}(\OmegaT)}$ are finite. 
It is now Proposition~\ref{proposition:inversefunctiontheorem:hoelder} that shows 
    $\phi^{-1} \in \Cont^{m,\theta}(\OmegaT,\OmegaS)$ and constructs an explicit upper bound for 
    $\left\| \phi^{-1} \right\|_{\Cont^{m,\theta}(\OmegaT)}$.
    If we resolve the recursive estimates used here, 
    then this last upper bound is only in terms of the $\Cont^{m,\theta}$-norm of $\phi$ and an upper bound for the inverse of its Jacobian.
    
    Applying this result requires $m \geq 2$. 
    Proposition~\ref{proposition:inversefunctiontheorem:hoelder} does not establish that mere $\Cont^{1,\theta}$-regularity of $\phi$
    implies the same for its inverse. However, a local version of such a result is easily seen:
    if $\phi \in \Cont^{1,\theta}(\OmegaS,\OmegaT)$, 
    then \[
        \nabla \phi^{-1}_{|y}
        =
        (\nabla \phi_{|\phi^{-1}(y)})^{-1}
        =
        \det( \nabla \phi_{|\phi^{-1}(y)} )^{-1}
        \adj( \nabla \phi_{|\phi^{-1}(y)} ) 
\] for all $y \in \OmegaT$. Here, we use Cramer's rule. 
    We use the last identity in the previous together with the fact that matrix determinant and adjunct matrix is differentiable over the open set of regular matrices 
    to see that $\phi^{-1}$ is $\Cont^{1,\theta}$-regular in a neighborhood of $y$.
\end{remark}

We now review Lebesgue and Sobolev-Slobodeckij spaces. 
We write $\Lebesgue^{p}(\OmegaS,\VEC)$ for the Lebesgue space of $\VEC$-valued fields over $\OmegaS$ to the integrability exponent $p \in [1,\infty]$. 
This is a Banach space equipped with the norm ${\| \cdot \|}_{\Lebesgue^{p}(\OmegaS)}$, defined as follows: 
\begin{align}
    {\| {u} \|}_{\Lebesgue^{\infty}(\OmegaS,\VEC)}
    &:=
    \esssup_{ x \in \OmegaS} |u(x)|_{\VEC}
    ,
    \\
    {\| {u} \|}_{\Lebesgue^{p}(\OmegaS,\VEC)}
    &:=
    \left( \int_{\OmegaS} |u(x)|^{p}_{\VEC} \,\dif x \right)^{\frac 1 p},
    \quad 
    p \in [1,\infty)
    .
\end{align}
In what follows, we may drop the notation of the normed space $\VEC$ when there is no danger of ambiguity.

For any $\theta \in (0,1)$ and $p \in [1,\infty]$ we define the Sobolev-Slobodeckij seminorm 
\begin{align}\label{math:slobodeckijnormseminorm}
    |u|_{\Sobolev^{\theta,p}(\OmegaS,\VEC)}
    =
    \left\| 
    | u(x) - u(y) |_{\VEC}
\dist( x, y )^{-\theta - \frac N p}
    \right\|_{\Lebesgue^{p}(\OmegaS\times\OmegaS)}
    , \quad 
    {u} \in \Lebesgue^{p}(\OmegaS,\VEC)
    .
\end{align}
The subspace of $\Lebesgue^{p}(\OmegaS,\VEC)$ over which~\eqref{math:slobodeckijnormseminorm} is finite constitutes a Banach space. 

We discuss weak derivatives.~\footnote{Throughout this manuscript, all integrals are to be understood in the sense of Bochner integrals.} 
Let $\Lebesgue^{1}_{loc}(\OmegaS,\VEC)$ be the vector space of measurable functions that are absolutely integrable over compact subsets of $\OmegaS$. 
We say that $u \in \Lebesgue^{1}_{loc}(\OmegaS,\VEC)$ has a weak derivative $w \in \Lebesgue^{1}_{loc}(\OmegaS,\LIN^m(X,\VEC))$ of order $m$ if for all compactly supported smooth functions $\test \in \Cont^\infty_{c}(\OmegaS,\VEC^\ast)$ that take values in the dual space of $\VEC$ we have the identity 
\begin{align}
    \int_\OmegaS 
    \langle u_{|x}, \nabla_{v_1,\ldots,v_m} \test_{|x} \rangle 
    \,\dif{x}
    =
    -
    \int_\OmegaS 
    \langle w_{|x}( v_1,\ldots,v_m ), \test_{|x} \rangle 
    \,\dif{x}
    ,
    \quad 
    v_1,\ldots,v_m \in X
    .
\end{align}
We then write $\nabla^m u := w$ for this weak derivative. 
We write $\nabla_{v_1,\ldots,v_m} u := \nabla^m u( v_1,\ldots,v_m )$ for any $v_1,\ldots,v_m \in X$.

We write $\Sobolev^{m,p}(\OmegaS,\VEC)$ for the \emph{Sobolev space} of measurable $\VEC$-valued fields over $\OmegaS$ 
for which all distributional derivatives up to order $m$ are in $\Lebesgue^{p}(\OmegaS,\VEC)$,
where $m \in \mathbb N_0$ and $p \in [1,\infty]$.
For every $u \in \Sobolev^{m,p}(\OmegaS,\VEC)$ we define 
\begin{align}\label{math:sobolevnormen}
    \| {u} \|_{\Sobolev^{m,p}(\OmegaS,\VEC)} 
    :=
    \sum_{ k = 0 }^{m}
    \| \nabla^{k} {u} \|_{\Lebesgue^{p}(\OmegaS)}
    ,
    \quad 
    | {u} |_{\Sobolev^{m,p}(\OmegaS,\VEC)} 
    :=
    \| \nabla^{m} {u} \|_{\Lebesgue^{p}(\OmegaS)}
    ,
\end{align}
which are a norm and a seminorm over the Sobolev space. 
The Sobolev spaces together with these norms are Banach spaces.

When $m \in \bbN_{0}$ and $\theta \in (0,1)$,
then the \emph{Sobolev-Slobodeckij} space $\Sobolev^{m+\theta,p}(\OmegaS,\VEC)$ 
is the subspace of functions in $\Sobolev^{m,p}(\OmegaS,\VEC)$ 
whose weak derivatives up to order $m$ have bounded Sobolev-Slobodeckij seminorms $|\cdot|_{\Sobolev^{\theta,p}(\OmegaS,\VEC)}$. 
This is a Banach space with respect to the norm 
\begin{align}
\| {u} \|_{\Sobolev^{m+\theta,p}(\OmegaS,\VEC)} 
    :=
    \sum_{ k = 0 }^{m}
    \| \nabla^{k} {u} \|_{\Lebesgue^{p}(\OmegaS)}
    + 
    | \nabla^{k} {u} |_{\Sobolev^{\theta,p}(\OmegaS)},
    \quad 
    u \in \Sobolev^{m+\theta,p}(\OmegaS,\VEC)
    . 
\end{align}
Sobolev-Slobodeckij spaces generalize H\"older spaces. 
We remark that without further assumptions on the domain, 
not all weakly differentiable functions have finite Sobolev-Slobodeckij norms~\footnote{It seems that ``\emph{Sobolev-Slobodeckij space}'' is more common than the alphabetical ``\emph{Slobodeckij-Sobolev space}''.}. 
\\

We prove an auxiliary estimate that addresses the transformation of functions in Lebesgue spaces and lowest-order Sobolev-Slobodeckij spaces along sufficiently regular coordinate transformations. 
We only need to recall standard arguments and state this for completeness of the exposition. 

\begin{lemma}\label{lemma:transformationssatz:lp}
Let $\OmegaS, \OmegaT \subseteq \bbR^{N}$ be open sets, and let $\VEC$ be a Banach space.
    Suppose that $\phi : \OmegaS \rightarrow \OmegaT$ is continuous and has a locally Lipschitz inverse $\phi^{-1} : \OmegaT \rightarrow \OmegaS$. 
    If  $p \in [1,\infty]$ and $u \in \Lebesgue^{p}(\OmegaT,\VEC)$,
    then $u \circ \phi \in \Lebesgue^{p}(\OmegaS,\VEC)$ with 
    \begin{align}\label{math:transformationssatz:lp:scalar}
        \left\| 
            u_{|\phi}
        \right\|_{\Lebesgue^{p}(\OmegaS,\VEC)}
        \leq 
        \left\| 
            {u} 
        \right\|_{\Lebesgue^{p}(\OmegaT,\VEC)}
        \left\| 
            \det \nabla \phi^{-1}
        \right\|_{\Lebesgue^{\infty}(\OmegaT)}^{\frac 1 p}
        .
    \end{align}
    Suppose in addition that $\phi$ is locally Lipschitz. 
    If  $p \in [1,\infty]$ and $u \in \Lebesgue^{p}(\OmegaT,\LIN^{d}(Y,\VEC))$,
    then $\phi^\ast u \in \Lebesgue^{p}(\OmegaS,\LIN^{d}(X,\VEC))$ with 
    \begin{align}\label{math:transformationssatz:lp:tensor}
        \left\| 
            \phi^\ast u
        \right\|_{\Lebesgue^{p}(\OmegaS,\VEC)}
        \leq 
        \left\| 
            {u} 
        \right\|_{\Lebesgue^{p}(\OmegaT,\VEC)}
        \left\| 
            \det \nabla \phi^{-1}
        \right\|_{\Lebesgue^{\infty}(\OmegaT)}^{\frac 1 p}
        \left\| 
            \nabla \phi
        \right\|_{\Lebesgue^{\infty}(\OmegaS )}^{d}
        .
    \end{align}
    
\end{lemma}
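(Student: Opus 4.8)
The plan is to reduce both estimates to the classical change-of-variables formula for the locally Lipschitz bijection $\phi^{-1}$, and then, in the tensor case, to a pointwise contraction bound for the pullback.

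First I would treat the scalar inequality~\eqref{math:transformationssatz:lp:scalar}. Since $u$ is strongly measurable and $\phi$ is continuous with a locally Lipschitz inverse --- so that $\phi^{-1}$ maps Lebesgue-null sets to Lebesgue-null sets --- the composition $u_{|\phi} = u \circ \phi$ is strongly measurable. By Rademacher's theorem $\phi^{-1}$ is differentiable almost everywhere, and I would invoke the area formula for Lipschitz maps, applied on an exhaustion of $\OmegaT$ by open sets with compact closure on which $\phi^{-1}$ is Lipschitz and then passing to the limit by monotone convergence, to obtain together with the injectivity of $\phi^{-1}$ the identity
\begin{align*}
    \int_{\OmegaS} g(x) \,\dif x
    =
    \int_{\OmegaT} g\bigl( \phi^{-1}(y) \bigr) \, \bigl| \det \nabla \phi^{-1}_{|y} \bigr| \,\dif y ,
\end{align*}
valid for every measurable $g : \OmegaS \to [0,\infty]$. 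For $p \in [1,\infty)$ I would then choose $g(x) := |u_{|\phi}(x)|_{\VEC}^{p}$, so that $g(\phi^{-1}(y)) = |u(y)|_{\VEC}^{p}$, and bound $|\det \nabla \phi^{-1}_{|y}|$ by its essential supremum to get
\begin{align*}
    \int_{\OmegaS} |u_{|\phi}(x)|_{\VEC}^{p} \,\dif x
    \leq
    \bigl\| \det \nabla \phi^{-1} \bigr\|_{\Lebesgue^{\infty}(\OmegaT)}
    \int_{\OmegaT} |u(y)|_{\VEC}^{p} \,\dif y ,
\end{align*}
and raising to the power $1/p$ yields~\eqref{math:transformationssatz:lp:scalar}. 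For $p = \infty$, where the exponent $1/p$ is to be read as $0$, the claimed inequality is $\| u_{|\phi} \|_{\Lebesgue^{\infty}(\OmegaS)} \leq \| u \|_{\Lebesgue^{\infty}(\OmegaT)}$; this holds because the superlevel set $E := \{ y \in \OmegaT : |u(y)|_{\VEC} > \| u \|_{\Lebesgue^{\infty}(\OmegaT)} \}$ is Lebesgue-null, hence so is its image $\phi^{-1}(E) = \{ x \in \OmegaS : |u_{|\phi}(x)|_{\VEC} > \| u \|_{\Lebesgue^{\infty}(\OmegaT)} \}$ under the locally Lipschitz map $\phi^{-1}$.

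For the tensor inequality~\eqref{math:transformationssatz:lp:tensor} I would additionally use the local Lipschitz regularity of $\phi$, so that $\nabla \phi$ exists almost everywhere and is measurable. Writing the pullback as the tensor contraction $\phi^{\ast} u_{|x} = u_{\phi(x)} \contract \bigl( \otimes^{d} \nabla \phi_{|x} \bigr)$, the contraction inequality~\eqref{math:contractioninequality} and the cross norm property of the projective norm give the pointwise bound
\begin{align*}
    \bigl| \phi^{\ast} u_{|x} \bigr|
    \leq
    \bigl| u_{\phi(x)} \bigr| \cdot \bigl| \nabla \phi_{|x} \bigr|^{d}
    \leq
    \bigl\| \nabla \phi \bigr\|_{\Lebesgue^{\infty}(\OmegaS)}^{d} \, \bigl| u_{\phi(x)} \bigr|
\end{align*}
for almost every $x \in \OmegaS$; in particular $\phi^{\ast} u$ is strongly measurable. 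Applying the already proved scalar estimate~\eqref{math:transformationssatz:lp:scalar} to the real-valued field $y \mapsto | u_{|y} |_{\LIN^{d}(Y,\VEC)} \in \Lebesgue^{p}(\OmegaT)$ then gives
\begin{align*}
    \bigl\| \phi^{\ast} u \bigr\|_{\Lebesgue^{p}(\OmegaS)}
    \leq
    \bigl\| \nabla \phi \bigr\|_{\Lebesgue^{\infty}(\OmegaS)}^{d} \, \bigl\| u_{|\phi} \bigr\|_{\Lebesgue^{p}(\OmegaS)}
    \leq
    \bigl\| \nabla \phi \bigr\|_{\Lebesgue^{\infty}(\OmegaS)}^{d} \, \bigl\| u \bigr\|_{\Lebesgue^{p}(\OmegaT)} \, \bigl\| \det \nabla \phi^{-1} \bigr\|_{\Lebesgue^{\infty}(\OmegaT)}^{1/p} ,
\end{align*}
which is~\eqref{math:transformationssatz:lp:tensor}.

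The one step I expect to require genuine care is the justification of the change-of-variables identity for the merely \emph{locally} Lipschitz bijection $\phi^{-1}$ over a domain $\OmegaT$ that need not be bounded or have regular boundary, and in particular the absence of any multiplicity factor on the right-hand side. This rests on the area formula for Lipschitz maps together with the injectivity of $\phi^{-1}$ and a routine exhaustion-plus-monotone-convergence argument; all remaining steps are elementary.
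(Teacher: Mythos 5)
Your argument is correct and follows essentially the same route as the paper's proof: the change-of-variables identity with $|\det\nabla\phi^{-1}|$ applied to $|u|^{p}$ gives the scalar bound, and the pointwise contraction estimate $|\phi^{\ast}u_{|x}| \leq (|u|\circ\phi)(x)\,|\nabla\phi_{|x}|^{d}$ reduces the tensor case to the scalar one. The extra detail you supply (area formula with exhaustion for the locally Lipschitz inverse, and the null-superlevel-set argument for $p=\infty$) merely fills in steps the paper treats as standard.
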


\begin{proof}
    Note that $\phi^{-1}$ maps sets of measure zero to sets of measure zero. 
    Inequality~\eqref{math:transformationssatz:lp:scalar} is evident in the case $p = \infty$. 
    In the case $p < \infty$, we recall that the coordinates of $\phi^{-1}$ have weak derivatives that are locally essentially bounded, due to Rademacher's theorem. 
    We recall the integral identity 
    \begin{align*}
        \int_{\OmegaS}
        g(\phi(x))
        \,\dif x
        =
        \int_{\OmegaT}
        g(y)  |\det\nabla\phi^{-1}_{|y}|
        \,\dif y,
        \quad 
        g \in \Lebesgue^1(\OmegaT,\bbR)
        .
    \end{align*}
    Whenever $p < \infty$ and $u \in \Lebesgue^{p}(\OmegaT,\VEC)$,
    then applying this identity to $g = |u|_{\VEC}^{p} \in \Lebesgue^1(\OmegaT,\VEC)$ gives~\eqref{math:transformationssatz:lp:scalar}.
    
    For the second part of the proof,
    we suppose that $u \in \Lebesgue^{p}(\OmegaT,\LIN^{d}(Y,\VEC))$. 
    Assuming that $\phi$ is locally Lipschitz, 
    the coordinates of $\phi$ have weak derivatives that are locally essentially bounded.
    We have $\phi^\ast u_{|x} = u_{|\phi(x)} \contract {\otimes}^d \nabla\phi_{|x}$ for almost all $x \in \OmegaS$. 
    It follows that almost everywhere, 
    \begin{align}\label{math:auxiliary:pointwise}
        |\phi^\ast u_{|x}| \leq ( |u| \circ \phi )(x) \cdot |\nabla\phi_{|x}|^d
        .
    \end{align}
    Applying~\eqref{math:transformationssatz:lp:scalar} to the function $|u| \in \Lebesgue^{p}(\OmegaT,\bbR)$, we get~\eqref{math:transformationssatz:lp:tensor}.
    The proof is complete. 
\end{proof}

We are in a position to describe the higher-order chain rule for the weak derivatives of functions in Sobolev spaces.

\begin{proposition}[Weak Derivative of Pullback]\label{prop:weakderivativesofpullback}
Let $\OmegaS, \OmegaT \subseteq \bbR^{N}$ be open sets, and let $\VEC$ be a Banach space.
Let $m \in \bbN$. 
    Suppose that $\phi : \OmegaS \rightarrow \OmegaT$ is locally bi-Lipschitz. 
Suppose that the weak derivatives $\nabla\phi, \dots, \nabla^{m+1}\phi$ are essentially bounded. 
    If $u \in \Lebesgue^{1}_{loc}(\OmegaT,\LIN^d(Y,\VEC))$ has weak derivatives up to order $m$, 
    then $\phi^\ast u \in \Lebesgue^{1}_{loc}(\OmegaS,\LIN^d(X,\VEC))$ has weak derivatives up to order $m$,
    and almost everywhere 
    \begin{align}\label{math:weakderivativeofpullback}
        \begin{aligned}
            & 
            \nabla_{v_1,v_2,\ldots,v_m} 
            \left( \phi^{\ast} {u} \right)_{|x}
            \\&\quad 
            =
            \sum_{ \substack{ {P} \in \calP_{0}(m,d) \\ 0 \leq k \leq |{P}_{0}| \\ \calC \in \calP({P}_{0},k) } }
            \nabla^{k} {u}_{|\phi(x)}
            \contract
            \left( 
                \nabla_{ C_1 }\phi_{} \otimes \cdots \otimes \nabla_{ C_k }\phi_{}
                \otimes 
                \nabla_{ P_1 }\nabla\phi_{} \otimes \cdots \otimes \nabla_{ P_d }\nabla\phi_{}
            \right)
            .
        \end{aligned}
    \end{align}
    If $d = 0$, then it suffices to assume that the weak derivatives $\nabla\phi, \dots, \nabla^{m}\phi$ are essentially bounded. 
\end{proposition}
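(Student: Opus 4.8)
The plan is to localize the claim, use a Sobolev embedding to upgrade the bi-Lipschitz map $\phi$ to a classically $\Cont^{m}$ map, mollify $u$, apply the pointwise formula \eqref{math:derivativeofpullback} to the smooth data, and pass to the limit. Since the statement is local, it suffices to fix an open ball $B$ with $\overline{B}\subset\OmegaS$ on which $\phi$ is bi-Lipschitz and to verify the defining identity of the weak derivative for test functions supported in $B$. On $\overline{B}$ the weak derivatives $\nabla\phi,\dots,\nabla^{m+1}\phi$ are essentially bounded, i.e.\ $\phi\in\Sobolev^{m+1,\infty}(B)$, and the embedding $\Sobolev^{m+1,\infty}(B)\hookrightarrow\Cont^{m,1}(\overline{B})$ gives that $\phi$ has continuous classical derivatives up to order $m$ with $\nabla^{m}\phi$ Lipschitz on $\overline{B}$ (so $\nabla^{m+1}\phi$ exists almost everywhere and is bounded); moreover $\phi^{-1}$ is Lipschitz on $\phi(\overline{B})$, whence $\det\nabla\phi^{-1}\in\Lebesgue^{\infty}(\phi(\overline{B}))$. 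I would then pick a ball $B'$ with $\phi(\overline{B})\subset B'\Subset\OmegaT$ and set $u_{\eps}:=u\ast\rho_{\eps}\in\Cont^{\infty}(B')$ for small $\eps$, so that $\nabla^{k}u_{\eps}\to\nabla^{k}u$ in $\Lebesgue^{1}(B')$ for all $0\le k\le m$.

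Next, since $u_{\eps}$ is smooth and $\phi\in\Cont^{m,1}(\overline{B})$, the tensor field $\phi^{\ast}u_{\eps}=(u_{\eps}\circ\phi)\contract\otimes^{d}\nabla\phi$ belongs to $\Sobolev^{m,1}(B)$ and, at almost every $x\in B$, its weak $m$-th derivative equals the pointwise chain-and-Leibniz expansion \eqref{math:derivativeofpullback} with $u$ replaced by $u_{\eps}$ --- this is exactly the computation of Section~\ref{sec:pullbacks}, carried out at the (almost every) point where $\phi$ is $(m+1)$-times differentiable. Therefore, for all $v_{1},\dots,v_{m}\in\bbR^{N}$ and all admissible test functions $\test$ supported in $B$,
\begin{align*}
    \int_{B}\langle(\phi^{\ast}u_{\eps})_{|x},\nabla_{v_{1},\dots,v_{m}}\test_{|x}\rangle\,\dif x
    =
    (-1)^{m}\int_{B}\langle w^{\eps}_{|x}(v_{1},\dots,v_{m}),\test_{|x}\rangle\,\dif x,
\end{align*}
where $w^{\eps}$ denotes the right-hand side of \eqref{math:weakderivativeofpullback} formed from $u_{\eps}$. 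Let $\eps\to0$. On the left, $|\phi^{\ast}u_{\eps}-\phi^{\ast}u|\le(|u_{\eps}-u|\circ\phi)\,|\nabla\phi|^{d}$ pointwise, and the change-of-variables estimate of Lemma~\ref{lemma:transformationssatz:lp} forces the left integrand to converge in $\Lebesgue^{1}(B)$. On the right, the same estimate applied to $|\nabla^{k}u_{\eps}-\nabla^{k}u|$ shows $\nabla^{k}u_{\eps}\circ\phi\to\nabla^{k}u\circ\phi$ in $\Lebesgue^{1}(B)$ for each $k$; multiplying by the $\eps$-independent, essentially bounded factors built from $\nabla\phi,\dots,\nabla^{m+1}\phi$ and by the bounded $\test$, each summand of $w^{\eps}$ converges in $\Lebesgue^{1}(B)$ to the corresponding summand of the $w$ in \eqref{math:weakderivativeofpullback}. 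Passing to the limit gives the defining identity, and $w\in\Lebesgue^{1}_{loc}(\OmegaS)$ since every term is a product of $\nabla^{k}u\circ\phi\in\Lebesgue^{1}_{loc}$ with $\Lebesgue^{\infty}_{loc}$ factors.

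Because $B$ was arbitrary, $\phi^{\ast}u$ has a weak $m$-th derivative on $\OmegaS$ equal to \eqref{math:weakderivativeofpullback}; running the same argument with $m$ replaced by each $m'\in\{1,\dots,m\}$ (whose hypotheses are weaker) yields all weak derivatives up to order $m$, and the case $d=0$ is identical save that no Jacobian factor occurs, so only $\nabla\phi,\dots,\nabla^{m}\phi$ enter. I expect the main obstacle to lie in the second paragraph's assertion that the intermediate object $\phi^{\ast}u_{\eps}$ genuinely belongs to $\Sobolev^{m,1}(B)$ with top weak derivative given by the naive almost-everywhere formula \eqref{math:derivativeofpullback}: this comes down to the chain and Leibniz rules for the composition and product of a $\Cont^{\infty}$ map with a $\Cont^{m,1}$ map, which one settles by checking via the Fa\`{a}~di~Bruno formula that $\nabla^{m}(u_{\eps}\circ\phi)$ is locally bounded and locally Lipschitz, hence possesses a weak derivative of one further order that agrees almost everywhere with the classical expression.
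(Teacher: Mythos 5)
Your proposal is correct, but it resolves the crux differently from the paper. The paper treats the roughness of $\phi$ by approximation: it first proves the identity for smooth $u$ by mollifying $\phi$ itself (smooth $\phi_{\eps}$ with $\nabla^{k}\phi_{\eps}\rightarrow\nabla^{k}\phi$ almost everywhere, classical calculus for each $\eps$, dominated convergence), and only then mollifies $u$, transferring the limit through a change of variables to $\OmegaT$ and back. You instead never regularize $\phi$: you invoke the local identification $\Sobolev^{m+1,\infty}\cong\Cont^{m,1}$ on a ball, so that the case of smooth $u_{\eps}$ composed with the rough $\phi$ is handled directly by the classical chain rule up to order $m$, Rademacher differentiability of $\nabla^{m}\phi$, and a weak Leibniz/chain rule for products and compositions of $\Cont^{m,1}$ and $\Sobolev^{m,\infty}_{loc}$ fields; then you mollify only $u$ and pass to the limit with the same change-of-variables $\Lebesgue^{1}$ estimate (Lemma~\ref{lemma:transformationssatz:lp}) that the paper also uses. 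The trade-off is clear: your route avoids the double approximation and the bookkeeping of almost-everywhere convergence of $\nabla^{k}\phi_{\eps}$ (including the order-$(m+1)$ derivatives appearing in the expanded formula), but it leans on the intermediate claim that $\phi^{\ast}u_{\eps}\in\Sobolev^{m,1}(B)$ with top weak derivative given by the almost-everywhere classical expression; you correctly flag this as the main obstacle, and your sketch (lower-order derivatives of the composition are locally Lipschitz, products of $\Sobolev^{m,\infty}_{loc}$ factors obey the Leibniz rule weakly) is the standard and adequate way to close it, whereas the paper's mollification of $\phi$ sidesteps that lemma entirely by keeping all pointwise calculus on smooth maps.
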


\begin{proof}
    Let $\test \in \Cont^{\infty}_{c}( \OmegaS, {\otimes}^d X \otimes \VEC^{\ast} )$ be a smooth and compactly supported function.
    We will use that there exists an open set $\OmegaS_{0}$ that compactly contained in $\OmegaS$, has smooth boundary, 
    and such that the support of $\test$ is compactly contained in $\OmegaS_{0}$.

    Since $\phi \in \Sobolev^{m+1,\infty}(\OmegaS_{0},\OmegaT)$, 
    there exists a family of smooth functions $\phi_{\eps} : \OmegaS_{0} \rightarrow \OmegaT$
    such that $\nabla^k \phi_{\eps} \rightarrow \nabla^{k} \phi$ almost everywhere over $\OmegaS_{0}$ as $\eps$ goes to zero 
    for all $0 \leq k \leq m$. 
For example, we can use local mollification.

    First, consider the case that $u$ is smooth. 
    Then ${u}_{\eps|\phi(x)}$ converges to ${u}_{|\phi(x)}$ almost everywhere over $\supp\test$. 
    By the dominated convergence theorem, 
    \begin{align*}
        &
        \int_{\OmegaS}
        \left\langle \phi^{\ast} {u}_{|x}, \nabla_{v_1,v_2,\ldots,v_m} {\test}_{|x} \right\rangle
        \,\dif{x}
=
        \int_{\OmegaS_{0}}
        \left\langle {u}_{|\phi(x)} \contract \otimes^{d} \nabla\phi_{|x}, \nabla_{v_1,v_2,\ldots,v_m} {\test}_{|x} \right\rangle
        \,\dif{x}
        \\&\quad 
        =
        \lim\limits_{ \epsilon \rightarrow 0 }
        \int_{\OmegaS_{0}}
        \left\langle {u}_{\eps|\phi(x)} \contract \otimes^{d} \nabla\phi_{\eps|x}, \nabla_{v_1,v_2,\ldots,v_m} {\test}_{|x} \right\rangle
        \,\dif{x}
=
        \lim\limits_{ \epsilon \rightarrow 0 }
        \int_{\OmegaS_{0}}
        \left\langle \phi_{\eps}^{\ast} {u}_{|x}, \nabla_{v_1,v_2,\ldots,v_m} {\test}_{|x} \right\rangle
        \,\dif{x}
        .
    \end{align*}
    For any $\eps > 0$, we get 
    \begin{align*}
        &
        (-1)^{m}
        \int_{\OmegaS_{0}}
        \left\langle \phi_{\eps}^{\ast} {u}_{|x}, \nabla_{v_1,v_2,\ldots,v_m} {\test}_{|x} \right\rangle
        \,\dif{x}
        \\&
=
        \sum_{ \substack{ {P} \in \calP_{0}(m,d) \\ 0 \leq k \leq |{P}_{0}| \\ \calC \in \calP({P}_{0},k) } }
        \int_{\OmegaS_{0}}
        \left\langle 
            \nabla^{k} {u}_{|\phi_{\eps}(x)}
            \contract
            \left( 
                \nabla_{ C_1 }\phi_{\eps|x} \otimes \cdots \otimes \nabla_{ C_k }\phi_{\eps|x}
                \otimes 
                \nabla_{ P_1 }\nabla\phi_{\eps|x} \otimes \cdots \otimes \nabla_{ P_d }\nabla\phi_{\eps|x}
            \right)
        ,
        {\test}_{|x}
        \right\rangle
        \,\dif{x}
        .
    \end{align*}
    Letting $\eps$ go to zero, we again use the dominated convergence theorem, together with the assumption that $u$ is smooth, 
    and Equation~\eqref{math:weakderivativeofpullback} follows.
    
    Second, consider the case that ${u} \in \Sobolev^{m,1}(\OmegaT,\LIN^d(Y,\VEC))$.
    Then there exists a family $u_\eps \in \Cont^{\infty}(\OmegaT,\LIN^d(Y,\VEC))$
    that converges to $u$ within the space $\Sobolev^{m,1}(\phi(\supp\test),\LIN^d(Y,\VEC))$. 
    We apply the dominated convergence theorem, and find 
    \begin{align*}
        & 
        \int_{\OmegaS}
        \left\langle \phi^{\ast} {u}_{|x}, \nabla_{v_1,v_2,\ldots,v_m} {\test}_{|x} \right\rangle
        \,\dif{x}
        \\&\quad 
        =
        \int_{\OmegaS}
        \left\langle {u}_{|\phi(x)} \contract {\otimes}^d \nabla\phi_{|x}, \nabla_{v_1,v_2,\ldots,v_m} {\test}_{|x} \right\rangle
        \,\dif{x}
        \\&\quad 
        =
        \int_{\OmegaT}
        \left\langle {u}_{|y} \contract {\otimes}^d \nabla\phi_{|\phi^{-1}(y)}, (\nabla_{v_1,v_2,\ldots,v_m} {\test})_{|\phi^{-1}(y)} 
        \right\rangle
        | \det\nabla\phi^{-1}_{|y} |
        \,\dif{y}
        \\&\quad 
        =
        \lim_{ \eps \rightarrow 0 }
        \int_{\OmegaT}
        \left\langle {u}_{\eps|y} \contract {\otimes}^d \nabla\phi_{|\phi^{-1}(y)}, (\nabla_{v_1,v_2,\ldots,v_m} {\test})_{|\phi^{-1}(y)} 
        \right\rangle
        | \det\nabla\phi^{-1}_{|y} |
        \,\dif{y}
        \\&\quad 
        =
        \lim_{ \eps \rightarrow 0 }
        \int_{\OmegaS}
        \left\langle {u}_{\eps|\phi(x)} \contract {\otimes}^d \nabla\phi_{|x}, \nabla_{v_1,v_2,\ldots,v_m} {\test}_{|x} \right\rangle
        \,\dif{x}
        \\&\quad 
        =
        \lim_{ \eps \rightarrow 0 }
        \int_{\OmegaS}
        \left\langle \phi^{\ast} {u}_{\eps|x}, \nabla_{v_1,v_2,\ldots,v_m} {\test}_{|x} \right\rangle
        \,\dif{x}
        .
    \end{align*}
    For any $\epsilon > 0$, we use that $u_{\eps}$ is smooth and a change of variables: 
\begin{align*}
        &
        (-1)^{m}
        \int_{\OmegaS}
        \langle \phi^{\ast} {u}_{\epsilon|x}, \nabla_{v_1,v_2,\ldots,v_m} {\test}_{|x} \rangle
        \,\dif{x}
        \\&=
        \sum_{ \substack{ {P} \in \calP_{0}(m,d) \\ 0 \leq k \leq |{P}_{0}| \\ \calC \in \calP({P}_{0},k) } }
        \int_{\OmegaS}
        \left\langle 
            \nabla^{k} {u}_{\epsilon|\phi(x)}
            \contract
            \left( 
                \nabla_{ C_1 }\phi_{|x} \otimes \cdots \otimes \nabla_{ C_k }\phi_{|x}
                \otimes 
                \nabla_{ P_1 }\nabla\phi_{|x} \otimes \cdots \otimes \nabla_{ P_d }\nabla\phi_{|x}
            \right)
        ,
        {\test}_{|x}
        \right\rangle
        \,\dif{x}
        \\&=
        \sum_{ \substack{ {P} \in \calP_{0}(m,d) \\ 0 \leq k \leq |{P}_{0}| \\ \calC \in \calP({P}_{0},k) } }
        \int_{\OmegaS}
        \left\langle 
            \nabla^{k} {u}_{\epsilon|y}
            \contract
            \left( 
                \bigotimes_{i=1}^{k} \nabla_{ C_i }\phi_{|\phi^{-1}(y)}
                \otimes 
                \bigotimes_{i=1}^{d} \nabla_{ P_i }\phi_{|\phi^{-1}(y)}
\right)
        ,
        {\test}_{|\phi^{-1}(y)}
        \right\rangle
        | \det\nabla\phi^{-1}_{|y} |
        \,\dif{y}
        .
    \end{align*}
    Once more taking the limit, using the dominated convergence theorem, and undoing the change of coordinates, 
    we get 
    \begin{align*}
        &
        (-1)^{m}
        \int_{\OmegaS}
        \langle \phi^{\ast} {u}_{|x}, \nabla_{v_1,v_2,\ldots,v_m} {\test}_{|x} \rangle
        \,\dif{x}
        \\&=
        \sum_{ \substack{ {P} \in \calP_{0}(m,d) \\ 0 \leq k \leq |{P}_{0}| \\ \calC \in \calP({P}_{0},k) } }
        \int_{\OmegaS}
        \left\langle 
            \nabla^{k} {u}_{|\phi(x)}
            \contract
            \left( 
                \nabla_{ C_1 }\phi_{|x} \otimes \cdots \otimes \nabla_{ C_k }\phi_{|x}
                \otimes 
                \nabla_{ P_1 }\nabla\phi_{|x} \otimes \cdots \otimes \nabla_{ P_d }\nabla\phi_{|x}
            \right)
        ,
        {\test}_{|x}
        \right\rangle
        \,\dif{x}
        .
    \end{align*}
    This completes the proof.  
\end{proof}

Thus we have a representation of the weak derivatives of the pullback of locally integrable tensor fields. 
When the tensor fields are in Sobolev-Slobodeckij spaces, this implies the following estimates. 

\begin{corollary}\label{corollary:weakderivativeofpullback:sobolev}
    Let $\OmegaS, \OmegaT \subseteq \bbR^{N}$ be open sets, and let $\VEC$ be a Banach space.
    Let $m \in \bbN$. 
    Suppose that $\phi : \OmegaS \rightarrow \OmegaT$ is locally bi-Lipschitz.
Let $p \in [1,\infty]$.

    \noindent 
    If the weak derivatives $\nabla\phi, \dots, \nabla^{m+1}\phi$ are essentially bounded
    and $u \in \Sobolev^{m,p}(\OmegaT,\LIN^{d}(Y,\VEC))$,
    then $\phi^\ast u \in \Sobolev^{m,p}(\OmegaS,\LIN^{d}(X,\VEC))$ and 
\begin{align}
        \begin{aligned}
        \left\| 
            \nabla^{m} 
            \left( \phi^{\ast} {u} \right)
\right\|_{\Lebesgue^{p}(\OmegaS)}
        &
        \leq 
        \sum_{k=0}^{m}
        \|
            \nabla^{k} u_{|\phi}
        \|_{\Lebesgue^{p}(\OmegaS)}
\left\|
        B_{k,m,d}\left(
            | \nabla^{} \phi |,
\ldots,
            | \nabla^{m+1} \phi |
        \right)
        \right\|_{\Lebesgue^{\infty}(\OmegaS)}
        \\&
        \leq 
        \sum_{k=0}^{m}
        \|
            \nabla^{k} u_{|\phi}
        \|_{\Lebesgue^{p}(\OmegaS)}
B_{k,m,d}\left(
            \| \nabla^{   } \phi \|_{\Lebesgue^{\infty}(\OmegaS)},
\ldots,
            \| \nabla^{m+1} \phi \|_{\Lebesgue^{\infty}(\OmegaS)}
        \right)
        .
        \end{aligned}
    \end{align}
    If the weak derivatives $\nabla\phi, \dots, \nabla^{m}\phi$ are essentially bounded 
    and $u \in \Sobolev^{m,p}(\OmegaT,\VEC)$,
    then $u \circ \phi \in \Sobolev^{m,p}(\OmegaS,\VEC)$ and 
    \begin{align}
        \begin{aligned}
        \left\| 
        \nabla^{m} 
        \left( {u} \circ \phi \right)
        \right\|_{\Lebesgue^{p}(\OmegaS)}
        &
        \leq 
        \sum_{k=1}^{m}
        \|
            \nabla^{k} u_{|\phi}
        \|_{\Lebesgue^{p}(\OmegaS)}
\left\|
        B_{k,m}\left(
            | \nabla^{} \phi |,
\ldots,
            | \nabla^{m} \phi |
        \right)
        \right\|_{\Lebesgue^{\infty}(\OmegaS)}
        \\&
        \leq 
        \sum_{k=1}^{m}
        \|
            \nabla^{k} u_{|\phi}
        \|_{\Lebesgue^{p}(\OmegaS)}
B_{k,m}\left(
            \| \nabla^{ } \phi \|_{\Lebesgue^{\infty}(\OmegaS)},
\ldots,
            \| \nabla^{m} \phi \|_{\Lebesgue^{\infty}(\OmegaS)}
        \right)
        .
        \end{aligned}
    \end{align}
\end{corollary}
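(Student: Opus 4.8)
The plan is to combine the explicit formula for the weak derivatives of the pullback from Proposition~\ref{prop:weakderivativesofpullback} with the purely algebraic norm bookkeeping already carried out in the proof of Proposition~\ref{prop:pullbackoftensors}, and then pass to $\Lebesgue^{p}$-norms. First I would invoke Proposition~\ref{prop:weakderivativesofpullback}: under the stated hypotheses $\phi^{\ast}u$ has weak derivatives up to order $m$, and for almost every $x \in \OmegaS$ the derivative $\nabla^{m}(\phi^{\ast}u)_{|x}$ is given by the finite sum~\eqref{math:weakderivativeofpullback}, where now $\nabla^{k}u$ denotes the weak derivatives of $u$. The spectral-norm estimates used to pass from~\eqref{math:derivativeofpullback} to Proposition~\ref{prop:pullbackoftensors} — sub-multiplicativity of the projective/operator norm under $\otimes$, the contraction inequality~\eqref{math:contractioninequality}, and the counting of like terms — depend only on this algebraic expression and not on $u$ or $\phi$ being classically differentiable. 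Hence the pointwise bound~\eqref{math:pointwisepullbackinequality} holds almost everywhere:
\begin{align*}
    \bigl|\nabla^{m}(\phi^{\ast}u)_{|x}\bigr|
    \leq
    \sum_{k=0}^{m}
    |\nabla^{k}u_{|\phi(x)}|\cdot
    B_{k,m,d}\bigl(|\nabla\phi_{|x}|,\ldots,|\nabla^{m+1}\phi_{|x}|\bigr)
    \qquad\text{for a.e.\ }x\in\OmegaS.
\end{align*}

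Next I would take the $\Lebesgue^{p}(\OmegaS)$-norm of both sides. By the triangle inequality in $\Lebesgue^{p}$ the right-hand side is bounded by $\sum_{k=0}^{m}\bigl\| |\nabla^{k}u_{|\phi}|\cdot B_{k,m,d}(|\nabla\phi|,\ldots,|\nabla^{m+1}\phi|) \bigr\|_{\Lebesgue^{p}(\OmegaS)}$, and applying H\"older's inequality with exponents $p$ and $\infty$ to each summand pulls the Bell-polynomial factor out in the supremum norm, giving the first displayed inequality of the corollary. For the second displayed inequality I would use that $B_{k,m,d}$ has non-negative coefficients and is therefore monotone non-decreasing in each of its non-negative arguments; since $|\nabla^{j}\phi_{|x}|\leq\|\nabla^{j}\phi\|_{\Lebesgue^{\infty}(\OmegaS)}$ for a.e.\ $x$, it follows that $\bigl\|B_{k,m,d}(|\nabla\phi|,\ldots,|\nabla^{m+1}\phi|)\bigr\|_{\Lebesgue^{\infty}(\OmegaS)}\leq B_{k,m,d}\bigl(\|\nabla\phi\|_{\Lebesgue^{\infty}(\OmegaS)},\ldots,\|\nabla^{m+1}\phi\|_{\Lebesgue^{\infty}(\OmegaS)}\bigr)$. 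Running the same pointwise estimate with $m$ replaced by any $0\leq m'\leq m$, together with the fact that $\nabla^{k}u_{|\phi}\in\Lebesgue^{p}(\OmegaS)$ — which follows from Lemma~\ref{lemma:transformationssatz:lp} applied to the scalar field $|\nabla^{k}u|\in\Lebesgue^{p}(\OmegaT)$, at least locally, which is all that is needed for the right-hand side to be the bound it is — shows that all weak derivatives of $\phi^{\ast}u$ up to order $m$ lie in $\Lebesgue^{p}(\OmegaS)$, i.e.\ $\phi^{\ast}u\in\Sobolev^{m,p}(\OmegaS,\LIN^{d}(X,\VEC))$.

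For the scalar case $d=0$ I would run the identical argument, starting instead from the $d=0$ clause of Proposition~\ref{prop:weakderivativesofpullback} (which only requires $\nabla\phi,\ldots,\nabla^{m}\phi$ essentially bounded) and using the pointwise bound of Corollary~\ref{corollary:pullbackoftensors:zerotensors} in terms of the ordinary partial Bell polynomials $B_{m,k}$ in place of $B_{k,m,d}$; the Minkowski/H\"older/monotonicity steps are verbatim the same, and one obtains $u\circ\phi\in\Sobolev^{m,p}(\OmegaS,\VEC)$ with the asserted estimate.

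I expect the only genuinely delicate point to be the first step: justifying that the combinatorial spectral-norm estimate behind~\eqref{math:pointwisepullbackinequality}, derived in Proposition~\ref{prop:pullbackoftensors} for a pointwise classically differentiable transformation, transfers without change to the almost-everywhere identity~\eqref{math:weakderivativeofpullback} for the weak derivatives. This is really a matter of observing that the bound is a formal consequence of the tensor identity alone, so no new analysis is required; everything afterward (Minkowski, H\"older, and monotonicity of the generalized Bell polynomials) is routine.
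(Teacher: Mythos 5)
Your proposal is correct and matches the paper's own argument: the paper likewise combines Proposition~\ref{prop:weakderivativesofpullback} with H\"older's inequality and the non-negativity of the (generalized) Bell polynomial coefficients, exactly the Minkowski/H\"older/monotonicity chain you describe. Your extra remark that the spectral-norm bookkeeping behind~\eqref{math:pointwisepullbackinequality} is purely algebraic and therefore transfers to the almost-everywhere identity for weak derivatives is precisely the (implicit) justification the paper relies on.
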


\begin{proof}
    We use Proposition~\ref{prop:weakderivativesofpullback},
    H\"older's inequality,
    and the fact that the (generalized) Bell polynomials 
   ~\eqref{math:bellpolynomial:partial} and~\eqref{math:bellpolynomial:dgeneralized} have non-negative coefficients.  
\end{proof}

\begin{example}\label{example:sobolevvectorfield}
    Suppose that $u \in \Sobolev^{m,p}(\OmegaT,\bbR^{N})$ is a vector field 
    and that $\phi : \OmegaS \rightarrow \OmegaT$ is a bi-Lipschitz mapping of regularity $\Cont^{m,1}$
    between open sets in $\bbR^{N}$.
    Identifying vector fields with covariant $1$-tensors (via transposition), 
    the pullback $\phi^{\ast} u$ equals the new vector field $(\nabla \phi)^{t} u_{|\phi}$. 
    The derivatives of that vector field satisfy 
    \begin{align*}
        \left\| 
            \nabla^{m} 
            \left( \phi^{\ast} {u} \right)
        \right\|_{\Lebesgue^{p}(\OmegaS)}
        &\leq 
        \sum_{k=0}^{m}
        \|
            \nabla^{k} u_{|\phi}
        \|_{\Lebesgue^{p}(\OmegaS)}
        B_{k,m,1}\left(
            \| \nabla^{   } \phi \|_{\Lebesgue^{\infty}(\OmegaS)},
            \ldots,
            \| \nabla^{m+1} \phi \|_{\Lebesgue^{\infty}(\OmegaS)}
        \right)
        \\&\leq 
        \sum_{k=0}^{m}
        \|
            \nabla^{k} u
        \|_{\Lebesgue^{p}(\OmegaT)}
        \left\| 
            \det \nabla \phi^{-1}
        \right\|_{\Lebesgue^{\infty}(\OmegaT)}^{\frac 1 p}
        B_{k,m,1}\left(
            \| \nabla^{   } \phi \|_{\Lebesgue^{\infty}(\OmegaS)},
            \ldots,
            \| \nabla^{m+1} \phi \|_{\Lebesgue^{\infty}(\OmegaS)}
        \right)
        ,
    \end{align*}    
    where we take the Lebesgue $p$-norms of the pointwise spectral norms of $u$ on both sides. 
\end{example}

\begin{example}\label{example:sobolevmatrixfield}
    Suppose that $u \in \Sobolev^{m,p}(\OmegaT,\bbR^{N \times N})$ is a matrix field 
    and that $\phi : \OmegaS \rightarrow \OmegaT$ is a bi-Lipschitz mapping of regularity $\Cont^{m,1}$
    between open sets in $\bbR^{N}$.
    Identifying matrix fields with covariant $2$-tensors, 
    the pullback $\phi^{\ast} u$ equals the new matrix field $(\nabla \phi)^{t} ( u \circ \phi ) (\nabla \phi)$. 
    The derivatives of that matrix field satisfy the inequality 
    \begin{align*}
        \left\| 
            \nabla^{m} 
            \left( \phi^{\ast} {u} \right)
\right\|_{\Lebesgue^{p}(\OmegaS)}
        &
        \leq 
        \sum_{k=0}^{m}
        \|
            \nabla^{k} u_{|\phi}
        \|_{\Lebesgue^{p}(\OmegaS)}
        \left\| 
            \det \nabla \phi^{-1}
        \right\|_{\Lebesgue^{\infty}(\OmegaT)}^{\frac 1 p}
        B_{k,m,2}\left(
            \| \nabla^{   } \phi \|_{\Lebesgue^{\infty}(\OmegaS)},
            \ldots,
            \| \nabla^{m+1} \phi \|_{\Lebesgue^{\infty}(\OmegaS)}
        \right)
        \\&\leq 
        \sum_{k=0}^{m}
        \|
            \nabla^{k} u
        \|_{\Lebesgue^{p}(\OmegaT)}
        \left\| 
            \det \nabla \phi^{-1}
        \right\|_{\Lebesgue^{\infty}(\OmegaT)}^{\frac 1 p}
        B_{k,m,2}\left(
            \| \nabla^{   } \phi \|_{\Lebesgue^{\infty}(\OmegaS)},
            \ldots,
            \| \nabla^{m+1} \phi \|_{\Lebesgue^{\infty}(\OmegaS)}
        \right)
    \end{align*}    
    in terms of generalized Bell polynomials. 
    Here, we take the Lebesgue $p$-norms of the pointwise spectral norms of $u$ on both sides.
\end{example}

Next, we address tensor fields in Sobolev-Slobodeckij norms and derive an analogous result for pullback.
Before stating the main result, we prove several auxiliary lemmas. 
We begin with estimating the Sobolev-Slobodeckij norm of a tensor contraction~\cite[Proposition~3.2.1]{wilbrandt2019stokes}).

\begin{lemma}\label{lemma:sobolevslobodeckijproduct}
    Let $\OmegaS \subseteq \bbR^{N}$ be an open set and let $\VEC_1, \VEC_2, \VEC_3$ be Banach spaces. 
    Let $p \in [1,\infty]$ and $\theta,\sigma \in (0,1]$,
    and assume that either $p = \infty$ and $\theta \leq \sigma$, or that $p < \infty$ and $\theta < \sigma$. 
    If ${u} \in \Sobolev^{\theta,p}(\OmegaS,\LIN(\VEC_1,\VEC_2))$
    and $\psi \in \Cont^{0,\sigma}(\OmegaS,\LIN(\VEC_2,\VEC_3))$, then 
\noindent 
\begin{align}\label{math:sobolevslobodeckijproduct}
            \begin{aligned}
                | {u} \contract \psi |_{\Sobolev^{\theta,p}(\OmegaS)}
                &
                \leq 
                | {u} |_{\Sobolev^{\theta,p}(\OmegaS)}
                \| \psi \|_{\Lebesgue^{\infty}(\OmegaS)}
                \\&\quad\quad
                +
\frac{ \SphereArea_{N-1}^{\frac{1}{p}} }{ (\sigma-\theta)^{\frac{1}{p}} p^{\frac{1}{p}} }
                \| {u} \|_{\Lebesgue^{p}(\OmegaS)}
                | \psi |_{\Cont^{0,\sigma}(\OmegaS)}
                +
2
                \frac{ \SphereArea_{N-1}^{\frac{1}{p}}}{\theta^{\frac{1}{p}} p^{\frac{1}{p}}}
                \| {u} \|_{\Lebesgue^{p}(\OmegaS)}
                \| \psi \|_{\Lebesgue^{\infty}(\OmegaS)}
                .
            \end{aligned}
        \end{align}        
Here, $\SphereArea_{N-1}$ denotes the area of the Euclidean unit sphere of dimension $N-1$, 
    and we understand the constants in the limit if $p=\infty$. 
\end{lemma}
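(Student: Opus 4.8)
The plan is to combine a pointwise decomposition of the contraction with the sub-multiplicativity of the spectral norm~\eqref{math:contractioninequality} and a split of the integration domain $\OmegaS\times\OmegaS$ into a near part and a far part. First I would record the pointwise inequality: by bilinearity of the contraction, the triangle inequality, and~\eqref{math:contractioninequality},
\begin{align*}
    | u(x)\contract\psi(x) - u(y)\contract\psi(y) |
    \leq
    | u(x) - u(y) |\,| \psi(x) |
    +
    | u(y) |\,| \psi(x) - \psi(y) |
    ,
    \quad x, y \in \OmegaS
    .
\end{align*}
Dividing by $\dist(x,y)^{\theta + N/p}$ and applying the triangle inequality in $\Lebesgue^{p}(\OmegaS\times\OmegaS)$ splits $| u\contract\psi |_{\Sobolev^{\theta,p}(\OmegaS)}$ into two contributions. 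Since $| \psi(x) | \leq \| \psi \|_{\Lebesgue^{\infty}(\OmegaS)}$ uniformly, the contribution of the first summand is immediately bounded by $\| \psi \|_{\Lebesgue^{\infty}(\OmegaS)}\, | u |_{\Sobolev^{\theta,p}(\OmegaS)}$, which is the first term on the right-hand side of~\eqref{math:sobolevslobodeckijproduct}.

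For the second summand I would split the domain into the region where $\dist(x,y) \leq 1$ and the region where $\dist(x,y) > 1$, and estimate each by Fubini's theorem followed by a passage to polar coordinates. On the near region I would use the H\"older bound $| \psi(x) - \psi(y) | \leq | \psi |_{\Cont^{0,\sigma}(\OmegaS)}\dist(x,y)^{\sigma}$; after integrating out $x$, the inner integral over $\{ \dist(x,y) \leq 1 \}$ is bounded by $\SphereArea_{N-1}\int_{0}^{1} r^{(\sigma-\theta)p - 1}\,\dif r = \SphereArea_{N-1}/\bigl((\sigma-\theta)p\bigr)$, which is finite precisely because $(\sigma-\theta)p > 0$; taking the $p$-th root of the resulting bound on the double integral yields the second term of~\eqref{math:sobolevslobodeckijproduct}. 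On the far region I would instead use the crude bound $| \psi(x) - \psi(y) | \leq 2\| \psi \|_{\Lebesgue^{\infty}(\OmegaS)}$, so that integrating out $x$ leaves $\SphereArea_{N-1}\int_{1}^{\infty} r^{-\theta p - 1}\,\dif r = \SphereArea_{N-1}/(\theta p)$, finite because $\theta p > 0$; this produces the third term. Summing the three bounds via the triangle inequality gives the claimed estimate. The case $p = \infty$ (where $\theta \leq \sigma$ is permitted) is handled by the identical decomposition with essential suprema in place of integrals: on the near region one uses $\dist(x,y)^{\sigma-\theta}\leq 1$ and on the far region $\dist(x,y)^{-\theta} < 1$, and the three constants are exactly the $p\to\infty$ limits of those above.

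The only genuine obstacle is the possible unboundedness of $\OmegaS$: if one tried to use the H\"older bound on all of $\OmegaS\times\OmegaS$, the resulting radial integral $\int_{0}^{\infty} r^{(\sigma-\theta)p - 1}\,\dif r$ would diverge at its upper limit since the integrand does not decay, so a bound controlled purely by $| \psi |_{\Cont^{0,\sigma}(\OmegaS)}$ cannot hold; the split at $\dist(x,y)=1$ is exactly what forces the appearance of the extra $\| \psi \|_{\Lebesgue^{\infty}(\OmegaS)}$-term (on a bounded domain one could instead absorb everything into a single term at the cost of a $\diam(\OmegaS)$-dependent constant). Everything else — measurability, applicability of Fubini's theorem, and the polar-coordinate computations — is routine.
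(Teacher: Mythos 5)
Your proposal is correct and follows essentially the same route as the paper's proof: the same pointwise splitting of the contraction, the first term absorbed into $| u |_{\Sobolev^{\theta,p}(\OmegaS)}\|\psi\|_{\Lebesgue^{\infty}(\OmegaS)}$, and the remaining term handled by splitting at $\dist(x,x')=1$ with the H\"older bound and the radial integrals $\SphereArea_{N-1}/((\sigma-\theta)p)$ and $\SphereArea_{N-1}/(\theta p)$, plus the separate supremum argument for $p=\infty$. No gaps to report.
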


\begin{proof}
    We first observe that for almost all $(x,x') \in \Omega\times\Omega$ we have 
    \begin{align*}
| u(x) \contract \psi(x) - u(x') \contract \psi(x') |
&
        \leq 
        | u(x) \contract \psi(x) - u(x') \contract \psi(x) |
+
        | u(x') \contract \psi(x) - u(x') \contract \psi(x') |
\\&
        \leq 
        | u(x) - u(x') | \cdot \| \psi \|_{\Lebesgue^{\infty}}
+
        | u(x') \contract \psi(x) - u(x') \contract \psi(x') |
.
    \end{align*}
    Using the Minkowski inequality for the $\Lebesgue^{p}$ norm over $\OmegaS \times \OmegaS$,
    we see
    \begin{align*}
        | {u} \contract \psi |_{\Sobolev^{\theta,p}(\OmegaS)}
        \leq 
        | {u} |_{\Sobolev^{\theta,p}(\OmegaS)}
        \| \psi \|_{\Lebesgue^{\infty}(\OmegaS)}
        +
        \| 
            \dist( x, x' )^{-\theta - \frac N p} 
            ( u(x') \contract \psi(x) - u(x') \contract \psi(x') )  
        \|_{\Lebesgue^{p}(\OmegaS \times \OmegaS)} 
        .
    \end{align*}
    If $p = \infty$,
    then we use a case distinction. 
    For almost all $x, x' \in \OmegaS$ with $\dist(x,x') < 1$,
    \begin{align*}
        | u(x') \contract \psi(x) - u(x') \contract \psi(x') |
        \cdot \dist( x, x' )^{-\theta}
        &
        \leq 
        | u(x') | \cdot | \psi(x) - \psi(x') |
        \cdot \dist( x, x' )^{-\theta}
        \\& 
        \leq 
        | \psi |_{\Sobolev^{\sigma,\infty}}
        | u(x') | \cdot \dist( x, x' )^{\sigma}
        \cdot \dist( x, x' )^{-\theta}
        \\&
        \leq 
        | \psi |_{\Sobolev^{\sigma,\infty}}
        | u(x') |
        .
    \end{align*}
    For almost all $x, x' \in \OmegaS$ with $\dist(x,x') > 1$,
    \begin{align*}
        | u(x') \contract \psi(x) - u(x') \contract \psi(x') |
        \cdot \dist( x, x' )^{-\theta}
        &
        \leq 
        | u(x') | \cdot | \psi(x) - \psi(x') |
        \cdot \dist( x, x' )^{-\theta}
        \\& 
        \leq 
        2 | \psi |_{\Lebesgue^{\infty}}
        | u(x') | 
        .
    \end{align*}
    This proves~\eqref{math:sobolevslobodeckijproduct} when $p = \infty$.
    
If $p < \infty$, we split the $\Lebesgue^{p}$ norm over $\OmegaS \times \OmegaS$ into two integrals. 
    On the one hand,
    \begin{align*}
        &
        \iint\limits_{ \substack{ \OmegaS \times \OmegaS \\ \dist(x,x') < 1 } }
        \frac{
            | u(x') \contract \psi(x) - u(x') \contract \psi(x') |^{p}
        }{
            \dist( x, x' )^{ \theta p + N }
        }
        \,\dif x \,\dif x'
        \\&\quad
        \leq
        | \psi |_{\Cont^{0,\sigma}(\OmegaS)}^{p} 
        \int_{\OmegaS}
        | u(x') |^{p}
        \int_{ \OmegaS : \dist(x,x') < 1}
        \dist( x, x' )^{ ( \sigma - \theta ) p - N }
        \,\dif x \,\dif x'
        \\&\quad
        \leq
        \frac{ \SphereArea_{N-1} }{ ( \sigma - \theta ) p }
        | \psi |_{\Cont^{0,\sigma}(\OmegaS)}^{p} 
        \int_{\OmegaS}
        | u(x') |^{p}
        \,\dif x'
        .
    \end{align*}
    On the other hand, 
    \begin{align*}
        &
        \iint\limits_{ \substack{ \OmegaS \times \OmegaS \\ \dist(x,x') > 1 } }
        \frac{
            | u(x') \contract \psi(x) - u(x') \contract \psi(x') |^{p}
        }{
            \dist( x, x' )^{ \theta p + N }
        }
        \,\dif x \,\dif x'
        \\&\quad 
        \leq
        2^p \| \psi \|_{\Lebesgue^{\infty}(\OmegaS)}^{p}
        \int_{\OmegaS}
        | u(x') |^{p} 
        \int_{\OmegaS : \dist(x,x') > 1}
        \dist( x, x' )^{ - \theta p - N }
        \,\dif x \,\dif x'
        \\&\quad 
        \leq
        2^p \frac{ \SphereArea_{N-1} }{ \theta p }
        \| \psi \|_{\Lebesgue^{\infty}(\OmegaS)}^{p}
        \int_{\OmegaS}
        | u(x') |^{p} 
        \,\dif x'
        .
    \end{align*}
Here, we have used that the integrals in $x$ converge because of the conditions on the exponents. 
Specifically, 
    \begin{gather*}
        \int_{\OmegaS : |x| < 1}
        | x |^{ ( \sigma - \theta ) p - N }
        \,\dif x 
        =
        \SphereArea_{N-1}
        \int_{0}^{1}
        r^{ ( \sigma - \theta ) p - N + N - 1}
        \,\dif r
        =
        \frac{ \SphereArea_{N-1} }{ ( \sigma - \theta ) p },
        \\
\int_{\OmegaS : |x| > 1}
        | x |^{ - \theta p - N }
        \,\dif x
        =
        \SphereArea_{N-1}
        \int_{1}^{\infty}
        r^{ - \theta p - N + N - 1}
        \,\dif r
        =
        \SphereArea_{N-1}
        \int_{1}^{\infty}
        r^{ - \theta p - 1}
        \,\dif r
        =
        \frac{ \SphereArea_{N-1} }{ \theta p }
        .
    \end{gather*}
    This shows~\eqref{math:sobolevslobodeckijproduct}, thus completing the proof of the auxiliary lemma. 
\end{proof}

We now estimate the Sobolev-Slobodeckij seminorms of tensors after a pullback. 

\begin{lemma}\label{lemma:transformationssatz:slobodeckij}
Let $\OmegaS, \OmegaT \subseteq \bbR^{N}$ be open sets, and let $\VEC$ be a Banach space.
    Suppose that $\phi : \OmegaS \rightarrow \OmegaT$ is bi-Lipschitz. 
    Let  $p \in [1,\infty]$ and $\theta \in (0,1]$.
    If $u \in \Sobolev^{\theta,p}(\OmegaT,\VEC)$, then $u \circ \phi \in \Sobolev^{\theta,p}(\OmegaS,\VEC)$ with
    \begin{align}\label{math:transformationssatz:slobodeckij:scalar}
        \left| u \circ \phi \right|_{\Sobolev^{\theta,p}(\OmegaS,\VEC)}
        \leq 
        \left| u \right|_{\Sobolev^{\theta,p}(\OmegaT,\VEC)}
        \left\| 
            \det \nabla \phi^{-1}
        \right\|_{\Lebesgue^{\infty}(\OmegaT)}^{\frac 2 p}
        \left| 
            \phi
        \right|_{\Cont^{0,1}(\OmegaS)}^{ \theta + \frac N p }
        .
    \end{align}
    Now assume that either $p = \infty$ and $\theta \leq \sigma$, or that $p < \infty$ and $\theta < \sigma$. 
    If $u \in \Sobolev^{\theta,p}(\OmegaT,\LIN^{d}(Y,\VEC))$, then $\phi^{\ast} u \in \Sobolev^{\theta,p}(\OmegaS,\LIN^{d}(X,\VEC))$ with
    \begin{align}\label{math:transformationssatz:slobodeckij:tensor}
        \begin{aligned}
        | \phi^{\ast} u |_{\Sobolev^{\theta,p}(\OmegaS)}
        &
        \leq 
        | {u} \circ \phi |_{\Sobolev^{\theta,p}(\OmegaS)}
        \| {\otimes}^d \nabla\phi \|_{\Lebesgue^{\infty}(\OmegaS)}
        \\&\quad\quad
        +
\frac{ \SphereArea_{N-1}^{\frac{1}{p}} }{ (\sigma-\theta)^{\frac{1}{p}} p^{\frac{1}{p}} }
        \| {u} \circ \phi \|_{\Lebesgue^{p}(\OmegaS)}
        | {\otimes}^d \nabla\phi |_{\Cont^{0,\sigma}(\OmegaS)}
        +
2
        \frac{ \SphereArea_{N-1}^{\frac{1}{p}}}{\theta^{\frac{1}{p}} p^{\frac{1}{p}}}
        \| {u} \circ \phi \|_{\Lebesgue^{p}(\OmegaS)}
        \| {\otimes}^d \nabla\phi \|_{\Lebesgue^{\infty}(\OmegaS)}
        .
    \end{aligned}
\end{align}
    
\end{lemma}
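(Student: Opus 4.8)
The plan is to establish the scalar estimate~\eqref{math:transformationssatz:slobodeckij:scalar} by a direct change of variables in the defining double integral, and then to deduce the tensor estimate~\eqref{math:transformationssatz:slobodeckij:tensor} from it by factoring the pullback and invoking Lemma~\ref{lemma:sobolevslobodeckijproduct}.

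For the scalar case I would begin from
\begin{align*}
    \left| u\circ\phi \right|_{\Sobolev^{\theta,p}(\OmegaS,\VEC)}^{p}
    =
    \iint_{\OmegaS\times\OmegaS}
    \frac{ \left| u(\phi(x))-u(\phi(x')) \right|_{\VEC}^{p} }{ \dist(x,x')^{\theta p + N} }
    \,\dif x\,\dif x'
\end{align*}
for $p<\infty$, and the analogous essential-supremum expression for $p=\infty$. Because $\phi$ is bi-Lipschitz, $\dist(x,x')\geq|\phi|_{\Cont^{0,1}(\OmegaS)}^{-1}\dist(\phi(x),\phi(x'))$ for all $x,x'\in\OmegaS$, and since $-\theta p-N<0$ this yields the pointwise bound $\dist(x,x')^{-\theta p-N}\leq|\phi|_{\Cont^{0,1}(\OmegaS)}^{\theta p+N}\dist(\phi(x),\phi(x'))^{-\theta p-N}$. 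After inserting it, the integrand depends on $(x,x')$ only through $(\phi(x),\phi(x'))$, so I apply the one-variable change-of-variables identity from the proof of Lemma~\ref{lemma:transformationssatz:lp} in each of the two variables by Fubini's theorem, producing the two Jacobian factors $|\det\nabla\phi^{-1}_{|y}|$ and $|\det\nabla\phi^{-1}_{|y'}|$; each is bounded almost everywhere by $\|\det\nabla\phi^{-1}\|_{\Lebesgue^{\infty}(\OmegaT)}$ since Rademacher's theorem applies to the Lipschitz map $\phi^{-1}$. Extracting $p$-th roots gives~\eqref{math:transformationssatz:slobodeckij:scalar}, and the case $p=\infty$ is the same computation carried out with suprema, with the convention $\|\det\nabla\phi^{-1}\|_{\Lebesgue^{\infty}}^{2/p}=1$ in the limit. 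Together with the $\Lebesgue^{p}$-bound of Lemma~\ref{lemma:transformationssatz:lp}, this also shows $u\circ\phi\in\Sobolev^{\theta,p}(\OmegaS,\VEC)$.

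For the tensor case I would use the pointwise identity $\phi^{\ast}u_{|x}=(u\circ\phi)_{|x}\contract{\otimes}^{d}\nabla\phi_{|x}$, valid for almost every $x\in\OmegaS$. The factor $u\circ\phi$ lies in $\Sobolev^{\theta,p}(\OmegaS,\LIN^{d}(Y,\VEC))$ by the scalar case just proved (applied with the Banach space $\LIN^{d}(Y,\VEC)$ in place of $\VEC$), hence in particular in $\Lebesgue^{p}$; the factor ${\otimes}^{d}\nabla\phi$ is essentially bounded because $\phi$ is Lipschitz, and if $|{\otimes}^{d}\nabla\phi|_{\Cont^{0,\sigma}(\OmegaS)}=\infty$ the claimed inequality holds trivially, so we may assume ${\otimes}^{d}\nabla\phi$ lies in $\Cont^{0,\sigma}(\OmegaS)$. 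Applying Lemma~\ref{lemma:sobolevslobodeckijproduct} with $u\circ\phi$ as the Slobodeckij-regular factor and ${\otimes}^{d}\nabla\phi$ as the H\"older-regular factor then delivers exactly~\eqref{math:transformationssatz:slobodeckij:tensor}; note that the dichotomy imposed here on $p,\theta,\sigma$ is precisely the hypothesis of that lemma, and the contraction inequality~\eqref{math:contractioninequality} handles the pointwise estimates.

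The only step that genuinely needs care is the change of variables in the double integral: one must record that the bi-Lipschitz map $\phi$ and its inverse carry Lebesgue-null sets to Lebesgue-null sets, so that the almost-everywhere defined integrand transforms correctly, and that $\nabla\phi^{-1}$ exists almost everywhere and is essentially bounded by Rademacher's theorem. The remaining work -- the two applications of the one-dimensional substitution formula and the bookkeeping of the exponents $2/p$ and $\theta+N/p$ -- is routine, and the tensor case is then immediate from the already-established Lemma~\ref{lemma:transformationssatz:lp} and Lemma~\ref{lemma:sobolevslobodeckijproduct}.
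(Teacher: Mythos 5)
Your proposal is correct and follows essentially the same route as the paper: the pointwise bound $\dist(x,x')^{-\theta-\frac{N}{p}}\leq \left|\phi\right|_{\Cont^{0,1}(\OmegaS)}^{\theta+\frac{N}{p}}\dist(\phi(x),\phi(x'))^{-\theta-\frac{N}{p}}$ followed by a change of variables in the double integral yielding the factor $\left|\det\nabla\phi^{-1}\right|^{2}$ (hence the exponent $\frac{2}{p}$), and then the tensor case via $\phi^{\ast}u=(u\circ\phi)\contract{\otimes}^{d}\nabla\phi$ combined with Lemma~\ref{lemma:sobolevslobodeckijproduct}. The extra bookkeeping you flag (null sets under the bi-Lipschitz map, Rademacher for $\nabla\phi^{-1}$, the trivial case of an infinite H\"older seminorm) is consistent with the paper's argument.
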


\begin{proof}
    Let $u \in \Sobolev^{\theta,p}(\OmegaT,\VEC)$.
    For almost all $x, x' \in \OmegaS$ we observe that 
    \begin{align*}
        &
        | u(\phi(x)) - u(\phi(x')) |_{\VEC} \dist(x,x')^{ - \theta - \frac N p }
        \\&\quad 
        = 
        | u(\phi(x)) - u(\phi(x')) |_{\VEC}
        \dist(\phi(x),\phi(x'))^{ - \theta - \frac N p }
        \cdot 
        \frac{
            \dist(\phi(x),\phi(x'))^{ \theta + \frac N p }
        }{
            \dist(x,x')^{ \theta + \frac N p }
        }
        \\&\quad 
        \leq  
        | u(\phi(x)) - u(\phi(x')) |_{\VEC}
        \dist(\phi(x),\phi(x'))^{ - \theta - \frac N p }
        |\phi|_{\Cont^{0,1}(\OmegaS)}^{ \theta + \frac N p }
        .
    \end{align*}
    We immediately conclude~\eqref{math:transformationssatz:slobodeckij:scalar}
    in the case $p = \infty$.
    Now assume $p < \infty$.
    In conjunction with change of variables over the measure space $\OmegaS \times \OmegaS$, 
    we estimate 
    \begin{align*}
        &
        \left\| 
            | u(\phi(x)) - u(\phi(x')) |_{\VEC} \dist(x,x')^{ - \theta - \frac N p }
        \right\|_{\Lebesgue^{p}(\OmegaS\times\OmegaS)}
        \\&\quad 
        \leq
        \left( 
            \iint_{ \OmegaS \times \OmegaS }
                | u(\phi(x)) - u(\phi(x')) |_{\VEC}^{p}
                \dist(\phi(x),\phi(x'))^{ - p \theta - N }
                |\phi|_{\Cont^{0,1}(\OmegaS)}^{ p \theta + N }
            \,\dif x \,\dif x'
        \right)^{\frac 1 p} 
        \\&\quad 
        \leq
        |\phi|_{\Cont^{0,1}(\OmegaS)}^{ \theta + \frac N p }
        \left( 
            \iint_{ \OmegaS \times \OmegaS }
                | u(\phi(x)) - u(\phi(x')) |_{\VEC}^{p}
                \dist(\phi(x),\phi(x'))^{ - p \theta - N }
            \,\dif x \,\dif x'
        \right)^{\frac 1 p} 
        \\&\quad 
        \leq
        |\phi|_{\Cont^{0,1}(\OmegaS)}^{ \theta + \frac N p }
        \left( 
            \iint_{ \OmegaT \times \OmegaT }
                | u(y) - u(y') |_{\VEC}^{p}
                \dist(y,y')^{ - p \theta - N }
                |\det \nabla\phi^{-1}_{|y}|^2
            \,\dif y \,\dif y'
        \right)^{\frac 1 p} 
        .
    \end{align*}
    This shows~\eqref{math:transformationssatz:slobodeckij:scalar} also when $p < \infty$.
    
    Next, let $u \in \Sobolev^{\theta,p}(\OmegaT,\LIN^{d}(Y,\VEC))$.
    Then $\phi^{\ast} {u} = ( u \circ \phi ) \contract ( \otimes^{d} \nabla\phi )$. 
    We use~\eqref{math:transformationssatz:slobodeckij:scalar} and Lemma~\ref{lemma:sobolevslobodeckijproduct} to deduce~\eqref{math:transformationssatz:slobodeckij:tensor}.
    The proof is complete. 
\end{proof}

We are now in a position to estimate the Sobolev-Slobodeckij seminorms of the higher derivatives of a pullback. 

\begin{proposition}\label{prop:pullbackoftensors:slobodeckij}
    Let $\OmegaS, \OmegaT \subseteq \bbR^{N}$ be open sets, and let $\VEC$ be a Banach space.
    Let $m \in \bbN$. 
    Suppose that $\phi : \OmegaS \rightarrow \OmegaT$ is locally bi-Lipschitz. 
Suppose that the weak derivatives $\nabla\phi, \dots, \nabla^{m+1}\phi$ are in $\Sobolev^{\sigma,\infty}(\OmegaS)$ for some $\sigma \in (0,1]$.
    Let $p \in [1,\infty]$ and $\theta \in (0,1]$, 
    and assume that either $p = \infty$ and $\theta \leq \sigma$ or that $p < \infty$ and $\theta < \sigma$. 
    
    If $u \in \Sobolev^{m+\theta,p}(\OmegaT,\LIN^{d}(Y,\VEC))$, 
    then $\phi^\ast u \in \Sobolev^{m+\theta,\infty}(\OmegaS,\LIN^{d}(X,\VEC))$,
    and we have 
    \begin{align*}
        \left|
            \nabla^{m} (\phi^{\ast} {u} )
        \right|_{\Sobolev^{\theta,p}(\OmegaS)}
        &
        \leq 
        \sum_{ 0 \leq k \leq m }
        \left| 
            \nabla^{k} u_{|\phi}
        \right|_{\Sobolev^{\theta,p}(\OmegaS)}
        \left\|
            B_{m,k,d}\left( |\nabla\phi|, \ldots, |\nabla^{m+1}\phi| \right)
        \right\|_{\Lebesgue^{\infty}(\OmegaS)}
        \\&\quad 
        + 
        \frac{ \SphereArea_{N-1}^{\frac{1}{p}} }{ (\sigma-\theta)^{\frac{1}{p}} p^{\frac{1}{p}} }
        \sum_{ 0 \leq k \leq m }
        \left\| 
            \nabla^{k} u_{|\phi}
        \right\|_{\Lebesgue^{p}(\OmegaS)}
        \left|
            B_{m,k,d}\left( |\nabla\phi|, \ldots, |\nabla^{m+1}\phi| \right)
        \right|_{\Cont^{0,\sigma}(\OmegaS)}
        \\&\quad 
        + 
        2 \frac{ \SphereArea_{N-1}^{\frac{1}{p}} }{ \theta^{\frac{1}{p}} p^{\frac{1}{p}} }
        \sum_{ 0 \leq k \leq m }
        \left\| 
            \nabla^{k} u_{|\phi}
        \right\|_{\Lebesgue^{p}(\OmegaS)}
        \left\|
            B_{m,k,d}\left( |\nabla\phi|, \ldots, |\nabla^{m+1}\phi| \right)
        \right\|_{\Lebesgue^{\infty}(\OmegaS)}
        .
    \end{align*}
    Here, we understand the constants in the limit if $p=\infty$.
    If $d = 0$, then it suffices to assume that the derivatives $\nabla\phi, \dots, \nabla^{m}\phi$ exist almost everywhere and are in $\Sobolev^{\sigma,\infty}(\OmegaS)$. 
\end{proposition}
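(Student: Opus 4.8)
The plan is to feed the explicit formula for the weak derivatives of a pullback from Proposition~\ref{prop:weakderivativesofpullback} into the Sobolev--Slobodeckij product estimate of Lemma~\ref{lemma:sobolevslobodeckijproduct}, and to recognise the generalized Bell polynomials $B_{m,k,d}$ by the same Scherk-coefficient combinatorics that powers Proposition~\ref{prop:pullbackoftensors} and Corollary~\ref{corollary:weakderivativeofpullback:sobolev}. First I would note that $\nabla\phi,\dots,\nabla^{m+1}\phi\in\Sobolev^{\sigma,\infty}(\OmegaS)\subseteq\Lebesgue^{\infty}(\OmegaS)$ and $u\in\Sobolev^{m,p}(\OmegaT,\LIN^{d}(Y,\VEC))$, so Proposition~\ref{prop:weakderivativesofpullback} applies and $\phi^{\ast}u$ has weak derivatives up to order $m$ given by~\eqref{math:weakderivativeofpullback} (local bi-Lipschitz continuity is all that is needed here, and Lemma~\ref{lemma:sobolevslobodeckijproduct} does not involve $\phi$ at all). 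Then I would collect that sum according to the order $k$ of the derivative of $u$ occurring in it, writing
\begin{align*}
    \nabla^{m}\left(\phi^{\ast}u\right)_{|x}(v_1,\dots,v_m)
    =
    \sum_{k=0}^{m}
    \nabla^{k}u_{|\phi(x)}\contract\Psi^{k}_{|x}(v_1,\dots,v_m),
\end{align*}
where $\Psi^{k}$ is the tensor field over $\OmegaS$ obtained by summing, over all $\calP=(P_0,\dots,P_d)\in\calP_{0}(m,d)$ and all $\calC\in\calP(P_0,k)$, the products $\nabla^{|C_1|}\phi(\vee_{i\in C_1}v_i)\otimes\cdots\otimes\nabla^{|C_k|}\phi(\vee_{i\in C_k}v_i)\otimes\nabla^{|P_1|+1}\phi(\vee_{i\in P_1}v_i)\otimes\cdots\otimes\nabla^{|P_d|+1}\phi(\vee_{i\in P_d}v_i)$. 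Each summand involves only the derivatives $\nabla\phi,\dots,\nabla^{m+1}\phi$, and the multiset of orders appearing in it is exactly the one encoded by a generalized Scherk index $(b,h)\in\Scherkset(k,m,d)$, realised by $\Scherkcof^{b}_{h}$ distinct summands; hence, by the spectral-norm cross-norm and contraction inequalities summed as in the proof of Proposition~\ref{prop:pullbackoftensors}, one gets the pointwise bound $\bigl|\Psi^{k}_{|x}\bigr|\le B_{m,k,d}\bigl(|\nabla\phi_{|x}|,\dots,|\nabla^{m+1}\phi_{|x}|\bigr)$. In particular $\Psi^{k}\in\Lebesgue^{\infty}(\OmegaS)$ with $\|\Psi^{k}\|_{\Lebesgue^{\infty}(\OmegaS)}\le\bigl\|B_{m,k,d}(|\nabla\phi|,\dots,|\nabla^{m+1}\phi|)\bigr\|_{\Lebesgue^{\infty}(\OmegaS)}$, and, since each $\nabla^{j}\phi\in\Cont^{0,\sigma}(\OmegaS)$, a Leibniz estimate for Hölder seminorms of products together with the non-negativity of the Scherk coefficients gives $\Psi^{k}\in\Cont^{0,\sigma}(\OmegaS)$ with $|\Psi^{k}|_{\Cont^{0,\sigma}(\OmegaS)}\le\bigl|B_{m,k,d}(|\nabla\phi|,\dots,|\nabla^{m+1}\phi|)\bigr|_{\Cont^{0,\sigma}(\OmegaS)}$.

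Next I would apply Lemma~\ref{lemma:sobolevslobodeckijproduct} --- read as an inequality valued in $[0,\infty]$ and extended in the obvious way from a single contraction index to the $(k+d)$-fold contraction at hand --- to each $\nabla^{k}u_{|\phi}\contract\Psi^{k}$, with $\nabla^{k}u_{|\phi}$ as the rough factor (it is the pullback of $\nabla^{k}u$, which lies in $\Sobolev^{\theta,p}(\OmegaT,\cdot)$ because $u\in\Sobolev^{m+\theta,p}$ and $k\le m$) and $\Psi^{k}$ as the $\Cont^{0,\sigma}$ factor; the hypothesis relating $\theta$, $\sigma$, and $p$ is precisely what that lemma demands. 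This yields, for each $k$, a three-term bound whose ingredients are $|\nabla^{k}u_{|\phi}|_{\Sobolev^{\theta,p}(\OmegaS)}\,\|\Psi^{k}\|_{\Lebesgue^{\infty}}$, $\|\nabla^{k}u_{|\phi}\|_{\Lebesgue^{p}}\,|\Psi^{k}|_{\Cont^{0,\sigma}}$, and $\|\nabla^{k}u_{|\phi}\|_{\Lebesgue^{p}}\,\|\Psi^{k}\|_{\Lebesgue^{\infty}}$, carrying the constants $\SphereArea_{N-1}^{1/p}(\sigma-\theta)^{-1/p}p^{-1/p}$ and $2\,\SphereArea_{N-1}^{1/p}\theta^{-1/p}p^{-1/p}$; substituting the bounds on $\|\Psi^{k}\|_{\Lebesgue^{\infty}}$ and $|\Psi^{k}|_{\Cont^{0,\sigma}}$, summing over $0\le k\le m$, and using the triangle inequality for $|\cdot|_{\Sobolev^{\theta,p}(\OmegaS)}$ then gives the asserted estimate. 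The claimed membership of $\phi^{\ast}u$ follows from this estimate together with Corollary~\ref{corollary:weakderivativeofpullback:sobolev}, which controls $\nabla^{j}(\phi^{\ast}u)$ in $\Lebesgue^{p}$ for $j\le m$. For $d=0$ the formula~\eqref{math:weakderivativeofpullback} involves only $\nabla\phi,\dots,\nabla^{m}\phi$ and $k$ runs over $1\le k\le m$, so $\Scherkset(k,m,0)$ collapses to the ordinary Scherk indices and $B_{m,k,d}$ reduces to $B_{m,k}$, which is why $\Cont^{0,\sigma}$-regularity of $\nabla\phi,\dots,\nabla^{m}\phi$ alone suffices.

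I expect the main obstacle to be the combinatorial bookkeeping of the first step: one must check carefully that regrouping~\eqref{math:weakderivativeofpullback} by $k$ really produces a single $\phi$-factor $\Psi^{k}$, and that both its pointwise spectral norm and its $\Cont^{0,\sigma}$ seminorm are dominated by the generalized Bell polynomial of, respectively, the spectral norms and the Hölder seminorms of the derivatives of $\phi$ --- the $\Cont^{0,\sigma}$ estimate being the delicate one, since the Leibniz rule for $|\cdot|_{\Cont^{0,\sigma}}$ applied to the products defining $\Psi^{k}$ has to be matched, monomial by monomial and using the non-negativity of the Scherk coefficients, against the corresponding expansion of $|B_{m,k,d}(|\nabla\phi|,\dots,|\nabla^{m+1}\phi|)|_{\Cont^{0,\sigma}(\OmegaS)}$. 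Everything else --- the existence of the weak derivatives, the change of index set, and the passage from pointwise to integral inequalities --- is routine once this identity is secured.
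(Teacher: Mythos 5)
Your proposal is essentially the paper's own proof, which consists precisely of combining Proposition~\ref{prop:weakderivativesofpullback} with Lemma~\ref{lemma:sobolevslobodeckijproduct} and Lemma~\ref{lemma:transformationssatz:slobodeckij}: regroup the weak-derivative formula~\eqref{math:weakderivativeofpullback} by the order $k$ of $\nabla^{k}u$, bound the resulting $\phi$-factor by the generalized Bell polynomial exactly as in Proposition~\ref{prop:pullbackoftensors}, and apply the product lemma with $\nabla^{k}u_{|\phi}$ as the rough factor. Two caveats. First, the step you yourself flag as delicate should be stated more carefully: the Leibniz rule bounds $|\Psi^{k}|_{\Cont^{0,\sigma}(\OmegaS)}$ by a Bell-type expression in the quantities $\|\nabla^{j}\phi\|_{\Lebesgue^{\infty}(\OmegaS)}$ and $|\nabla^{j}\phi|_{\Cont^{0,\sigma}(\OmegaS)}$, not literally by $|B_{m,k,d}(|\nabla\phi|,\ldots,|\nabla^{m+1}\phi|)|_{\Cont^{0,\sigma}(\OmegaS)}$; since $\bigl|\,|\nabla^{j}\phi_{|x}|-|\nabla^{j}\phi_{|x'}|\,\bigr|\le|\nabla^{j}\phi_{|x}-\nabla^{j}\phi_{|x'}|$, the H\"older seminorm of the composite scalar function can be strictly smaller than the Leibniz bound (it even vanishes when the pointwise norms are constant while the tensors vary), so the inequality $|\Psi^{k}|_{\Cont^{0,\sigma}}\le|B_{m,k,d}(|\nabla\phi|,\ldots)|_{\Cont^{0,\sigma}}$ does not follow from Leibniz alone and must be read in the Leibniz-expanded sense in which the statement (and the paper's one-line proof) intends it. Second, to pass from the displayed estimate to the membership $\phi^{\ast}u\in\Sobolev^{m+\theta,p}$ you also need finiteness of $\|\nabla^{k}u_{|\phi}\|_{\Lebesgue^{p}(\OmegaS)}$ and $|\nabla^{k}u_{|\phi}|_{\Sobolev^{\theta,p}(\OmegaS)}$, which is supplied by Lemma~\ref{lemma:transformationssatz:lp} and the scalar part of Lemma~\ref{lemma:transformationssatz:slobodeckij}; the paper cites the latter explicitly, whereas you leave it implicit in the parenthetical claim that the composition inherits the $\Sobolev^{\theta,p}$ regularity of $\nabla^{k}u$.
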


\begin{proof}
    We use Proposition~\ref{prop:weakderivativesofpullback} together with Lemma~\ref{lemma:sobolevslobodeckijproduct} and Lemma~\ref{lemma:transformationssatz:slobodeckij}.
\end{proof}

\begin{remark}
    The reader may have noticed that the Sobolev and Sobolev-Slobodeckij norms of tensor fields $\nabla^{k} {u} \circ \phi$ appear in the final estimate.
    These are simply the integrals of $| \nabla^{k} {u} | \circ \phi$, 
    which are estimated by Lemmas~\ref{lemma:transformationssatz:lp}~and~\ref{lemma:transformationssatz:slobodeckij}.
\end{remark}

\begin{remark}
    Obviously, the Sobolev and Sobolev-Slobodeckij norms of the pullback $\phi^{\ast}u$
    depend on higher derivatives of the transformation $\phi$.
    We summarize how exactly this depends on the number of covariant indices $d$ of the tensor field $u$. 
    
    In the case $d=0$, when $u$ is a scalar-valued function with $m$ weak derivatives, 
    we need that $\phi$ is differentiable almost everywhere up to order $m$.
    But the situation changes drastically in the case $d > 0$.
    the nature of the pullback requires that the coordinate transformation is differentiable almost everywhere up to order $m+1$. 
    Generally speaking,
    $\Sobolev^{m,p}$ functions are even preserved under coordinate transformations along coordinate transformations with $m$ derivatives, 
    whereas $\Sobolev^{m,p}$ tensor fields are preserved under coordinate transformations with $m+1$ derivatives. 
    For example, this applies to the transformation of vector fields. 

    Sobolev-Slobodeckij spaces with non-integer smoothness involve additional subtlety. 
    The derivatives of the coordinate transformation up to the relevant orders 
    must feature H\"older regularity that at least matches the H\"older regularity of the original tensor field or function.
    More specifically, 
    unless $p=\infty$, 
    our estimates even require that the transformation has \emph{strictly higher} H\"older regularity.
    The estimate in Proposition~\ref{prop:pullbackoftensors:slobodeckij} gets worse as the fractional smoothness parameter $\theta$ approaches either zero or one. This is a typical phenomenon with Sobolev-Slobodeckij spaces. 
\end{remark}

\section{Musielak-Orlicz spaces and estimates}\label{sec:orlicz}

\emph{Musielak-Orlicz spaces}, or \emph{Orlicz spaces} for short, are an important generalization of Lebesgue spaces. 
They allow for a more nuanced analysis of growth conditions and appear in the analysis of nonlinear partial differential equations.
Taking into account weak derivatives, we generalize Sobolev spaces to Musielak-Orlicz-Sobolev spaces.
In a similar manner, we generalize Sobolev-Slobodeckij spaces to Musielak-Orlicz-Sobolev-Slobodeckij spaces. 
The latter spaces have only recently received wider attention in the literature.
The functions that we discuss in this section include the classical Orlicz spaces, Lebesgue spaces with variable exponents, and general Musielak-Orlicz spaces. 
We refer to the monographs~\cite{diening2011lebesgue},~\cite{Harjulehto2019} and~\cite{chlebicka2021partial} 
for more theoretical background and applications of Orlicz and Orlicz-Sobolev spaces,
and to the survey~\cite{baalal2019density,alberico2021fractional,azroul2022class} for more information on Orlicz-Sobolev-Slobodeckij spaces. 

As in the preceding section, 
we continue to assume that $X = Y = \bbR^{N}$, 
let $\OmegaS \subseteq X$ be an open set, 
and let $\VEC$ be a normed space. 
\\

We begin with basic definitions.
A measurable function $\Orlicz : \Omega \times [0,\infty] \rightarrow [0,\infty]$ is called a \emph{Musielak-Orlicz integrand} if
\begin{itemize}
    \item $\Orlicz(x,0) = 0$.
    \item $\Orlicz(x,\cdot)$ is convex, left-continuous, and non-decreasing in the second variable.
    \item $\Orlicz(x,\cdot)$ is a non-zero function for almost all $x \in \OmegaS$.
    \item $\Orlicz(\cdot,\xi)$ is measurable for all $\xi \in [0,\infty)$.
\end{itemize}
The \emph{Musielak-Orlicz} space is the linear span 
\begin{align}\label{math:orliczmusielakclass}
    \Lebesgue^{\Orlicz}(\Omega,\VEC)
    :=
    \operatorname{span}
    \left\{ 
        u : \Omega \rightarrow \VEC
        \suchthat* 
        \int_{\Omega} \Orlicz\left( x, |u(x)| \right) < \infty
    \right\}
    .
\end{align}
The \emph{Luxemburg norm} on $\Lebesgue^{\Orlicz}(\Omega,\VEC)$ is the functional
\begin{align}\label{math:luxemburgnorm}
    \| u \|_{\Lebesgue^{\Orlicz}(\Omega)}
    :=
    \left\{ 
        \lambda > 0
        \suchthat*
        \int_{\Omega} \Orlicz\left( x, \frac{ |u(x)| }{\lambda} \right) \leq 1
    \right\}
    .
\end{align}
Recall that the infimum of the empty set is $\infty$.
Below we will prove that this indeed defines a norm.

We say that a Musielak-Orlicz integrand is \emph{proper} if all members of $\Lebesgue^{\Orlicz}(\Omega,\VEC)$ are locally integrable.

\begin{example}
    We list a few standard examples for Musielak-Orlicz integrands.
    \begin{enumerate}
     \item 
     $\Orlicz(\xi) = \xi^{p}$ for any $p \in [1,\infty)$. 
     The associated Luxemburg norm is the Lebesgue $p$-norm.
     \item 
     $\Orlicz(\xi) = \infty \cdot \chi_{(1,\infty)}\left( \xi \right)$, which we also abbreviate as $\xi^{\infty}$.
     In other words, $\Orlicz(\xi)$ equals $0$ for $\xi \in [0,1]$ and equals $\infty$ for $\xi > 1$.
     The associated Luxemburg norm is the Lebesgue $\infty$-norm.
     \item 
     $\Orlicz(x,\xi) = \xi^{p(x)}$, where $p(x) \in [1,\infty]$ is a measurable function defined over the domain and mapping into the extended reals. The associated space is known as Lebesgue space with variable exponent. 
     \item 
     $\Orlicz(x,\xi) = \xi^{p(x)} + a(x) \xi^{q(x)}$ with $p, q : \Omega \rightarrow [1,\infty]$ measurable and $a : \Omega \rightarrow \bbR$ bounded and non-negative. This gives rise to a Musielak-Orlicz space known as \emph{double phase space}.
     \item 
     $\Orlicz(x,\xi) = \xi^{p(x)}\ln( e + \xi )$.
     \item 
     $\Orlicz(x,\xi) = e^{|\xi|^{\sigma}}-1$ for some $\sigma \geq 1$. 
     \item 
     $\Orlicz(x,\xi) = \infty \cdot \chi_{(0,\infty)}(\xi)$ is zero at the origin and $\infty$ everywhere else.
     The resulting Orlicz space is the trivial vector space.
     \item 
     Pointwise non-negative combinations of Musielak-Orlicz integrands yield Musielak-Orlicz integrands again.
    \end{enumerate}
\end{example}

\begin{remark}
    Members of Orlicz spaces are measurable but not necessarily locally integrable. 
    For example, if $\OmegaS = (-1,1)$ and $\Orlicz(x,\xi) = x^2 \xi$, then $x^{-2} \in \Lebesgue^{\Orlicz}(\OmegaS)$ but $x^{-2}$ is not locally integrable.

    But if for any compact set $E \subseteq \OmegaS$ there exists $\mu > 0$ such that $\Orlicz(x,\mu)$ has a positive lower bound over $E$, 
    then every member of $\Lebesgue^{\Orlicz}(\OmegaS)$ is locally integrable. 
    For example, this condition holds for variable exponent Lebesgue spaces. 
\end{remark}

\begin{remark}
    We remark on a few aspects of these definitions and the different variations that can be found in the literature. 
    Function with these properties are also known as \emph{$N$-functions} in the literature. 
    
    If $x \in \OmegaS$ and $\Orlicz(x,\cdot)$ is non-zero,
    then $\Orlicz(x,0)=0$, the convexity, left-continuity, and non-decreasing condition imply the following: the function $\Orlicz(x,\cdot)$ equals zero on a closed interval $[0,\xi_0]$ for some $\xi_0 > 0$; the function is finite, strictly increasing and continuous over a closed interval $[\xi_0,\xi_1]$ for some $\xi_1 > 0$, the latter possibly infinite, and $\Orlicz(x,\xi)=\infty$ for all $\xi > \xi_1$.
    
    Imposing left-continuity only ensures that $\Orlicz(x,\cdot)$ is finite on a closed interval. Note that we permit the two special cases where either $\Orlicz(x,\xi)=0$ or $\Orlicz(x,\xi)=\infty$ for all $\xi > 0$. 
    
    This definition is slightly more general than what is used in~\cite[Definition~2.3.1]{diening2011lebesgue} and the isotropic case in~\cite[Definition~2.2.2]{chlebicka2021partial}. 
    But in the contrast to the latter reference, we do not consider anisotropic Musielak-Orlicz spaces in our discussion. 
    Many authors impose additional assumptions on $\Orlicz(x,\xi)$, 
    such as finiteness, sublinearity as $\xi$ approaches zero, or superlinearity as $\xi$ goes to infinity.
    We refrain from using those additional assumptions 
    since that would exclude the relevant special cases of $\Lebesgue^{1}(\Omega,\VEC)$ and $\Lebesgue^{\infty}(\Omega,\VEC)$.
    We also do not require the so-called $\Delta_2$-condition (see~\cite{chlebicka2021partial}). 
    Notably, the discussion in~\cite[Chapter~2]{Harjulehto2019} uses weaker definitions than we do.
\end{remark}

\begin{proposition}
    The space $\Lebesgue^{\Orlicz}(\Omega,\VEC)$ together with the Luxemburg norm is a normed space.
\end{proposition}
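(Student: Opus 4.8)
The plan is to check that the Luxemburg functional~\eqref{math:luxemburgnorm} takes finite values on $\Lebesgue^{\Orlicz}(\Omega,\VEC)$ and satisfies the three norm axioms; the space itself is already a vector space, being defined as a span. All integrands $x \mapsto \Orlicz(x,|u(x)|/\mu)$ are measurable because $\Orlicz$ is jointly measurable and $u$ is measurable, so every defining integral makes sense in $[0,\infty]$. Absolute homogeneity is immediate: for $\lambda = 0$ one uses $\Orlicz(x,0)=0$ to get $\int_\Omega \Orlicz(x,0) = 0 \le 1$, hence $\|0\| = 0$; for $\lambda \neq 0$ the substitution $\mu = |\lambda|\nu$ identifies the admissible set in~\eqref{math:luxemburgnorm} for $\lambda u$ with $|\lambda|$ times the one for $u$, so $\|\lambda u\| = |\lambda|\,\|u\|$.

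For the triangle inequality I would avoid claiming that the infimum in~\eqref{math:luxemburgnorm} is attained, and instead argue with arbitrary finite $a' > \|u\|$ and $b' > \|v\|$. Monotonicity of $\Orlicz(x,\cdot)$ together with the definition of the infimum gives $\int_\Omega \Orlicz(x,|u(x)|/a') \le 1$ and $\int_\Omega \Orlicz(x,|v(x)|/b') \le 1$. Then the pointwise bound $|u(x)+v(x)|/(a'+b') \le \tfrac{a'}{a'+b'}\tfrac{|u(x)|}{a'} + \tfrac{b'}{a'+b'}\tfrac{|v(x)|}{b'}$, combined with monotonicity and convexity of $\Orlicz(x,\cdot)$, yields
\[
  \Orlicz\!\left(x,\frac{|u(x)+v(x)|}{a'+b'}\right) \;\le\; \frac{a'}{a'+b'}\,\Orlicz\!\left(x,\frac{|u(x)|}{a'}\right) + \frac{b'}{a'+b'}\,\Orlicz\!\left(x,\frac{|v(x)|}{b'}\right),
\]
and integrating gives $\int_\Omega \Orlicz(x,|u(x)+v(x)|/(a'+b')) \le 1$, hence $\|u+v\| \le a'+b'$; letting $a' \downarrow \|u\|$ and $b' \downarrow \|v\|$ finishes this axiom, the case of an infinite norm being trivial.

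Finiteness follows because every $u \in \Lebesgue^{\Orlicz}(\Omega,\VEC)$ is a finite combination $\sum_i c_i u_i$ with $M_i := \int_\Omega \Orlicz(x,|u_i(x)|) < \infty$; applying convexity to $|u_i(x)|/\mu = \tfrac1\mu|u_i(x)| + (1-\tfrac1\mu)\cdot 0$ for $\mu \ge 1$, and using $\Orlicz(x,0)=0$, shows $\int_\Omega \Orlicz(x,|u_i(x)|/\mu) \le M_i/\mu \le 1$ once $\mu \ge \max(1,M_i)$, so $\|u_i\|<\infty$, and then homogeneity and the triangle inequality give $\|u\|<\infty$. For definiteness, $u=0$ almost everywhere trivially gives $\|u\|=0$; conversely, if $\|u\|=0$ but $|u| \ge 1/n$ on a set $A_n$ of positive measure, I would invoke the structural description in the remark preceding the statement — a non-zero convex integrand with $\Orlicz(x,0)=0$ has $\Orlicz(x,\xi)/\xi$ non-decreasing, hence $\Orlicz(x,\xi) \to \infty$ as $\xi \to \infty$ for almost every $x$ — so that $\Orlicz(x,|u(x)|/\mu)$ increases to $\infty$ on $A_n$ as $\mu \downarrow 0$, whence the monotone convergence theorem forces $\int_\Omega \Orlicz(x,|u(x)|/\mu) \ge \int_{A_n}\Orlicz(x,|u(x)|/\mu) \to \infty$, contradicting the bound $\le 1$ valid for all small $\mu$. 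I expect this definiteness step to be the main obstacle, since it is the only place that genuinely exploits the superlinear growth forced by convexity rather than routine convexity and monotonicity.
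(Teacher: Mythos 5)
Your proposal is correct, and its overall architecture (vector-space structure from the span, finiteness via convex scaling, exact homogeneity by substituting $\mu=|\lambda|\nu$, triangle inequality from convexity with $a'>\|u\|$, $b'>\|v\|$) matches the paper's proof step for step; if anything, your homogeneity argument is cleaner, since the paper only records an inequality. The one place where you genuinely diverge is definiteness, which is also the bulk of the paper's proof. You exploit the pointwise structural fact that, for almost every $x$, a non-zero convex integrand with $\Orlicz(x,0)=0$ has $\Orlicz(x,\xi)/\xi$ non-decreasing and hence $\Orlicz(x,\xi)\to\infty$ as $\xi\to\infty$, and then let $\mu\downarrow 0$ and invoke monotone convergence on the set $A_n=\{|u|\ge 1/n\}$ to contradict the uniform bound $\int_\Omega\Orlicz(x,|u|/\mu)\le 1$ that follows from $\|u\|=0$. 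The paper instead avoids appealing to divergence at infinity: it introduces the sets $E_{i,\alpha}=\{x:\Orlicz(x,i)\ge\alpha\}$, shows by a null-set argument that some $E_{i,\alpha}$ has positive measure, bounds the integral from below by $\alpha\,|E_{i,\alpha}|$ for $\lambda$ small, and then drives the same integral below every $\beta\in(0,1)$ by the convexity rescaling $\Orlicz(x,\beta\xi)\le\beta\,\Orlicz(x,\xi)$. Both arguments rest on the same hypotheses (convexity, monotonicity, $\Orlicz(x,0)=0$, and non-vanishing of $\Orlicz(x,\cdot)$ for a.e.\ $x$); yours is more direct and shorter, at the cost of using the monotone convergence theorem, while the paper's stays entirely within elementary convexity manipulations and measure-theoretic bookkeeping. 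Your steps are all justified, so there is no gap; just make sure, when writing it up, to state explicitly that $\|u\|=0$ makes every $\lambda>0$ admissible (via monotonicity of $\Orlicz(x,\cdot)$), since that is what licenses the bound $\le 1$ for all $\mu$ in your limiting argument.
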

\begin{proof}
    By definition, $\Lebesgue^{\Orlicz}(\Omega,\VEC)$ is a vector space,
    so it remains to be shown that the Luxemburg norm is a norm. 
    
    Before we proceed, we first show that for every $u : \Omega \rightarrow \VEC$ with $u \in \Lebesgue^{\Orlicz}(\Omega,\VEC)$ 
    the following condition is true: 
    there exists $\lambda > 0$ such that $\calA(x,\lambda^{-1}|u(x)|)$ has finite integral.
    This condition is obviously true if $\calA(x,|u(x)|)$ has finite integral,
    and if $u \in \Lebesgue^{\Orlicz}(\Omega,\VEC)$ satisfies the condition, 
    then so does $a u \in \Lebesgue^{\Orlicz}(\Omega,\VEC)$ for any $a \in \bbR$.
    Lastly, if $u, v \in \Lebesgue^{\Orlicz}(\Omega,\VEC)$ satisfy the condition, 
    with respective parameters $\lambda_{u}, \lambda_{v} > 0$, then the inequality 
    \begin{align*}
        \Orlicz\left( x, \frac{ |u(x) + v(x)| }{ \lambda_{u} + \lambda_{v} } \right)
        \leq 
        \frac{ \lambda_{u} }{ \lambda_{u} + \lambda_{v} }
        \Orlicz\left( x, \frac{ |u(x)| }{ \lambda_{u} } \right)
        +
        \frac{ \lambda_{v} }{ \lambda_{u} + \lambda_{v} }
        \Orlicz\left( x, \frac{ |v(x)| }{ \lambda_{v} } \right)
    \end{align*}
    shows the condition for $u+v$. By construction,
    every member of $\Lebesgue^{\Orlicz}(\Omega,\VEC)$ satisfies the condition. 
    
    If $u \in \Lebesgue^{\Orlicz}(\Omega,\VEC)$ and $\lambda > 0$ such that $\calA(x,\lambda^{-1}|u(x)|)$ has finite integral,
    then for all $\mu \geq 1$ we have
    \begin{align*}
        \int_{\OmegaS} \calA(x,\lambda^{-1}|u(x)|)
        \geq 
        \mu 
        \int_{\OmegaS} \calA(x,\mu^{-1}\lambda^{-1}|u(x)|
    \end{align*}
    due to convexity. 
    For $\mu > 0$ large enough, $\calA(x,\mu^{-1}\lambda^{-1}|u(x)|)$ has integral at most $1$.
    Hence $\|\cdot\|_{\Lebesgue^{\Orlicz}}$ is finite over $\Lebesgue^{\Orlicz}(\Omega,\VEC)$. 
    
    For any $\Lebesgue^{\Orlicz}(\Omega,\VEC)$ and $t \in \bbR$ we verify 
    \begin{align*}
        \| t u \|_{\Lebesgue^{\Orlicz}(\Omega)}
        &:=
        \inf\left\{ 
            \lambda > 0
            \suchthat*
            \int_{\Omega} \Orlicz\left( x, \frac{ |t u(x)| }{\lambda} \right) \leq 1
        \right\}
        \\&
        \leq 
        \inf\left\{ 
            \lambda' t > 0
            \suchthat*
            \int_{\Omega} \Orlicz\left( x, \frac{ |u(x)| }{\lambda'} \right) \leq 1
        \right\}
        =
        t \| u \|_{\Lebesgue^{\Orlicz}(\Omega)}
        .
    \end{align*}
    Let $u, v \in \Lebesgue^{\Orlicz}(\Omega,\VEC)$ and let $\lambda_{u}, \lambda_{v} > 0$ 
    with $\lambda_{u} > \|u\|_{\Lebesgue^{\Orlicz}(\Omega)}$ and $\lambda_{v} > \|v\|_{\Lebesgue^{\Orlicz}(\Omega)}$.
    By convexity of the Musielak-Orlicz integrand in the second variable, 
    \begin{align*}
        \int_{\Omega} \Orlicz\left( x, \frac{ |u(x) + v(x)| }{ \lambda_{u} + \lambda_{v} } \right)
        \leq 
        \frac{ \lambda_{u} }{ \lambda_{u} + \lambda_{v} }
        \int_{\Omega} \Orlicz\left( x, \frac{ |u(x)| }{ \lambda_{u} } \right)
        +
        \frac{ \lambda_{v} }{ \lambda_{u} + \lambda_{v} }
        \int_{\Omega} \Orlicz\left( x, \frac{ |v(x)| }{ \lambda_{v} } \right)
        \leq 
        1.
    \end{align*}
    Hence $\| u + v \|_{\Lebesgue^{\Orlicz}(\Omega)} \leq \lambda_{u} + \lambda_{v}$. 
    In the limit, we obtain the triangle inequality 
    \begin{align*}
        \| u + v \|_{\Lebesgue^{\Orlicz}(\Omega)} \leq \| u \|_{\Lebesgue^{\Orlicz}(\Omega)} + \| v \|_{\Lebesgue^{\Orlicz}(\Omega)}.
    \end{align*}
    If $u = 0$, then $\| u \|_{\Lebesgue^{\Orlicz}(\Omega)} = 0$ is easily seen. 
    Showing that $u \neq 0$ implies $\| u \|_{\Lebesgue^{\Orlicz}(\Omega)} \neq 0$ needs some preparations.
    
    Let $E \subseteq \Omega$ be a set of positive measure.
    Let $\alpha > 0$. 
    For every $i \in \bbN$ we write 
    \begin{align*}
        E_{i,\alpha} := \left\{ x \in X \suchthat* \Orlicz(x,i) \geq \alpha \right\}.
    \end{align*}
    Suppose that all $E_{i,\alpha}$ have measure zero;
    the countable union of sets with measure has measure zero again,
    and so for almost all $x \in \Omega$ it holds that for all $i \in \bbN$ we have $\Orlicz(x,i) < \alpha$.
    But since $\Orlicz(x,\xi)$ vanishes at $\xi=0$ and is convex and non-decreasing in $\xi$ almost everywhere
    we must have $\Orlicz(x,\cdot) = 0$ almost everywhere.
    This contradicts our assumptions on $\Orlicz$, 
    and we conclude that for every $\alpha > 0$ there exists $i \in \bbN$ such that $E_{i,\alpha}$ has positive measure.
    
    Let now $u : \Omega \rightarrow \VEC$ be measurable and assume it is not zero almost everywhere.
    Then there exists $\mu > 0$ such that $|u(x)| \geq \mu$ over a set $E \subseteq \OmegaS$ of positive measure. 
    If $\lambda > 0$ is such that $\Orlicz\left( x, \lambda^{-1} |u(x)| \right)$ has integral at most $1$, 
    then 
    \begin{align*}
        \int_{\Omega} \Orlicz\left( x, \frac{ \mu \chi_{E} }{\lambda} \right)
        \leq
        \int_{\Omega} \Orlicz\left( x, \frac{ |u(x)| }{\lambda} \right)
        \leq
        1.
    \end{align*}
    Fix any $\alpha > 0$.
    There exists $i \in \bbN$ such that $E_{i,\alpha} \subseteq E$ has positive measure.
    We show that assuming $\|u\|_{\Lebesgue^{\Orlicz}(\Omega)} = 0$ leads to a contradiction. 
    We pick $\lambda > 0$ so small that $\mu/\lambda \geq i$, so that
    \begin{align*}
        0
        < 
        \alpha \cdot |E_{i,\alpha}|
        \leq 
        \int_{\Omega} \Orlicz\left( x, \frac{ \mu \chi_{E} }{\lambda} \right)
        .
    \end{align*}
    Since we assume $\|u\|_{\Lebesgue^{\Orlicz}(\Omega)} = 0$, the integral on the right-hand side is bounded by $1$.
    On the other hand, for any $\beta \in (0,1)$ we use convexity once more to find  
    \begin{align*}
        \int_{\Omega} \Orlicz\left( x, \frac{ \mu \chi_{E} }{\lambda} \right)
        =
        \int_{\Omega} \Orlicz\left( x, \beta \frac{ \mu \chi_{E} }{\beta\lambda} \right)
        \leq
        \beta 
        \int_{\Omega} \Orlicz\left( x, \frac{ \mu \chi_{E} }{\beta\lambda} \right)
        \leq
        \beta 
        \int_{\Omega} \Orlicz\left( x, \frac{ |u(x)| }{\beta\lambda} \right)
        \leq 
        \beta
    \end{align*}
    This leads to the desired contradiction. Hence $\|u\|_{\Lebesgue^{\Orlicz}(\Omega)} > 0$ whenever $u \neq 0$.
\end{proof}

\begin{proposition}
    The space $\Lebesgue^{\Orlicz}(\Omega,\VEC)$ is complete. 
\end{proposition}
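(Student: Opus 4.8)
The plan is to establish completeness through the Riesz--Fischer criterion: a normed space is a Banach space if and only if every absolutely summable series converges in norm. (Here I use that $\VEC$ is itself a Banach space, as everywhere else in the paper.) Accordingly, I would start from a sequence $(u_k)_{k\in\bbN}$ in $\Lebesgue^{\Orlicz}(\Omega,\VEC)$ with $M := \sum_{k}\|u_k\|_{\Lebesgue^{\Orlicz}(\Omega)} < \infty$, assuming $M>0$ since $M=0$ is trivial. It is convenient to write $\rho(v) := \int_{\Omega}\Orlicz(x,|v(x)|)\,\dif x$ for the associated modular and to record the elementary observation that $\lambda\mapsto\rho(v/\lambda)$ is non-increasing, so that $\rho(v/\lambda)\le 1$ whenever $\lambda>\|v\|_{\Lebesgue^{\Orlicz}(\Omega)}$.

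The first substantial step is to produce an almost-everywhere finite pointwise limit. Put $g_n(x):=\sum_{k=1}^{n}|u_k(x)|$; by the triangle inequality $\|g_n\|_{\Lebesgue^{\Orlicz}(\Omega)}\le\sum_{k=1}^{n}\|u_k\|_{\Lebesgue^{\Orlicz}(\Omega)}\le M<2M$, hence $\rho(g_n/(2M))\le1$ for all $n$. The $g_n$ increase pointwise to $g:=\sum_{k}|u_k|\in[0,\infty]$, and since $\Orlicz(x,\cdot)$ is non-decreasing and left-continuous, $\Orlicz(x,g_n(x)/(2M))$ increases to $\Orlicz(x,g(x)/(2M))$; the monotone convergence theorem then gives $\int_{\Omega}\Orlicz(x,g(x)/(2M))\,\dif x\le1$. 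Here the structural hypotheses on $\Orlicz$ enter decisively: a non-zero convex non-decreasing function on $[0,\infty)$ vanishing at the origin is unbounded, so $\Orlicz(x,\infty)=\infty$ for almost every $x$; were $g$ infinite on a set of positive measure, the preceding integral would be infinite, a contradiction. Thus $g(x)<\infty$ almost everywhere, and therefore $\sum_k u_k(x)$ converges absolutely in $\VEC$ for almost every $x$ to a function $u(x)$, defined almost everywhere.

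Next I would verify that $u$ lies in $\Lebesgue^{\Orlicz}(\Omega,\VEC)$ and that the partial sums $s_n:=\sum_{k=1}^{n}u_k$ converge to $u$. Membership is immediate from $|u(x)|\le g(x)$ and monotonicity: $\int_{\Omega}\Orlicz(x,|u(x)|/(2M))\,\dif x\le\int_{\Omega}\Orlicz(x,g(x)/(2M))\,\dif x<\infty$, so $u/(2M)$, hence $u$, belongs to $\Lebesgue^{\Orlicz}(\Omega,\VEC)$. For the norm convergence, note $|u(x)-s_n(x)|\le\sum_{k>n}|u_k(x)|$ almost everywhere; the pointwise majorant has Luxemburg norm at most $\sum_{k>n}\|u_k\|_{\Lebesgue^{\Orlicz}(\Omega)}$ (again a monotone-convergence passage in the infinite triangle inequality), so by monotonicity of $\Orlicz(x,\cdot)$ we get $\rho((u-s_n)/\lambda)\le1$ for every $\lambda>\sum_{k>n}\|u_k\|_{\Lebesgue^{\Orlicz}(\Omega)}$, and hence $\|u-s_n\|_{\Lebesgue^{\Orlicz}(\Omega)}\le\sum_{k>n}\|u_k\|_{\Lebesgue^{\Orlicz}(\Omega)}\to0$. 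This exhibits the absolutely summable series as convergent, proving completeness.

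The step I expect to require the most care is the construction of the almost-everywhere finite majorant $g$: it rests on combining left-continuity of $\Orlicz(x,\cdot)$ (needed to run the monotone convergence theorem on $\Orlicz(x,g_n(x)/(2M))$) with the convexity, monotonicity and non-triviality of $\Orlicz(x,\cdot)$ (needed to conclude $\Orlicz(x,\infty)=\infty$ and thereby exclude $g=\infty$ on a positive-measure set). The remaining ingredients---measurability of $x\mapsto\Orlicz(x,g_n(x)/(2M))$, which follows from the Carath\'eodory-type structure of the integrand, and the solidity estimates used for membership and for the tail bound---are routine consequences of the norm properties already established in the preceding proposition.
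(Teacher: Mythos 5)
Your proof is correct and takes essentially the same route as the paper's: both reduce completeness to the convergence of an absolutely summable series (the paper does this by hand, extracting a subsequence with $\| u_{n} - u_{m} \|_{\Lebesgue^{\Orlicz}(\Omega)} < 2^{-n}$ and telescoping), bound the modular of the partial sums of $\sum_k |u_k|$, and pass to the limit using left-continuity together with monotone convergence/Fatou to obtain an almost everywhere finite majorant, before estimating the tails in the Luxemburg norm. The only cosmetic differences are your explicit appeal to the Riesz--Fischer criterion and your explicit convexity argument showing $\Orlicz(x,\cdot)$ is unbounded (so the majorant cannot be infinite on a set of positive measure), a point the paper's proof uses but leaves implicit.
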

\begin{proof}
    Suppose that $w_{n} \in \Lebesgue^{\Orlicz}(\Omega,\VEC)$ is a Cauchy sequence,
    i.e., for every $\epsilon > 0$ there exists $N \geq 0$ 
    such that $\| w_{n} - w_{m} \|_{\Lebesgue^{\Orlicz}(\Omega)} < \epsilon$ for all $m, n \geq N$. 
    It suffices to show that a subsequence of that Cauchy sequence converges. 
    We can pick a subsequence $u_{n} \in \Lebesgue^{\Orlicz}(\Omega,\VEC)$
    such that 
    $\| u_{n} - u_{m} \|_{\Lebesgue^{\Orlicz}(\Omega)} < 2^{-n}$ for all $m, n \in \bbN$ with $m \geq n$. 
    
    We define $g_n := u_{n+1}-u_{n}$.
    By assumption, $\| u_{n+1} - u_{n} \|_{\Lebesgue^{\Orlicz}(\Omega)} < 2^{-n}$. Hence 
    \begin{align*}
        \left\| \sum_{n=1}^{N} |g_n| \right\|_{\Lebesgue^{\Orlicz}(\Omega)}
        \leq 
        \sum_{n=1}^{N} \left\| |g_n| \right\|_{\Lebesgue^{\Orlicz}(\Omega)}
        \leq 
        \sum_{n=1}^{N} 2^{-n}
        \leq 
        1.
    \end{align*}
    We conclude that there exists $\lambda_{0} > 1$ such that for all $N$
    \begin{align*}
        \int_{\Omega} \Orlicz\left( x, \lambda_{0}^{-1} \sum_{n=1}^{N} |g_n(x)| \right) \leq 1.
    \end{align*}
    We use Fatou's lemma and derive 
    \begin{align*}
        1 
        &\geq 
        \liminf_{N \rightarrow \infty}
        \int_{\Omega} \Orlicz\left( x, \lambda_{0}^{-1} \sum_{n=1}^{N} |g_n(x)| \right)
\geq 
        \int_{\Omega} \liminf_{N \rightarrow \infty} \Orlicz\left( x, \lambda_{0}^{-1} \sum_{n=1}^{N} |g_n(x)| \right)
        .
    \end{align*}
    Since the integrand is left-continuous and non-decreasing in $N$, the limes inferior is a limit.
    Since $\Orlicz\left( x, \cdot \right)$ is left-continuous, we find almost everywhere:
    \begin{align*}
        \int_{\Omega} \liminf_{N \rightarrow \infty} \Orlicz\left( x, \lambda_{0}^{-1} \sum_{n=1}^{N} |g_n(x)| \right)
        =
        \int_{\Omega} \Orlicz\left( x, \lambda_{0}^{-1} \sum_{n=1}^{\infty} |g_n(x)| \right)
        .
    \end{align*}
    We conclude that the series $\sum_{n=1}^{\infty} g_n(x)$ is absolutely convergent almost everywhere,
    and hence it is convergent almost everywhere. 
    Hence there exists a measurable function $G : \Omega \rightarrow \VEC$
    that is the pointwise limit of that series almost everywhere. 
    We find 
    \begin{align*}
        \int_{\Omega} \Orlicz\left( x, \lambda_{0}^{-1} \left|G(x)\right| \right)
        = 
        \int_{\Omega} \Orlicz\left( x, \lambda_{0}^{-1} \left|\sum_{n=1}^{\infty} g_n(x)\right| \right)
        \leq 
        \int_{\Omega} \Orlicz\left( x, \lambda_{0}^{-1} \sum_{n=1}^{\infty} |g_n(x)| \right)
        \leq 
        1.
    \end{align*}
    Hence $G \in \Lebesgue^{\Orlicz}$ with $\|G\|_{\Lebesgue^{\Orlicz}(\Omega)} \leq \lambda_{0}$.
    For almost every $x \in \Omega$ we have 
    \begin{align*}
        G(x) = \lim_{N\rightarrow\infty} \sum_{n=1}^{N} g_n(x) = \lim_{N\rightarrow\infty} u_{N+1} - u_{1}.
    \end{align*}
    So $u(x) = G(x) + u_{1}(x)$ is the pointwise limit of $u_{n}$ almost everywhere. 
    For any $\lambda > 0$ we use left-continuity and observe   
    \begin{align*}
        \int_{\Omega} \Orlicz\left( x, \lambda^{-1} \left|u(x)-u_{n}(x)\right| \right)
        &
        =
        \int_{\Omega} \Orlicz\left( x, \lambda^{-1} \left|\lim_{m\rightarrow\infty}u_{m}(x)-u_{n}(x)\right| \right)
        \\&
        =
        \int_{\Omega} \lim_{m\rightarrow\infty} \Orlicz\left( x, \lambda^{-1} \left|u_{m}(x)-u_{n}(x)\right| \right)
        .
    \end{align*}
    Next, by Fatou's lemma:
    \begin{align*}
        \int_{\Omega} \lim_{m\rightarrow\infty} \Orlicz\left( x, \lambda^{-1} \left|u_{m}(x)-u_{n}(x)\right| \right)
        \leq 
        \liminf_{m\rightarrow\infty} \int_{\Omega} \Orlicz\left( x, \lambda^{-1} \left|u_{m}(x)-u_{n}(x)\right| \right)
        .
    \end{align*}
    The last limes inferior of integrals is bounded by $1$ if $\lambda > 2^{-n}$.
    Consequently, $\| u-u_{n} \|_{\Lebesgue^{\Orlicz}(\Omega)} < 2^{-n}$.
    In other words, $u_{n}$ converges towards $u$ in the space $\Lebesgue^{\Orlicz}(\Omega,\VEC)$. 
\end{proof}

We commence the study of pullback formulas for Orlicz spaces.
This has many parallels to the previous section, but it seems most natural to work with a notion of \emph{pullback} for Musielak-Orlicz integrands. 

Suppose that $\phi : \OmegaS \rightarrow \OmegaT$ is continuous, invertible, and with locally Lipschitz inverse.
When $\Orlicz$ is a Musielak-Orlicz integrand over $\OmegaT$,
then we define the \emph{pullback} of $\Orlicz$ along $\phi$ as
\begin{align}
    \phi^{\ast} \Orlicz( x, \xi ) 
    = 
    \Orlicz( \phi(x), \xi ) \left|\det\nabla\phi_{|x}\right|
    ,
    \quad 
    x \in \OmegaS.
\end{align}
Thus we can formalize the following;
see also similarly~\cite[Proposition~9.3.7]{diening2011lebesgue} for the variable exponent case. 

\begin{lemma}\label{lemma:transformationssatz:orlicz}
    Let $\OmegaS, \OmegaT \subseteq \bbR^{N}$ be open sets, and let $\VEC$ be a Banach space.
    Suppose that $\phi : \OmegaS \rightarrow \OmegaT$ is continuous 
    and has a locally Lipschitz inverse $\phi^{-1} : \OmegaT \rightarrow \OmegaS$. 
    Let $\Orlicz$ be a Musielak-Orlicz integrand over $\OmegaT$. 
    If $u \in \Lebesgue^{\Orlicz}(\OmegaT,\VEC)$,
    then $u \circ \phi \in \Lebesgue^{\phi^{\ast}\Orlicz}(\OmegaS,\VEC)$ with 
    \begin{align}\label{math:transformationssatz:orlicz:scalar}
        \left\| 
            u \circ \phi
        \right\|_{\Lebesgue^{\phi^{\ast}\Orlicz}(\OmegaS,\VEC)}
        = 
        \left\| 
            {u} 
        \right\|_{\Lebesgue^{\Orlicz}(\OmegaT,\VEC)}
        .
    \end{align}
    Suppose in addition that $\phi$ is locally Lipschitz. 
    If  $p \in [1,\infty]$ and $u \in \Lebesgue^{\Orlicz}(\OmegaT,\LIN^{d}(Y,\VEC))$,
    then $\phi^\ast u \in \Lebesgue^{\phi^{\ast}\Orlicz}(\OmegaS,\LIN^{d}(X,\VEC))$ with 
    \begin{align}\label{math:transformationssatz:orlicz:tensor}
        \left\| 
            \phi^\ast u
        \right\|_{\Lebesgue^{\phi^{\ast}\Orlicz}(\OmegaS,\VEC)}
        \leq 
        \left\| 
            {u} 
        \right\|_{\Lebesgue^{\Orlicz}(\OmegaT,\VEC)}
        \left\| 
            \nabla \phi
        \right\|_{\Lebesgue^{\infty}(\OmegaS )}^{d}
        .
    \end{align}
\end{lemma}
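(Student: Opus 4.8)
The plan is to reduce everything to a change-of-variables computation at the level of the modular functional $\rho_{\Orlicz}(u)=\int_{\OmegaT}\Orlicz(y,|u(y)|_{\VEC})\,\dif y$. First I would record that $\phi^{\ast}\Orlicz$ is again a Musielak-Orlicz integrand: $\phi^{\ast}\Orlicz(x,0)=0$, convexity, left-continuity and monotonicity in $\xi$ are inherited from $\Orlicz$ since $|\det\nabla\phi_{|x}|\ge 0$, and $\phi^{\ast}\Orlicz(\cdot,\xi)$ is measurable because $\phi$ is continuous. The only delicate point is that $\phi^{\ast}\Orlicz(x,\cdot)$ is non-zero for a.e.\ $x$, i.e.\ $|\det\nabla\phi_{|x}|>0$ a.e.; this I would obtain from the hypothesis on $\phi^{-1}$, using that $\phi^{-1}$ is differentiable a.e.\ and that the change-of-variables identity $\int_{\OmegaS}g(\phi(x))\,\dif x=\int_{\OmegaT}g(y)|\det\nabla\phi^{-1}_{|y}|\,\dif y$ (applied to indicators) forces $\phi^{-1}$ to send both null sets and the set $\{y:\det\nabla\phi^{-1}_{|y}=0\}$ to null sets. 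Hence for a.e.\ $x$ the point $\phi(x)$ is a differentiability point of $\phi^{-1}$ with invertible Jacobian, so $\phi$ is differentiable at $x$ with $\nabla\phi_{|x}=(\nabla\phi^{-1}_{|\phi(x)})^{-1}$ and $|\det\nabla\phi_{|x}|=|\det\nabla\phi^{-1}_{|\phi(x)}|^{-1}\in(0,\infty)$.

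The core step is the modular identity
\[
    \int_{\OmegaS}\phi^{\ast}\Orlicz\bigl(x,v(\phi(x))\bigr)\,\dif x
    =
    \int_{\OmegaT}\Orlicz\bigl(y,v(y)\bigr)\,\dif y
\]
for every non-negative measurable $v$ on $\OmegaT$. To prove it, write the left-hand side as $\int_{\OmegaS}G(\phi(x))\,|\det\nabla\phi_{|x}|\,\dif x$ with $G(y)=\Orlicz(y,v(y))$, substitute $|\det\nabla\phi_{|x}|=|\det\nabla\phi^{-1}_{|\phi(x)}|^{-1}$, and apply the displayed change-of-variables identity — extended from $\Lebesgue^1$ to non-negative measurable integrands by monotone convergence — to $g=G/|\det\nabla\phi^{-1}|$. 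Taking $v(y)=|u(y)|_{\VEC}/\lambda$ then shows that $\int_{\OmegaS}\phi^{\ast}\Orlicz(x,|u\circ\phi(x)|_{\VEC}/\lambda)\,\dif x\le 1$ holds exactly when $\int_{\OmegaT}\Orlicz(y,|u(y)|_{\VEC}/\lambda)\,\dif y\le 1$, so taking the infimum over such $\lambda$ gives $u\circ\phi\in\Lebesgue^{\phi^{\ast}\Orlicz}(\OmegaS,\VEC)$ together with the equality of Luxemburg norms \eqref{math:transformationssatz:orlicz:scalar}. The same conclusion applies verbatim to $|u|$, which lies in $\Lebesgue^{\Orlicz}(\OmegaT,\bbR)$ with the same norm.

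For the tensor estimate \eqref{math:transformationssatz:orlicz:tensor} I would use the pointwise identity $\phi^{\ast}u_{|x}=u_{|\phi(x)}\contract{\otimes}^{d}\nabla\phi_{|x}$, valid a.e.\ once $\phi$ is also locally Lipschitz, together with the contraction inequality \eqref{math:contractioninequality} and the cross-norm property $|{\otimes}^{d}\nabla\phi_{|x}|=|\nabla\phi_{|x}|^{d}$, to obtain $|\phi^{\ast}u_{|x}|\le\|\nabla\phi\|_{\Lebesgue^{\infty}(\OmegaS)}^{d}\,(|u|\circ\phi)(x)$ for a.e.\ $x$. Since the Luxemburg norm is monotone in the pointwise norm by construction and absolutely homogeneous by the earlier proposition, this and the scalar case applied to $|u|$ give
\[
    \|\phi^{\ast}u\|_{\Lebesgue^{\phi^{\ast}\Orlicz}(\OmegaS)}
    \le
    \|\nabla\phi\|_{\Lebesgue^{\infty}(\OmegaS)}^{d}\,\|\,|u|\circ\phi\,\|_{\Lebesgue^{\phi^{\ast}\Orlicz}(\OmegaS)}
    =
    \|\nabla\phi\|_{\Lebesgue^{\infty}(\OmegaS)}^{d}\,\|u\|_{\Lebesgue^{\Orlicz}(\OmegaT)},
\]
which is the assertion.

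I expect the main obstacle to be the measure-theoretic bookkeeping under the weak regularity assumptions: checking that $\phi^{\ast}\Orlicz$ is a genuine Musielak-Orlicz integrand (the a.e.\ positivity of $\det\nabla\phi$) and that the substitution rule $\int_{\OmegaS}(g\circ\phi)\,|\det\nabla\phi|=\int_{\OmegaT}g$ is legitimate when only $\phi^{-1}$ — not $\phi$ — is assumed locally Lipschitz. Once the modular identity is established, the passage to Luxemburg norms and the tensor case are routine.
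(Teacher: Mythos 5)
Your proposal is correct and takes essentially the same route as the paper's proof: the change-of-variables modular identity combined with the definition of the Luxemburg norm yields the scalar equality, and the pointwise bound $|\phi^{\ast}u_{|x}|\le(|u|\circ\phi)(x)\,|\nabla\phi_{|x}|^{d}$ together with monotonicity and homogeneity of the Luxemburg norm yields the tensor estimate. Your extra measure-theoretic care (a.e.\ differentiability of $\phi$ with nonvanishing Jacobian determinant, and the extension of the substitution rule to non-negative measurable integrands) merely fills in what the paper dismisses as ``an obvious application of the transformation theorem.''
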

\begin{proof}
    By an obvious application of the transformation theorem,
    \begin{align*}
        \int_{\OmegaT} \Orlicz\left( y,\frac{|u(y)|}{\lambda} \right) \,\dif y
        &=
        \int_{\OmegaS} \Orlicz\left( \phi(x),\frac{|u\circ\phi(x)|}{\lambda} \right) \cdot |\det\nabla\phi_{|x}| \,\dif x
    \end{align*}
    for any measurable function $u : \OmegaT \rightarrow \VEC$. 
    Thus~\eqref{math:transformationssatz:orlicz:scalar} follows from the definition of the Luxemburg norm.
    Next,~\eqref{math:transformationssatz:orlicz:tensor} follows by 
    \begin{align*}
        \left\| 
            \phi^{\ast} {u} 
        \right\|_{\Lebesgue^{\phi^{\ast}\Orlicz}(\OmegaS,\VEC)}
        \leq 
        \left\| 
            |\nabla\phi|^{d} \cdot 
            ({u} \circ \phi)
        \right\|_{\Lebesgue^{\phi^{\ast}\Orlicz}(\OmegaS,\VEC)}
        \leq 
        \left\| 
            \nabla \phi
        \right\|_{\Lebesgue^{\infty}(\OmegaS )}^{d}
        \left\| 
            {u} \circ \phi
        \right\|_{\Lebesgue^{\Orlicz}(\OmegaS,\VEC)},
    \end{align*}
    where we have used~\eqref{math:auxiliary:pointwise} and the homogeneity of the Luxemburg norm. 
\end{proof}

We are interested in \emph{Musielak-Orlicz-Sobolev} spaces, which just call \emph{Orlicz-Sobolev spaces}. 
The \emph{$m$-th order Orlicz-Sobolev space} $W^{m,\Orlicz}(\Omega,\VEC)$ denote the set of locally integrable functions with weak derivatives up to order $m$, all of which are in the Orlicz-Sobolev space $\Lebesgue^{\Orlicz}(\Omega,\VEC)$. 
The pullback of functions in Orlicz-Sobolev spaces is again in a Orlicz-Sobolev space, but with a different Musielak-Orlicz integrand.

\begin{corollary}\label{corollary:weakderivativeofpullback:orlicz}
    Let $\OmegaS, \OmegaT \subseteq \bbR^{N}$ be open sets, and let $\VEC$ be a Banach space.
    Let $m \in \bbN$. 
    Suppose that $\phi : \OmegaS \rightarrow \OmegaT$ is locally bi-Lipschitz.
    Let $\Orlicz$ be a Musielak-Orlicz integrand. 
    
    \noindent 
    If the weak derivatives $\nabla\phi, \dots, \nabla^{m+1}\phi$ are essentially bounded
    and $u \in \Sobolev^{m,\Orlicz}(\OmegaT,\LIN^{d}(Y,\VEC))$,
    then $\phi^\ast u \in \Sobolev^{m,\phi^{\ast}\Orlicz}(\OmegaS,\LIN^{d}(X,\VEC))$ and 
    \begin{align}
        \begin{aligned}
        \left\| 
            \nabla^{m} 
            \left( \phi^{\ast} {u} \right)
        \right\|_{\Lebesgue^{\phi^\ast\Orlicz}(\OmegaS)}
        &
        \leq 
        \sum_{k=0}^{m}
        \|
            \nabla^{k} u 
        \|_{\Lebesgue^{\Orlicz}(\OmegaT)}
\left\|
        B_{k,m,d}\left(
            | \nabla^{} \phi |,
            \ldots,
            | \nabla^{m+1} \phi |
        \right)
        \right\|_{\Lebesgue^{\infty}(\OmegaS)}
        \\&
        \leq 
        \sum_{k=0}^{m}
        \|
            \nabla^{k} u 
        \|_{\Lebesgue^{\Orlicz}(\OmegaT)}
        B_{k,m,d}\left(
            \| \nabla^{   } \phi \|_{\Lebesgue^{\infty}(\OmegaS)},
            \ldots,
            \| \nabla^{m+1} \phi \|_{\Lebesgue^{\infty}(\OmegaS)}
        \right)
        .
        \end{aligned}
    \end{align}
    If the weak derivatives $\nabla\phi, \dots, \nabla^{m}\phi$ are essentially bounded 
    and $u \in \Sobolev^{m,\Orlicz}(\OmegaT,\VEC)$,
    then $u \circ \phi \in \Sobolev^{m,\phi^{\ast}\Orlicz}(\OmegaS,\VEC)$ and 
    \begin{align}
        \begin{aligned}
        \left\| 
        \nabla^{m} 
        \left( {u} \circ \phi \right)
        \right\|_{\Lebesgue^{\phi^\ast\Orlicz}(\OmegaS)}
        &
        \leq 
        \sum_{k=1}^{m}
        \|
            \nabla^{k} u 
        \|_{\Lebesgue^{\Orlicz}(\OmegaT)}
\left\|
        B_{k,m}\left(
            | \nabla^{} \phi |,
            \ldots,
            | \nabla^{m} \phi |
        \right)
        \right\|_{\Lebesgue^{\infty}(\OmegaS)}
        \\&
        \leq 
        \sum_{k=1}^{m}
        \|
            \nabla^{k} u 
        \|_{\Lebesgue^{\Orlicz}(\OmegaT)}
        B_{k,m}\left(
            \| \nabla^{ } \phi \|_{\Lebesgue^{\infty}(\OmegaS)},
            \ldots,
            \| \nabla^{m} \phi \|_{\Lebesgue^{\infty}(\OmegaS)}
        \right)
        .
        \end{aligned}
    \end{align}
\end{corollary}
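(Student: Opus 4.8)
The plan is to follow the template of the proof of Corollary~\ref{corollary:weakderivativeofpullback:sobolev}, simply substituting Luxemburg-norm estimates for the $\Lebesgue^{p}$-estimates used there. First I would invoke Proposition~\ref{prop:weakderivativesofpullback}: since $\phi$ is locally bi-Lipschitz, $u$ is locally integrable with weak derivatives up to order $m$, and $\nabla\phi,\dots,\nabla^{m+1}\phi$ are essentially bounded (respectively $\nabla\phi,\dots,\nabla^{m}\phi$ when $d=0$), the pullback $\phi^{\ast}u$ has weak derivatives up to order $m$ and $\nabla^{m}(\phi^{\ast}u)$ is given almost everywhere by the explicit formula~\eqref{math:weakderivativeofpullback}. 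Taking pointwise spectral norms in that formula and applying the contraction inequality~\eqref{math:contractioninequality} together with the combinatorial counting already performed for~\eqref{math:pointwisepullbackinequality} (respectively Corollary~\ref{corollary:pullbackoftensors:zerotensors} when $d=0$) yields, for almost every $x\in\OmegaS$,
\[
    \left| \nabla^{m}\left( \phi^{\ast}u \right)_{|x} \right|
    \leq
    \sum_{k=0}^{m}
    \left| \nabla^{k} u_{|\phi(x)} \right|
    \cdot
    B_{k,m,d}\left( |\nabla\phi_{|x}|, \ldots, |\nabla^{m+1}\phi_{|x}| \right).
\]

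Next I would replace the generalized Bell polynomial factor pointwise by its essential supremum: because $B_{k,m,d}$ has non-negative coefficients by~\eqref{math:bellpolynomial:dgeneralized}, it is monotone in each argument, so $B_{k,m,d}(|\nabla\phi_{|x}|,\ldots,|\nabla^{m+1}\phi_{|x}|)\leq \| B_{k,m,d}(|\nabla\phi|,\ldots,|\nabla^{m+1}\phi|) \|_{\Lebesgue^{\infty}(\OmegaS)}$ for a.e.\ $x$. Inserting this bound and then applying the Luxemburg norm $\|\cdot\|_{\Lebesgue^{\phi^{\ast}\Orlicz}(\OmegaS)}$ to both sides, I would use that this functional is a genuine norm (triangle inequality) and is absolutely homogeneous, so that the constants $\| B_{k,m,d}(\ldots) \|_{\Lebesgue^{\infty}(\OmegaS)}$ may be pulled out, obtaining
\[
    \left\| \nabla^{m}\left( \phi^{\ast}u \right) \right\|_{\Lebesgue^{\phi^{\ast}\Orlicz}(\OmegaS)}
    \leq
    \sum_{k=0}^{m}
    \left\| B_{k,m,d}\left( |\nabla\phi|, \ldots, |\nabla^{m+1}\phi| \right) \right\|_{\Lebesgue^{\infty}(\OmegaS)}
    \left\| \, |\nabla^{k}u| \circ \phi \, \right\|_{\Lebesgue^{\phi^{\ast}\Orlicz}(\OmegaS)},
\]
where I use $|\nabla^{k}u_{|\phi(x)}| = (|\nabla^{k}u|\circ\phi)(x)$ and the fact that $|\nabla^{k}u|\in\Lebesgue^{\Orlicz}(\OmegaT,\bbR)$, which holds because $\Lebesgue^{\Orlicz}$ is defined through the pointwise norm and $\nabla^{k}u\in\Lebesgue^{\Orlicz}(\OmegaT,\cdot)$ for $0\leq k\leq m$.

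Finally I would apply the scalar change-of-variables identity~\eqref{math:transformationssatz:orlicz:scalar} of Lemma~\ref{lemma:transformationssatz:orlicz} to the non-negative scalar field $|\nabla^{k}u|$ — its hypotheses hold since $\phi$ is locally bi-Lipschitz — which gives $\| |\nabla^{k}u| \circ \phi \|_{\Lebesgue^{\phi^{\ast}\Orlicz}(\OmegaS)} = \| |\nabla^{k}u| \|_{\Lebesgue^{\Orlicz}(\OmegaT)} = \| \nabla^{k}u \|_{\Lebesgue^{\Orlicz}(\OmegaT)}$. Substituting this back produces the first displayed inequality of the corollary and in particular shows $\phi^{\ast}u\in\Sobolev^{m,\phi^{\ast}\Orlicz}(\OmegaS,\LIN^{d}(X,\VEC))$; the second displayed inequality follows by monotonicity of $B_{k,m,d}$, i.e.\ $\| B_{k,m,d}(|\nabla\phi|,\ldots,|\nabla^{m+1}\phi|) \|_{\Lebesgue^{\infty}(\OmegaS)} \leq B_{k,m,d}( \|\nabla\phi\|_{\Lebesgue^{\infty}(\OmegaS)}, \ldots, \|\nabla^{m+1}\phi\|_{\Lebesgue^{\infty}(\OmegaS)} )$. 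The scalar case $d=0$ is handled identically, invoking the $d=0$ clause of Proposition~\ref{prop:weakderivativesofpullback} (so that only $\nabla\phi,\dots,\nabla^{m}\phi$ need be essentially bounded) and the identification $B_{m,k}=B_{m,k,0}$.

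The argument carries no substantial new difficulty, since the analytic content already resides in Proposition~\ref{prop:weakderivativesofpullback} and Lemma~\ref{lemma:transformationssatz:orlicz}. The only point that demands care — and the reason the proof must be organized in exactly this order — is that the Luxemburg norm, unlike an $\Lebesgue^{p}$-norm, interacts with pointwise products only through crude homogeneity; hence the Bell-polynomial weight has to be replaced by its $\Lebesgue^{\infty}(\OmegaS)$-norm \emph{before} the Luxemburg norm is taken, so that only scalar constants, never genuine functions, are ever moved past that norm. Getting this ordering wrong is the one way the estimate could fail.
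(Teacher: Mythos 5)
Your proposal is correct and follows essentially the same route as the paper: the paper's proof simply refers back to Corollary~\ref{corollary:weakderivativeofpullback:sobolev}, i.e.\ it combines Proposition~\ref{prop:weakderivativesofpullback}, the pointwise Bell-polynomial bound with non-negative coefficients, and (implicitly) the exact change-of-variables identity~\eqref{math:transformationssatz:orlicz:scalar} of Lemma~\ref{lemma:transformationssatz:orlicz}, which is precisely your sequence of steps. Your remark about replacing the Bell-polynomial weight by its $\Lebesgue^{\infty}$-bound before taking the Luxemburg norm (using monotonicity of the integrand in its second argument) is exactly the detail that substitutes for the H\"older step in the Lebesgue case.
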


\begin{proof}
    This is essentially the same proof as for Corollary~\ref{corollary:weakderivativeofpullback:sobolev}.
\end{proof}

We study the following notion of fractional Musielak-Orlicz spaces.
Given any Musielak-Orlicz integrand $\DoubleOrlicz$ over the product space $\Omega \times \Omega$,
the product $\dist(x,x')^{-N}\DoubleOrlicz$ defines another such Musielak-Orlicz integrand. 
Given $\theta \in (0,1)$, we define the \emph{Orlicz-Slobodeckij seminorm} 
\begin{align}\label{math:orliczslobodeckijseminorm}
    |u|_{\Sobolev^{\theta,\DoubleOrlicz}(\OmegaS)}
    :=
    \inf_{ \lambda > 0 }
    \left\{ 
        \lambda 
        \suchthat* 
        \iint_{\OmegaS \times \OmegaS} 
        \DoubleOrlicz\left( x, x', \frac{|u(x)-u(x')|}{\dist(x,x')^{\theta}} \right) \dist(x,x')^{-N}
        \,\dif x \,\dif x'
        \leq 1
    \right\}
\end{align}
for any measurable function $u : \OmegaS \rightarrow \VEC$.

Finally, whenever $\Orlicz$ and $\DoubleOrlicz$ are Musielak-Orlicz integrands over $\OmegaS$ and $\OmegaS\times\OmegaS$, respectively,
we write $\Sobolev^{\Orlicz,\theta,\DoubleOrlicz}(\OmegaS,\VEC)$ for the subspace $\Lebesgue^{\Orlicz}(\OmegaS)$ whose members have bounded seminorm~\eqref{math:orliczslobodeckijseminorm}. 
More generally, when $m \in \bbN_{0}$, we write $\Sobolev^{m,\Orlicz,\theta,\DoubleOrlicz}(\OmegaS,\VEC)$ for the subspace $\Sobolev^{m,\Orlicz}(\OmegaS)$ whose weak derivatives up to order $m$ are in $\Sobolev^{\Orlicz,\theta,\DoubleOrlicz}(\OmegaS)$.

\begin{remark}
    In principle, there is no canonical relation between the functions $\Orlicz$ and $\DoubleOrlicz$.
    However, the following estimates require an additional relation between the two functions and we comment further below in how far those assumptions are realistic. 
\end{remark}

For handling the change of variables, we once more need a notion of pullback.
Suppose that $\DoubleOrlicz$ is a Musielak-Orlicz integrand over the product space $\Omega \times \Omega$. 
Every transformation $\phi : \OmegaS \rightarrow \OmegaT$ defines a transformation between the corresponding product spaces, 
and we define the \emph{pullback} of $\DoubleOrlicz$ along $\phi$ as
\begin{align}
    \phi^{\ast} \DoubleOrlicz( x, x', \xi ) 
    = 
    \DoubleOrlicz( \phi(x), \phi(x'), \xi ) \left|\det\nabla\phi_{|\phi(x)}\right| \left|\det\nabla\phi_{|\phi(x')}\right|
    .
\end{align}
We first study how the seminorm behaves under multiplication with H\"older continuous functions and under change of variables. 

\begin{lemma}\label{lemma:orliczsobolevslobodeckijproduct}
    Let $\OmegaS \subseteq \bbR^{N}$ be an open set and let $\VEC_1, \VEC_2, \VEC_3$ be Banach spaces. 
    Let $\theta,\sigma \in (0,1]$ with $\theta < \sigma$. 
    Let $\DoubleOrlicz$ be a Musielak-Orlicz integrand over $\OmegaS\times\OmegaS$
    and let $\Orlicz$ be a Musielak-Orlicz integrand over $\OmegaS$ such that $\DoubleOrlicz(x,x') \leq C_{\Orlicz,\DoubleOrlicz} \Orlicz(x)$
    for almost all $x, x' \in \OmegaS$.
    
    If ${u} \in \Sobolev^{\Orlicz,\theta,\DoubleOrlicz}(\OmegaS,\LIN(\VEC_1,\VEC_2))$
    and $\psi \in \Cont^{0,\sigma}(\OmegaS,\LIN(\VEC_2,\VEC_3))$, then 
    \noindent 
    \begin{align}\label{math:orliczsobolevslobodeckijproduct}
        \begin{aligned}
            | {u} \contract \psi |_{\Sobolev^{\theta,\DoubleOrlicz}(\OmegaS)}
            &
            \leq 
            | {u} |_{\Sobolev^{\theta,\DoubleOrlicz}(\OmegaS)}
            \| \psi \|_{\Lebesgue^{\infty}(\OmegaS)}
            \\&\quad\quad
            +
            C_{\Orlicz,\DoubleOrlicz}
            \frac{ \SphereArea_{N-1} }{ \sigma-\theta }
            \| {u} \|_{\Lebesgue^{\Orlicz}(\OmegaS)}
            | \psi |_{\Cont^{0,\sigma}(\OmegaS)}
            +
            2
            C_{\Orlicz,\DoubleOrlicz}
            \frac{ \SphereArea_{N-1} }{ \theta }
            \| {u} \|_{\Lebesgue^{\Orlicz}(\OmegaS)}
            \| \psi \|_{\Lebesgue^{\infty}(\OmegaS)}
            .
        \end{aligned}
    \end{align}        
    Here, $\SphereArea_{N-1}$ denotes the area of the Euclidean unit sphere of dimension $N-1$. 
\end{lemma}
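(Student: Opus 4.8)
The plan is to mirror the proof of Lemma~\ref{lemma:sobolevslobodeckijproduct}, with the Luxemburg-type definition~\eqref{math:orliczslobodeckijseminorm} in place of the explicit $\Lebesgue^{p}$ integrals and with convexity of $\DoubleOrlicz$ in its last argument replacing H\"older's and Minkowski's inequalities. First I would record, for almost every $(x,x') \in \OmegaS \times \OmegaS$, the algebraic identity $u(x)\contract\psi(x) - u(x')\contract\psi(x') = (u(x)-u(x'))\contract\psi(x') + u(x)\contract(\psi(x)-\psi(x'))$, so that, by the contraction inequality~\eqref{math:contractioninequality},
\begin{align*}
    | u(x)\contract\psi(x) - u(x')\contract\psi(x') |
    \le | u(x)-u(x') | \, \| \psi \|_{\Lebesgue^{\infty}(\OmegaS)}
    + | u(x) | \, | \psi(x)-\psi(x') | .
\end{align*}
Dividing by $\dist(x,x')^{\theta}$, writing $F(x,x')$ for the left-hand side, $G(x,x')$ for the first term on the right, and, after estimating $|\psi(x)-\psi(x')|$ and separating the ranges $\dist(x,x')<1$ and $\dist(x,x')\ge 1$, $H_{<}(x,x')$ and $H_{>}(x,x')$ for the two pieces of the second term, we obtain $F \le G + H_{<} + H_{>}$ pointwise, with $H_{<}$ supported where $\dist(x,x')<1$ and $H_{>}$ where $\dist(x,x')\ge 1$.

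Next I would prove sub-additivity of the implicit seminorm along this splitting. Suppose $\lambda_{G},\lambda_{H_{<}},\lambda_{H_{>}}>0$ are chosen so that each of $\iint_{\OmegaS\times\OmegaS}\DoubleOrlicz(x,x',G/\lambda_{G})\dist(x,x')^{-N}\,\dif x\,\dif x'$ and its analogues with $H_{<}/\lambda_{H_{<}}$ and $H_{>}/\lambda_{H_{>}}$ is at most $1$. Then, writing $\lambda := \lambda_{G}+\lambda_{H_{<}}+\lambda_{H_{>}}$ and using convexity of $\DoubleOrlicz(x,x',\cdot)$ together with $\DoubleOrlicz(x,x',0)=0$ and monotonicity, one has $\DoubleOrlicz(x,x',F/\lambda) \le (\lambda_{G}/\lambda)\DoubleOrlicz(x,x',G/\lambda_{G}) + (\lambda_{H_{<}}/\lambda)\DoubleOrlicz(x,x',H_{<}/\lambda_{H_{<}}) + (\lambda_{H_{>}}/\lambda)\DoubleOrlicz(x,x',H_{>}/\lambda_{H_{>}})$; integrating gives $\iint_{\OmegaS\times\OmegaS}\DoubleOrlicz(x,x',F/\lambda)\dist(x,x')^{-N}\,\dif x\,\dif x' \le 1$, hence $|u\contract\psi|_{\Sobolev^{\theta,\DoubleOrlicz}(\OmegaS)} \le \lambda_{G}+\lambda_{H_{<}}+\lambda_{H_{>}}$. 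It then remains to exhibit admissible $\lambda_{G},\lambda_{H_{<}},\lambda_{H_{>}}$ recovering the three summands of~\eqref{math:orliczsobolevslobodeckijproduct}.

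For $\lambda_{G}=\|\psi\|_{\Lebesgue^{\infty}(\OmegaS)}\,\mu$ with $\mu>|u|_{\Sobolev^{\theta,\DoubleOrlicz}(\OmegaS)}$ the argument of $\DoubleOrlicz$ is $|u(x)-u(x')|/(\mu\,\dist(x,x')^{\theta})$, and admissibility is immediate from~\eqref{math:orliczslobodeckijseminorm}. For $H_{<}$ I would use $|\psi(x)-\psi(x')|\le|\psi|_{\Cont^{0,\sigma}(\OmegaS)}\dist(x,x')^{\sigma}$, so that $H_{<}\le|\psi|_{\Cont^{0,\sigma}(\OmegaS)}\,|u(x)|\,\dist(x,x')^{\sigma-\theta}$ where $\dist(x,x')<1$; then convexity in the form $\DoubleOrlicz(x,x',t\xi)\le t\,\DoubleOrlicz(x,x',\xi)$ with $t=\dist(x,x')^{\sigma-\theta}\le1$ pulls out the power of $\dist$, the comparison $\DoubleOrlicz(x,x',\cdot)\le C_{\Orlicz,\DoubleOrlicz}\Orlicz(x,\cdot)$ reduces to a one-variable integrand, and the polar-coordinate bound $\int_{\{x'\mid\dist(x,x')<1\}}\dist(x,x')^{\sigma-\theta-N}\,\dif x'\le \SphereArea_{N-1}/(\sigma-\theta)$ disposes of the $x'$-integral; the choice $\lambda_{H_{<}}=C_{\Orlicz,\DoubleOrlicz}\,\frac{\SphereArea_{N-1}}{\sigma-\theta}\,|\psi|_{\Cont^{0,\sigma}(\OmegaS)}\,\mu$ with $\mu>\|u\|_{\Lebesgue^{\Orlicz}(\OmegaS)}$ is then admissible after one more use of convexity, namely $c\,\Orlicz(x,\xi)\le\Orlicz(x,c\xi)$ for $c\ge1$ — applicable because we may assume $C_{\Orlicz,\DoubleOrlicz}\ge1$, and $\SphereArea_{N-1}\ge2$ with $\sigma-\theta\le1$, so the constant is at least $1$. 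The term $H_{>}$ is handled the same way, using $|\psi(x)-\psi(x')|\le2\|\psi\|_{\Lebesgue^{\infty}(\OmegaS)}$ and $\dist(x,x')^{-\theta}\le1$ where $\dist(x,x')\ge1$, the bound $\int_{\{x'\mid\dist(x,x')\ge1\}}\dist(x,x')^{-\theta-N}\,\dif x'\le\SphereArea_{N-1}/\theta$, and $\lambda_{H_{>}}=2C_{\Orlicz,\DoubleOrlicz}\,\frac{\SphereArea_{N-1}}{\theta}\,\|\psi\|_{\Lebesgue^{\infty}(\OmegaS)}\,\mu$. Letting $\mu$ decrease to the relevant norms of $u$ in each estimate yields~\eqref{math:orliczsobolevslobodeckijproduct}.

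The main obstacle I expect is the careful handling of the implicit (Luxemburg-type) seminorm: making the sub-additivity step precise, and, in the choices of $\lambda_{H_{<}}$ and $\lambda_{H_{>}}$, arranging that the one-variable convexity inequality points in the needed direction, which forces the geometric constants to be at least $1$ — dispatched by replacing $C_{\Orlicz,\DoubleOrlicz}$ with $\max(C_{\Orlicz,\DoubleOrlicz},1)$ at the outset. The remaining ingredients — the pointwise splitting, the H\"older estimate on $\psi$, and the two radial integrals — are routine and parallel Lemma~\ref{lemma:sobolevslobodeckijproduct}.
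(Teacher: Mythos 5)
Your proof follows the paper's route almost verbatim: the same pointwise splitting of $u(x)\contract\psi(x)-u(x')\contract\psi(x')$, the same case distinction $\dist(x,x')<1$ versus $\dist(x,x')\geq 1$, the same use of convexity to pull out the factor $\dist(x,x')^{\sigma-\theta}$, the comparison $\DoubleOrlicz\leq C_{\Orlicz,\DoubleOrlicz}\Orlicz$, and the same two radial integrals. In fact you are more explicit than the paper at the two delicate points: the subadditivity of the implicit Luxemburg-type seminorm (which the paper compresses into ``Minkowski inequality for the Luxemburg norm'' and ``an argument involving the triangle inequality''), and the conversion of the modular bounds into norm bounds; moreover, placing $|u(x)|$ rather than $|u(x')|$ in the second term matches the hypothesis $\DoubleOrlicz(x,x')\leq C_{\Orlicz,\DoubleOrlicz}\Orlicz(x)$ more cleanly than the paper's own bookkeeping.

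The one genuine flaw is your dispatch of the convexity-direction issue. You correctly identify that admissibility of $\lambda_{H_<}$ and $\lambda_{H_>}$ needs $c\,\Orlicz(x,\xi)\leq\Orlicz(x,c\xi)$, hence $c\geq 1$, for $c=C_{\Orlicz,\DoubleOrlicz}\SphereArea_{N-1}/(\sigma-\theta)$ and $c=2C_{\Orlicz,\DoubleOrlicz}\SphereArea_{N-1}/\theta$, respectively. But the claim $\SphereArea_{N-1}\geq 2$ is false: $\SphereArea_{N-1}=2\pi^{N/2}/\Gamma(N/2)$ tends to $0$ as $N\to\infty$ (it drops below $2$ near $N=18$ and below $1$ for $N\geq 19$), so even with $C_{\Orlicz,\DoubleOrlicz}\geq 1$ these constants can be smaller than $1$ (take $N$ large and $\theta$ close to $1$). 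In that regime your argument only yields the inequality with the two constants replaced by their maximum with $1$, not the constants as stated. To be fair, the paper's own proof stops at the modular bounds and never addresses this last step at all, so your write-up is the more honest one; but the sentence asserting $\SphereArea_{N-1}\geq 2$ should be deleted and the constants adjusted (or the needed restriction that they be at least $1$ stated explicitly).
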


\begin{proof}
    We first observe that for almost all $(x,x') \in \Omega\times\Omega$ we have 
    \begin{align*}
        | u(x) \contract \psi(x) - u(x') \contract \psi(x') |
        &
        \leq 
        | u(x) \contract \psi(x) - u(x') \contract \psi(x) |
        +
        | u(x') \contract \psi(x) - u(x') \contract \psi(x') |
        \\&
        \leq 
        | u(x) - u(x') | \cdot \| \psi \|_{\Lebesgue^{\infty}}
+
        | u(x') \contract \psi(x) - u(x') \contract \psi(x') |
.
    \end{align*}
    Using the Minkowski inequality for the Luxemburg norm over $\OmegaS \times \OmegaS$,
    we see
    \begin{align*}
        &
        | {u} \contract \psi |_{\Sobolev^{\theta,\DoubleOrlicz}(\OmegaS)}
        \leq 
        | {u} |_{\Sobolev^{\theta,\DoubleOrlicz}(\OmegaS)}
        \| \psi \|_{\Lebesgue^{\infty}(\OmegaS)}
        \\&\qquad 
        +
        \inf_{\lambda > 0}
        \left\{ 
            \lambda
            \suchthat*
            \iint_{\OmegaS \times \OmegaS}
            \DoubleOrlicz\left( x, x',
            \frac{
                | u(x') \contract \psi(x) - u(x') \contract \psi(x') |
            }{
                \lambda \dist( x, x' )^{\theta} 
            } \right)
            \dist( x, x' )^{-N}
            \,\dif x\,\dif x'
        \right\}
        .
    \end{align*}
    By an argument involving the triangle inequality, 
    we can split the integral.
    On the one hand,
    \begin{align*}
        &
        \iint\limits_{ \substack{ \OmegaS \times \OmegaS \\ \dist(x,x') < 1 } }
            \DoubleOrlicz\left( x, x',
                \frac{
                    | u(x') \contract \psi(x) - u(x') \contract \psi(x') |
                }{
                    \lambda \dist( x, x' )^{\theta} 
                } 
            \right) \dist( x, x' )^{-N}
        \,\dif x \,\dif x'
        \\&\quad
        \leq
        \iint\limits_{ \substack{ \OmegaS \times \OmegaS \\ \dist(x,x') < 1 } }
            \DoubleOrlicz\left( x, x',
                \lambda^{-1} 
                | u(x') | \cdot | \psi |_{\Cont^{0,\sigma}(\OmegaS)} \dist( x, x' )^{\sigma-\theta}
            \right) \dist( x, x' )^{-N}
        \,\dif x \,\dif x'
        \\&\quad
        \leq
        \iint\limits_{ \substack{ \OmegaS \times \OmegaS \\ \dist(x,x') < 1 } }
            \DoubleOrlicz\left( x, x',
                \lambda^{-1}
                | u(x') | \cdot | \psi |_{\Cont^{0,\sigma}(\OmegaS)}
            \right)  
            \dist( x, x' )^{\sigma-\theta-N}
        \,\dif x \,\dif x'
        \\&\quad
        \leq
        C_{\Orlicz,\DoubleOrlicz}
        \frac{ \SphereArea_{N-1} }{ \sigma - \theta }
        \cdot 
        \int\limits_{ \substack{ \OmegaS } }
            \Orlicz\left( x,
                \lambda^{-1}
                | u(x) | \cdot | \psi |_{\Cont^{0,\sigma}(\OmegaS)}
            \right)  
        \,\dif x
        .
    \end{align*}
    On the other hand, 
    \begin{align*}
        &
        \iint\limits_{ \substack{ \OmegaS \times \OmegaS \\ \dist(x,x') > 1 } }
            \DoubleOrlicz\left( x, x',
                \frac{
                    | u(x') \contract \psi(x) - u(x') \contract \psi(x') |
                }{
                    \lambda \dist( x, x' )^{\theta} 
                } 
            \right) \dist( x, x' )^{-N}
        \,\dif x \,\dif x'
        \\&\quad
        \leq
        \iint\limits_{ \substack{ \OmegaS \times \OmegaS \\ \dist(x,x') > 1 } }
            \DoubleOrlicz\left( x, x',
                \frac{
                    | u(x') | \cdot 2 \| \psi \|_{\Lebesgue^{\infty}(\OmegaS)}
                }{
                    \lambda \dist( x, x' )^{\theta}
                } 
            \right) \dist( x, x' )^{-N}
        \,\dif x \,\dif x'
        \\&\quad
        \leq
        \iint\limits_{ \substack{ \OmegaS \times \OmegaS \\ \dist(x,x') > 1 } }
            \DoubleOrlicz\left( x, x',
                \lambda^{-1}
                | u(x') | \cdot 2 \| \psi \|_{\Lebesgue^{\infty}(\OmegaS)}
            \right)  
            \dist( x, x' )^{-\theta-N}
        \,\dif x \,\dif x'
        \\&\quad
        \leq
        C_{\Orlicz,\DoubleOrlicz}
        \frac{ \SphereArea_{N-1} }{ \theta }
        \cdot 
        \int\limits_{ \substack{ \OmegaS } }
            \Orlicz\left( x,
                \lambda^{-1}
                | u(x) | \cdot | \psi |_{\Cont^{0,\sigma}(\OmegaS)}
            \right)  
        \,\dif x
        .
    \end{align*}
    This shows~\eqref{math:sobolevslobodeckijproduct}, thus completing the proof of the auxiliary lemma. 
\end{proof}

With that preparation in place, 
we estimate the Sobolev-Slobodeckij seminorms of tensors and their higher derivatives after a pullback.

\begin{lemma}\label{lemma:transformationssatz:orliczslobodeckij}
    Let $\OmegaS, \OmegaT \subseteq \bbR^{N}$ be open sets, and let $\VEC$ be a Banach space.
    Suppose that $\phi : \OmegaS \rightarrow \OmegaT$ is bi-Lipschitz. 
    Let $\DoubleOrlicz$ be a Musielak-Orlicz integrand over $\OmegaT\times\OmegaT$
    and let $\Orlicz$ be a Musielak-Orlicz integrand over $\OmegaT$ such that $\DoubleOrlicz(x,x') \leq C_{\Orlicz,\DoubleOrlicz} \Orlicz(x)$
    for almost all $x, x' \in \OmegaS$.
    Let $\theta \in (0,1]$.
    If $u \in \Sobolev^{\Orlicz,\theta,\DoubleOrlicz}(\OmegaT,\VEC)$, 
    then $u \circ \phi \in \Sobolev^{\phi^{\ast}\Orlicz,\theta,\phi^{\ast}\DoubleOrlicz}(\OmegaS,\VEC)$ with
    \begin{align}\label{math:transformationssatz:orliczslobodeckij:scalar}
        \left| u \circ \phi \right|_{\Sobolev^{\theta,\phi^{\ast}\DoubleOrlicz}(\OmegaS,\VEC)}
        \leq 
        \left|\phi\right|_{\Cont^{0,1}(\OmegaS)}^{\theta+N}
        \left| u \right|_{\Sobolev^{\theta,\DoubleOrlicz}(\OmegaT,\VEC)}
        .
    \end{align}
    Now assume that $\theta < \sigma$. 
    If $u \in \Sobolev^{\Orlicz,\theta,\DoubleOrlicz}(\OmegaT,\LIN^{d}(Y,\VEC))$, 
    then $\phi^{\ast} u \in \Sobolev^{\phi^{\ast}\Orlicz,\theta,\phi^{\ast}\DoubleOrlicz}(\OmegaS,\LIN^{d}(X,\VEC))$ with
    \begin{align}\label{math:transformationssatz:orliczslobodeckij:tensor}
        \begin{aligned}
            | \phi^{\ast} u |_{\Sobolev^{\theta,\phi^{\ast}\DoubleOrlicz}(\OmegaS)}
            &
            \leq 
            \left|\phi\right|_{\Cont^{0,1}(\OmegaS)}^{\theta+N}
            | {u} |_{\Sobolev^{\theta,\DoubleOrlicz}(\OmegaT)}
            \| {\otimes}^d \nabla\phi \|_{\Lebesgue^{\infty}(\OmegaS)}
            \\&\quad\quad
            +
            C_{\Orlicz,\DoubleOrlicz}
            \frac{ \SphereArea_{N-1} }{ \sigma-\theta }
            \| {u} \|_{\Lebesgue^{\Orlicz}(\OmegaT)}
            | {\otimes}^d \nabla\phi |_{\Cont^{0,\sigma}(\OmegaS)}
            \| \det\nabla\phi \|_{\Lebesgue^{\infty}(\OmegaS)}
            \\&\quad\quad
            +
            2
            C_{\Orlicz,\DoubleOrlicz}
            \frac{ \SphereArea_{N-1} }{ \theta }
            \| {u} \|_{\Lebesgue^{\Orlicz}(\OmegaT)}
            \| {\otimes}^d \nabla\phi \|_{\Lebesgue^{\infty}(\OmegaS)}
            \| \det\nabla\phi \|_{\Lebesgue^{\infty}(\OmegaS)}
            .
        \end{aligned}
    \end{align}    
\end{lemma}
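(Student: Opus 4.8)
The plan is to follow the proof of Lemma~\ref{lemma:transformationssatz:slobodeckij} step by step, replacing the $\Lebesgue^{p}$ estimates there by direct manipulations of the modular appearing in the definition~\eqref{math:orliczslobodeckijseminorm} of the Orlicz-Slobodeckij seminorm. Throughout, bi-Lipschitz-ness of $\phi$ is used exactly so that $\phi^{-1}$ preserves null sets and so that both $\phi$ and $\phi^{-1}$ obey the Lipschitz distortion bounds $\dist(\phi(x),\phi(x'))\leq|\phi|_{\Cont^{0,1}(\OmegaS)}\dist(x,x')$ and $\dist(\phi^{-1}(y),\phi^{-1}(y'))\geq|\phi|_{\Cont^{0,1}(\OmegaS)}^{-1}\dist(y,y')$.

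\textbf{The scalar estimate~\eqref{math:transformationssatz:orliczslobodeckij:scalar}.} Fix $\lambda>0$ and consider the double integral in~\eqref{math:orliczslobodeckijseminorm} with $u$ replaced by $u\circ\phi$ and $\DoubleOrlicz$ replaced by $\phi^{\ast}\DoubleOrlicz$. Expanding $\phi^{\ast}\DoubleOrlicz$ by its definition and substituting $(y,y')=(\phi(x),\phi(x'))$, the Jacobian weights built into $\phi^{\ast}\DoubleOrlicz$ cancel exactly against the change of variables on $\OmegaS\times\OmegaS$, so the modular becomes the integral over $\OmegaT\times\OmegaT$ of $\DoubleOrlicz\bigl(y,y',\lambda^{-1}|u(y)-u(y')|\dist(\phi^{-1}(y),\phi^{-1}(y'))^{-\theta}\bigr)\dist(\phi^{-1}(y),\phi^{-1}(y'))^{-N}$. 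Using that $\DoubleOrlicz(y,y',\cdot)$ is non-decreasing, the distortion bound for $\phi^{-1}$, and the elementary convexity inequality $c\,\DoubleOrlicz(y,y',\eta)\leq\DoubleOrlicz(y,y',c\eta)$ for $c\geq1$ (valid because $\DoubleOrlicz(y,y',0)=0$), I would absorb the factor $|\phi|_{\Cont^{0,1}(\OmegaS)}^{\theta}$ coming from the $\dist^{-\theta}$ term and the factor $|\phi|_{\Cont^{0,1}(\OmegaS)}^{N}$ coming from the weight $\dist^{-N}$ into the argument of $\DoubleOrlicz$. The modular is then dominated by the modular of $\bigl(\lambda\,|\phi|_{\Cont^{0,1}(\OmegaS)}^{-\theta-N}\bigr)^{-1}u$ with respect to $\DoubleOrlicz$ over $\OmegaT$, which is $\leq1$ as soon as $\lambda\,|\phi|_{\Cont^{0,1}(\OmegaS)}^{-\theta-N}\geq|u|_{\Sobolev^{\theta,\DoubleOrlicz}(\OmegaT,\VEC)}$; taking the infimum over admissible $\lambda$ gives~\eqref{math:transformationssatz:orliczslobodeckij:scalar}. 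The argument is unchanged for $\LIN^{d}(Y,\VEC)$-valued $u$, since $\LIN^{d}(Y,\VEC)$ is again a Banach space.

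\textbf{The tensor estimate~\eqref{math:transformationssatz:orliczslobodeckij:tensor}.} Exactly as in the proof of Lemma~\ref{lemma:transformationssatz:slobodeckij}, write $\phi^{\ast}u=(u\circ\phi)\contract(\otimes^{d}\nabla\phi)$ and apply the Orlicz-Slobodeckij product estimate of Lemma~\ref{lemma:orliczsobolevslobodeckijproduct} with $u\circ\phi$ in the role of the Orlicz-Slobodeckij factor and $\psi=\otimes^{d}\nabla\phi\in\Cont^{0,\sigma}(\OmegaS)$ in the role of the H\"older factor. One point to verify is the comparison constant required by that lemma: from $\DoubleOrlicz(x,x')\leq C_{\Orlicz,\DoubleOrlicz}\Orlicz(x)$ and the definitions of the pullbacks $\phi^{\ast}\DoubleOrlicz$ and $\phi^{\ast}\Orlicz$ one reads off $\phi^{\ast}\DoubleOrlicz(x,x')\leq C_{\Orlicz,\DoubleOrlicz}\|\det\nabla\phi\|_{\Lebesgue^{\infty}(\OmegaS)}\,\phi^{\ast}\Orlicz(x)$, and this factor $\|\det\nabla\phi\|_{\Lebesgue^{\infty}(\OmegaS)}$ is precisely what appears in the second and third terms of~\eqref{math:transformationssatz:orliczslobodeckij:tensor}. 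It then remains to substitute, in the conclusion of Lemma~\ref{lemma:orliczsobolevslobodeckijproduct}, the identity $\|u\circ\phi\|_{\Lebesgue^{\phi^{\ast}\Orlicz}(\OmegaS)}=\|u\|_{\Lebesgue^{\Orlicz}(\OmegaT)}$ from Lemma~\ref{lemma:transformationssatz:orlicz}, the scalar bound $|u\circ\phi|_{\Sobolev^{\theta,\phi^{\ast}\DoubleOrlicz}(\OmegaS)}\leq|\phi|_{\Cont^{0,1}(\OmegaS)}^{\theta+N}|u|_{\Sobolev^{\theta,\DoubleOrlicz}(\OmegaT)}$ just established, and to leave $\|\otimes^{d}\nabla\phi\|_{\Lebesgue^{\infty}(\OmegaS)}$ and $|\otimes^{d}\nabla\phi|_{\Cont^{0,\sigma}(\OmegaS)}$ as they stand; this yields~\eqref{math:transformationssatz:orliczslobodeckij:tensor}.

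\textbf{Main obstacle.} The change of variables and the tracking of constants are routine. The genuinely new point, compared with the $\Lebesgue^{p}$ setting of Lemma~\ref{lemma:transformationssatz:slobodeckij}, is the treatment of the volume-type distortion $\dist(\phi^{-1}(y),\phi^{-1}(y'))^{-N}$: in the $\Lebesgue^{p}$ case this is a multiplicative constant one simply extracts from the integral, whereas in the Orlicz case it must be folded into the argument of $\DoubleOrlicz$ by convexity/homogeneity, and it is exactly this maneuver that converts the naive Lipschitz exponent $\theta$ into the exponent $\theta+N$ of~\eqref{math:transformationssatz:orliczslobodeckij:scalar}. A secondary subtlety is ensuring that the hypothesis $\DoubleOrlicz(x,x')\leq C_{\Orlicz,\DoubleOrlicz}\Orlicz(x)$ transfers to the pullbacks with the correct constant, which is what dictates the appearance of $\|\det\nabla\phi\|_{\Lebesgue^{\infty}(\OmegaS)}$ on the right-hand side.
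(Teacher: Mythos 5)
Your proposal is correct and follows essentially the same route as the paper: the scalar bound via the change of variables $(y,y')=(\phi(x),\phi(x'))$ with the Lipschitz distortion of $\dist^{-\theta}$ and $\dist^{-N}$ absorbed into the modular by monotonicity and convexity of $\DoubleOrlicz$ (the same absorption the paper performs implicitly before "recalling the definition of the Luxemburg norm"), and the tensor bound via $\phi^{\ast}u=(u\circ\phi)\contract(\otimes^{d}\nabla\phi)$, Lemma~\ref{lemma:orliczsobolevslobodeckijproduct} with the comparison constant $C_{\Orlicz,\DoubleOrlicz}\|\det\nabla\phi\|_{\Lebesgue^{\infty}(\OmegaS)}$ for the pulled-back integrands, and the norm identity of Lemma~\ref{lemma:transformationssatz:orlicz}. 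No substantive difference from the paper's own argument.
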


\begin{proof}
    Let $u \in \Sobolev^{\Orlicz,\theta,\DoubleOrlicz}(\OmegaT,\VEC)$.
    Let $\lambda > 0$. 
    A change of variables shows 
    \begin{align*}
        &
        \iint_{\OmegaS \times \OmegaS} 
        \Orlicz\left( \phi(x), \phi(x'), \frac{|u(\phi(x))-u(\phi(x)')|}{\lambda\dist(x,x')^{\theta}} \right) |\det\nabla\phi_{|x}| \cdot |\det\nabla\phi_{|x'}| \cdot \dist(x,x')^{-N}
        \,\dif x \,\dif x'
        \\&
        \leq 
        \left|\phi\right|_{\Cont^{0,1}(\OmegaS)}^{N}
        \iint_{\OmegaS \times \OmegaS} 
        \Orlicz\left( \phi(x), \phi(x'), \frac{|u(\phi(x))-u(\phi(x)')|}{\lambda \left|\phi\right|_{\Cont^{0,1}(\OmegaS)}^{-\theta}|\phi(x)-\phi(x')|^{\theta}} \right) 
        \frac{ |\det\nabla\phi_{|x}| \cdot |\det\nabla\phi_{|x'}| }{|\phi(x)-\phi(x')|^{N}}
        \,\dif x \,\dif x'
        \\&
        =
        \left|\phi\right|_{\Cont^{0,1}(\OmegaS)}^{N}
        \iint_{\OmegaS \times \OmegaS} 
        \Orlicz\left( x, x', \frac{|u(x)-u(x')|}{\lambda \left|\phi\right|_{\Cont^{0,1}(\OmegaS)}^{-\theta}\dist(x,x')^{\theta}} \right) \cdot \dist(x,x')^{-N}
        \,\dif x \,\dif x'
        .
    \end{align*}
    Recalling the definition of the Luxemburg norm, we immediately conclude~\eqref{math:transformationssatz:orliczslobodeckij:scalar}.
    
    Next, let $u \in \Sobolev^{\Orlicz,\theta,\DoubleOrlicz}(\OmegaT,\LIN^{d}(Y,\VEC))$.
    Then $\phi^{\ast} {u} = ( u \circ \phi ) \contract ( \otimes^{d} \nabla\phi )$, hence
    \begin{align*}
        \left| \phi^{\ast} {u} \right| 
        \leq 
        \left| u \circ \phi \right|
        \cdot
        \left| \otimes^{d} \nabla\phi \right|
        .
    \end{align*}
    We also find that 
    \begin{align*}
        \phi^{\ast}\DoubleOrlicz(x,x',\xi)
        &
        =
        \DoubleOrlicz(\phi(x),\phi(x'),\xi) |\det\nabla\phi_{|x}| \cdot |\det\nabla\phi_{|x'}|
        \\&
        \leq 
        C_{\Orlicz,\DoubleOrlicz}
        \Orlicz(\phi(x),\xi) |\det\nabla\phi_{|x}| \| \det\nabla\phi \|_{\Lebesgue^{\infty}(\OmegaS)}
        = 
        C_{\Orlicz,\DoubleOrlicz}
        \| \det\nabla\phi \|_{\Lebesgue^{\infty}(\OmegaS)}
        \phi^{\ast}\Orlicz(x,\xi)
        .
    \end{align*}
    We use~\eqref{math:transformationssatz:orliczslobodeckij:scalar} and Lemma~\ref{lemma:orliczsobolevslobodeckijproduct} to deduce~\eqref{math:transformationssatz:orliczslobodeckij:tensor}.
    The proof is complete. 
\end{proof}

\begin{proposition}\label{prop:pullbackoftensors:orliczslobodeckij}
    Let $\OmegaS, \OmegaT \subseteq \bbR^{N}$ be open sets, and let $\VEC$ be a Banach space.
    Let $m \in \bbN$. 
    Suppose that $\phi : \OmegaS \rightarrow \OmegaT$ is locally bi-Lipschitz. 
    Suppose that the weak derivatives $\nabla\phi, \dots, \nabla^{m+1}\phi$ are in $\Sobolev^{\sigma,\infty}(\OmegaS)$ for some $\sigma \in (0,1]$.
    Let $\theta \in (0,1]$ with $\theta < \sigma$. 
    Let $\DoubleOrlicz$ be a Musielak-Orlicz integrand over $\OmegaS\times\OmegaS$
    and let $\Orlicz$ be a Musielak-Orlicz integrand over $\OmegaS$ such that $\DoubleOrlicz(x,x') \leq C_{\Orlicz,\DoubleOrlicz}\Orlicz(x)$
    for almost all $x, x' \in \OmegaS$.
    
    If $u \in \Sobolev^{m,\Orlicz,\theta,\DoubleOrlicz}(\OmegaT,\LIN^{d}(Y,\VEC))$, 
    then $\phi^\ast u \in \Sobolev^{m,\phi^{\ast}\Orlicz,\theta,\phi^{\ast}\DoubleOrlicz}(\OmegaS,\LIN^{d}(X,\VEC))$,
    and we have 
    \begin{align*}
        &
        \left|
            \nabla^{m} (\phi^{\ast} {u} )
        \right|_{\Sobolev^{\theta,\phi^{\ast}\DoubleOrlicz}(\OmegaS)}
        \\&\quad 
        \leq 
        \sum_{ 0 \leq k \leq m }
        \left| 
            \nabla^{k} u 
        \right|_{\Sobolev^{\theta,\DoubleOrlicz}(\OmegaT)}
        \left\|
            B_{m,k,d}\left( |\nabla\phi|, \ldots, |\nabla^{m+1}\phi| \right)
        \right\|_{\Lebesgue^{\infty}(\OmegaS)}
        \\&\quad 
        + 
        C_{\Orlicz,\DoubleOrlicz}
        \frac{ \SphereArea_{N-1} }{ \sigma-\theta }
        \sum_{ 0 \leq k \leq m }
        \left\| 
            \nabla^{k} u 
        \right\|_{\Lebesgue^{\Orlicz}(\OmegaT)}
        \left|
            B_{m,k,d}\left( |\nabla\phi|, \ldots, |\nabla^{m+1}\phi| \right)
        \right|_{\Cont^{0,\sigma}(\OmegaS)}
        \| \det\nabla\phi \|_{\Lebesgue^{\infty}(\OmegaS)}
        \\&\quad 
        + 
        2 
        C_{\Orlicz,\DoubleOrlicz}
        \frac{ \SphereArea_{N-1} }{ \theta }
        \sum_{ 0 \leq k \leq m }
        \left\| 
            \nabla^{k} u 
        \right\|_{\Lebesgue^{\Orlicz}(\OmegaT)}
        \left\|
            B_{m,k,d}\left( |\nabla\phi|, \ldots, |\nabla^{m+1}\phi| \right)
        \right\|_{\Lebesgue^{\infty}(\OmegaS)}
        \| \det\nabla\phi \|_{\Lebesgue^{\infty}(\OmegaS)}
        .
    \end{align*}
    If $d = 0$, then it suffices to assume that the derivatives $\nabla\phi, \dots, \nabla^{m}\phi$ exist almost everywhere and are in $\Sobolev^{\sigma,\infty}(\OmegaS)$. 
\end{proposition}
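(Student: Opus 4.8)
The plan is to follow the template of the proof of Proposition~\ref{prop:pullbackoftensors:slobodeckij}, replacing the Lebesgue and Sobolev--Slobodeckij tools by their Musielak--Orlicz analogues: the weak-derivative formula of Proposition~\ref{prop:weakderivativesofpullback}, the product estimate of Lemma~\ref{lemma:orliczsobolevslobodeckijproduct}, and the transformation estimates of Lemmas~\ref{lemma:transformationssatz:orlicz} and~\ref{lemma:transformationssatz:orliczslobodeckij}. First I would invoke Proposition~\ref{prop:weakderivativesofpullback}, which under the stated hypotheses guarantees that $\phi^{\ast}u$ has weak derivatives up to order $m$ and that $\nabla^{m}(\phi^{\ast}u)$ equals the partition sum~\eqref{math:weakderivativeofpullback}. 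Grouping the summands of that sum by the contraction order $k$ of $\nabla^{k}u$, I would write $\nabla^{m}(\phi^{\ast}u)_{|x} = \sum_{k=0}^{m} (\nabla^{k}u)_{|\phi(x)} \contract \Theta_{k|x}$, where $\Theta_{k|x}$ is the tensor obtained by summing, over the relevant index partitions, the tensor products of the derivatives $\nabla\phi_{|x},\dots,\nabla^{m+1}\phi_{|x}$, and $(\nabla^{k}u)_{|\phi}$ denotes ordinary composition so that $|\nabla^{k}u|$ is a scalar field on $\OmegaS$. The combinatorial bookkeeping in the proof of Proposition~\ref{prop:pullbackoftensors} (see also~\eqref{math:pointwisepullbackinequality} and the definition~\eqref{math:bellpolynomial:dgeneralized}) yields the pointwise bound $|\Theta_{k|x}| \leq B_{k,m,d}(|\nabla\phi_{|x}|,\dots,|\nabla^{m+1}\phi_{|x}|)$. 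Moreover, since the weak derivatives $\nabla\phi,\dots,\nabla^{m+1}\phi$ lie in $\Sobolev^{\sigma,\infty}(\OmegaS)$, i.e.\ are bounded and $\sigma$-H\"older, and since $\Theta_{k}$ is a multilinear expression in them whose non-negative combinatorial weights match those of the generalized Bell polynomial, a Leibniz-type estimate for H\"older seminorms together with the non-negativity of the coefficients of~\eqref{math:bellpolynomial:dgeneralized} shows $\Theta_{k} \in \Cont^{0,\sigma}(\OmegaS)$ with $\|\Theta_{k}\|_{\Lebesgue^{\infty}(\OmegaS)} \leq \|B_{k,m,d}(|\nabla\phi|,\dots,|\nabla^{m+1}\phi|)\|_{\Lebesgue^{\infty}(\OmegaS)}$ and $|\Theta_{k}|_{\Cont^{0,\sigma}(\OmegaS)} \leq |B_{k,m,d}(|\nabla\phi|,\dots,|\nabla^{m+1}\phi|)|_{\Cont^{0,\sigma}(\OmegaS)}$.

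Next I would apply the triangle inequality for the Orlicz--Slobodeckij seminorm~\eqref{math:orliczslobodeckijseminorm} to the finite sum over $k$, and then Lemma~\ref{lemma:orliczsobolevslobodeckijproduct} to each summand $(\nabla^{k}u)_{|\phi}\contract\Theta_{k}$, playing the roles $u \leftrightarrow (\nabla^{k}u)_{|\phi}$ and $\psi \leftrightarrow \Theta_{k}$ with the pulled-back integrands $\phi^{\ast}\Orlicz$ and $\phi^{\ast}\DoubleOrlicz$ on $\OmegaS$. For this the domination hypothesis $\DoubleOrlicz(x,x') \leq C_{\Orlicz,\DoubleOrlicz}\Orlicz(x)$ on $\OmegaT$ must be transported to $\OmegaS$: the computation at the end of the proof of Lemma~\ref{lemma:transformationssatz:orliczslobodeckij} gives $\phi^{\ast}\DoubleOrlicz(x,x',\xi) \leq C_{\Orlicz,\DoubleOrlicz}\|\det\nabla\phi\|_{\Lebesgue^{\infty}(\OmegaS)}\,\phi^{\ast}\Orlicz(x,\xi)$, so Lemma~\ref{lemma:orliczsobolevslobodeckijproduct} applies with $C_{\Orlicz,\DoubleOrlicz}\|\det\nabla\phi\|_{\Lebesgue^{\infty}(\OmegaS)}$ in place of $C_{\Orlicz,\DoubleOrlicz}$, which is precisely the extra factor in the second and third lines of the asserted bound. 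Finally I would convert the $\OmegaS$-side norms of $(\nabla^{k}u)_{|\phi}$ into $\OmegaT$-side norms of $\nabla^{k}u$: the transformation identity~\eqref{math:transformationssatz:orlicz:scalar} gives $\|(\nabla^{k}u)_{|\phi}\|_{\Lebesgue^{\phi^{\ast}\Orlicz}(\OmegaS)} = \|\nabla^{k}u\|_{\Lebesgue^{\Orlicz}(\OmegaT)}$, and~\eqref{math:transformationssatz:orliczslobodeckij:scalar} bounds $|(\nabla^{k}u)_{|\phi}|_{\Sobolev^{\theta,\phi^{\ast}\DoubleOrlicz}(\OmegaS)}$ by $|\nabla^{k}u|_{\Sobolev^{\theta,\DoubleOrlicz}(\OmegaT)}$ up to the bi-Lipschitz factor $|\phi|_{\Cont^{0,1}(\OmegaS)}^{\theta+N}$. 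Assembling these three inputs gives the displayed inequality; the membership $\phi^{\ast}u \in \Sobolev^{m,\phi^{\ast}\Orlicz,\theta,\phi^{\ast}\DoubleOrlicz}(\OmegaS,\LIN^{d}(X,\VEC))$ follows from this top-order estimate together with Corollary~\ref{corollary:weakderivativeofpullback:orlicz} for the derivatives of order below $m$. For $d=0$ one specializes $B_{m,k,0}=B_{m,k}$, which involves only $\nabla\phi,\dots,\nabla^{m}\phi$, replaces Proposition~\ref{prop:weakderivativesofpullback} by its scalar case and the pointwise bound by that of Corollary~\ref{corollary:pullbackoftensors:zerotensors}, so that the hypothesis on $\nabla^{m+1}\phi$ is not needed.

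The step I expect to be the main obstacle is the regrouping and H\"older bookkeeping of the first paragraph: one must verify that the tensor field $\Theta_{k}$, assembled from tensor products of the $\phi$-derivatives over the nested index sets, is controlled both in sup-norm and in $\Cont^{0,\sigma}$-seminorm by the corresponding norms of $B_{k,m,d}(|\nabla\phi|,\dots,|\nabla^{m+1}\phi|)$. This relies on the contraction inequality~\eqref{math:contractioninequality}, the reverse triangle inequality $\bigl||\nabla^{j}\phi(x)|-|\nabla^{j}\phi(x')|\bigr| \leq |\nabla^{j}\phi(x)-\nabla^{j}\phi(x')|$, the product rule for H\"older seminorms, and crucially the non-negativity of the coefficients of~\eqref{math:bellpolynomial:dgeneralized}, so that the combinatorial constants of $\Theta_{k}$ are dominated termwise. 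Once this pointwise and H\"older control is in place, the remainder is a bounded linear combination of the three Orlicz lemmas, and the only remaining care is to track the constants $\SphereArea_{N-1}/(\sigma-\theta)$, $\SphereArea_{N-1}/\theta$ and the powers of $\|\det\nabla\phi\|_{\Lebesgue^{\infty}(\OmegaS)}$ so as to land exactly on the stated right-hand side.
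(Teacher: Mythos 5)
Your proposal is correct and takes essentially the same route as the paper: the paper's proof is precisely the combination of Proposition~\ref{prop:weakderivativesofpullback} with Lemma~\ref{lemma:orliczsobolevslobodeckijproduct} and Lemma~\ref{lemma:transformationssatz:orliczslobodeckij}, which is exactly the skeleton you describe. Your write-up merely makes explicit what the paper leaves implicit — the regrouping by $k$ with the generalized Bell-polynomial bookkeeping, the transported domination $\phi^{\ast}\DoubleOrlicz \leq C_{\Orlicz,\DoubleOrlicz}\,\|\det\nabla\phi\|_{\Lebesgue^{\infty}(\OmegaS)}\,\phi^{\ast}\Orlicz$, and the conversion of source-side Orlicz norms via Lemma~\ref{lemma:transformationssatz:orlicz}.
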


\begin{proof}
    We use Proposition~\ref{prop:weakderivativesofpullback} together with Lemma~\ref{lemma:orliczsobolevslobodeckijproduct} and Lemma~\ref{lemma:transformationssatz:orliczslobodeckij}.
\end{proof}

\begin{example}
    We point out a few applications where the assumption $\DoubleOrlicz \leq C_{\Orlicz,\DoubleOrlicz} \Orlicz$ is satisfied.
    \begin{itemize}
        \item 
        An obvious special case is $\Orlicz(x,\cdot) = \DoubleOrlicz(x,x',\cdot)$ for almost all $(x,x') \in \Omega\times\Omega$,
        and even more specifically, the case where $\Orlicz$ and $\DoubleOrlicz$ do not depend on $x$.
        \item 
        Another simple example relates to variable exponent spaces. 
        Let $p \in [1,\infty)$ and $\theta \in (0,1)$ such that $sp < N$.
        We abbreviate $q = p \frac{N}{N-sp}$.
        Let $\Omega$ be a bounded Lipschitz domain.
        Then $\Sobolev^{\theta,p}(\Omega,\VEC)$ embeds continuously into $\Lebesgue^{q}(\Omega,\VEC)$.
        
        Suppose that we have $\Orlicz_{0}(x,\xi)=\xi^{p}$ and $\DoubleOrlicz(x,x',\xi)=\xi^{p \delta(x,x')}$,
        where $\delta(x,x') \in [1,\frac{N}{N-sp})$.
        By standard results on variable exponent spaces,
        $\Sobolev^{\Orlicz_{0},\theta,\DoubleOrlicz}(\OmegaS)$ embeds continuously into $\Sobolev^{\theta,p}(\OmegaS)$.
We have $\xi^{q} \geq \xi^{p \delta(x,x')}$ for $\xi \geq 1$ 
        and $\xi^{p} \geq \xi^{p \delta(x,x')}$ for $\xi \leq 1$.
        This suggests the choice $\Orlicz(x,\xi) = \max(\xi^{p},\xi^{q})$.
        
        We want to bound $\|u\|_{\Lebesgue^{\Orlicz}(\Omega)}$ in terms of the Lebesgue norms with exponent $p$ and $q$.
        For any $\lambda > 0$, we estimate 
        \begin{align*}
            \int_{\OmegaS} \calB( \lambda^{-1} |u| ) \,\dif x
            \leq 
            \int_{\Omega} \lambda^{-p} |u(x)|^{p} \,\dif x
            +
            \int_{\Omega} \lambda^{-q} |u(x)|^{q} \,\dif x
        \end{align*}
        The two integrals on the right-hand side are each bounded by $\frac 1 2$ 
        if $\lambda \geq 2^{\frac 1 p}\|u\|_{\Lebesgue^{p}(\Omega,\VEC)}$
        and $\lambda \geq 2^{\frac 1 q}\|u\|_{\Lebesgue^{q}(\Omega,\VEC)}$,
        respectively.
        Hence,
        \begin{align*}
            \|u\|_{\Lebesgue^{\Orlicz}(\Omega)} \leq \max\{ 2^{\frac 1 p}\|u\|_{\Lebesgue^{p}(\Omega,\VEC)}, 2^{\frac 1 q}\|u\|_{\Lebesgue^{q}(\Omega,\VEC)} \},
        \end{align*}
        which is bounded in terms of the norm of $u$ in $\Sobolev^{\Orlicz_{0},\theta,\DoubleOrlicz}(\OmegaS)$.
    \end{itemize}
    From a broader perspective, 
    if the functions $\Orlicz_{0}$ and $\DoubleOrlicz$ are ``close enough'',
    then we expect a generalization of the Sobolev embedding theorem to embed $\Sobolev^{\Orlicz_{0},\theta,\DoubleOrlicz}(\OmegaS)$ into a ``sharper'' Orlicz space $\Lebesgue^{\Orlicz}(\Omega,\VEC)$.
\end{example}

We finish this section with the discussion of dual Musielak-Orlicz integrands and H\"older's inequality. 
Given a Musielak-Orlicz integrand $\Orlicz$, the \emph{dual Musielak-Orlicz integrand} is defined by
\begin{align*}
    \Dual\Orlicz(x,\zeta) := \sup_{ \xi \geq 0 } \xi\zeta - \Orlicz(x,\xi).
\end{align*}
Indeed, one easily verifies that $\Dual\Orlicz(x,\zeta)$ is a Musielak-Orlicz integrand over $\OmegaS$.
Note that this is just the Legendre transform for every $x \in \OmegaS$.

\begin{lemma}
    $\Dual\Orlicz$ is a Musielak-Orlicz integrand.
\end{lemma}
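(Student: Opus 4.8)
The plan is to verify the four defining clauses of a Musielak-Orlicz integrand for the function $\Dual\Orlicz : \OmegaS \times [0,\infty] \to [0,\infty]$. As already noted, for each fixed $x$ the map $\zeta \mapsto \Dual\Orlicz(x,\zeta)$ is the restriction to $[0,\infty)$ of the Legendre--Fenchel conjugate of $\Orlicz(x,\cdot)$ (extended by $+\infty$ to the negative half-line), so the first three clauses follow from soft properties of convex conjugates, while the non-triviality clause is the one point demanding attention.

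First I would settle the value at the origin: $\Dual\Orlicz(x,0) = \sup_{\xi\geq 0}\bigl(-\Orlicz(x,\xi)\bigr) = -\Orlicz(x,0) = 0$, since $\Orlicz(x,\cdot)\geq 0$ and $\Orlicz(x,0)=0$. For each fixed $\xi\geq 0$ the function $\zeta\mapsto\xi\zeta-\Orlicz(x,\xi)$ is affine and non-decreasing on $[0,\infty)$ and is nonnegative at $\xi=0$; taking the pointwise supremum over $\xi$ shows at once that $\Dual\Orlicz(x,\cdot)$ is convex, non-decreasing, and valued in $[0,\infty]$. For left-continuity I would use that a supremum of affine functions is lower semicontinuous, hence $\Dual\Orlicz(x,\cdot)$ is a closed convex function on $\bbR$; and a non-decreasing lower semicontinuous function $g$ on $[0,\infty)$ is automatically left-continuous, because monotonicity gives $\lim_{\zeta\to\zeta_0^-}g(\zeta)=\sup_{\zeta<\zeta_0}g(\zeta)\leq g(\zeta_0)$ whereas lower semicontinuity gives $g(\zeta_0)\leq\liminf_{\zeta\to\zeta_0}g(\zeta)\leq\lim_{\zeta\to\zeta_0^-}g(\zeta)$, forcing equality. (Left-continuity at $\zeta_0=0$ is vacuous, and at $\zeta_0=\infty$ it follows from monotone convergence after interchanging the two suprema defining $\Dual\Orlicz(x,\infty)$.)

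For measurability of $x\mapsto\Dual\Orlicz(x,\zeta)$ at a fixed $\zeta\in[0,\infty)$ I would reduce the defining supremum to a countable one. Since $\Orlicz(x,\cdot)$ is left-continuous, the map $\xi\mapsto\xi\zeta-\Orlicz(x,\xi)$ is left-continuous in $\xi$, so approaching any $\xi^*>0$ by rationals from below shows that its supremum over $[0,\infty)$ equals its supremum over $\bbQ_{\geq 0}$ (the point $\xi=0$ being rational and contributing $-\Orlicz(x,0)=0$). Consequently $\Dual\Orlicz(\cdot,\zeta)=\sup_{\xi\in\bbQ_{\geq 0}}\bigl(\xi\zeta-\Orlicz(\cdot,\xi)\bigr)$ is a countable supremum of measurable functions (each $\Orlicz(\cdot,\xi)$ being measurable), hence measurable.

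The genuine obstacle is the non-triviality clause, namely that $\Dual\Orlicz(x,\cdot)$ is a non-zero function for almost every $x$. Here one checks that $\Dual\Orlicz(x,\zeta)=0$ for all $\zeta\geq 0$ is equivalent to $\Orlicz(x,\xi)\geq\xi\zeta$ for all $\xi,\zeta\geq 0$, i.e.\ to $\Orlicz(x,\xi)=+\infty$ for every $\xi>0$ --- the degenerate integrand appearing in the list of examples above, whose Luxemburg space is trivial. Thus $\Dual\Orlicz(x,\cdot)$ is non-zero precisely where $\Orlicz(x,\cdot)$ is finite at some positive argument, and the clause holds provided this degenerate case is excluded on a set of positive measure; this is the step where care is needed, since the bare requirement that $\Orlicz(x,\cdot)$ be non-zero does not by itself rule out the trivial integrand.
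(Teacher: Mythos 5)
Your verification of the origin value, of convexity/monotonicity/left-continuity, and of measurability is correct, and it is essentially the only reasonable route; the paper's own proof consists of the single sentence ``This is easily seen,'' so there is no more detailed argument to compare against. In particular, your reduction of the defining supremum to rational $\xi$ via left-continuity of $\Orlicz(x,\cdot)$, and your observation that a non-decreasing lower semicontinuous function on $[0,\infty)$ is automatically left-continuous, are both sound and fill in exactly the details the paper omits.

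The obstruction you flag at the non-triviality clause is genuine, and it is a defect of the statement rather than of your argument. The paper's definition deliberately admits the degenerate integrand $\Orlicz(x,\xi)=\infty\cdot\chi_{(0,\infty)}(\xi)$: it appears in the list of examples, and the remark following the definition explicitly says that the case $\Orlicz(x,\xi)=\infty$ for all $\xi>0$ is permitted. For this admissible integrand one has $\Dual\Orlicz(x,\zeta)=0$ for every finite $\zeta$, exactly as your equivalence shows, so $\Dual\Orlicz(x,\cdot)$ is the zero function and the third clause fails; the lemma as stated is false in this edge case. To complete the proof one must add the (tacit) hypothesis that for almost every $x$ there exists some $\xi>0$ with $\Orlicz(x,\xi)<\infty$, i.e.\ that the set of $x$ where $\Orlicz(x,\cdot)$ is infinite on all of $(0,\infty)$ is null; under that assumption your computation immediately gives a $\zeta$ with $\Dual\Orlicz(x,\zeta)>0$, and your proposal then constitutes a complete and correct proof. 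So read your write-up as a full proof under that extra assumption, together with a correct diagnosis of why the unqualified statement cannot be proved from the paper's definition alone.
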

\begin{proof}
    This is easily seen. 
\end{proof}

We can now state the following result~\cite[Lemma~2.6.5]{diening2011lebesgue},
which generalizes H\"older's inequality known for Lebesgue spaces to the setting of Orlicz spaces. 

\begin{proposition}\label{proposition:generalizedhoelder}
    Let $\Orlicz$ be a Musielak-Orlicz integrand.
    Then there exists $C_{\Orlicz,\Dual} \in (0,2]$ such that
    for all $u \in \Lebesgue^{\Orlicz}(\OmegaS)$ and $v \in \Lebesgue^{\Dual\Orlicz}(\OmegaS)$
    we have 
    \begin{align*}
        \int_{\OmegaS} |uv| \dif x
        \leq 
        C_{\Orlicz,\Dual}
        \| u \|_{\Lebesgue^{\Orlicz}(\OmegaS)} 
        \| v \|_{\Lebesgue^{\Dual\Orlicz}(\OmegaS)} 
        .
    \end{align*}
\end{proposition}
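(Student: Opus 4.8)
The plan is to obtain the inequality with the explicit constant $C_{\Orlicz,\Dual}=2$, directly from the pointwise Young (Fenchel) inequality attached to the Legendre pair $(\Orlicz,\Dual\Orlicz)$. First I would record the pointwise estimate: for almost every $x\in\OmegaS$ and all $\xi,\zeta\in[0,\infty)$,
\[
    \xi\zeta \leq \Orlicz(x,\xi) + \Dual\Orlicz(x,\zeta),
\]
which is immediate from the definition $\Dual\Orlicz(x,\zeta)=\sup_{\eta\geq 0}\bigl(\eta\zeta-\Orlicz(x,\eta)\bigr)$ by choosing the competitor $\eta=\xi$ and rearranging; the inequality is vacuous at the points where $\Orlicz(x,\xi)=+\infty$.

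Next I would dispose of the trivial cases. Since $u\in\Lebesgue^{\Orlicz}(\OmegaS)$ and $v\in\Lebesgue^{\Dual\Orlicz}(\OmegaS)$, both Luxemburg norms are finite by the preceding proposition, and (as shown there) the norm of a function vanishes exactly when the function is zero almost everywhere. Hence $\int_{\OmegaS}|uv|\dif x=0$ whenever $\|u\|_{\Lebesgue^{\Orlicz}(\OmegaS)}=0$ or $\|v\|_{\Lebesgue^{\Dual\Orlicz}(\OmegaS)}=0$, and it remains only to treat $\lambda:=\|u\|_{\Lebesgue^{\Orlicz}(\OmegaS)}>0$ and $\mu:=\|v\|_{\Lebesgue^{\Dual\Orlicz}(\OmegaS)}>0$.

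Then comes the one step requiring a small argument: I claim the modular does not exceed $1$ at the norm itself, i.e.
\[
    \int_{\OmegaS}\Orlicz\!\left(x,\tfrac{|u(x)|}{\lambda}\right)\dif x \leq 1
    \quad\text{and}\quad
    \int_{\OmegaS}\Dual\Orlicz\!\left(x,\tfrac{|v(x)|}{\mu}\right)\dif x \leq 1 .
\]
To see this, pick a sequence $\lambda_{n}\downarrow\lambda$ with $\lambda_{n}>\lambda$, so that $\int_{\OmegaS}\Orlicz(x,|u(x)|/\lambda_{n})\dif x\leq 1$ by the definition of the Luxemburg norm; since $\Orlicz(x,\cdot)$ is non-decreasing and left-continuous and $|u(x)|/\lambda_{n}\uparrow|u(x)|/\lambda$, we get $\Orlicz(x,|u(x)|/\lambda_{n})\uparrow\Orlicz(x,|u(x)|/\lambda)$, and the monotone convergence theorem yields the claim (likewise for $v$). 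This is precisely the point where the left-continuity of the integrand is used.

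Finally I would apply the pointwise Young inequality with $\xi=|u(x)|/\lambda$ and $\zeta=|v(x)|/\mu$, integrate over $\OmegaS$, and insert the two modular bounds:
\[
    \frac{1}{\lambda\mu}\int_{\OmegaS}|u(x)|\,|v(x)|\dif x
    \leq
    \int_{\OmegaS}\Orlicz\!\left(x,\tfrac{|u(x)|}{\lambda}\right)\dif x
    +
    \int_{\OmegaS}\Dual\Orlicz\!\left(x,\tfrac{|v(x)|}{\mu}\right)\dif x
    \leq 2 ,
\]
which is the assertion with $C_{\Orlicz,\Dual}=2\in(0,2]$. The only genuine obstacle is the left-continuity/monotone-convergence step guaranteeing that the modular stays $\leq 1$ at the norm value; the rest is the classical Young-inequality computation. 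If a sharper constant were desired one could homogenize, splitting the mass as $t\lambda$ and $(1-t)\mu$ and optimizing in $t$, or invoke associate-norm duality, but since only $C_{\Orlicz,\Dual}\leq 2$ is claimed I would stop here.
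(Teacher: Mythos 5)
Your proof is correct and follows essentially the same route as the paper: the pointwise Young--Fenchel inequality for the pair $(\Orlicz,\Dual\Orlicz)$ combined with the bound that both modulars are at most $1$, yielding the constant $2$. The only difference is in how the limiting step is closed — the paper takes $\lambda_{u}>\|u\|_{\Lebesgue^{\Orlicz}(\OmegaS)}$ and $\lambda_{v}>\|v\|_{\Lebesgue^{\Dual\Orlicz}(\OmegaS)}$ and lets them tend to the norms at the very end, whereas you prove via left-continuity and monotone convergence that the modular is already $\leq 1$ at the norm value itself — and both variants are valid.
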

\begin{proof}
    The statement is trivial if $u=0$ or $v=0$, so let us assume that both functions are non-zero. 
    Pick $\lambda_{u} > \| u \|_{\Lebesgue^{\Orlicz}(\Omega)}$ and $\lambda_{v} > \|v\|_{\Lebesgue^{\Dual\Orlicz}(\Omega)} $.
    Then, by the definition of $\Dual\Orlicz$, we have 
    \begin{align*}
        \int_{\OmegaS} \frac{u}{\lambda_{u}} \frac{v}{\lambda_{v}}  \dif x
        \leq 
        \int_{\OmegaS} 
        \Orlicz\left( x, \frac{u}{\lambda_{u}} \right)
        +
        \Orlicz\left( x, \frac{v}{\lambda_{v}} \right)
        \dif x
        \leq 2.
    \end{align*}
    Hence,
    \begin{align*}
        \int_{\OmegaS} uv \dif x \leq 2 \lambda_{u} \lambda_{v}.
    \end{align*}
    In the limit, the desired statement follows. 
\end{proof}

Lastly, we study how dualization affects the pullback of Musielak-Orlicz integrands. 
Suppose that $\phi : \OmegaS \rightarrow \OmegaT$ is continuous, invertible, and with locally Lipschitz inverse;
for any Musielak-Orlicz integrand $\Orlicz$ we observe 
\begin{align*}
    \Dual\left( \phi^{\ast}\Orlicz \right)( x, \zeta ) 
    &
    = 
    \sup_{ \xi \geq 0 }
    \xi\zeta 
    - 
    \phi^{\ast}\Orlicz(x,\xi)
    \\&
    = 
    \sup_{ \xi \geq 0 } \xi\zeta - \left|\det\nabla\phi_{|x}\right| \Orlicz(\phi(x),\xi)
    =
    \phi^{\ast} (\Dual\Orlicz)\left( x, \left|\det\nabla\phi_{|x}\right|^{-1} \zeta \right) 
    .
\end{align*}
An easy consequence is this:
if $u \in \Lebesgue^{\Dual\Orlicz}(\OmegaT,\VEC)$, 
then $\phi^{\ast} u \in \Lebesgue^{\phi^{\ast}\Dual\Orlicz}(\OmegaT,\VEC)$,
and we have
\begin{align*}
    \| \phi^{\ast} u \|_{\Lebesgue^{\Dual\phi^{\ast}\Orlicz}}
    \leq 
    \left\|\det\nabla\phi^{-1}\right\|_{\Lebesgue^{\infty}(\OmegaT)}
    \| u \|_{\Lebesgue^{\phi^{\ast}\Dual\Orlicz}(\OmegaS)}
    .
\end{align*}

\begin{example}
    When $p \in [1,\infty]$, then the dual of the Musielak-Orlicz integrand $\Orlicz(x,\xi) = \xi^{p}$ is $\Dual\Orlicz(x,\xi) = \xi^{q}$,
    where $q \in [1,\infty]$ satisfies $1 = 1 - \frac 1 p$. 
    The constant in Proposition~\ref{proposition:generalizedhoelder} equals $1$.
\end{example}

\begin{example}
    The dual of the Musielak-Orlicz integrand $\Orlicz(x,\xi) = \exp(\xi)-1$ is
    \begin{align*}
        \Dual\Orlicz(x,\zeta) 
        = 
        \sup_{\xi \geq 0} \zeta \xi - \exp(\xi)+1
        =
        \zeta \ln(\zeta) - \zeta + 1
        .
    \end{align*}
    We demonstrate Proposition~\ref{proposition:generalizedhoelder} when $\OmegaS = (0,1)$ and $u = v = 1$.
    One calculates 
    \begin{align*}
        \| 1 \|_{\Lebesgue^{\Orlicz}(\Omega)} \leq 1
\;\equivalent\;
        \lambda \geq \frac{1}{\ln(2)}
        ,
        \quad 
        \| 1 \|_{\Lebesgue^{\Dual\Orlicz}(\Omega)} \leq 1
\;\equivalent\; 
        \lambda \geq \frac{1}{e}
    \end{align*}
    By H\"older's inequality, $1 \leq \frac{2}{e \ln(2)} \leq 2 \cdot 0.5308$.
    This example shows that the numerical factor $C_{\Orlicz,\Dual}$ must generally be larger than $1$.
\end{example}

\section{Bounded Variation Spaces and Estimates}\label{sec:bv}

This final section discusses functions with bounded variation 
and how their total variation changes under coordinate transformations. 
Specifically, we consider tensor fields in Lebesgue and Sobolev spaces and 
estimate the \emph{total variation} of the highest derivative of the pullback, building on the chain rule derived above. 
\\

We operate with the same geometric setting as in the preceding section.
We continue to assume that $X = Y = \bbR^{N}$,
and we let $\OmegaS \subseteq X$ be an open set and let $\VEC$ be a normed space. 

Before we discuss the notion of total variation, we briefly review the notion of divergence within our setting. 
When $\vectest \in \Cont^{\infty}_{c}(\OmegaS, X \otimes \VEC^\ast )$ is a smooth vector field with compact support,
which can be uniquely written as 
\begin{align}
    \vectest_{|x} = \sum_{i=1}^{N} e_i \otimes \vectest^{i}_{|x},
    \quad 
    \vectest^{i} \in \Cont^{\infty}_{c}(\OmegaS, \VEC^{\ast})
    ,
\end{align}
then we define the divergence of this vector field as 
\begin{align}
    \divergence\vectest_{|x} := \sum_{i=1}^{N} \nabla_{e_i} \vectest^{i}_{|x}
    .
\end{align}
The notion of weak derivatives is tightly connected with the notion of divergence of vector fields. 
For any weakly differentiable $u \in \Lebesgue^{1}_{loc}(\OmegaS,\VEC)$ we have the relationship 
\begin{align}
    \int_\OmegaS 
    \langle \nabla u_{|x}, \vectest_{|x} \rangle 
    \,\dif{x}
    =
    -
    \int_\OmegaS 
    \langle u_{|x}, \divergence \vectest_{|x} \rangle 
    \,\dif{x}
    .
\end{align}
Moreover, if we are given two vector fields $\vectest_{1} \in \Cont^{\infty}_{c}(\OmegaS, X \otimes \VEC_{1}^\ast )$
and $\vectest_{2} \in \Cont^{\infty}_{c}(\OmegaS, X \otimes \VEC_{2}^\ast )$
with values in the duals of Banach spaces $\VEC_{1}$ and $\VEC_{2}$, 
then the divergence of the tensor product $\vectest_1 \otimes \vectest_2$ is a vector field
in $\Cont^{\infty}_{c}(\OmegaS, X \otimes X \otimes \VEC_{1}^\ast \otimes \VEC_{2}^\ast )$
and it satisfies 
\begin{align}
    \divergence \left( \vectest_{1} \otimes \vectest_{2} \right)
    =
    \left( \divergence \vectest_{1} \right) \otimes \vectest_{2}
    +
    \vectest_{1} \otimes \left( \divergence \vectest_{2} \right)
    .
\end{align} 
This identity extends to tensor products of more than two factors in the obvious way.
\\

We now define the total variation of a function. 
Suppose that $u \in \Lebesgue^{1}_{loc}(\OmegaS,\VEC)$ is locally integrable.
We say that $u$ has \emph{bounded variation} over $\OmegaS$
if there exists a constant $C \geq 0$ such that 
\begin{align}\label{math:boundedvariationinequality}
    \int_{\Omega}
    \langle u, \divergence \vectest \rangle 
    \,\dif{x}
    \leq 
    C \| \vectest \|_{\Lebesgue^{\infty}(\OmegaS)}
    ,
    \quad 
    \vectest \in \Cont^{\infty}_{c}(\OmegaS,X \otimes \VEC^{\ast})
    .
\end{align}
The smallest such constant $C \geq 0$ is called the \emph{total variation} of $u$ over the set $\Omega$
and we write $\TV(u,\OmegaS)$ for this quantity.
\\

Suppose that $\Orlicz$ is a Musielak-Orlicz integrand 
and recall the definition of the Orlicz-Sobolev space $\Sobolev^{m,p}(\OmegaS,\VEC)$.
\emph{We henceforth assume that $\Lebesgue^{\Orlicz}(\OmegaS,\VEC)$ embeds into $\Lebesgue^{1}_{loc}(\OmegaS,\VEC)$.}

We let $\Sobolev^{m,\Orlicz,\TV}(\OmegaS,\VEC)$ be the subspace of $\Sobolev^{m,\Orlicz}(\OmegaS,\VEC)$
whose members have an $m$-th derivative of bounded variation. 
This space is equipped with the norm 
\begin{align}\label{math:sobolevTVnorm}
    \| u \|_{\Sobolev^{m,\Orlicz,\TV}(\OmegaS,\VEC)}
    :=
    \| u \|_{\Sobolev^{m,\Orlicz}(\OmegaS,\VEC)}
    +
    \TV( \nabla^{m} u, \OmegaS)
    .
\end{align}
An important special case is when the derivatives are in Lebesgue spaces. 
We let $\Sobolev^{m,p,\TV}(\OmegaS,\VEC)$ be the subspace of $\Sobolev^{m,p}(\OmegaS,\VEC)$
whose members have an $m$-th derivative of bounded variation. 

This normed space is a Banach space. 
For completeness, we state a proof.

\begin{proposition}\label{proposition:sobolevBVspacecomplete}
    The normed space $\Sobolev^{m,\Orlicz,\TV}(\OmegaS,\VEC)$ is complete. 
\end{proposition}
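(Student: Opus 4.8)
The plan is to show completeness by taking a Cauchy sequence, extracting a limit using the completeness of the ambient Orlicz-Sobolev space $\Sobolev^{m,\Orlicz}(\OmegaS,\VEC)$, and then verifying that the total-variation term is lower-semicontinuous along the convergence so that the candidate limit still lies in $\Sobolev^{m,\Orlicz,\TV}(\OmegaS,\VEC)$ and is approached in the stronger norm \eqref{math:sobolevTVnorm}. Concretely: let $(u_n)$ be Cauchy in $\Sobolev^{m,\Orlicz,\TV}(\OmegaS,\VEC)$. Since $\|\cdot\|_{\Sobolev^{m,\Orlicz}(\OmegaS,\VEC)} \le \|\cdot\|_{\Sobolev^{m,\Orlicz,\TV}(\OmegaS,\VEC)}$, the sequence is Cauchy in $\Sobolev^{m,\Orlicz}(\OmegaS,\VEC)$, which is complete (this follows from the completeness of $\Lebesgue^{\Orlicz}(\OmegaS,\VEC)$ proven earlier, applied to $u$ and each weak derivative, using that $\Lebesgue^{\Orlicz}(\OmegaS,\VEC) \hookrightarrow \Lebesgue^1_{loc}(\OmegaS,\VEC)$ so that weak-derivative relations pass to the limit). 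Hence there is $u \in \Sobolev^{m,\Orlicz}(\OmegaS,\VEC)$ with $u_n \to u$ in that norm; in particular $\nabla^m u_n \to \nabla^m u$ in $\Lebesgue^{\Orlicz}(\OmegaS,\LIN^m(X,\VEC))$, and therefore in $\Lebesgue^1_{loc}$.

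The next step is to check that $u \in \Sobolev^{m,\Orlicz,\TV}(\OmegaS,\VEC)$, i.e.\ that $\nabla^m u$ has bounded variation, and to estimate $\TV(\nabla^m u,\OmegaS)$. Fix any $\vectest \in \Cont^{\infty}_{c}(\OmegaS, X \otimes (\LIN^m(X,\VEC))^{\ast})$ with $\|\vectest\|_{\Lebesgue^{\infty}(\OmegaS)} \le 1$. Its support is a fixed compact set $K \subseteq \OmegaS$, and on $K$ the convergence $\nabla^m u_n \to \nabla^m u$ holds in $\Lebesgue^1(K)$ (using again the local integrability embedding, plus that convergence in Luxemburg norm implies convergence in $\Lebesgue^1(K)$ for bounded $K$ — one passes through a subsequence converging a.e.\ and dominated by an integrable majorant built from the Orlicz-norm bound, or invokes the standard inequality $\|\cdot\|_{\Lebesgue^1(K)} \le C_K \|\cdot\|_{\Lebesgue^{\Orlicz}(\OmegaS)}$). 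Consequently
\begin{align*}
    \int_{\OmegaS} \langle \nabla^m u, \divergence\vectest \rangle \,\dif x
    &=
    \lim_{n\to\infty} \int_{\OmegaS} \langle \nabla^m u_n, \divergence\vectest \rangle \,\dif x
    \le
    \limsup_{n\to\infty} \TV(\nabla^m u_n,\OmegaS)
    .
\end{align*}
Taking the supremum over all such $\vectest$ shows $\TV(\nabla^m u,\OmegaS) \le \limsup_n \TV(\nabla^m u_n,\OmegaS) < \infty$, the latter being finite because $(\TV(\nabla^m u_n,\OmegaS))_n$ is a Cauchy — hence bounded — sequence of reals (the map $v \mapsto \TV(\nabla^m v,\OmegaS)$ is a seminorm dominated by the $\Sobolev^{m,\Orlicz,\TV}$-norm). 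So $u \in \Sobolev^{m,\Orlicz,\TV}(\OmegaS,\VEC)$.

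Finally, to get norm convergence $u_n \to u$ in $\Sobolev^{m,\Orlicz,\TV}(\OmegaS,\VEC)$, apply the lower-semicontinuity bound just derived to the differences: for fixed $n$, the sequence $(u_m - u_n)_m$ converges to $u - u_n$ in $\Sobolev^{m,\Orlicz}(\OmegaS,\VEC)$, so the same duality-pairing-and-limit argument gives $\TV(\nabla^m(u - u_n),\OmegaS) \le \limsup_{m\to\infty} \TV(\nabla^m(u_m - u_n),\OmegaS)$. Combined with $\|u - u_n\|_{\Sobolev^{m,\Orlicz}(\OmegaS,\VEC)} \le \limsup_m \|u_m - u_n\|_{\Sobolev^{m,\Orlicz}(\OmegaS,\VEC)}$, we obtain
\begin{align*}
    \|u - u_n\|_{\Sobolev^{m,\Orlicz,\TV}(\OmegaS,\VEC)}
    \le
    \limsup_{m\to\infty} \|u_m - u_n\|_{\Sobolev^{m,\Orlicz,\TV}(\OmegaS,\VEC)}
    ,
\end{align*}
which tends to $0$ as $n \to \infty$ by the Cauchy property. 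This proves completeness. The main obstacle I anticipate is the technical point that convergence in the Luxemburg norm of $\Lebesgue^{\Orlicz}$ must be upgraded to $\Lebesgue^1$-convergence on compact sets so that the test-function pairings pass to the limit; this is where the standing assumption $\Lebesgue^{\Orlicz}(\OmegaS,\VEC) \hookrightarrow \Lebesgue^1_{loc}(\OmegaS,\VEC)$ is essential, and one should state carefully (perhaps via the closed graph theorem or a direct modular estimate) that this embedding is continuous on each compactly contained subdomain.
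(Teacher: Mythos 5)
Your proof is correct and follows essentially the same route as the paper: extract the limit in $\Sobolev^{m,\Orlicz}(\OmegaS,\VEC)$ from completeness of the Orlicz--Sobolev space, pass the pairing with $\divergence\vectest$ to the limit via the embedding $\Lebesgue^{\Orlicz}(\OmegaS,\VEC)\hookrightarrow\Lebesgue^{1}_{loc}(\OmegaS,\VEC)$, and use lower semicontinuity of the total variation both to place $u$ in $\Sobolev^{m,\Orlicz,\TV}(\OmegaS,\VEC)$ and, applied to the differences $u_m-u_n$, to obtain convergence in the $\TV$ part of the norm. Your closing remark about upgrading Luxemburg-norm convergence to $\Lebesgue^{1}$-convergence on compact supports merely makes explicit a continuity point that the paper handles with its ``bounded linear functional'' assertion.
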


\begin{proof}
    Suppose that $u_{i}$ is a Cauchy sequence in $\Sobolev^{m,\Orlicz,\TV}(\OmegaS,\VEC)$.
    Then it is a Cauchy sequence in $\Sobolev^{m,\Orlicz}(\OmegaS,\VEC)$
    with a limit $u \in \Sobolev^{m,\Orlicz}(\OmegaS,\VEC)$. 
    We now show that $u \in \Sobolev^{m,\Orlicz,\TV}(\OmegaS,\VEC)$ 
    and that $u$ is the limit of $u_{i}$ in $\Sobolev^{m,\Orlicz,\TV}(\OmegaS,\VEC)$.
    
    We first prove that the function $u$ has bounded variation. 
    Let $\vectest \in \Cont^{\infty}_{c}(\OmegaS, X \otimes \VEC^{\ast})$ be any smooth compactly supported vector field. 
    Since $\Lebesgue^{\Orlicz}(\OmegaS,\VEC)$ embeds into $\Lebesgue^{1}_{loc}(\OmegaS,\VEC)$,
    integration against the divergence of $\vectest$ 
    defines a bounded linear functional on $\Lebesgue^{\Orlicz}(\OmegaS,\VEC)$. 
    Thus, 
    \begin{align*}
        \int_{\Omega} 
        \langle \nabla^{m} u, \divergence \vectest \rangle   
        \,\dif{x}
        &
        =
        \int_{\Omega}
        \left\langle \lim_{i \rightarrow \infty} \nabla^{m} u_{i}, \divergence \vectest \right\rangle 
        \,\dif{x}
        \\&
        \leq 
        \lim_{i \rightarrow \infty}
        \int_{\Omega}
        \langle \nabla^{m} u_{i}, \divergence \vectest \rangle
        \,\dif{x}
\leq 
        \| \vectest \|_{\Lebesgue^{\infty}(\OmegaS)} 
        \cdot 
        \lim_{i \rightarrow \infty}
\TV( \nabla^{m} u_{i}, \OmegaS )
        .
    \end{align*}
    Here, we have used the dominated convergence theorem. 
    Since $u_{i}$ is a Cauchy sequence in $\Sobolev^{m,\Orlicz,\TV}(\OmegaS)$,
    the last limit exists. 
Hence $\nabla^{m} u$ has bounded variation.
    We use a similar technique to show convergence in the total variation seminorm. 
    For any $\epsilon > 0$ there exists $i > 0$ such that for all $j \geq i$
    we have $\TV( \nabla^{m} ( u_{i} - u_{j} ), \OmegaS ) < \epsilon$.
    Consequently, 
    \begin{align*}
        \int_{\Omega} \left\langle \nabla^{m} ( u_{i} - u ), \divergence \vectest \right\rangle \,\dif{x}
        &=
        \int_{\Omega}
        \left\langle \lim_{j \rightarrow \infty} \nabla^{m} ( u_{i} - u_{j} ), \divergence \vectest \right\rangle \,\dif{x}
        \\&
        \leq 
        \lim_{j \rightarrow \infty}
        \int_{\Omega}
        \left\langle \nabla^{m} ( u_{i} - u_{j} ), \divergence \vectest \right\rangle \,\dif{x}
        \\&
        \leq 
        \| \vectest \|_{\Lebesgue^{\infty}(\OmegaS)} 
        \cdot 
        \sup_{j \geq i}
        \TV( \nabla^{m}u_{i} - \nabla^{m}u_{j}, \OmegaS )
        \leq 
        \epsilon 
        \| \vectest \|_{\Lebesgue^{\infty}(\OmegaS)} 
        .
    \end{align*}
    Hence $u_{i}$ converges to $u$ in the total variation seminorm. 
    The proof is complete. 
\end{proof}

\begin{remark}
    The space $\BV(\OmegaS) := \Sobolev^{0,1,\TV}(\OmegaS,\bbR)$ is widely known as the space of multivariate \emph{functions of bounded variation}. 
The literature on bounded variation spaces seems to have focused on this special case, 
    since it suffices for setting up the theory over bounded domains:
    all $p$-integrable functions over bounded domains are integrable. 
    
    The situation is different over unbounded domains. For example, the constant functions over domains with infinite volume are not integrable, but it makes sense to assign them a total variation of zero. Therefore, it is only natural to consider the spaces $\Sobolev^{0,p,\TV}(\OmegaS,\bbR)$ when $p > 1$.
    
    Any scalar function $u \in \Sobolev^{1,1}(\OmegaS)$ is a function of bounded variation,
    and in that case $\TV(u,\OmegaS) \leq \|\nabla u\|_{\Lebesgue^{1}(\OmegaS)}$.
    Of course, there are functions in $\Lebesgue^{p}(\OmegaS)$ that do not have finite total variation. 
\end{remark}

We want to study how functions of bounded variation transform under bi-Lipschitz mappings. 
This will be easier when we characterize the total variation in terms of divergences of non-smooth vector fields. 
We let $\Sobolev^{\divergence,\infty}(\OmegaS, X \otimes \VEC^{\ast})$ be the vector space of those vector fields 
in $\Lebesgue^{\infty}(\OmegaS, X \otimes \VEC^{\ast})$
whose distributional divergence is essentially bounded. 
The vector space of members of $\Sobolev^{\divergence,\infty}(\OmegaS, X \otimes \VEC^{\ast})$
with compact support is written $\Sobolev^{\divergence,\infty}_{c}(\OmegaS, X \otimes \VEC^{\ast})$.

\begin{lemma}\label{math:flatdivergence}
    Suppose that $u \in \Lebesgue^{1}_{loc}(\OmegaS,\VEC)$ has bounded variation over $\Omega$.
    Then 
    \begin{align}\label{math:boundedvariationinequality:flat}
        \int_{\Omega}
        \langle u, \divergence \vectest \rangle
        \,\dif{x}
        \leq 
        \TV(u,\OmegaS) \| \vectest \|_{\Lebesgue^{\infty}(\OmegaS)}
        ,
        \quad 
        \vectest \in \Sobolev^{\divergence,\infty}_{c}(\OmegaS, X \otimes \VEC^{\ast} )
        .
    \end{align}
\end{lemma}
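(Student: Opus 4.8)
The plan is to establish the estimate by mollification, reducing the case of a ``flat'' test field to the case of a smooth one, for which~\eqref{math:boundedvariationinequality} is available. Fix $\vectest \in \Sobolev^{\divergence,\infty}_{c}(\OmegaS, X \otimes \VEC^{\ast})$ and set $K := \supp\vectest$, a compact subset of the open set $\OmegaS$; write $\delta := \dist(K, \bbR^{N} \setminus \OmegaS) > 0$, with the convention $\delta = +\infty$ if $\OmegaS = \bbR^{N}$. Let $(\rho_{\eps})_{\eps > 0}$ be a standard nonnegative family of even scalar mollifiers on $\bbR^{N}$ with $\supp\rho_{\eps} \subseteq \overline{B_{\eps}}$ and $\int \rho_{\eps} = 1$, and for $\eps \in (0,\delta)$ put $\vectest_{\eps} := \vectest \ast \rho_{\eps}$, the convolution being taken componentwise in the sense of Bochner integrals. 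Since $\vectest \in \Lebesgue^{1}(\OmegaS) \cap \Lebesgue^{\infty}(\OmegaS)$ has compact support, $\vectest_{\eps}$ is smooth with $\supp\vectest_{\eps} \subseteq K_{\eps} := K + \overline{B_{\eps}} \subseteq \OmegaS$, so that $\vectest_{\eps} \in \Cont^{\infty}_{c}(\OmegaS, X \otimes \VEC^{\ast})$.

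First I would record three facts about these mollifications. By Minkowski's inequality for Bochner integrals, $\| \vectest_{\eps} \|_{\Lebesgue^{\infty}(\OmegaS)} \leq \| \vectest \|_{\Lebesgue^{\infty}(\OmegaS)}$. Next, since convolution with a fixed kernel is translation invariant, it commutes with the divergence operator acting on distributions; as $\divergence\vectest \in \Lebesgue^{\infty}(\OmegaS)$ by assumption, this yields the pointwise identity $\divergence\vectest_{\eps} = (\divergence\vectest) \ast \rho_{\eps}$ on $K_{\eps}$. Hence $\| \divergence\vectest_{\eps} \|_{\Lebesgue^{\infty}(\OmegaS)} \leq \| \divergence\vectest \|_{\Lebesgue^{\infty}(\OmegaS)}$ for all $\eps \in (0,\delta)$, and $\divergence\vectest_{\eps} \to \divergence\vectest$ almost everywhere as $\eps \to 0$ by the Lebesgue differentiation theorem applied to the locally integrable function $\divergence\vectest$ (convergence in $\Lebesgue^{1}_{loc}$, hence along a subsequence, would also suffice). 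I expect the verification of the commutation identity $\divergence\vectest_{\eps} = (\divergence\vectest) \ast \rho_{\eps}$ to be the one genuinely delicate point: one must remember that only the full tensor divergence $\divergence\vectest = \sum_{i} \nabla_{e_{i}} \vectest^{i}$ is assumed to lie in $\Lebesgue^{\infty}$, not the individual summands, so the computation should treat the divergence as a single translation-invariant operator tested against $\test \ast \rho_{\eps}$ rather than differentiate component by component. This is a routine distributional argument using Fubini's theorem.

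Finally I would combine these ingredients. For each $\eps \in (0,\delta)$ the field $\vectest_{\eps}$ is smooth with compact support, so~\eqref{math:boundedvariationinequality} applies and gives
\[
    \int_{\OmegaS} \langle u, \divergence\vectest_{\eps} \rangle \,\dif x
    \;\leq\;
    \TV(u,\OmegaS) \, \| \vectest_{\eps} \|_{\Lebesgue^{\infty}(\OmegaS)}
    \;\leq\;
    \TV(u,\OmegaS) \, \| \vectest \|_{\Lebesgue^{\infty}(\OmegaS)} .
\]
On the left-hand side the integrands vanish outside the fixed compact set $K_{\delta/2} \subseteq \OmegaS$ once $\eps \leq \delta/2$; on $K_{\delta/2}$ the function $u$ is integrable, $\divergence\vectest_{\eps} \to \divergence\vectest$ almost everywhere, and $|\langle u, \divergence\vectest_{\eps} \rangle| \leq \| \divergence\vectest \|_{\Lebesgue^{\infty}(\OmegaS)}\,|u| \in \Lebesgue^{1}(K_{\delta/2})$. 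The dominated convergence theorem therefore gives $\int_{\OmegaS} \langle u, \divergence\vectest_{\eps} \rangle \,\dif x \to \int_{\OmegaS} \langle u, \divergence\vectest \rangle \,\dif x$, and passing to the limit $\eps \to 0$ in the displayed inequality establishes~\eqref{math:boundedvariationinequality:flat}.
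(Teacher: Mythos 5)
Your argument is correct and follows essentially the same route as the paper's proof: mollify the flat field $\vectest$, use that mollification commutes with the distributional divergence and does not increase the $\Lebesgue^{\infty}$ norm, apply the defining inequality~\eqref{math:boundedvariationinequality} to the smooth compactly supported approximations, and pass to the limit by dominated convergence. Your write-up merely makes explicit a few details (the support bookkeeping and the dominating function) that the paper leaves implicit.
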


\begin{proof}
Consider a smooth non-negative function $\eta : \bbR^{N} \rightarrow \bbR$ with support in the $N$-dimensional unit ball and having integral one. Define $\eta_\eps(x) = \eps^{-N}\eta(\eps x)$ for $\eps > 0$.
    For $\eps$ small enough, 
    the mollifications $\vectest_{\epsilon} := \eta_\eps \star \vectest$ are smooth and their supports lie within a set compactly contained in $\OmegaS$. 
    Moreover,
    \[
        \divergence( \eta_\eps \star \vectest ) = \eta_\eps \star \divergence\vectest.
    \]
    The sequence
    $\vectest_{\epsilon} \in \Cont^{\infty}_{c}(\OmegaS, X \otimes \VEC^{\ast} )$
    converges to $\vectest$ almost everywhere 
    and 
    $\divergence\vectest_{\epsilon}$
    converges to $\divergence\vectest$ almost everywhere. 
We note that $\| \vectest_{\epsilon} \|_{\Lebesgue^{\infty}(\OmegaS)}$ is bounded by $\| \vectest \|_{\Lebesgue^{\infty}(\OmegaS)}$. 
Since $u$ is locally integrable,  
    \begin{align*}
        \left|
        \int_{\Omega}
        \langle u, \divergence \vectest \rangle 
        \,\dif{x}
        \right|
        &
        =
        \left|
        \int_{\Omega}
        \langle u, \lim_{ \epsilon \rightarrow 0 }
        \divergence \vectest_{\epsilon} \rangle 
        \,\dif{x}
        \right|
        \\&
        =
        \lim_{ \epsilon \rightarrow 0 }
        \left|
        \int_{\Omega}
        \langle u, \divergence \vectest_{\epsilon} \rangle 
        \,\dif{x}
        \right|
\leq 
        \TV(u,\OmegaS) 
        \limsup_{ \epsilon \rightarrow 0 } \| \vectest_{\epsilon} \|_{\Lebesgue^{\infty}(\OmegaS)}
        \leq 
        \TV(u,\OmegaS) \| \vectest \|_{\Lebesgue^{\infty}(\OmegaS)}
        .
    \end{align*}
    This finishes the proof.
\end{proof}

The \emph{Piola transformation} of a vector field $\vectest \in \Lebesgue^{\infty}(\OmegaS,X \otimes \VEC^{\ast})$
along a bi-Lipschitz mapping $\phi : \OmegaS \rightarrow \OmegaT$ is a vector field 
$\phi_{\ast}\vectest \in \Lebesgue^{\infty}(\OmegaT,Y \otimes \VEC^{\ast})$ defined almost everywhere as 
\begin{align}\label{math:pioladefinition}
    \phi_{\ast}\vectest_{|y}
    =
\adj\left(\Jacobian\phi\right)_{|y}^{-1}
    \cdot
    \vectest_{|\phi^{-1}(y)}
    .
\end{align}
Here, $\adj(M)$ is the adjugate matrix of $M \in \bbR^{N \times N}$, also known as the classical adjoint. 

\begin{lemma}\label{lemma:piolacommutes}
    The Piola transform of a vector field in $\vectest \in \Sobolev^{\divergence,\infty}(\OmegaS, X \otimes \VEC^{\ast} )$
    along a bi-Lipschitz transformation $\phi : \OmegaS \rightarrow \OmegaT$
    is a vector field in $\phi_\ast\vectest \in \Sobolev^{\divergence,\infty}(\OmegaT, Y \otimes \VEC^{\ast} )$. 
    Moreover, 
    \begin{align*}
        \divergence( \phi_\ast\vectest )
        =
        \det\Jacobian\phi^{-1}_{|y} \cdot (\divergence \vectest)_{\phi^{-1}(y)} 
    \end{align*}
    almost everywhere. 
\end{lemma}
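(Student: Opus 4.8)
The plan is to verify the divergence identity directly in its weak (distributional) form, and then read off from that formula that $\phi_\ast\vectest$ lands in $\Sobolev^{\divergence,\infty}(\OmegaT,Y\otimes\VEC^\ast)$. First I would record the standard facts about the bi-Lipschitz map $\phi$, exactly as already used in Lemma~\ref{lemma:transformationssatz:lp}: by Rademacher's theorem $\phi$ and $\phi^{-1}$ are differentiable almost everywhere, the Jacobians $\Jacobian\phi$ and $\Jacobian\phi^{-1}$ are essentially bounded, $\Jacobian\phi_{|x}$ is invertible for a.e.\ $x$ with $(\Jacobian\phi_{|x})^{-1}=\Jacobian\phi^{-1}_{|\phi(x)}$, and the area formula $\int_{\OmegaT}g=\int_{\OmegaS}(g\circ\phi)\,|\det\Jacobian\phi|$ holds for nonnegative measurable $g$. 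Since $\adj(M)^{-1}=(\det M)^{-1}M$ for invertible $M$, the definition~\eqref{math:pioladefinition} reads $\phi_\ast\vectest_{|y}=(\det\Jacobian\phi_{|x})^{-1}\,\Jacobian\phi_{|x}\cdot\vectest_{|x}$ with $x=\phi^{-1}(y)$, so essential boundedness of $\Jacobian\phi$, of $\Jacobian\phi^{-1}$, and of $\vectest$ gives $\phi_\ast\vectest\in\Lebesgue^{\infty}(\OmegaT,Y\otimes\VEC^\ast)$ at once. I would also note that $\sgn\det\Jacobian\phi$ is a.e.\ constant on each connected component of $\OmegaS$ --- a standard orientation property of bi-Lipschitz homeomorphisms --- and denote that value $\varepsilon\in\{\pm1\}$.

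Next comes the core computation. Fix $\chi\in\Cont^{\infty}_{c}(\OmegaT,\VEC)$; then $\chi\circ\phi$ is Lipschitz with compact support in $\OmegaS$ (because $\phi^{-1}$ is continuous), and the chain rule yields $\Jacobian(\chi\circ\phi)_{|x}=\Jacobian\chi_{|\phi(x)}\cdot\Jacobian\phi_{|x}$ almost everywhere. Applying the area formula, then the formula for $\phi_\ast\vectest$, then pulling the locally constant $\varepsilon$ out of the integral, I obtain
\begin{align*}
  \int_{\OmegaT}\langle \phi_\ast\vectest_{|y},\Jacobian\chi_{|y}\rangle\,\dif y
  &= \int_{\OmegaS}(\det\Jacobian\phi_{|x})^{-1}\langle \Jacobian\phi_{|x}\cdot\vectest_{|x},\Jacobian\chi_{|\phi(x)}\rangle\,|\det\Jacobian\phi_{|x}|\,\dif x
  \\&= \varepsilon\int_{\OmegaS}\langle \vectest_{|x},\Jacobian\chi_{|\phi(x)}\cdot\Jacobian\phi_{|x}\rangle\,\dif x
  = \varepsilon\int_{\OmegaS}\langle \vectest_{|x},\Jacobian(\chi\circ\phi)_{|x}\rangle\,\dif x .
\end{align*}
Because $\vectest\in\Sobolev^{\divergence,\infty}(\OmegaS)$, I would extend the defining identity $\int_{\OmegaS}\langle\vectest,\Jacobian\zeta\rangle=-\int_{\OmegaS}\langle\divergence\vectest,\zeta\rangle$ from $\zeta\in\Cont^{\infty}_{c}$ to the Lipschitz, compactly supported $\zeta=\chi\circ\phi$ by mollification --- the same technique as in the proof of Lemma~\ref{math:flatdivergence}, using $\vectest,\divergence\vectest\in\Lebesgue^{\infty}_{loc}$ and dominated convergence. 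Changing variables back with the area formula, and using that the sign of $\det\Jacobian\phi^{-1}$ at $y$ is again $\varepsilon$, so that $\varepsilon\,|\det\Jacobian\phi^{-1}_{|y}|=\det\Jacobian\phi^{-1}_{|y}$, I arrive at
\begin{align*}
  \int_{\OmegaT}\langle \phi_\ast\vectest_{|y},\Jacobian\chi_{|y}\rangle\,\dif y
  = -\varepsilon\int_{\OmegaS}\langle \divergence\vectest_{|x},(\chi\circ\phi)(x)\rangle\,\dif x
  = -\int_{\OmegaT}\langle \det\Jacobian\phi^{-1}_{|y}\,(\divergence\vectest)_{\phi^{-1}(y)},\chi_{|y}\rangle\,\dif y .
\end{align*}
This identifies the weak divergence of $\phi_\ast\vectest$ as $\det\Jacobian\phi^{-1}_{|y}\,(\divergence\vectest)_{\phi^{-1}(y)}$, which lies in $\Lebesgue^{\infty}(\OmegaT,\VEC^\ast)$ since $\det\Jacobian\phi^{-1}$ is essentially bounded and $\divergence\vectest\in\Lebesgue^{\infty}(\OmegaS,\VEC^\ast)$; hence $\phi_\ast\vectest\in\Sobolev^{\divergence,\infty}(\OmegaT,Y\otimes\VEC^\ast)$, which is both assertions. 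The $\VEC$-valued bookkeeping is harmless: $\adj(\Jacobian\phi)^{-1}$ acts only on the $X=Y$ slot, so the computation goes through componentwise in $\VEC$.

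I expect the main obstacle to be the careful handling of low regularity together with orientation: justifying that $\chi\circ\phi$, which is merely Lipschitz, is an admissible test object against the distributional divergence of $\vectest$ (the mollification step), and tracking the factor $\varepsilon=\sgn\det\Jacobian\phi$, which rests on the fact that a bi-Lipschitz homeomorphism does not locally reverse orientation so that $\varepsilon$ may be pulled out of the integral. Everything else reduces to the chain rule, the area formula, and the definition of weak divergence.
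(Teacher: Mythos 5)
Your proposal is correct and follows essentially the same route as the paper: test against a smooth compactly supported function, use the a.e.\ chain rule and the area formula for the bi-Lipschitz change of variables, exploit the (locally) constant sign of $\det\Jacobian\phi$ to convert $|\det|$ into $\det$, and read off the weak divergence, with boundedness giving membership in $\Sobolev^{\divergence,\infty}(\OmegaT)$. The only differences are presentational: you make explicit the mollification needed to pair $\divergence\vectest$ with the merely Lipschitz test function $\chi\circ\phi$ (which the paper leaves implicit), while the paper handles the orientation issue by restricting to test functions with connected support, exactly the localization your ``locally constant $\varepsilon$'' remark amounts to.
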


\begin{proof}
    Let $f \in \Cont^{\infty}_{c}(\OmegaT,\VEC)$ with connected support. 
    Since $\phi$ is bi-Lipschitz, 
    $\sign\det\Jacobian\phi^{-1}$ is essentially constant over the support of $f$~\cite[Corollary 4.1.26]{federer2014geometric}.
For almost all $y \in \OmegaT$ we have 
    \begin{align*}
        \adj\Jacobian\phi^{-1}_{|y}
        =
        \det(\Jacobian\phi^{-1}_{|y}) \cdot \left( \Jacobian\phi_{|y}^{-1} \right)^{-1}
        =
        \det(\Jacobian\phi^{-1}_{|y}) \cdot \Jacobian\phi_{|\phi^{-1}(y)} 
        ,
        \quad 
        \det\Jacobian\phi^{-1}_{|y}
        =
        \det\Jacobian\phi^{-1}_{|\phi^{-1}(y)}
        .
    \end{align*}
    On the one hand, 
    \begin{align*}
        \int_{\OmegaS} \left\langle \nabla ( f \circ \phi )_{|x}, \vectest_{|x} \right\rangle 
        \,\dif{x}
        &
        =
        \int_{\OmegaS} \left\langle \nabla f_{|\phi(x)} \cdot \Jacobian\phi_{|x}, \vectest_{|x} \right\rangle 
        \,\dif{x}
        \\&=
        \int_{\OmegaT} \left\langle \nabla f_{|y} \cdot \Jacobian\phi_{|\phi^{-1}(y)}, \vectest_{|\phi^{-1}(y)} \right\rangle 
        |\det\Jacobian\phi^{-1}_{|y}| \,\dif{y}
        \\&=
        \int_{\OmegaT}
        \sign\left(\det\Jacobian\phi^{-1}_{|y}\right)
        \left\langle \nabla f_{|y}, \Jacobian\phi_{|\phi^{-1}(y)} \det(\Jacobian\phi^{-1}_{|y}) \cdot \vectest_{|\phi^{-1}(y)} \right\rangle 
        \,\dif{y}
        \\&=
        \int_{\OmegaT}
        \sign\left(\det\Jacobian\phi^{-1}_{|y}\right)
        \left\langle \nabla f_{|y}, \adj\Jacobian\phi^{-1}_{|y} \cdot \vectest_{|\phi^{-1}(y)} \right\rangle
        \,\dif{y}
        .
    \end{align*}
    On the other hand, 
    \begin{align*}
        \int_{\OmegaS} \left\langle \nabla ( f \circ \phi )_{|x} , \vectest_{|x} \right\rangle
        \,\dif{x}
        &
        =
        -
        \int_{\OmegaS} \left\langle ( f \circ \phi )_{|x} , \divergence \vectest_{|x} \right\rangle
        \,\dif{x}
=
        -
        \int_{\OmegaT} \left\langle f_{|y} , (\divergence \vectest)_{|\phi^{-1}(y)} \right\rangle \left| \det\Jacobian\phi^{-1}_{|\phi^{-1}(y)} \right| 
        \,\dif{y}
.
    \end{align*}
    Therefore, $\det(\Jacobian\phi^{-1}_{|y}) \cdot (\divergence \vectest)_{|\phi^{-1}(y)}$ is the distributional divergence of $\adj\Jacobian\phi^{-1}_{|y} \cdot \vectest_{|\phi^{-1}(y)}$. 
    That completes the proof. 
\end{proof}

\begin{remark}
    Different authors define the Piola transformation in different, sometimes non-equivalent forms. 
    Often, the adjugate matrix is written as the Jacobian multiplied by the determinant,
    and the direction of the transformation may be either in the direction of $\phi$ or in the opposite direction, 
    depending on the convention. 
    Our convention emphasizes that the Piola transformation~\eqref{math:pioladefinition} of vector fields is dual to the pullback of gradients.
\end{remark}

\begin{lemma}\label{lemma:invarianceBV:scalar}
    Suppose that $\phi : \OmegaS \rightarrow \OmegaT$ is bi-Lipschitz.
    If $u \in \Lebesgue^{1}_{loc}(\OmegaT,\VEC)$ has bounded variation over $\OmegaT$, 
    then $u \circ \phi \in \Lebesgue^{1}_{loc}(\OmegaS,\VEC)$ has bounded variation over $\OmegaS$, and 
    \begin{align}\label{math:invarianceBV:scalar}
        \TV( u \circ \phi, \OmegaS )
        \leq
        \| \adj\Jacobian\phi \|_{\Lebesgue^{\infty}(\OmegaS)}
        \cdot
        \TV(u,\OmegaS)
        .
    \end{align}
\end{lemma}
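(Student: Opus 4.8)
The plan is to reduce the bounded-variation statement for $u \circ \phi$ to the bounded-variation property of $u$ by testing against smooth compactly supported vector fields on $\OmegaS$ and transporting them to $\OmegaT$ via the Piola transform. First I would fix an arbitrary $\vectest \in \Cont^{\infty}_{c}(\OmegaS, X \otimes \VEC^{\ast})$ and, without loss of generality, assume its support is connected (a general $\vectest$ decomposes into finitely many such pieces, or one argues componentwise on connected components of the support). Since $\phi$ is bi-Lipschitz, $\phi^{-1}$ maps compact sets to compact sets, so $\phi_{\ast}\vectest$ has compact support contained in $\OmegaT$; by Lemma~\ref{lemma:piolacommutes} it lies in $\Sobolev^{\divergence,\infty}_{c}(\OmegaT, Y \otimes \VEC^{\ast})$ with $\divergence(\phi_{\ast}\vectest)_{|y} = \det\Jacobian\phi^{-1}_{|\phi^{-1}(y)} \cdot (\divergence\vectest)_{|\phi^{-1}(y)}$ almost everywhere. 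Note that since $\phi$ is bi-Lipschitz, $\phi^{-1}$ maps null sets to null sets, and $u \circ \phi$ is locally integrable on $\OmegaS$.

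The core computation is then a change of variables. Starting from $\int_{\OmegaS} \langle u \circ \phi, \divergence\vectest \rangle \,\dif x$, I would substitute $y = \phi(x)$, so $\dif x = |\det\Jacobian\phi^{-1}_{|y}| \,\dif y$, obtaining
\begin{align*}
    \int_{\OmegaS} \langle (u \circ \phi)_{|x}, \divergence\vectest_{|x} \rangle \,\dif x
    =
    \int_{\OmegaT} \langle u_{|y}, (\divergence\vectest)_{|\phi^{-1}(y)} \rangle \, |\det\Jacobian\phi^{-1}_{|y}| \,\dif y
    .
\end{align*}
Up to the sign of $\det\Jacobian\phi^{-1}$, which is essentially constant on the (connected) support by \cite[Corollary 4.1.26]{federer2014geometric}, the integrand $(\divergence\vectest)_{|\phi^{-1}(y)} \cdot |\det\Jacobian\phi^{-1}_{|y}|$ equals $\pm\divergence(\phi_{\ast}\vectest)_{|y}$. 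Hence the integral equals $\pm\int_{\OmegaT} \langle u_{|y}, \divergence(\phi_{\ast}\vectest)_{|y} \rangle \,\dif y$, and by Lemma~\ref{math:flatdivergence} (the flat-vector-field characterization of total variation) this is bounded in absolute value by $\TV(u,\OmegaT)\,\|\phi_{\ast}\vectest\|_{\Lebesgue^{\infty}(\OmegaT)}$.

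The final step is estimating $\|\phi_{\ast}\vectest\|_{\Lebesgue^{\infty}(\OmegaT)}$. From the definition~\eqref{math:pioladefinition}, $\phi_{\ast}\vectest_{|y} = \adj(\Jacobian\phi)_{|\phi^{-1}(y)} \cdot \vectest_{|\phi^{-1}(y)}$ — using the identity $\adj(\Jacobian\phi^{-1}_{|y})^{-1} = \adj(\Jacobian\phi_{|\phi^{-1}(y)})$ valid almost everywhere since $\adj$ is multiplicative and $\Jacobian(\phi^{-1})_{|y} = (\Jacobian\phi_{|\phi^{-1}(y)})^{-1}$ — so by submultiplicativity of the spectral norm and the change of variables for the essential supremum we get $\|\phi_{\ast}\vectest\|_{\Lebesgue^{\infty}(\OmegaT)} \leq \|\adj\Jacobian\phi\|_{\Lebesgue^{\infty}(\OmegaS)}\,\|\vectest\|_{\Lebesgue^{\infty}(\OmegaS)}$. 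Combining, $\int_{\OmegaS}\langle u\circ\phi,\divergence\vectest\rangle\,\dif x \leq \|\adj\Jacobian\phi\|_{\Lebesgue^{\infty}(\OmegaS)}\,\TV(u,\OmegaT)\,\|\vectest\|_{\Lebesgue^{\infty}(\OmegaS)}$, and since $\vectest$ was arbitrary, $u\circ\phi$ has bounded variation with $\TV(u\circ\phi,\OmegaS) \leq \|\adj\Jacobian\phi\|_{\Lebesgue^{\infty}(\OmegaS)}\TV(u,\OmegaT)$. (The statement as written has $\TV(u,\OmegaS)$ on the right, presumably a typo for $\TV(u,\OmegaT)$; I would write $\OmegaT$.) The main obstacle I anticipate is the bookkeeping around the sign of $\det\Jacobian\phi^{-1}$ and the connectedness reduction: one must be careful that $\phi_{\ast}\vectest$ is genuinely an admissible test field in Lemma~\ref{math:flatdivergence} (compact support, $\Lebesgue^{\infty}$ with $\Lebesgue^{\infty}$ divergence), which is exactly what Lemma~\ref{lemma:piolacommutes} delivers, so this is more a matter of citing the right lemma at the right moment than of genuine difficulty.
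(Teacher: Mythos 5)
Your proposal follows the same route as the paper's own proof: test with $\vectest \in \Cont^{\infty}_{c}(\OmegaS, X\otimes\VEC^{\ast})$, transport it with the Piola transform, use the divergence identity of Lemma~\ref{lemma:piolacommutes}, change variables, and conclude with the flat-field characterization of Lemma~\ref{math:flatdivergence}; you are in fact more careful than the printed proof about the compact support of $\phi_{\ast}\vectest$, the sign of $\det\Jacobian\phi^{-1}$, and null sets, and you are right that the $\TV(u,\OmegaS)$ on the right-hand side of \eqref{math:invarianceBV:scalar} should read $\TV(u,\OmegaT)$.

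There is, however, a genuine gap in your final norm estimate. The divergence identity you invoke is established, in the proof of Lemma~\ref{lemma:piolacommutes}, for the field $y \mapsto \adj\bigl(\Jacobian\phi^{-1}_{|y}\bigr)\cdot\vectest_{|\phi^{-1}(y)}$, whose multiplier is $\adj\bigl(\Jacobian\phi^{-1}_{|y}\bigr) = \bigl(\adj\Jacobian\phi_{|\phi^{-1}(y)}\bigr)^{-1}$. In your last step you instead read the Piola transform as multiplication by $\adj\Jacobian\phi_{|\phi^{-1}(y)}$ --- the inverse matrix --- and that misreading is exactly what produces the bound $\|\phi_{\ast}\vectest\|_{\Lebesgue^{\infty}(\OmegaT)} \leq \|\adj\Jacobian\phi\|_{\Lebesgue^{\infty}(\OmegaS)}\,\|\vectest\|_{\Lebesgue^{\infty}(\OmegaS)}$. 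You cannot use one matrix for the divergence identity and its inverse for the sup-norm bound: for the field to which the cancellation against $|\det\Jacobian\phi^{-1}|$ applies, the consistent estimate is $\|\phi_{\ast}\vectest\|_{\Lebesgue^{\infty}(\OmegaT)} \leq \|\adj\Jacobian\phi^{-1}\|_{\Lebesgue^{\infty}(\OmegaT)}\,\|\vectest\|_{\Lebesgue^{\infty}(\OmegaS)}$, and the argument then delivers $\TV(u\circ\phi,\OmegaS)\leq\|\adj\Jacobian\phi^{-1}\|_{\Lebesgue^{\infty}(\OmegaT)}\,\TV(u,\OmegaT)$. This is not cosmetic: for $\phi(x)=\lambda x$ on $\bbR^{2}$ with $0<\lambda<1$ one has $\TV(u\circ\phi,\OmegaS)=\lambda^{-1}\TV(u,\OmegaT)$ while $\|\adj\Jacobian\phi\|_{\Lebesgue^{\infty}(\OmegaS)}=\lambda$, so the inequality with the stated constant fails, whereas $\|\adj\Jacobian\phi^{-1}\|_{\Lebesgue^{\infty}(\OmegaT)}=\lambda^{-1}$ is sharp. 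To be fair, the paper's own proof makes the same slip in its final display, so you have reconstructed the published argument faithfully; but to make the step correct you must replace the constant by $\|\adj\Jacobian\phi^{-1}\|_{\Lebesgue^{\infty}(\OmegaT)}$, equivalently $\|(\adj\Jacobian\phi)^{-1}\|_{\Lebesgue^{\infty}(\OmegaS)}$, both here and in the statement.
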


\begin{proof}
    Suppose that $\vectest \in \Sobolev^{\divergence,\infty}(\OmegaS, X \otimes \VEC^{\ast} )$.
    Let $\hat\vectest \in \Sobolev^{\divergence,\infty}(\OmegaT, Y \otimes \VEC^{\ast} )$
    such that $\phi_{\ast}^{-1}\hat\vectest = \vectest$.
    The properties of the Piola transform imply 
    \begin{align*}
        \int_{\OmegaS}
        \left\langle u_{|\phi(x)}, \divergence( \vectest )_{|x} \right\rangle 
        \,\dif{x}
        &
        =
        \int_{\OmegaS}
        \left\langle u_{|\phi(x)}, \divergence( \phi_{\ast}^{-1}\hat\vectest )_{|x} \right\rangle 
        \,\dif{x}
=
        \int_{\OmegaS}
        \left\langle u_{|\phi(x)}, \det(\Jacobian\phi) \divergence( \hat\vectest )_{|\phi(x)} \right\rangle 
        \,\dif{x}
        .
    \end{align*}
    We thus estimate 
    \begin{align*}
        \left|
        \int_{\OmegaS}
        \left\langle u_{|\phi(x)}, \divergence( \vectest )_{|x} \right\rangle 
        \,\dif{x}
        \right|
        &=
        \left|
        \int_{\OmegaS}
        \left\langle u_{|\phi(x)}, \det(\Jacobian\phi) \divergence( \hat\vectest )_{|\phi(x)} \right\rangle 
        \,\dif{x}
        \right|
        \\&
        =
        \left|
        \int_{\OmegaT}
        \left\langle u_{|y}, \divergence( \hat\vectest )_{|y} \right\rangle 
        \,\dif{y}
        \right|
\leq 
        \TV(u,\OmegaT)
        \cdot 
        \| \hat\vectest \|_{\Lebesgue^{\infty}(\OmegaT)}
        .
    \end{align*}
    Finally,    
    \begin{align*}
        \| \hat\vectest \|_{\Lebesgue^{\infty}(\OmegaT)}
        =
        \| \phi_{\ast}\vectest \|_{\Lebesgue^{\infty}(\OmegaT)}
        \leq 
        \| \adj\Jacobian\phi \|_{\Lebesgue^{\infty}(\OmegaS)}
        \cdot 
        \| \vectest \|_{\Lebesgue^{\infty}(\OmegaS)}
        .
    \end{align*}
    This shows~\eqref{math:invarianceBV:scalar}. The proof is complete. 
\end{proof}

\begin{lemma}\label{lemma:invarianceBV:tensor}
    Suppose that $\phi : \OmegaS \rightarrow \OmegaT$ is bi-Lipschitz and $\nabla\phi \in \Sobolev^{\divergence,\infty}(\OmegaS)$.
    Let $\Orlicz$ be a proper Musielak-Orlicz integrand over $\OmegaS$. 
    If $u \in \Lebesgue^{\Orlicz}(\OmegaT,\LIN^{d}(Y,\VEC))$ has bounded variation over $\OmegaT$, 
    then 
    \begin{align}\label{math:invarianceBV:tensor}
        \begin{aligned}
            \TV( \phi^{\ast} u, \OmegaS )
            &\leq
            \left\| \otimes^{d}\nabla\phi_{} \right\|_{\Lebesgue^{\infty}(\OmegaS)}
            \| \adj\Jacobian\phi \|_{\Lebesgue^{\infty}(\OmegaS)}
            \TV(u,\OmegaS)
            \\
            &\quad\quad
            +
            C_{\Orlicz,\Dual}
            \left\| \divergence \left( \otimes^{d}\nabla\phi_{} \right) \right\|_{\Lebesgue^{\Dual\phi^{\ast}\Orlicz}(\OmegaS)}
            \left\| {u}_{|\phi} \right\|_{\Lebesgue^{\phi^{\ast}\Orlicz}(\OmegaS)}
            .
        \end{aligned}
    \end{align}
    In particular, if the right-hand side is bounded, 
    then $\phi^{\ast} u \in \Lebesgue^{p}(\OmegaS,\LIN^{d}(X,\VEC))$ has bounded variation over $\OmegaS$.
\end{lemma}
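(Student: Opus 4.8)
The plan is to estimate $\int_{\OmegaS}\langle\phi^{\ast}u,\divergence\vectest\rangle\,\dif x$ for an arbitrary $\vectest\in\Cont^{\infty}_{c}(\OmegaS,X\otimes(\LIN^{d}(X,\VEC))^{\ast})$ and then to pass to the supremum over $\|\vectest\|_{\Lebesgue^{\infty}(\OmegaS)}\leq 1$. Recall that $\phi^{\ast}u=(u\circ\phi)\contract(\otimes^{d}\nabla\phi)$ with $\otimes^{d}\nabla\phi_{|x}\in\LIN(\otimes^{d}X,\otimes^{d}Y)$, so pairing against $\divergence\vectest_{|x}\in(\LIN^{d}(X,\VEC))^{\ast}$ gives the pointwise identity
\begin{align*}
    \langle\phi^{\ast}u_{|x},\divergence\vectest_{|x}\rangle
    =
    \langle (u\circ\phi)_{|x},\ (\otimes^{d}\nabla\phi)_{|x}\contract\divergence\vectest_{|x}\rangle .
\end{align*}
I would then introduce the compactly supported field $\vecw:=(\otimes^{d}\nabla\phi)\contract\vectest$, in which $\otimes^{d}\nabla\phi$ is contracted against the $d$ slot-indices of $\vectest$ while the divergence-index is left free; thus $\vecw$ is valued in $X\otimes(\LIN^{d}(Y,\VEC))^{\ast}$. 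Because $\phi$ is Lipschitz, $\otimes^{d}\nabla\phi\in\Lebesgue^{\infty}(\OmegaS)$; because $\nabla\phi\in\Sobolev^{\divergence,\infty}(\OmegaS)$, the product rule for the divergence (valid here by mollification, see below) gives $\otimes^{d}\nabla\phi\in\Sobolev^{\divergence,\infty}(\OmegaS)$, hence $\vecw\in\Sobolev^{\divergence,\infty}_{c}(\OmegaS,X\otimes(\LIN^{d}(Y,\VEC))^{\ast})$, with the Leibniz identity
\begin{align*}
    \divergence\vecw=(\otimes^{d}\nabla\phi)\contract\divergence\vectest+\divergence(\otimes^{d}\nabla\phi)\contract\vectest
\end{align*}
holding almost everywhere.

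Eliminating $(\otimes^{d}\nabla\phi)\contract\divergence\vectest$ with this identity and integrating, I obtain
\begin{align*}
    \int_{\OmegaS}\langle\phi^{\ast}u,\divergence\vectest\rangle\,\dif x
    &=
    \int_{\OmegaS}\langle u\circ\phi,\divergence\vecw\rangle\,\dif x
    \\
    &\quad
    -
    \int_{\OmegaS}\langle u\circ\phi,\ \divergence(\otimes^{d}\nabla\phi)\contract\vectest\rangle\,\dif x .
\end{align*}
For the first integral I would invoke Lemma~\ref{lemma:invarianceBV:scalar}, which gives that $u\circ\phi$ has bounded variation with $\TV(u\circ\phi,\OmegaS)\leq\|\adj\Jacobian\phi\|_{\Lebesgue^{\infty}(\OmegaS)}\TV(u,\OmegaS)$, and then Lemma~\ref{math:flatdivergence} applied to the compactly supported flat field $\vecw$ together with the contraction inequality~\eqref{math:contractioninequality}; this bounds the first integral by $\|\otimes^{d}\nabla\phi\|_{\Lebesgue^{\infty}(\OmegaS)}\|\adj\Jacobian\phi\|_{\Lebesgue^{\infty}(\OmegaS)}\TV(u,\OmegaS)\,\|\vectest\|_{\Lebesgue^{\infty}(\OmegaS)}$. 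For the second integral I would apply the generalized H\"older inequality (Proposition~\ref{proposition:generalizedhoelder}) in the dual pair $\Lebesgue^{\phi^{\ast}\Orlicz}$, $\Lebesgue^{\Dual\phi^{\ast}\Orlicz}$ over $\OmegaS$, together with the pointwise bound $|\divergence(\otimes^{d}\nabla\phi)\contract\vectest|\leq|\divergence(\otimes^{d}\nabla\phi)|\,|\vectest|$, obtaining $C_{\Orlicz,\Dual}\|u\circ\phi\|_{\Lebesgue^{\phi^{\ast}\Orlicz}(\OmegaS)}\|\divergence(\otimes^{d}\nabla\phi)\|_{\Lebesgue^{\Dual\phi^{\ast}\Orlicz}(\OmegaS)}\|\vectest\|_{\Lebesgue^{\infty}(\OmegaS)}$; and Lemma~\ref{lemma:transformationssatz:orlicz}, applied to the scalar field $|u|$, identifies $\|u\circ\phi\|_{\Lebesgue^{\phi^{\ast}\Orlicz}(\OmegaS)}=\|u\|_{\Lebesgue^{\Orlicz}(\OmegaT)}$. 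Summing the two estimates and taking the supremum over $\vectest$ with $\|\vectest\|_{\Lebesgue^{\infty}(\OmegaS)}\leq 1$ yields~\eqref{math:invarianceBV:tensor}. The concluding assertion follows at once: by properness of $\Orlicz$ and Lemma~\ref{lemma:transformationssatz:orlicz}, $\phi^{\ast}u\in\Lebesgue^{\phi^{\ast}\Orlicz}(\OmegaS,\LIN^{d}(X,\VEC))\subseteq\Lebesgue^{1}_{loc}$, and finiteness of the right-hand side of~\eqref{math:invarianceBV:tensor} is exactly the statement $\TV(\phi^{\ast}u,\OmegaS)<\infty$.

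The step that needs genuine care is the integration by parts at the available regularity: $\phi$ is only bi-Lipschitz, so $\otimes^{d}\nabla\phi$ is not weakly differentiable in general, and the Leibniz identity for $\divergence((\otimes^{d}\nabla\phi)\contract\vectest)$ must rest on nothing beyond $\nabla\phi\in\Sobolev^{\divergence,\infty}(\OmegaS)$. I would argue by mollification: for $\phi_{\epsilon}=\eta_{\epsilon}\star\phi$ one has $\nabla\phi_{\epsilon}=\eta_{\epsilon}\star\nabla\phi$ and $\divergence(\nabla\phi_{\epsilon})=\eta_{\epsilon}\star\divergence(\nabla\phi)$, so on compact subsets $\nabla\phi_{\epsilon}\to\nabla\phi$ and $\divergence(\nabla\phi_{\epsilon})\to\divergence(\nabla\phi)$ almost everywhere while $\|\divergence\nabla\phi_{\epsilon}\|_{\Lebesgue^{\infty}}$ stays controlled by $\|\divergence\nabla\phi\|_{\Lebesgue^{\infty}}$; the classical product rule for the smooth maps $\phi_{\epsilon}$ then expresses $\divergence(\otimes^{d}\nabla\phi_{\epsilon})$ and $\divergence((\otimes^{d}\nabla\phi_{\epsilon})\contract\vectest)$ through sums of products that are uniformly bounded in $\Lebesgue^{\infty}$ and converge almost everywhere, and passing to the limit in the weak formulation of the distributional divergence yields both $\otimes^{d}\nabla\phi\in\Sobolev^{\divergence,\infty}(\OmegaS)$ and the stated Leibniz identity. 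A secondary, purely clerical, issue is to keep the tensor valences straight through each contraction $\contract$ — which index of $\vectest$ is consumed by the divergence and which $d$ indices are fed to $\otimes^{d}\nabla\phi$ — so that $\vecw$ genuinely lands in $X\otimes(\LIN^{d}(Y,\VEC))^{\ast}$ and every pairing against $u\circ\phi$ is the intended one; this is routine given the conventions of Section~\ref{sec:notation} but easy to botch.
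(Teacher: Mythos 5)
Your proposal is correct and follows essentially the same route as the paper's proof: the identical Leibniz-type splitting of $\int_{\OmegaS}\langle\phi^{\ast}u,\divergence\vectest\rangle\,\dif x$ into a term handled via Lemma~\ref{lemma:invarianceBV:scalar} (tested against the flat, compactly supported field $(\otimes^{d}\nabla\phi)\contract\vectest$, i.e.\ Lemma~\ref{math:flatdivergence} plus the contraction inequality) and a term handled by the generalized H\"older inequality in the dual pair $\Lebesgue^{\phi^{\ast}\Orlicz}$, $\Lebesgue^{\Dual\phi^{\ast}\Orlicz}$. Your additional mollification argument justifying $\otimes^{d}\nabla\phi\in\Sobolev^{\divergence,\infty}(\OmegaS)$ and the product rule at bi-Lipschitz regularity only makes explicit a step the paper leaves tacit, and is consistent with its conventions.
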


\begin{proof}
    Let $u \in \Lebesgue^{1}_{loc}(\OmegaT,\LIN^{d}(Y,\VEC))$ 
    and let $\vectest \in \Sobolev^{\divergence,\infty}(\OmegaS, \otimes^{d+1} X \otimes \VEC^{\ast} )$.
    We use the definition of the pullback and observe 
    \begin{align*}
        &
        \int_{\OmegaS}
        \left( \phi^{\ast} {u} \right)_{|x}
        \contract
        \divergence\vectest_{|x}
        \,\dif{x}
        =
        \int_{\OmegaS}
        {u}_{|\phi(x)}
        \contract
        \otimes^{d} \nabla\phi_{|x}
        \contract
        \divergence 
        \vectest_{|x}
        \,\dif{x}
        \\
        &\quad 
        =
        \int_{\OmegaS}
        {u}_{|\phi(x)}
        \contract
        \divergence 
        \left( 
            \otimes^{d} \nabla\phi_{|x}
            \contract
            \vectest_{|x}
        \right)
        \,\dif{x}
        -
        \int_{\OmegaS}
        {u}_{|\phi(x)}
        \contract
        \divergence 
        \left( \otimes^{d} \nabla\phi_{|x} \right)
        \contract
        \vectest_{|x}
        \,\dif{x}
        .
    \end{align*}  
    We bound the last two terms. 
    First, $\otimes^{d} \nabla\phi_{|x} \contract \vectest_{|x}$ is essentially bounded with essentially bounded divergence, 
    and we apply Lemma~\ref{lemma:invarianceBV:scalar}.
    Second, we use H\"older's inequality together with the assumptions and derive 
    \begin{align*}
        &
        \left|
        \int_{\OmegaS}
            {u}_{|\phi(x)}
            \contract
            \divergence 
            \left( \otimes^{d} \nabla\phi_{|x} \right)
            \contract  
            \vectest_{|x}
        \,\dif{x}
        \right|
        \leq 
        C_{\Orlicz,\Dual}
        \left\| {u}_{|\phi} \right\|_{\Lebesgue^{\phi^{\ast}\Orlicz}(\OmegaS)}
        \left\| \divergence \left( \otimes^{d}\nabla\phi_{} \right) \right\|_{\Lebesgue^{\Dual\phi^{\ast}\Orlicz}(\OmegaS)}
        \left\| \vectest \right\|_{\Lebesgue^{\infty}(\OmegaS)}
        .
    \end{align*}
The desired statement thus follows. 
\end{proof}

We see that scalar functions of bounded variation are preserved under bi-Lipschitz transformations.
However, tensor fields of bounded variation are preserved under bi-Lipschitz transformations
whose Jacobians additionally have divergence within a Lebesgue space. 
That regularity requirement is higher than for preserving tensor fields in Lebesgue spaces~(see Corollary~\ref{corollary:weakderivativeofpullback:sobolev}). 
We now generalize this crucial observation to functions with higher weak derivatives. 

\begin{proposition}\label{prop:higherboundedvariation}
Let $\OmegaS, \OmegaT \subseteq \bbR^{N}$ be open sets, and let $\VEC$ be a Banach space.
    Let $m \in \bbN$ 
    and let $\Orlicz$ be a proper Musielak-Orlicz integrand over $\OmegaS$. 
    Suppose that $\phi : \OmegaS \rightarrow \OmegaT$ is locally bi-Lipschitz. 
Suppose that the weak derivatives $\nabla\phi, \dots, \nabla^{m+1}\phi$ are in $\Sobolev^{\divergence,\infty}(\OmegaS)$.
If $u \in \Sobolev^{p,m}(\OmegaT,\LIN^d(Y,\VEC))$
    and $\nabla^{m} u$ has bounded variation,
    then 
    \begin{align*}
        &
        \TV( \nabla^{m} (\phi^{\ast} u) , \OmegaS )
\leq 
        \TV( \nabla^{m} u_{|\phi}, \OmegaS )
        \left\| \nabla\phi \right\|_{\Lebesgue^{\infty}(\OmegaS)}^{m+d}
        \\&\quad\quad
        +
C_{\Orlicz,\Dual}
        \sum_{ \substack{ 0 \leq k \leq m-1 \\ {P} \in \calP_{0}(m,d) \\ k \leq |{P}_{0}| \\ \calC \in \calP({P}_{0},k) } }
        \left\|
            \nabla^{k+1} {u}_{|\phi}
        \right\|
        _{\Lebesgue^{\phi^{\ast}\Orlicz}(\OmegaS)}
        \left\|
            \nabla\phi_{}
            \otimes 
            \nabla_{ C_1 }\phi_{}
            \otimes 
            \cdots 
            \otimes 
            \nabla_{ C_k }\phi_{}
            \otimes 
            \nabla_{ P_1 }\nabla\phi_{}
            \otimes 
            \cdots 
            \otimes 
            \nabla_{ P_d }\nabla\phi_{}
        \right\|
        _{\Lebesgue^{\Dual\phi^{\ast}\Orlicz}(\OmegaS)}
        \\&\quad\quad
        +
        C_{\Orlicz,\Dual}
        \sum_{ \substack{ 0 \leq k \leq m \\ {P} \in \calP_{0}(m,d) \\ k \leq |{P}_{0}| \\ \calC \in \calP({P}_{0},k) } }
        \left\|
            \nabla^{k} {u}_{|\phi}
        \right\|
        _{\Lebesgue^{\phi^{\ast}\Orlicz}(\OmegaS)}
        \left\|
            \divergence \left(
            \nabla_{ C_1 }\phi_{}
            \otimes 
            \cdots 
            \otimes 
            \nabla_{ C_k }\phi_{}
            \otimes 
            \nabla_{ P_1 }\nabla\phi_{}
            \otimes 
            \cdots 
            \otimes 
            \nabla_{ P_d }\nabla\phi_{}
            \right)
        \right\|
        _{\Lebesgue^{\Dual\phi^{\ast}\Orlicz}(\OmegaS)}
        .
    \end{align*}
    In particular, 
    if the right-hand side of that inequality is finite, 
    then $\phi^{\ast} u \in \Sobolev^{p,m}(\OmegaS,\LIN^d(X,\VEC))$ and $\nabla^{m} (\phi^{\ast} u)$ has bounded variation.

    If $d=0$, we only need to assume that the derivatives $\nabla\phi, \dots, \nabla^{m}\phi$ exist almost everywhere and are in $\Sobolev^{\divergence,\infty}(\OmegaS)$.
\end{proposition}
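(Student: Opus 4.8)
The plan is to start from the explicit formula~\eqref{math:weakderivativeofpullback} for the $m$-th weak derivative of the pullback, test it against $\divergence\vectest$ for an arbitrary $\vectest\in\Cont^{\infty}_{c}(\OmegaS)$, and estimate the resulting sum term by term, treating the single top-order summand ($k=m$) separately from the lower-order ones.

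First I would abbreviate $T_{\calP,\calC}:=\nabla_{C_1}\phi\otimes\cdots\otimes\nabla_{C_k}\phi\otimes\nabla_{P_1}\nabla\phi\otimes\cdots\otimes\nabla_{P_d}\nabla\phi$, so that by~\eqref{math:weakderivativeofpullback}
\[
\int_{\OmegaS}\nabla^{m}(\phi^{\ast}u)\contract\divergence\vectest\,\dif x
=\sum_{\substack{{P}\in\calP_0(m,d)\\0\le k\le|{P}_0|\\\calC\in\calP({P}_0,k)}}\int_{\OmegaS}\bigl(\nabla^{k}u_{|\phi}\contract T_{\calP,\calC}\bigr)\contract\divergence\vectest\,\dif x.
\]
Each $T_{\calP,\calC}$ is a tensor product of weak derivatives $\nabla^{j}\phi$ with $1\le j\le m+1$ (only $1\le j\le m$ when $d=0$), hence lies in $\Sobolev^{\divergence,\infty}$ by iterating the Leibniz rule for the divergence of tensor products; consequently $T_{\calP,\calC}\contract\vectest\in\Sobolev^{\divergence,\infty}_{c}(\OmegaS)$. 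The Leibniz rule also gives the pointwise identity $T_{\calP,\calC}\contract\divergence\vectest=\divergence(T_{\calP,\calC}\contract\vectest)-(\divergence T_{\calP,\calC})\contract\vectest$, which splits each summand into two integrals.

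For the integrals involving $(\divergence T_{\calP,\calC})\contract\vectest$ I would extract $\|\vectest\|_{\Lebesgue^{\infty}(\OmegaS)}$ and apply the generalized H\"older inequality (Proposition~\ref{proposition:generalizedhoelder}) with the conjugate pair $\phi^{\ast}\Orlicz$, $\Dual\phi^{\ast}\Orlicz$; this yields exactly the last sum of the asserted estimate. For the integrals involving $\divergence(T_{\calP,\calC}\contract\vectest)$ a case distinction is needed. When $k=m$ the combinatorics of $\calP_0(m,d)$ forces $T_{\calP,\calC}=\otimes^{m+d}\nabla\phi$; since $(\nabla^{m}u)\circ\phi=\nabla^{m}u_{|\phi}$ has bounded variation over $\OmegaS$ by Lemma~\ref{lemma:invarianceBV:scalar} and $T_{\calP,\calC}\contract\vectest$ is flat and compactly supported, Lemma~\ref{math:flatdivergence} bounds this integral by $\TV(\nabla^{m}u_{|\phi},\OmegaS)\,\|\nabla\phi\|_{\Lebesgue^{\infty}(\OmegaS)}^{m+d}\,\|\vectest\|_{\Lebesgue^{\infty}(\OmegaS)}$, the first term of the estimate. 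When $k\le m-1$ the field $\nabla^{k}u_{|\phi}$ is weakly differentiable, because $\nabla^{k}u\in\Sobolev^{1,p}$ and $\phi$ is locally Lipschitz, with weak derivative $\nabla^{k+1}u_{|\phi}\contract\nabla\phi$ by the first-order chain rule; integrating by parts against the flat field $T_{\calP,\calC}\contract\vectest$ (justified by mollification, as in the proof of Lemma~\ref{math:flatdivergence}) rewrites the integral as $-\int_{\OmegaS}\bigl(\nabla^{k+1}u_{|\phi}\contract\nabla\phi\contract T_{\calP,\calC}\bigr)\contract\vectest\,\dif x$, and a further application of Proposition~\ref{proposition:generalizedhoelder}, again after extracting $\|\vectest\|_{\Lebesgue^{\infty}(\OmegaS)}$, produces the middle sum. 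Collecting the three groups of terms and taking the supremum over $\vectest$ with $\|\vectest\|_{\Lebesgue^{\infty}(\OmegaS)}\le1$ gives the claimed inequality; the ``in particular'' clause then follows because Corollary~\ref{corollary:weakderivativeofpullback:sobolev} already yields $\phi^{\ast}u\in\Sobolev^{m,p}(\OmegaS)$ (all of $\nabla\phi,\dots,\nabla^{m+1}\phi$ being essentially bounded), and finiteness of the right-hand side makes $\nabla^{m}(\phi^{\ast}u)$ satisfy the defining inequality of bounded variation.

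The main obstacle will be the index bookkeeping: one must apply the Leibniz rule for the divergence to the correct spatial slot in every summand and track which covariant index the contraction $\contract\nabla\phi$ from the chain rule occupies, so that the tensors appearing in the middle sum come out exactly as stated. A secondary point of care is the two non-standard integrations by parts — against flat rather than smooth test fields — which are legitimate by mollification precisely because each $T_{\calP,\calC}\contract\vectest$ belongs to $\Sobolev^{\divergence,\infty}_{c}(\OmegaS)$; everything else is routine given formula~\eqref{math:weakderivativeofpullback}, Lemmas~\ref{math:flatdivergence} and~\ref{lemma:invarianceBV:scalar}, and Proposition~\ref{proposition:generalizedhoelder}.
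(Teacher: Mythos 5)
Your proposal is correct and follows essentially the same route as the paper's proof: start from the weak-derivative formula of Proposition~\ref{prop:weakderivativesofpullback}, test against $\divergence\vectest$, split each summand by the Leibniz identity $T\contract\divergence\vectest=\divergence(T\contract\vectest)-(\divergence T)\contract\vectest$, bound the $(\divergence T)\contract\vectest$ terms by the generalized H\"older inequality, integrate by parts for $k<m$ to produce the middle sum, and use the total variation bound together with Lemma~\ref{lemma:invarianceBV:scalar} for the single $k=m$ term. Your added remarks on mollification to justify pairing with flat (non-smooth) fields and on the combinatorial identification $T=\otimes^{m+d}\nabla\phi$ when $k=m$ are consistent with, and slightly more explicit than, the paper's argument.
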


\begin{proof}
    We apply Proposition~\ref{prop:weakderivativesofpullback}. 
    Using the representation of $\nabla^m u$ as stated in that result,
    multiplying with the directional divergence of an arbitrary test vector field $\vectest \in \Cont_{c}^{\infty}(\OmegaS, X \otimes \VEC^{\ast} )$,
    and integrating,
    we derive 
    \begin{align*}
        &
        \int_{\OmegaS}
        \nabla_{v_1,v_2,\ldots,v_m} 
        \left( \phi^{\ast} {u} \right)_{|x}
        \contract
        \divergence\vectest_{|x}
        \,\dif{x}
        \\&\quad 
        =
        \sum_{ \substack{ 0 \leq k \leq m \\ {P} \in \calP_{0}(m,d) \\ k \leq |{P}_{0}| \\ \calC \in \calP({P}_{0},k) } }
        \int_{\OmegaS}
        \nabla^{k} {u}_{|\phi(x)}
        \contract
        \left( 
            \nabla_{ C_1 }\phi_{|x}
            \otimes 
            \cdots 
            \otimes 
            \nabla_{ C_k }\phi_{|x}
            \otimes 
            \nabla_{ P_1 }\nabla\phi_{|x}
            \otimes 
            \cdots 
            \otimes 
            \nabla_{ P_d }\nabla\phi_{|x}
        \right)
        \contract
        \divergence 
        \vectest_{|x}
        \,\dif{x}
    \end{align*}
    We take a look at each of those summands. 
    Let $0 \leq k \leq m$, ${P} \in \calP_{0}(m,d)$, and $\calC \in \calP({P}_{0},k)$
    such that $k \leq |{P}_{0}|$.
    We see that 
    \begin{align*}
        &
        \int_{\OmegaS}
        \nabla^{k} {u}_{|\phi(x)}
        \contract
        \left( 
            \nabla_{ C_1 }\phi_{|x}
            \otimes 
            \cdots 
            \otimes 
            \nabla_{ C_k }\phi_{|x}
            \otimes 
            \nabla_{ P_1 }\nabla\phi_{|x}
            \otimes 
            \cdots 
            \otimes 
            \nabla_{ P_d }\nabla\phi_{|x}
        \right)
        \contract 
        \divergence 
        \vectest_{|x}
        \,\dif{x}
        \\&=
        \int_{\OmegaS}
        \nabla^{k} {u}_{|\phi(x)}
        \contract
        \divergence 
        \left( 
            \nabla_{ C_1 }\phi_{|x}
            \otimes 
            \cdots 
            \otimes 
            \nabla_{ C_k }\phi_{|x}
            \otimes 
            \nabla_{ P_1 }\nabla\phi_{|x}
            \otimes 
            \cdots 
            \otimes 
            \nabla_{ P_d }\nabla\phi_{|x}
            \contract 
            \vectest_{|x}
        \right)
        \,\dif{x}
        \\&\quad 
        -
        \int_{\OmegaS}
        \nabla^{k} {u}_{|\phi(x)}
        \contract
        \divergence 
        \left( 
            \nabla_{ C_1 }\phi_{|x}
            \otimes 
            \cdots 
            \otimes 
            \nabla_{ C_k }\phi_{|x}
            \otimes 
            \nabla_{ P_1 }\nabla\phi_{|x}
            \otimes 
            \cdots 
            \otimes 
            \nabla_{ P_d }\nabla\phi_{|x}
        \right)
        \contract  
        \vectest_{|x}
        \,\dif{x}
        .
    \end{align*}
    We use H\"older's inequality together with the assumptions and derive 
    \begin{align*}
        &
        \left|
        \int_{\OmegaS}
            \nabla^{k} {u}_{|\phi(x)}
            \contract
            \divergence 
            \left( 
                \nabla_{ C_1 }\phi_{|x}
                \otimes 
                \cdots 
                \otimes 
                \nabla_{ C_k }\phi_{|x}
                \otimes 
                \nabla_{ P_1 }\nabla\phi_{|x}
                \otimes 
                \cdots 
                \otimes 
                \nabla_{ P_d }\nabla\phi_{|x}
            \right)
            \contract  
            \vectest_{|x}
        \,\dif{x}
        \right|
        \\&\quad 
        \leq 
        C_{\Orlicz,\Dual}
        \left\|
            \nabla^{k} {u}_{|\phi}
        \right\|
        _{\Lebesgue^{\phi^{\ast}\Orlicz}(\OmegaS)}
        \left\|
            \divergence \left(
            \nabla_{ C_1 }\phi_{}
            \otimes 
            \cdots 
            \otimes 
            \nabla_{ C_k }\phi_{}
            \otimes 
            \nabla_{ P_1 }\nabla\phi_{}
            \otimes 
            \cdots 
            \otimes 
            \nabla_{ P_d }\nabla\phi_{}
            \right)
        \right\|
        _{\Lebesgue^{\Dual\phi^{\ast}\Orlicz}(\OmegaS)}
        \left\|
            \vectest
        \right\|
        _{\Lebesgue^{\infty}(\OmegaS)}
        .
    \end{align*}
    If now $k < m$, we integrate by parts and use H\"older's inequality once more,
    \begin{align*}
        &
        \left|
        \int_{\OmegaS}
        \nabla^{k} {u}_{|\phi(x)}
        \contract
        \divergence 
        \left( 
            \nabla_{ C_1 }\phi_{|x}
            \otimes 
            \cdots 
            \otimes 
            \nabla_{ C_k }\phi_{|x}
            \otimes 
            \nabla_{ P_1 }\nabla\phi_{|x}
            \otimes 
            \cdots 
            \otimes 
            \nabla_{ P_d }\nabla\phi_{|x}
            \contract  
            \vectest_{|x}
        \right)
        \,\dif{x}
        \right|
        \\&\quad 
        =
        \left|
        \int_{\OmegaS}
        \nabla^{k+1} {u}_{|\phi(x)}
        \contract
        \left( 
            \nabla\phi_{|x}
            \otimes 
            \nabla_{ C_1 }\phi_{|x}
            \otimes 
            \cdots 
            \otimes 
            \nabla_{ C_k }\phi_{|x}
            \otimes 
            \nabla_{ P_1 }\nabla\phi_{|x}
            \otimes 
            \cdots 
            \otimes 
            \nabla_{ P_d }\nabla\phi_{|x}
            \contract  
            \vectest_{|x}
        \right)
        \,\dif{x}
        \right|
        \\&\quad 
        \leq 
        C_{\Orlicz,\Dual}
        \left\|
            \nabla^{k+1} {u}_{|\phi}
        \right\|
        _{\Lebesgue^{\phi^{\ast}\Orlicz}(\OmegaS)}
        \left\|
            \nabla\phi_{}
            \otimes 
            \nabla_{ C_1 }\phi_{}
            \otimes 
            \cdots 
            \otimes 
            \nabla_{ C_k }\phi_{}
            \otimes 
            \nabla_{ P_1 }\nabla\phi_{}
            \otimes 
            \cdots 
            \otimes 
            \nabla_{ P_d }\nabla\phi_{}
        \right\|
        _{\Lebesgue^{\Dual\phi^{\ast}\Orlicz}(\OmegaS)}
        \left\|
            \vectest
        \right\|
        _{\Lebesgue^{\infty}(\OmegaS)}
        .
    \end{align*}
If instead $k=m$, then 
    \begin{align*}
        &
        \left|
        \int_{\OmegaS}
        \nabla^{m} {u}_{|\phi(x)}
        \contract
        \divergence 
        \left( 
            \nabla_{ C_1 }\phi_{|x}
            \otimes 
            \cdots 
            \otimes 
            \nabla_{ C_m }\phi_{|x}
            \otimes 
            \nabla_{ P_1 }\nabla\phi_{|x}
            \otimes 
            \cdots 
            \otimes 
            \nabla_{ P_d }\nabla\phi_{|x}
            \contract  
            \vectest_{|x}
        \right)
        \,\dif{x}
        \right|
        \\&\quad 
        =
        \left|
        \int_{\OmegaS}
        \nabla^{m} {u}_{|\phi(x)}
        \contract
        \divergence 
        \left( 
            \otimes^{m+d} \nabla\phi_{|x}
            \contract  
            \vectest_{|x}
        \right)
        \,\dif{x}
        \right|
\leq 
        \TV( \nabla^{m} {u}_{|\phi}, \OmegaS )
        \cdot 
        \left\|
            \vectest
        \right\|_{\Lebesgue^{\infty}(\OmegaS)}
        .
    \end{align*}
    Lemma~\ref{lemma:invarianceBV:scalar} implies that $\nabla^{m} {u}_{|\phi}$ has bounded variation.
    The proof is complete. 
\end{proof}

\begin{remark}
    A more comprehensive picture emerges. 
    We already know that the pullback of a scalar function with $m$ weak derivatives will have $m$ weak derivatives again provided that the transformation has essentially bounded derivatives of orders $1$ to $m$. 
    The pullback of a tensor field with $m$ weak derivatives having $m$ weak derivatives again additionally requires a transformation with essentially bounded derivative of order $m+1$. 
    Preserving the bounded variation class imposes further requirements on the transformation:
    the derivative of order $m$ (for scalar fields) or of order $m+1$ (for tensor fields) must additionally have an essentially bounded divergence. 
    
    It is instructive to review special cases of Lemma~\ref{lemma:invarianceBV:tensor} and Lemma~\ref{prop:higherboundedvariation} when the Orlicz norm is a classical Lebesgue norm. 
    In the case $p=1$, which has received the bulk of attention in the literature, 
    we only need that the divergence of the transformation's highest derivative is essentially bounded. 
    The same is also true for $p > 1$ when the domain is bounded. 
    However, the situation is different when $p>1$ and the domain has infinite volume. 
    Then we must choose $q = p/(p-1)$, 
    which effectively enforces a decay condition on the higher derivatives, in addition to being essentially bounded. 
    For example, the class of essentially bounded functions with bounded variation 
    is preserved under any coordinate transformation whose Jacobian has integrable and essentially bounded divergence. 
\end{remark}

We take a look at univariate functions of bounded variation as an interesting special case.

\begin{corollary}\label{prop:higherboundedvariation:univariate}
    Let $\IS, \IT \subseteq \bbR$ be open intervals.
    Let $m \in \bbN$ 
    and let $\Orlicz$ be a proper Musielak-Orlicz integrand over $\OmegaS$. 
    Suppose that $\phi : \IS \rightarrow \IT$ is locally bi-Lipschitz. 
Suppose that the weak derivatives $\nabla\phi, \dots, \nabla^{m+1}\phi$ are in $\Lebesgue^{\infty}(\IS)$.
If $u \in \Sobolev^{m,\Orlicz}(\IT,\bbR)$
    and $\nabla^{m} u$ has bounded variation,
    then 
    \begin{align*}
\TV( \nabla^{m} ( u \circ \phi ) , \IS )
        &\leq 
        \TV( \nabla^{m} u \circ \phi, \IS )
        \left\| \nabla\phi \right\|_{\Lebesgue^{\infty}(\IS)}^{m+d}
        \\&\quad\quad
        +
        C_{\Orlicz,\Dual}
        \sum_{ \substack{ 0 \leq k \leq m-1 \\ \calC \in \calP(m,k) } }
        \left\|
            \nabla^{k+1} {u} \circ \phi
        \right\|
        _{\Lebesgue^{\phi^{\ast}\Orlicz}(\IS)}
        \left\|
            \nabla\left(
            \nabla\phi_{}
            \otimes 
            \nabla_{ C_1 }\phi_{}
            \otimes 
            \cdots 
            \otimes 
            \nabla_{ C_k }\phi_{}
            \right)
        \right\|
        _{\Lebesgue^{\Dual\phi^{\ast}\Orlicz}(\IS)}
        \\&\quad\quad
        +
        C_{\Orlicz,\Dual}
        \sum_{ \substack{ 1 \leq k \leq m \\ \calC \in \calP(m,k) } }
        \left\|
            \nabla^{k} {u} \circ \phi
        \right\|
        _{\Lebesgue^{\phi^{\ast}\Orlicz}(\IS)}
        \left\|
            \nabla \left(
            \nabla_{ C_1 }\phi_{}
            \otimes 
            \cdots 
            \otimes 
            \nabla_{ C_k }\phi_{}
            \right)
        \right\|
        _{\Lebesgue^{\Dual\phi^{\ast}\Orlicz}(\IS)}
        .
    \end{align*}
    In particular, 
    if the right-hand side of that inequality is finite, 
    then $u \circ \phi \in \Sobolev^{m,\Orlicz}(\IS,\bbR)$ and $\nabla^{m} ( u \circ \phi )$ has bounded variation.
\end{corollary}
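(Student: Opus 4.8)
The plan is to obtain this statement as a direct specialization of Proposition~\ref{prop:higherboundedvariation} to the one-dimensional scalar setting: take $X = Y = \bbR$ (hence $N = 1$), $\OmegaS = \IS$, $\OmegaT = \IT$, $\VEC = \bbR$, and $d = 0$, so that the pullback of the scalar field $u$ is nothing but the composition $u \circ \phi$. With this dictionary the analytic content is already contained in Proposition~\ref{prop:higherboundedvariation} and Lemma~\ref{lemma:invarianceBV:scalar}; what remains is to verify that the hypotheses match and that the general bound collapses to the three-group expression displayed above.

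For the hypotheses, I would use that in one space dimension a vector field on $\IS$, written $\vectest = e_{1} \otimes \vectest^{1}$, is just the scalar function $\vectest^{1}$, and its divergence is the ordinary weak derivative $\nabla \vectest^{1}$. Consequently $\Sobolev^{\divergence,\infty}(\IS)$ coincides with the space of $w \in \Lebesgue^{\infty}(\IS)$ whose weak derivative $\nabla w$ also lies in $\Lebesgue^{\infty}(\IS)$. Applying this with $w = \nabla^{j}\phi$ for $1 \le j \le m$ shows that the assumption that $\nabla\phi, \dots, \nabla^{m+1}\phi$ are essentially bounded is exactly the assumption that $\nabla\phi, \dots, \nabla^{m}\phi$ belong to $\Sobolev^{\divergence,\infty}(\IS)$, which is what the $d = 0$ branch of Proposition~\ref{prop:higherboundedvariation} requires; together with the standing hypotheses that $\phi$ is locally bi-Lipschitz and that $\Orlicz$ is proper, this lets us invoke the proposition.

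For the bound, with $d = 0$ the index set $\calP_{0}(m,0)$ has a single element, namely $P_{0} = \indexset{1}{m}$ with all further blocks empty, so the double sum over $\calP_{0}(m,d)$ and $\calC \in \calP(P_{0},k)$ in Proposition~\ref{prop:higherboundedvariation} reduces to a single sum over $\calC \in \calP(m,k)$, no factors $\nabla_{P_{j}}\nabla\phi$ survive, and $\divergence$ becomes the scalar derivative $\nabla$. Tracking the three groups of terms in Proposition~\ref{prop:higherboundedvariation} under this reduction — the $k = m$ contribution, where $\TV(\nabla^{m}u \circ \phi, \IS)$ is finite by Lemma~\ref{lemma:invarianceBV:scalar} and $\|\otimes^{m+d}\nabla\phi\|_{\Lebesgue^{\infty}}$ is estimated by $\|\nabla\phi\|_{\Lebesgue^{\infty}}^{m+d}$; the $k < m$ contributions produced by the final integration by parts, which carry a $\nabla\phi$ chain-rule factor; and the remaining contributions carrying a divergence of a product of derivative factors — then reproduces the asserted inequality, and the concluding conditional assertion is inherited verbatim from Proposition~\ref{prop:higherboundedvariation}. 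I expect the only genuine obstacle, a mild bookkeeping one, to be exactly this last matching step: confirming that the collapse of the nested partition sums, the disappearance of the $\nabla_{P_{j}}\nabla\phi$ factors, and the conversion of one-dimensional tensor divergences to ordinary derivatives line up termwise with the three displayed sums, since the substantive estimates have already been carried out in Proposition~\ref{prop:higherboundedvariation}.
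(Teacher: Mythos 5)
Your proposal is correct and follows exactly the route the paper intends: the corollary is the $d=0$, $N=1$ specialization of Proposition~\ref{prop:higherboundedvariation}, and your key observation that in one dimension $\Sobolev^{\divergence,\infty}(\IS)$ is precisely the set of essentially bounded functions with essentially bounded weak derivative shows the hypotheses $\nabla\phi,\dots,\nabla^{m+1}\phi\in\Lebesgue^{\infty}(\IS)$ match the $d=0$ branch of that proposition, after which the partition sums collapse as you describe. One caveat on your claim of a termwise match with the printed statement: specializing the middle group of Proposition~\ref{prop:higherboundedvariation} to $d=0$ gives the factor $\|\nabla\phi\otimes\nabla_{C_1}\phi\otimes\cdots\otimes\nabla_{C_k}\phi\|_{\Lebesgue^{\Dual\phi^{\ast}\Orlicz}(\IS)}$ \emph{without} an outer derivative (the integration by parts in the proposition's proof puts the extra $\nabla\phi$ in front but differentiates nothing), whereas the corollary as printed displays $\|\nabla(\nabla\phi\otimes\cdots\otimes\nabla_{C_k}\phi)\|$; this, like the stray exponent $m+d$ in a statement where $d$ plays no role, appears to be a slip in the corollary's formulation rather than a gap in your argument, which produces the bound the specialization actually yields.
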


\subsection{An application with harmonic maps}

The estimate in Proposition~\ref{prop:higherboundedvariation} uses a regularity condition that requires the highest-order Jacobian of the transformation $\phi$ to have a weak divergence. We close this presentation with sketching in how far this requirement might naturally arise in the study of diffeomorphisms that minimize energy functionals.
Such diffeomorphisms appear in the theory of linearized elasticity and the theory of harmonic maps.

Suppose that $\OmegaS \subseteq \bbR^{N}$ is a bounded Lipschitz domain and consider the Dirichlet energy functional 
\begin{align*}
    E : \Sobolev^{1,2}(\OmegaS)^{N} \rightarrow \bbR,
    \quad 
    \bfD
    \mapsto 
    \int_{\OmegaS} \langle \nabla\bfD, \nabla\bfD \rangle - \langle \bfF, \bfD \rangle \;\dif{x},
\end{align*}
where $\bfF \in \Lebesgue^{2}(\OmegaS)^{N}$.
We now suppose that $\bfD \in \Sobolev^{1,2}(\OmegaS)^{N}$ is the unique minimizer to this energy functional subject to, say, homogeneous Dirichlet boundary conditions,
\begin{align*}
    \bfD_{|\partial\OmegaS} = 0.
\end{align*}
In a physical interpretation, $\bfF$ represents the external force applied to a body that is fixed at the boundary, and $\bfD$ is resulting displacement vector field. $\bfD$ is the weak solution to 
\begin{align*}
    \divergence (\nabla \bfD) = \bfF
\end{align*}
with homogeneous Dirichlet boundary conditions. Of course, this simple model problem is just a system of independent Poisson problems. 

We introduce the mapping 
\begin{align*}
    \phi : \OmegaS \rightarrow \bbR^{N}, \quad x \mapsto x + \bfD(x).
\end{align*}
If $\bfD$ and $\nabla\bfD$ are essentially bounded over $\OmegaS$ and $\nabla\bfD$ is small enough, 
then $\phi$ is a bi-Lipschitz mapping $\phi : \OmegaS \rightarrow \OmegaS$.
It is clear that the pullback along $\phi$ preserves the function class $\Sobolev^{1,p}$.
However, it is not clear whether $\phi$ preserves any function classes of higher regularity,
$\nabla\nabla\phi$ may not exist as an essentially bounded field. 

We now make the additional assumption that the external force field $\bfF$ is essentially bounded. 
Then $\nabla\phi \in \Sobolev^{\divergence,\infty}(\OmegaS)$. 
This extra regularity allows us to apply Proposition~\ref{prop:higherboundedvariation}, which shows that $u \circ \phi \in \Sobolev^{1,p,\TV}(\OmegaS)$
for any $u \in \Sobolev^{1,p,\TV}(\OmegaT)$ and $p \in [1,\infty]$. 
\\

This simple example demonstrates that the regularity condition $\nabla\phi \in \Sobolev^{\divergence,\infty}(\OmegaS)$ arises naturally in applications such as linearized elasticity in the low regularity regime.

\subsection*{Acknowledgment}

This material is based upon work supported by the National Science Foundation under Grant No.\ DMS-1439786 while the author was in residence at the Institute for Computational and Experimental Research in Mathematics in Providence, RI, during the ``Advances in Computational Relativity'' program.

\end{document}